\DeclareMathAlphabet{\mathpzc}{OT1}{pzc}{m}{it}
\newtheorem{theorem}{Theorem}[section]
\newtheorem{lemma}[theorem]{Lemma}
\newtheorem{corollary}[theorem]{Corollary}
\theoremstyle{definition}
\newtheorem{definition}[theorem]{Definition}
\theoremstyle{remark}
\newtheorem{remark}[theorem]{Remark}
\numberwithin{equation}{section}
\DeclareRobustCommand{\cev}[1]{%
  \mathpalette\do@cev{#1}%
}
\newcommand{\do@cev}[2]{%
  \fix@cev{#1}{+}%
  \reflectbox{$\m@th#1\vec{\reflectbox{$\fix@cev{#1}{-}\m@th#1#2\fix@cev{#1}{+}$}}$}%
  \fix@cev{#1}{-}%
}
\newcommand{\fix@cev}[2]{%
  \ifx#1\displaystyle
    \mkern#23mu
  \else
    \ifx#1\textstyle
      \mkern#23mu
    \else
      \ifx#1\scriptstyle
        \mkern#22mu
      \else
        \mkern#22mu
      \fi
    \fi
  \fi
}
 \newcommand{\virgolette}{``}
\newcommand{\set}[1]{\ensuremath{\mathbb{#1}}}
\newcommand{\cat}[1]{\ensuremath{\mbox{\sffamily{#1}}}}
\newcommand{\slantone}[2]{{\raisebox{.1em}{$#1$}\left/\raisebox{-.1em}{$#2$}\right.}}
\newcommand*{\defeq}{\mathrel{\vcenter{\baselineskip0.5ex \lineskiplimit0pt
                     \hbox{\scriptsize.}\hbox{\scriptsize.}}}%
                     =}
\newcommand{\slanttwo}[2]{{\raisebox{.2em}{$#1$} \big/ \raisebox{-.2em}{$#2$}}}
\newcommand\asim{\mathrel{%
  \ooalign{\raise0.1ex\hbox{$\sim$}\cr\hidewidth\raise-0.8ex\hbox{\scalebox{0.9}{$\scriptstyle{x}$}}\hidewidth\cr}}}
\newcommand{\mani}{\ensuremath{\mathpzc{M}}}
\newcommand{\manir}{\ensuremath{\mathpzc{M}_{\mathpzc{red}}}}
\newcommand{\stsheaf}{\ensuremath{\mathcal{O}_{\mathpzc{M}}}}
\newcommand{\stsheafred}{\ensuremath{\mathcal{O}_{\mathpzc{M}_{red}}}}
\newcommand{\beq}{\begin{equation}}
\newcommand{\eeq}{\end{equation}}
\newcommand{\bear}{\begin{eqnarray}}
\newcommand{\eear}{\end{eqnarray}}
\newcommand{\longhookrightarrow}{\ensuremath{\lhook\joinrel\relbar\joinrel\rightarrow}}
\begin{document}

\title{On the Geometry of Forms on Supermanifolds}


\author{Simone Noja}
\address{Universität Heidelberg}
\curraddr{Im Neuenheimer Feld 205}
\email{simone.noja@mathi.uni-heidelberg.de}




\begin{abstract}
This paper provides a rigorous account on the geometry of forms on supermanifolds, with a focus on its algebraic-geometric aspects. First, we introduce the de Rham complex of differential forms and we compute its cohomology. We then discuss three intrinsic definitions of the Berezinian sheaf of a supermanifold - as a quotient sheaf, via cohomology of the super Koszul complex or via cohomology of the total de Rham complex. Further, we study the properties of the Berezinian sheaf, showing in particular that it defines a right $\mathcal{D}$-module. 
Then we introduce integral forms and their complex and we compute their cohomology, by providing a suitable Poincaré lemma. We show that the complex of differential forms and integral forms are quasi-isomorphic and their cohomology computes the de Rham cohomology of the reduced space of the supermanifold. The notion of Berezin integral is then introduced and put to good use to prove the superanalog of Stokes' theorem and Poincaré duality, which relates differential and integral forms on supermanifolds. Finally, a different point of view is discussed by introducing the total tangent supermanifold and (integrable) pseudoforms in a new way. In this context, it is shown that a particular class of integrable pseudoforms having a distributional dependence supported at a point on the fibers are isomorphic to integral forms. Within the general overview, several new proofs of results are scattered.




\end{abstract}

\maketitle

\tableofcontents

\section{Introduction - A Tale of Forms and Berezinians}

\noindent Supergeometry is the study of \emph{supermanifolds}, \emph{i.e.}\ manifolds characterized by sheaves of $\mathbb{Z}_2$-graded (commutative) algebras, called \emph{superalgebras}, whose \virgolette functions'' might commute or anticommute, depending on their even or odd $\mathbb{Z}_2$-degree \cite{Konstant, Leites, Manin}.   
Certain supermanifolds can also be endowed with a tighter structure, roughly speaking a symmetry that exchanges even and odd directions \cite{WittenRiem}. This provides a geometric realization of what physicists call \emph{supersymmetry transformations} in the context of modern quantum field theories, thus making supergeometry into the right mathematical environment to study these physical theories - one such being superstring theory \cite{Deligne}.\\ 
In some sense, the characteristic $\mathbb{Z}_2$-graded commutativity of supergeometry is the lowest possible degree of non-commutativity. For this reason, several constructions from ordinary purely commutative algebra and geometry can be easily generalized to a supergeometric setting. But this is not really the point. Indeed, supergeometry features new notions that resist such a trivial extension, challenging instead our geometric intuition. \\ 
 One such notion is of \emph{global} nature and related to the holomorphic theory. Indeed, complex holomorphic supermanifolds can be \emph{non-projected} or \emph{non-split}, meaning that they cannot be directly reconstructed from their underlying purely commutative manifolds \cite{Green, Manin}. As such, they are genuinely new geometric objects, having \virgolette a life of their own''. Interest in these non-split / non-projected geometries has remarkably grown within the last years \cite{Bettadapura1, Bettadapura2, P2}, prompted by a paper of Donagi and Witten \cite{DonWit}, where it is shown that the \emph{supermoduli space} of super Riemann surfaces is indeed non-projected for genus at least $5$. \\ 
But issues are also of \emph{local} nature. Indeed, whereas on an ordinary manifold differential forms anticommute, thus forcing the de Rham complex to terminate at the dimension of the manifold, on a supermanifold differentials of odd functions do instead commute, so that the de Rham complex is not bounded \cite{Deligne, Manin}. This easy fact has very far-reaching consequences. 
Poincaré duality - as known from ordinary commutative geometry - breaks down, and there exists a new complex, which is \virgolette dual'' to the de Rham complex. This is the so-called complex of \emph{integral forms}, where the Berezinian bundle - a characteristic supergeometric construction that controls integration on supermanifolds - sits and plays the role the canonical bundle plays in the ordinary de Rham complex \cite{BL1}. Accordingly, integration theory on supermanifolds is quite peculiar and highly non-trivial \cite{Manin, Voronov}. Most notably, once again, its subtleties are related to important physical questions, regarding both the foundations of supersymmetric theories, described as manifestly invariant theories on superspaces \cite{CDF}, and actual computations of quantities of physical interests, such as scattering amplitudes in superstring theory \cite{WittenSuper}. \\
This paper is mostly concerned with the algebraic-geometric aspects of this rich and beautiful theory of forms (and integration) on supermanifolds. Before we start, though, in order to provide the reader with some perspective and context - and also to make justice to the researchers who have contributed most to building and shaping this branch of mathematics -, we will briefly go through and comment on the historical development of the theory.   \vspace{.4cm}



\noindent The concept of differential forms on supermanifolds originated from the astounding creativity of the work of Felix Berezin - the \virgolette mastermind of \emph{super-mathematics}'' as in \cite{BerezinLife} - and his collaborators around 1970 \cite{Berezin}, years before the mathematical formalization of the notion of \emph{supermanifold} was even available \cite{Konstant, Leites}. As in the ordinary theory, differential forms on supermanifolds can be structured into a complex, the de Rham complex, which - differently from a purely commutative setting - is \emph{not} bounded from above. 
Also, differential forms allow for integration on reduced, or \emph{bosonic}, submanifolds of a supermanifold by their pull-back, but they do not control integration on supermanifolds. A meaningful notion of integration on a space with odd, or \emph{fermionic}, directions indeed requires the notion of \emph{Berezinian} bundle (after Berezin) - probably one of the most peculiar constructions in the theory of supermanifolds -, which plays the same role the determinant or canonical bundle play on an ordinary manifold. The fact that the Berezinian bundle does not appear in (the generalization of) the de Rham complex on supermanifolds - and, as such, its sections are not differential forms - marks a significant departure from the ordinary integration theory and its relation to differential forms. Later on, in 1977, in the pioneering \cite{BL1, BL2} Bernstein and Leites introduced the complex of \emph{integral} (or, perhaps more appropriately, integra\emph{ble}) \emph{forms}, which is bounded from above (but not from below) and whose \virgolette top'' bundle is given by the Berezinian line bundle of the supermanifold. Whereas the complex of differential forms controls integration on ordinary bosonic submanifolds of a supermanifold, the complex of integral forms controls integration on sub-supermanifolds of the same odd (or fermionic) dimension of the ambient supermanifold. In this integration procedure, fermionic variables are somehow \virgolette frozen'' and integrated over in the Berezin sense. Bernstein and Leites exploited further the characteristic geometry of supermanifolds where even and odd variables coexist and they also introduced the notion of \emph{pseudodifferential forms} \cite{BL2, BerLei}. This newly-defined type of forms generalizes the notion of differential forms allowing for more general, non-polynomial, dependences on the even 1-forms - while nilpotency constraints the odd 1-forms to have a polynomial dependence. In particular, pseudodifferential forms can be \emph{integrable}, provided that they vanish fast enough at infinity. It is this particular kind of pseudoforms that allow for integration on sub-supermanifolds of \emph{any} codimension, thus supplementing integral forms in the integration theory on supermanifolds, as they can only be integrated on codimension $k|0$ sub-supermanifolds. 
Nonetheless, differently than differential and integral forms, pseudodifferential forms do not carry any grading. 
This problem has been addressed in the Eighties and early Nineties by Voronov and Zorich \cite{VZ1, VZ2, VZ3} and later on by Voronov \cite{V4}. Building on earlier work by Gaiduk, Khudaverdian, and Schwarz \cite{Gaiduk}, these authors were able to develop a theory of forms graded by superdimension. These kinds of forms are now referred to as \emph{Voronov-Zorich forms}. Remarkably, Voronov-Zorich forms admit different descriptions: a quite surprising one being via \emph{Lagrangians} related to parametrized supersurfaces \cite{VZ1}. In more recent years, this intriguing point of view has been further pushed forward by Voronov \cite{V3, V4}, who provided an extension of the de Rham complex with forms carrying a \emph{negative} degree, something possible only due to the peculiar geometry of forms on supermanifolds. Quite remarkably, integral forms, (integrable) pseudodifferential forms, and Voronov-Zorich forms are all related via \emph{integral transformations}. For example, it is possible to define an integral transform \cite{VZ2}, called odd Fourier transform, which maps isomorphically integral forms to a particular class of pseudodifferential forms having a \emph{Dirac delta} distributional dependence on the even 1-forms.\\ 
Foundational problems in the theory of supermanifolds - and in particular in the theory of forms on supermanifolds - have been addressed during the Eighties also by other groups of researchers. Manin, together with the students at his school of algebraic geometry, worked extensively on issues regarding complex supermanifolds (and their deformations) in relation to the back then new-born \emph{superstring theory}, which was polarizing the attention of the high-energy physics community. The fundamental intrinsic definition of the Berezinian bundle as arising from the homology of a very non-trivial generalization of the \emph{Koszul complex} appeared first in \cite{OP}, where Ogievetsky and Penkov, students of Manin at that time, introduced \emph{Serre duality} for projective supermanifolds, showing that the Berezinian bundle provides the right super-analog of the dualizing sheaf in (commutative) algebraic geometry. Later on, this important construction has been briefly reported by Manin in his beautiful book \cite{Manin} - but the paper \cite{OP} was inadvertently not acknowledged, thus causing a bit of confusion regarding the attribution of the result \cite{Ogiev}. Quite recently a self-contained thorough discussion of the super Koszul complex and its relation with the Berezinian module has been given in \cite{NojaRe}. Further crucial properties of differential and integral forms on supermanifolds were discussed by Penkov in relation to the theory of $\mathcal{D}$-modules on supermanifolds in the very beautiful paper \cite{Penkov}: in this work, among many other things, the author shows that the Berezinian bundle carries a natural structure of right $\mathcal{D}$-module, showing once again similarities with the canonical bundle of an ordinary manifold. 
Beside the \virgolette Russian school'', other groups of researchers worked on \virgolette supermathematics'' starting in the Eighties. Bartocci, Bruzzo, and Hern\`andez Ruiperez were very active in the research on foundational problems in the theory of supermanifolds. In particular, it is due to Hern\`andez Ruiperez and Mu\~{n}oz Masque one of the neatest intrinsic construction of the Berezin bundle for smooth supermanifolds \cite{Ruiperez}, to be compared to the aforementioned contemporary construction via super Koszul complex by Ogievetsky and Penkov \cite{OP}, which has the algebraic category in sight instead. \\
The theory of forms and integration on supermanifolds and superspaces has many applications in modern physics. \emph{Supersymmetry} - a building pillar of contemporary high-energy physics, which relates \emph{bosonic} fields to \emph{fermionic} fields \cite{Gaw} -, can only be realized geometrically at the cost of upgrading the ambient manifold of the theory to a supermanifold. In this context, the action of the physical theory is written in terms of a Berezin integral of a section of the Berezinian of the supermanifold, which plays the role of the \emph{Lagrangian density} of the theory, and the supersymmetry transformations are generated by particular \emph{odd} vector fields. The machinery of the Berezin integral and the property of the Berezinian bundle make it possible for the Lie derivative of the action of the physical theory to integrate to zero, thus making supersymmetry into a manifest symmetry. In other words, to set a supersymmetric theory on a supermanifold, with its peculiar notion of integration, has the same meaning as setting a Lorentz-invariant theory on a semi-Riemannian Lorentzian manifold. In this way symmetries of the physical theory become geometric transformations, uncovering mathematical structures and making the symmetries of the theory manifest. \\
It is fair to say, though, that physics has probably not yet absorbed the elegance and subtleness of the theory of forms on supermanifolds: integration on superspaces is regarded as an algebraic formal machinery and the theory of integral forms - in its various realizations - is mostly ignored.  As a result, the full power of this formalism has not been exploited yet in its applications. There exist remarkable exceptions and efforts in this direction though. In \cite{Belo1, Belo2} Belopolsky used a variation on the theme of Voronov-Zorich forms in relation to the physical problem of computing scattering amplitudes in superstring perturbation theory, putting forward a supergeometric version of the so-called \emph{picture changing operators} introduced in conformal field theory in \cite{FMS1, FMS2}. In more recent years, Catenacci, Castellani, and Grassi - together with several other collaborators - realized that the so-called \emph{rheonomic principle} \cite{CDF}, that lies at the basis of the geometric formulation of supergravity theories on supermanifolds, needed to be lifted at the level of the integral forms complex to make sense. This led to the formulation of many supergravity and supersymmetric theories via integral forms and a new understanding of their structures and supersymmetries \cite{CCG, CCGN, CGNinf, CGN, CA1, CGP}. Whereas high-energy physics and string theory communities were rather unwary to the subtleties related to forms and integration theory on supermanifolds, in a totally opposite fashion, the development of the theory of \emph{Batalin-Vilkovisky} (BV) \emph{quantization} has been highly influenced by supergeometry and prompted several advances in the field. In particular, integral forms, in their incarnation as \emph{semi-densities} on odd symplectic supermanifolds, play a major role in BV quantization, see \cite{Mnev} for a supergeometric-aware detailed review of the topic. In this context the very influential work \cite{Schwarz} by Schwarz and the seminal contributions \cite{Khudaverdian1, KN1, KN2, Khudaverdian2} by Khudaverdian and Neressian, regarding the relations between BV geometry and forms and integration on supermanifolds are to be cited. In particular, the first supergeometric definition of the BV Laplacian \cite{Khuda2, Khuda1} is due to Khudaverdian. Later on, building upon the work of Khudaverdian, in \cite{Severa} \v{S}evera provided a wonderful homological construction of the BV Laplacian via a (quite surprising) spectral sequence related to a \virgolette deformed'' de Rham operator by the (odd) symplectic form of the ambient \emph{odd symplectic} supermanifold. Notably, the construction is based on the cohomology of the aforementioned super Koszul complex, introduced by Ogievetsky and Penkov. Finally, in recent years the theory of integral forms and the related integration theory on supermanifolds has been revitalized and drawn back to the attention of the physics community by the review \cite{Witten} of Witten, which was written with an eye to applications to superstrings \cite{WittenSuper}. The point of view of the author emphasizes the relationship of differential and integral forms with Clifford-Weyl (super)algebras and their representations. This prompted new works in the field, also in the realm of pure mathematics, see for example the interesting \cite{SuZhang} in relation to Lie superalgebras. \\  
The books on the topics deserve a separate mention. First off, Voronov's \cite{Voronov} provides a thorough discussion on integration on supermanifolds, emphasizing the author's construction of $r|s$-forms as variations of Lagrangians and the related integration theory. This is the only dedicated book on the topic to this day - we warn the reader, though, that this paper takes a different perspective. Further, Manin's \cite{Manin} features a thorough chapter on supergeometry which, among many other things, introduces integral forms, Berezin integral, and densities. The exposition leans toward an algebraic geometric point of view, which is the one taken also in the present paper: as such, \cite{Manin} could be considered as a main reference. \vspace{.4cm}

The paper is structured as follows. In section 2 the basic constructions in supergeometry are introduced. In particular, the notion of supermanifold is discussed from the point of view of locally-ringed space, and certain natural sheaves are introduced. Section 3 is dedicated to differential forms on supermanifolds and their (de Rham) cohomology, in particular Poincaré lemma is discussed. Section 4 is dedicated to one of the most peculiar constructions in supergeometry, that of Berezinian. More in detail, we will define the Berezinian bundle via three constructions: as a quotient sheaf, as constructed via the super Koszul complex and as byproduct of the cohomology of the so-called total de Rham complex, which mediates between the first two and it is substantially new. The relations between these constructions are commented on. In section 5 we will study some properties of the Berezinian bundle, in particular we will see that it carries a right $\mathcal{D}$-module structure, such as the canonical bundle of an ordinary manifold. Section 6 deals with integral forms and their (Spencer) cohomology, in particular we will prove that the complexes of differential and integral forms are quasi-isomorphic, \emph{i.e.}\ they compute the same cohomology, actually the de Rham cohomology of the reduced manifold.  In section 7 the Berezin integral is introduced. We will prove Stokes' theorem for supermanifolds (without boundaries) and, as an application, we will see how it allows manifest supersymmetry invariance in physics. In section 8 we will see how the notion of Poincaré duality gets modified when working on supermanifolds. Finally, in section 9 we will present a different point of view, by introducing pseudoforms on supermanifolds as particular functions defined on the supermanifold associated to the (parity shifted) tangent bundle of a given supermanifold. The geometry of this peculiar supermanifold is then discussed in detail in a new fashion. We will show that for a specific class of these forms - namely those having a distributional dependence supported at zero in the fiber directions -, there is an isomorphism with the previously defined integral forms. The approach taken in this last section differs from the available literature, 
in that - consistently with the spirit of the paper - only algebraic-geometric inspired ideas and methods have been employed, in place of the traditional analytic approach via integral transforms. It is fair to say also that this last section is more speculative and perhaps less complete and systematic compared to all of the previous ones, as it deals with open problems and ongoing research.  \\
We stress that this paper does not aim to be fully encompassing - and indeed some points of view that we have hinted upon in this introduction are not discussed here - for these, we refer for example to \cite{Voronov}. Instead, we have chosen to provide a - hopefully - conceptually clear and mathematically rigorous exposition, keeping our focus on the algebraic-geometric aspects of the theory. The most important and peculiar constructions - which are not well-known outside a rather small community of experts - have been spelled out in great detail, trying to provide a firmly founded systematization of the results, alongside new comprehensive proofs which are often not available in the literature. Finally, efforts have been put to make the exposition as self-contained as possible in the hope to provide a readable, but not overwhelming, reference to the subject.

\vspace{.3cm}

\noindent {\bf Acknowledgements.} The author wishes to thank the anonymous referee for attentive reading and useful comments on this manuscript. This work is funded by Deutsche Forschungsgemeinschaft (DFG, German Research Foundation) under Germany’s Excellence Strategy EXC-2181/1 - 390900948 (the Heidelberg STRUCTURES Cluster of Excellence).

\section{Elements of Geometry of Supermanifolds}

\noindent In this section we briefly recall the main definitions in the theory of supermanifolds, see for example the classical \cite{Konstant, Leites} or \cite{BR, Manin}. We start with one of the most fundamental concepts in supergeometry, that of superspace.

\begin{definition}[Superspace] A superspace is a pair $\mani \defeq (|\mathpzc{M}|, \mathcal{O}_{\mathpzc{M}})$, where $|\mathpzc{M}|$ is a topological space and $\mathcal{O}_{\mathpzc{M}}$ is a sheaf of $\mathbb{Z}_2$-graded supercommutative rings over $|\mathpzc{M}|$, such that the stalks $\mathcal{O}_{\mathpzc{M}, x}$ at every point of $|\mathpzc{M}|$ are {local rings}.
Analogously, a superspace is a locally ringed space whose structure sheaf is given by a sheaf of $\mathbb{Z}_2$-graded supercommutative rings.
\end{definition}

\noindent Note that the requirement about the stalks being local rings reduces to asking that the \emph{even} component of the stalk is a usual commutative local ring. Indeed if $A = A_0 \oplus A_1$ is a super ring, then $A$ is local if and only if its even part $A_0$ is, see for example \cite{Varadarajan}. \\
Given two superspaces we can define a morphism between them in the usual fashion. 
\begin{definition}[Morphisms of Superspaces] Given two superspaces $\mathpzc{M}$ and $\mathpzc{N}$ a morphism $\varphi : \mathpzc{M} \rightarrow \mathpzc{N}$ is a pair $\varphi \defeq (\phi, \phi^\sharp)$ where 
\begin{enumerate}[leftmargin=*]
\item
$\phi : |\mathpzc{M}| \rightarrow |\mathpzc{N}|$ is a continuous morphism of topological spaces; 
\item $\phi^\sharp : \mathcal{O}_{\mathpzc{N}} \rightarrow \phi_* \mathcal{O}_\mathpzc{M}$ is a morphism of sheaves of $\set{Z}_2$-graded rings, having the properties that it preserves the $\set{Z}_2$-grading and that given any point $x\in |\mathpzc{M}|$, the homomorphism
$
\phi^\sharp_x : \mathcal{O}_{\mathpzc{N}, \phi (x)} \rightarrow \mathcal{O}_{\mathpzc{M}, x}
$
is local, \emph{i.e.}\ it preserves the (unique) maximal ideal, \emph{i.e.}\ $\phi^\sharp_x (\mathfrak{m}_{\phi (x)}) \subseteq \mathfrak{m}_x.$
\end{enumerate}
\end{definition} 
{\noindent It is easy to see that superspaces together with their morphisms form a category, we call it $\mathbb{S}\mathbf{Sp}$. Before we go on, some remarks on the previous definitions are in order. 
\remark With an eye to the ordinary theory of schemes in algebraic geometry, we stress that the request that the morphism $\phi^\sharp_x : \mathcal{O}_{\mathpzc{N}, \phi (x)} \rightarrow \mathcal{O}_{\mathpzc{M}, x} $ preserves the maximal ideal in the second point of the definition above is of particular significance in supergeometry. Indeed it is important to notice that the structure sheaf $\stsheaf$ of a superspace is in general \emph{not} a sheaf of functions. As long as the structure sheaf $\stsheaf $ of a certain space or, more in general, of a scheme, is a sheaf of functions, then a section $s$ of $\stsheaf$ takes values in the field of fractions $k (x) = \mathcal{O}_{\mani, x} / \mathfrak{m}_x$ that depends on the point $x\in |\mani|$, as a function $x \mapsto s(x) \in k(x)$, and the maximal ideal $\mathfrak{m}_x $ contains the germs of functions that vanish at $x \in |\mani|$. In the case of superspaces, nilpotent sections - and thus in particular all of the odd sections - would be identically equal to zero as functions on points, and indeed the maximal ideal $\mathfrak{m}_x$ contains the germs of all the nilpotent sections in $\mathcal{O}_{\mani, x}.$ In this context, the request that $\phi^\sharp_x : \mathcal{O}_{\mathpzc{N}, \phi (x)} \rightarrow \mathcal{O}_{\mathpzc{M}, x} $ is local becomes crucial, while in the case of a genuine sheaf of functions the locality is automatically achieved. In particular, locality implies that a \emph{non} unit element in the stalk $\mathcal{O}_{\mathpzc{N}, \phi (x)}$, such as a germ of a nilpotent section, can only be mapped to another \emph{non} unit element in $\mathcal{O}_{\mathpzc{M}, x}$, such as another germ of a nilpotent section. In other words, nilpotent elements cannot be mapped to invertible elements.\\
We advise the reader, though, that we will often abuse the notation by keep denoting the morphism of sheaves related to a superspace morphism $\varphi : \mathpzc{M} \rightarrow \mathpzc{N}$ by $\phi$, instead of $\phi^\sharp$.}
{\remark It is crucial to observe that one can always construct a superspace out of two \virgolette classical'' data: given a smooth manifold $\mani_{\mathpzc{red}}$ with underlying topological space $|\mathpzc{M}|$, and a smooth vector bundle over $\mathpzc{M}_{\mathpzc{red}}$, call it $\mathcal{E}$, 
then the sheaf of smooth sections of the bundle of exterior algebras $\mathcal{\bigwedge^\bullet} \mathcal{E}^\ast$ has an obvious $\set{Z}_2$-grading (by taking its natural $\set{Z}$-grading $\mbox{mod}\,2$). Therefore in order to realize a superspace it is enough to take the structure sheaf $\mathcal{O}_\mathpzc{M}$ of the superspace to be the sheaf of smooth sections of the bundle of exterior algebras of $\mathcal{E}$. This construction is so important to bear its own name \cite{DonWit}.}
\begin{definition}[Local Model $\mathfrak{S}(\mathpzc{M}_{\mathpzc{red}}, \mathcal{E})$] Given a pair $(\mathpzc{M}_{\mathpzc{red}}, \mathcal{E})$, where $\mathpzc{M}$ is a smooth manifold and $\mathcal{E}$ is a smooth vector bundle over $\mani_{\mathpzc{red}}$, we call $\mathfrak{S}(\mathpzc{M}_{\mathpzc{red}}, \mathcal{E})$ the superspace modelled on the pair $(\mathpzc{M}_{\mathpzc{red}}, \mathcal{E})$, where the structure sheaf is given by the 
smooth sections of the exterior algebra bundle $\bigwedge^\bullet \mathcal{E}^\ast$. 
\end{definition}  
\noindent  
One can also work in richer and more structured categories, such as the \emph{complex analytic} or \emph{algebraic} categories - not only in the \emph{real smooth} category, as in the above definition of local model. This amount to consider local models based on the pair $(\manir, \mathcal{E})$, where $\manir$ is a complex manifold or an algebraic variety - we keep denoting its underlying topological space with $|\mani|,$ as above - with $\mathcal{O}_{\manir}$ being its sheaf holomorphic of algebraic functions and $\mathcal{E}$ being a holomorphic or algebraic vector bundle.  \\
We will call \emph{smooth, holomorphic} or \emph{algebraic local model} a local model $\mathfrak{S} (\manir,\mathcal{E})$ which is constructed from the above data. This leads to the definition of supermanifold in the appropriate category, which we explicitly give in the real smooth or complex analytic category.  
\begin{definition}[Real / Complex Supermanifold] A real (complex) supermanifold $\mani $ of dimension $n|m$ is a superspace that is \emph{locally} isomorphic to some smooth (holomorphic) local model $\mathfrak{S} ( \manir, \mathcal{E})$, where $\manir$ is a smooth (complex) manifold of dimension $n$ and $\mathcal{E}$ is a smooth (holomorphic) vector bundle on $\manir $ of rank $m$.
\end{definition}
\noindent In other words, if $\manir$ is covered by an atlas $\{{U}_i \}_{i \in I}$, the structure sheaf $\stsheaf = \mathcal{O}_{\mani, 0} \oplus \mathcal{O}_{\mani, 1}$ of the supermanifold $\mani$ is described via a collection $\{ \psi_{{U}_i} \}_{i\in I}$ of \emph{local} isomorphisms of sheaves 
\bear
U_i \longmapsto \psi_{{U}_i} :  \mathcal{O}_{\mathpzc{M}}\lfloor_{U_i} \stackrel{\cong}{\longrightarrow} \wedge^\bullet \mathcal{E}^\ast \lfloor_{U_i} 
\eear 
where we have denoted with $\bigwedge^\bullet \mathcal{E}^\ast$ the sheaf of $\mathcal{O}_{\manir}$-valued sections of the exterior algebra of $\mathcal{E}$ considered with its $\mathbb{Z}_2$-gradation. Also, notice that a morphism of supermanifolds is nothing but a morphism of superspaces, so that one has the related category of supermanifolds, that we denote with $\mathbb{S}\mathbf{Man}$. \\
The special case in which there is a single \emph{global} isomorphism instead of a family of local isomorphisms deserves a name of its own. 
\begin{definition}[Split Supermanifold] We say that a supermanifold $\mani$ is a split supermanifold if it is \emph{globally} isomorphic to its local model. Analogously, $\mani$ is split if there exists a sheaf isomorphism $\mathcal{O}_\mani \cong \bigwedge^\bullet \mathcal{E}^\ast.$
\end{definition} 
\noindent Clearly, split supermanifolds are the easiest supermanifolds to deal with, as their structure sheaves are simply sheaves of $\mathcal{O}_{\manir}$-valued sections of exterior algebras, and as such they are locally-free sheaves of $\mathcal{O}_{\manir}$-modules. \\
In order to see how real and complex supermanifolds might differ from the point of view of their global geometry,  we need to introduce some further pieces of information related to a supermanifold.
\begin{definition}[Nilpotent Sheaf] Let $\mani $ be a real (complex) supermanifold with structure sheaf $\mathcal{O}_\mani$. We call the {nilpotent sheaf} $\mathcal{J}_\mani$ of $\mani$ the sheaf of ideals of $\stsheaf = \mathcal{O}_{\mani, 0} \oplus \mathcal{O}_{\mani, 1}$ generated by \emph{all} of the nilpotent sections in $\mathcal{O}_\mani$, \emph{i.e.} we put $\mathcal{J}_\mani \defeq \mathcal{O}_{\mani,1} \oplus \mathcal{O}_{\mani, 1}^2$. 
\end{definition}
\noindent It is crucial to note that modding out all of the nilpotent sections from the structure sheaf $\stsheaf $ of the supermanifold $\mani$ we recover the structure sheaf $\mathcal{O}_{\manir}$ of the underlying ordinary  manifold $\manir$.  
\begin{definition}[Reduced Space] Let $\mani$ be a real (complex) supermanifold with structure sheaf $\mathcal{O}_\mani$. We call the reduced space of $\mani$ the smooth (complex) manifold $\manir$ with structure sheaf given by the quotient $\mathcal{O}_{\manir} \defeq \mathcal{O}_{\mani} / \mathcal{J}_\mani$.   
\end{definition}
\noindent Loosely speaking, the reduced manifold in $\manir$ arises by setting all the nilpotent sections in $\stsheaf $ to zero. In other words, more invariantly, attached to any real or complex supermanifold there is a short exact sequence that relates the supermanifold to its reduced manifold
\bear \label{ses}
\xymatrix@R=1.5pt{ 
0 \ar[rr] && \mathcal{J}_\mani \ar[rr] &&  \stsheaf  \ar[rr]^{\iota^\sharp\; \; \;} && \mathcal{O}_{\mani_{\mathpzc{red}}} \ar[rr] && 0. 
}
\eear
The surjective sheaf morphism $\iota^\sharp : \mathcal{O}_\mani \rightarrow \mathcal{O}_{\mani_{\mathpzc{red}}}$ corresponds to the existence of an \emph{embedding} $\manir \stackrel{\iota}{\longhookrightarrow} \mani$ of the reduced manifold $\manir $ inside the supermanifold $\mani$. Notice that $\mathcal{J}_\mani = \ker (\iota)$, where $\iota : \stsheaf \rightarrow \mathcal{O}_{\mani_{\mathpzc{red}}}$ is the surjective sheaf morphism in \eqref{ses}.\\ 
Using the nilpotent sheaf associated to a supermanifold $\mani$, say of odd dimension $m$ we can construct a \emph{descending filtration} of length $m$ of $\stsheaf $ as follows,
\bear \label{filt}
\stsheaf \supset \mathcal{J}_\mani \supset \mathcal{J}_\mani^2 \supset \mathcal{J}_\mani^3 \supset \ldots \supset \mathcal{J}_\mani^{q} \supset \mathcal{J}_\mani^{m+1} = 0.
\eear
This allows us to give the following definition.
\begin{definition}[$\mbox{Gr} \, \stsheaf$ and $\mbox{Gr}\, \mani$] Let $\mani $ be a supermanifold having odd dimension $m$ together with the filtration of its structure sheaf $\stsheaf$ as in \eqref{filt}. We define the following sheaf of supercommutative algebras
\bear \label{grsheaf}
\mbox{{Gr}}\, \stsheaf \defeq \bigoplus_{i=0}^m \mbox{{Gr}}^{(i)}  \stsheaf = \stsheafred \oplus \slantone{\mathcal{J}_\mani}{\mathcal{J}^2_{\mani}} \oplus \ldots \oplus \slantone{\mathcal{J}_\mani^{m-1}}{\mathcal{J}^m_{\mani}} \oplus {\mathcal{J}^m_\mani}.
\eear
where $\mbox{{Gr}}^{(i)} \stsheaf \defeq \slantone{\mathcal{J}_\mani^i}{\mathcal{J}_\mani^{i+1}}$ and the $\set{Z}_2$-grading is obtained by taking the obvious $\set{Z}$-grading $\mbox{\emph{mod}}\,2$. We call the \emph{split supermanifold associated to} $\mani$ the supermanifold arising from the superspace  $(|\mani|, \mbox{{Gr}}\, \stsheaf)$ and we denote it by $\mbox{{Gr}}\, \mani$.
\end{definition}
\noindent If the supermanifold $\mani$ has odd dimension $m$, the quotient $\mbox{Gr}^{(1)} \, \stsheaf = \mathcal{J}_\mani / \mathcal{J}_\mani^2$ in \eqref{grsheaf} is a locally-free sheaf of $\mathcal{O}_{\manir}$-modules of rank $0|m$ - \emph{i.e.} it is locally generated by $m$ odd sections - and it plays a special role so that for notational convenience we denote it $\mathcal{F}_\mani \defeq \mathcal{J}_{\mani} / \mathcal{J}^2_{\mani}$, as to recall its \virgolette fermionic'' behavior. \\
The importance of $\mathcal{F}_\mani$ lies in that - up to parity - it is isomorphic to $\mathcal{E}^\ast$, the vector bundle appearing in the local model $\mathfrak{S} (\manir, \mathcal{E})$ whose the supermanifold $\mani$ is based upon, \emph{i.e.} one has $\mathcal{E}^\ast \cong \Pi \mathcal{F}_\mani$, where $\Pi : \mathbf{Sh}_{\mathcal{O}_\mani} \rightarrow \mathbf{Sh}_{\mathcal{O}_\mani}$ is the so-called \emph{parity changing} or \emph{parity shifting} functor, which maps a locally-free sheaf of rank of rank $p|q$ to one of rank $q|p$, by reversing its parity \cite{Deligne, Manin}. In view of this one has that $\mbox{Gr}\, \mani = \mathfrak{S} ({\manir, \Pi \mathcal{F}_\mani^\ast})$ and the local isomorphisms characterizing the structure sheaf $\mathcal{O}_\mani$ can be rewritten, over an open set $U \subset |\mani|$, as 
\bear
\mathcal{O}_\mani \lfloor_{U} \cong \wedge^\bullet \mathcal{E}^\ast\lfloor_{U} \cong \cat{S}^\bullet \mathcal{F}_\mani \lfloor_{U},
\eear
where $\cat{S}^i : \mathbf{Sh}_{\stsheaf} \rightarrow \mathbf{Sh}_{\stsheaf}$ is the $i$-th \emph{super}symmetric power functor \cite{Manin}, so that in particular $\mbox{Gr}\, \stsheaf = \bigoplus_{i=0}^m \cat{S}^i \mathcal{F}_\mani$.  \\

\noindent We conclude this section by stressing out the major difference between the realm of real and complex supermanifolds, which lies in the fact that \emph{real supermanifold are always split} and actually all isomorphic to $\mbox{Gr} \, \mani$. This result - in a slightly different form - was first proved by Marjorie Batchelor in \cite{Bat}.
\begin{theorem}[Batchelor] \label{Batchelor} Let $\mani$ be a smooth supermanifold. Then its structure sheaf is \emph{non-canonically} isomorphic to a sheaf of exterior algebras for some smooth vector bundle $\mathcal{E}$ over $\manir$. In particular, $\mani$ is split and one has $\mani \cong \mbox{\emph{Gr}}\, \mani$. 
\end{theorem}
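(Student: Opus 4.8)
The plan is to produce a \emph{global} algebra isomorphism $\stsheaf \cong \bigwedge^\bullet \mathcal{E}^\ast$ with $\mathcal{E}^\ast \defeq \Pi\mathcal{F}_\mani = \Pi(\mathcal{J}_\mani/\mathcal{J}_\mani^2)$, i.e.\ an isomorphism $\stsheaf \cong \mbox{Gr}\,\stsheaf$; the bundle $\mathcal{E}$ so obtained is genuinely smooth because $\mathcal{F}_\mani$ is a locally-free $\mathcal{O}_{\manir}$-module. Since $\mbox{Gr}\,\stsheaf = \cat{S}^\bullet\mathcal{F}_\mani$ is freely generated over $\stsheafred$ by the odd sheaf $\mathcal{F}_\mani$, such an isomorphism is the same datum as a pair consisting of (i) an algebra splitting $s:\stsheafred \to \stsheaf$ of the projection $\iota^\sharp$ in \eqref{ses}, and (ii) an $\stsheafred$-linear lift $\ell:\mathcal{F}_\mani \to \mathcal{J}_\mani\subset\stsheaf$ of the identity $\mathcal{F}_\mani = \mathcal{J}_\mani/\mathcal{J}_\mani^2$. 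Indeed, any odd section $f$ of $\stsheaf$ satisfies $f^2=0$ (from $f^2=-f^2$ and $\mathrm{char}\,\mathbb{R}=0$), so the universal property of $\cat{S}^\bullet\mathcal{F}_\mani$ turns $(s,\ell)$ into an algebra map $\Psi:\mbox{Gr}\,\stsheaf\to\stsheaf$; as $\Psi$ is filtered and induces the identity on every graded piece, it is automatically an isomorphism. The whole problem is thereby reduced to the \emph{global} existence of the pair $(s,\ell)$.

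Both $s$ and $\ell$ exist over any open set $U_i$ of a trivialising cover: there the defining isomorphism $\psi_{U_i}$ provides them at once. What must be overcome is that these local choices disagree on overlaps by an automorphism of $\bigwedge^\bullet\mathcal{E}^\ast$. I would first observe that the action of such an automorphism on the reduced quotient $\stsheafred$ and on $\mathcal{F}_\mani=\mathcal{J}_\mani/\mathcal{J}_\mani^2$ is already pinned down, so the relevant transition data take values in the subsheaf of groups $\mathcal{G}\subset \mathcal{A}ut(\bigwedge^\bullet\mathcal{E}^\ast)$ consisting of automorphisms that induce the \emph{identity} on $\mbox{Gr}\,\stsheaf$. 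The obstruction to patching the $\psi_{U_i}$ into a single global isomorphism is then a class in the (non-abelian) \v{C}ech cohomology set $H^1(\manir,\mathcal{G})$, and proving the theorem amounts to showing this set is a singleton.

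The key point — and the one place where smoothness is indispensable — is that $\mathcal{G}$ is \emph{unipotent}, carrying a finite filtration $\mathcal{G}=\mathcal{G}_1\supset\mathcal{G}_2\supset\cdots$ (by order of agreement along the filtration \eqref{filt}) whose successive quotients $\mathcal{G}_k/\mathcal{G}_{k+1}$ are abelian and isomorphic, as sheaves of abelian groups, to locally-free sheaves of $\mathcal{O}_{\manir}$-modules — namely the sheaves of filtration-raising derivations of $\bigwedge^\bullet\mathcal{E}^\ast$. On a smooth (hence paracompact) manifold every sheaf of $\mathcal{O}_{\manir}$-modules is soft, hence acyclic, so $H^1(\manir,\mathcal{G}_k/\mathcal{G}_{k+1})=0$ for all $k$. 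A standard dévissage for a unipotent sheaf of groups — inducting up the filtration and using the vanishing of these abelian $H^1$'s at each stage — then forces $H^1(\manir,\mathcal{G})$ to be trivial. Concretely, one corrects the local isomorphisms $\psi_{U_i}$ one filtration-degree at a time, each correction being governed by an $\stsheafred$-linear (hence partition-of-unity–patchable) datum, until they glue to a global isomorphism $\stsheaf\cong\bigwedge^\bullet\mathcal{E}^\ast$.

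The main obstacle is exactly this gluing: a naive partition-of-unity average of the local algebra isomorphisms $\psi_{U_i}$ is \emph{not} an algebra isomorphism, since convex combinations do not respect the multiplicative structure. The filtration/dévissage argument is what resolves this, linearising the non-abelian gluing degree by degree into ordinary first cohomology of soft $\mathcal{O}_{\manir}$-modules. I would stress that softness of $\mathcal{O}_{\manir}$-modules is the decisive input, and that it is unavailable in the holomorphic or algebraic categories — which is precisely why the statement is special to smooth supermanifolds, and why the isomorphism it produces is only \emph{non-canonical}, depending on all the choices made along the filtration.
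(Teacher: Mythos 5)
Your proposal is correct and is essentially the argument the paper itself gives (in the remark following the theorem, attributed to Green): filter $\stsheaf$ by powers of $\mathcal{J}_\mani$, observe that the obstructions to splitting live in $H^1$ of locally-free sheaves of $\mathcal{O}_{\manir}$-modules — in the paper's notation $H^1(\manir, \mathcal{T}_{\manir}\otimes\wedge^{2i}\mathcal{E}^\ast)$ and $H^1(\manir, \mathcal{E}\otimes\wedge^{2i+1}\mathcal{E}^\ast)$, which are exactly your graded quotients $\mathcal{G}_k/\mathcal{G}_{k+1}$ of filtration-raising derivations — and conclude by fineness/softness of such sheaves in the smooth category. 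Your non-abelian \v{C}ech $H^1(\manir,\mathcal{G})$ and d\'evissage formulation is simply a more explicit rendering of the same degree-by-degree correction, including the same caveat that higher obstructions are only defined once the lower ones are killed.
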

{\remark It is important to note that the theorem only states the existence of such isomorphism. This means that it guarantees the existence of a certain covering of open sets together with charts such that the isomorphism is realized, but it is not \emph{constructive}: in other words, it does not tell how to concretely realize such an isomorphism.  
\remark One can further spell out the meaning of Batchelor's theorem \ref{Batchelor} in a local-to-global fashion. Indeed, the result guarantees  
that charts $\{ U_\ell, x_i^\ell | \theta^\ell_\alpha \}_{\ell \in I}$ for $i = 1, \ldots, n$ and $\alpha = 1, \ldots, m$ can always be found such that if $x^\mathpzc{j}_i | \theta^\mathpzc{j}_\alpha$ and $x^\mathpzc{k}_i | \theta^\mathpzc{k}_\alpha$ are local coordinates in any two open sets $U_\mathpzc{j}$ and $U_\mathpzc{k}$ in $\{ U_\ell \}_{\ell \in I}$ having non-empty intersection $U_\mathpzc{k} \cap U_\mathpzc{j} \neq \emptyset$, then we will have 
\bear
x^\mathpzc{k}_i = x^\mathpzc{k}_i (x^\mathpzc{j}_1, \ldots, x^\mathpzc{j}_n), \qquad \theta^\prime_\alpha = \sum_{\beta = 1}^m [g_{\mathpzc{k \, j}} (x)]_{\alpha \beta} \theta_\beta,
\eear
where $[g_{\mathpzc{k \, j}}(x)]_{\alpha \beta} \in \check{Z}^1 (U_{\mathpzc{k}} \cap U_{\mathpzc{j}}, GL (q, \mathbb{R}))$ are the transition functions of the vector bundle $\mathcal{E}^\ast.$ That is, when changing charts, the even local coordinates $x$'s transform as the coordinates of the ordinary smooth manifolds $\manir$ and the odd local coordinates $\theta$'s transform linearly, as the generating sections of the vector bundle $\mathcal{E}^\ast.$}
{\remark Loosely speaking, a general \emph{obstruction theory} to split the structure sheaf of a supermanifold can be constructed by filtering $\mathcal{O}_\mani$ as in \eqref{grsheaf}. This was first done by Green in \cite{Green}. For a supermanifold based on the local model $\mathfrak{S} (\manir, \mathcal{E})$, obstructions are given by cohomology classes $\omega_i$ lying in the cohomology groups 
\bear
\omega_{2i} \in H^1 (\manir, \mathcal{T}_{\manir} \otimes \wedge^{2i} \mathcal{E}^\ast), \qquad \omega_{2i+1} \in H^1 (\manir, \mathcal{E} \otimes \wedge^{2i+1} \mathcal{E}^\ast) 
\eear
for $i\geq 1$ and where $\mathcal{T}_{\manir}$ is the tangent sheaf of the reduced space $\manir.$ The subtlety here is that whereas the \emph{fundamental obstruction} $\omega_2$ is always defined, the higher obstructions $\omega_{i}$ for $i \geq 3$ are only defined if all the lower ones vanish instead \cite{DonWit, Green}. \\
In the smooth category all the sheaves are \emph{fine} as a consequence of the existence of smooth \emph{partitions of unity}, then these cohomology groups are automatically zero for a real supermanifold and there are no obstructions to split $\mathcal{O}_\mani$ as $\mbox{Gr} \, \mani$, providing another proof of Batchelor theorem.\\
On the other hand things change dramatically for complex supermanifolds, where the above cohomology groups can indeed be non-zero, thus leading to the peculiar - and very interesting! - complex supergeometry of \emph{non-split} and \emph{non-projected} supermanifolds, \emph{i.e.}\ those complex supermanifolds that do not admit a retraction or \emph{projection} for their defining exact sequence \eqref{ses}
\bear \label{splittingseq}
\xymatrix@R=1.5pt{ 
0 \ar[rr] && \mathcal{J}_\mani \ar[rr] & &  \stsheaf  \ar[rr]_{\iota} && \ar@{-->}@/_1.3pc/[ll]_{\pi} \mathcal{O}_{\mani_{\mathpzc{red}}} \ar[rr] && 0,
 }
\eear
where $\pi^\sharp : \mathcal{O}_{\mani_{\mathpzc{red}}} \rightarrow \stsheaf$ is such that $\iota^\sharp \circ \pi^\sharp = id_{\mathcal{O}_{\manir}}$ and corresponds to a map $\pi : \mani \rightarrow \manir$. Notice that, in particular, the structure sheaf of a non-projected supermanifold is \emph{not} a sheaf of $\mathcal{O}_{\manir}$-algebras, and as such a non-projected supermanifold cannot be \virgolette reconstructed'' easily from its underlying ordinary complex manifold: in the words of Donagi and Witten in \cite{DonWit}, a non-projected supermanifold \virgolette has a life of its own''.\\
In complex supergeometry the absence of a projection should not be looked at as an exotic and mostly rare phenomenon, but as a characterizing and quite common phenomenon instead. For example, the easiest supergeometric generalization of a conic in the complex projective superspace $\mathbb{CP}^{2|2}$ cut out by the equation 
\bear
X_0^2 + X_1^2 + X_2^2 + \Theta_1 \Theta_2 = 0 \; \subset \; \mathbb{CP}^{2|2},
\eear  
turns out to be a $1|2$-dimensional non-projected supermanifold, see for example \cite{DonWit} or \cite{Manin}, which is characterized up to isomorphism by the triple 
\bear
\mathcal{C}_{\omega} \defeq (\manir = \mathbb{CP}^1,\; \; \mathcal{E}^\ast = \mathcal{O}_{\mathbb{CP}^1} (-2)^{\oplus 2},\; \;  H^1 (\manir, \mathcal{T}_{\manir} \otimes \wedge^2 \mathcal{E}^\ast) \owns \omega_2 \neq 0),
\eear
notice that in this case $\omega_2 \in H^1 (\mathbb{CP}^1, \mathcal{O}_{\mathbb{CP}^1} (-2)) \cong \mathbb{C}$. In light of the Batchelor theorem \cite{Bat}, one can see the difference in terms of the structure of transition functions between split supermanifolds and non-projected (or non-split) supermanifolds. Indeed, covering the reduced space $\mathbb{CP}^1$ by the standard (two) open sets, and considering the related systems of local coordinates to be given by $(z | \theta_1, \theta_2) $ and $(w | \psi_1, \psi_2)$ respectively, for the above super conic $\mathcal{C}_{\omega}$ one finds 
\bear \label{transp1}
w = \frac{1}{z} +  \frac{\theta_1 \theta_2}{z^3}, \quad \psi_1 = \frac{\theta_1}{z^2}, \quad \psi_2 = \frac{\theta_2}{z^2}.
\eear
The non-vanishing of the obstruction class $\omega_2 \neq 0 $ in cohomology guarantees that there are \emph{no} choices of coordinates or redefinitions of charts that bring the even transition functions back to the form of that of the reduced space $\mathbb{CP}^1$, \emph{i.e.} $w = 1/z$. On the contrary, for a non-projected supermanifold the even transition functions depend crucially also on the odd part of the geometry, as one can see in the \eqref{transp1}.
}
\subsection{Derivations, Differential Operators and $\mathcal{D}$-modules} Having introduced the concept of supermanifolds - in particular in the smooth real and complex analytic category -, we now briefly discuss the most important \emph{natural} sheaves that can be defined on them and that will be used the most in this paper. \\
In particular, working for instance over a complex supermanifold $\mani$, the structure sheaf $\stsheaf$ can be looked at as a \emph{subsheaf} of the sheaf of its $\mathbb{C}$-endomorphisms, $\mathcal{E}nd_\mathbb{C}(\mathcal{O}_\mani)$, taking $g \mapsto \sigma_{f} (g) \defeq fg$ for any sections $f, g \in \mathcal{O}_\mani$. Likewise, the \emph{tangent sheaf} $\mathcal{T}_\mani$ of $\mani$ - or, analogously, the \emph{sheaf of derivations} $\mathcal{D}er_{\mathbb{C}}(\mathcal{O}_\mani)$ - can also be looked at as a subsheaf of $\mathcal{E}nd_{\mathbb{C}} (\mathcal{O}_\mani)$ defining
\bear
\mathcal{T}_\mani \defeq \{ X \in \mathcal{E}nd_{\mathbb{C}} (\mathcal{O}_\mani) : X (fg) = X(f)g + (-1)^{|X||f|} f X(g), \; f, g \in \stsheaf\}.
\eear
Since we only consider \emph{smooth} supermanifolds, then $\mathcal{T}_\mani$ is \emph{locally-free} of rank $n | m= \dim_{\mathbb{C}}\mani$ and if $ x_i |\theta_\alpha$ are local coordinates for a chart $U \subset \mani$, then a section $X \in \mathcal{T}_{\mani}$ over $U$ is given by
\bear
X_{U} = \sum_{i=1}^n f_i^U \cdot \partial_{x_i} + \sum_{\alpha = 1}^m f_\alpha^U \cdot \partial_{\theta_\alpha}, 
\eear
where $f^U_i, f^U_\alpha \in \mathcal{O}_{\mani} (U)$. This means that the tangent sheaf is freely locally-generated by the even and odd derivations 
\bear
\mathcal{T}_\mani ({U}) = \mathcal{O}_{\mani} (U) \cdot \{ \partial_{x_1}, \ldots, \partial_{x_n} | \partial_{\theta_1}, \ldots \partial_{\theta_m} \}. 
\eear
The above point of view, aimed at relating the structure sheaf and the tangent sheaf with the sheaf of endomorphisms of $\stsheaf$ is particularly useful when one is interested in introducing the sheaf of \emph{differential operators} on $\mani$, which will play an important role in what follows. We give the definition for a complex supermanifold, but the same can be done for a real and also algebraic supermanifold.
\begin{definition}[The Sheaf $\mathcal{D}_\mani$] Let $\mani$ be a complex supermanifold. We define the sheaf of differential operators of $\mani$ to be the subsheaf of $\mathcal{E}nd_\mathbb{C} (\stsheaf)$ generated by $\stsheaf $ and $\mathcal{T}_\mani$, and we denote it by $\mathcal{D}_\mani$.
\end{definition}
\noindent If $x_a | \theta_\alpha$ is a coordinate system over an open set $U$, then $\{x_i | \theta_\alpha, \partial_{x_i} | \partial_{\theta_\alpha} \}_{i=1, \ldots, n, \alpha = 1, \ldots, m} $ induces a local trivialization of $\mathcal{D}_\mani \lfloor_U$, where $x_i | \theta_\alpha \in \mathcal{O}_\mani \lfloor_U$ and $\partial_{x_i} | \partial_{\theta_\alpha} \in \mathcal{T}_\mani \lfloor_{U}$ satisfy the following defining relations   
\begin{align} \label{comm}
 [x_i, x_j] = 0, \quad  &[\partial_{x_i} , \partial_{x_j} ] = 0, \quad  [x_i , \partial_{x_j}] = \delta_{ij}, \quad  \{\theta_\alpha , \theta_{\beta} \} = 0, \quad  \{ \partial_{\theta_\alpha}, \partial_{\theta_\beta} \} = 0, \quad  \{ \theta_\alpha, \partial_{\theta_\beta} \} = \delta_{\alpha \beta} \nonumber \\
 & [x_i, \theta_\alpha] = 0, \qquad  [\partial_{x_i} , \partial_{\theta_\alpha} ] = 0, \qquad  [x_i , \partial_{\theta_\alpha}] = 0, \qquad [\theta_\alpha, \partial_{x_i}] = 0,   
\end{align}
where $[\cdot, \cdot ]$ denotes a commutator and $\{ \cdot, \cdot \}$ denotes an \emph{anti}commutator. Notice that locally, these relations define the \emph{Weyl superalgebra} $\mathcal{D}_{\mathbb{C}^{n|m}} $, so that posing 
\bear
U \longmapsto \mathcal{D}_\mani (U) \defeq \{ D_U \mbox{ is a differential operator on } \mathcal{O}_\mani (U)\},
\eear 
then one has 
\bear \label{localD}
D_U = \bigoplus_{\ell \in \mathbb{N}^n, \varepsilon \in \mathbb{Z}_2^m} \mathcal{O}_{\mani}\lfloor_{U} \partial_x^{\ell} \partial_{\theta}^{\varepsilon} \qquad \mbox{where} \qquad \left \{ \begin{array}{l} \partial^\ell_x \defeq \partial_{x_1}^{\ell_1} \partial_{x_2}^{\ell_2} \cdots \partial_{x_n}^{\ell_n}, \\ \partial_{\theta}^{\varepsilon} \defeq \partial_{\theta_1}^{\varepsilon_1} \partial_{\theta_2}^{\varepsilon_2} \cdots \partial_{\theta_n}^{\varepsilon_n} \end{array} \right. 
\eear
where $\ell_i \in \mathbb{N}$ and $\varepsilon_\alpha \in \mathbb{Z}_2 = \{0,1\}.$
In particular we define $\deg (D_U) \defeq \max (|\ell| + |\varepsilon|)$, if $|\ell| = \sum_{i = 1}^n \ell_i$ and $| \varepsilon | = \sum_{\alpha = 1}^m \varepsilon_\alpha$ and we call $\deg (D_U)$ the \emph{degree} of the differential operator $D_U \in \mathcal{D}_\mani \lfloor_{U}$. \\
The above local description of equation \eqref{localD} leads to a natural \emph{filtration} of $\mathcal{D}_\mani$. We define
\bear
{F}^i \mathcal{D}_\mani (U) \defeq \left \{ D_U \in \mathcal{D}_{\mani } (U) : \deg (D_U \lfloor_{V} ) \leq i \mbox{ for all } V \subseteq U \right  \}. 
\eear
Notice that the above filtration is \emph{increasing}, \emph{i.e.}\
$F^{i} \mathcal{D}_\mani \subseteq F^{i+1} \mathcal{D}_\mani $, and \emph{exhaustive}, \emph{i.e.}\ $\cup_i F^i \mathcal{D}_\mani = \mathcal{D}_\mani$. More in general one has the following relations
\bear
F^i \mathcal{D}_{\mani} \cdot F^j \mathcal{D}_{\mani} \subseteq F^{i+j} \mathcal{D}_{\mani}, \quad \qquad [F^i \mathcal{D}_{\mani}, F^j \mathcal{D}_{\mani} ] \subseteq F^{i+j-1}\mathcal{D}_{\mani},
\eear
and we define the \emph{associated graded module} $\mbox{gr}^\bullet_F (\mathcal{D}_\mani)$ with respect to the above filtration:
\bear
\mbox{gr}^{\bullet}_F (\mathcal{D}_\mani) \defeq \bigoplus_{i = 0}^\infty \mbox{gr}^i_F (\mathcal{D}_\mani) = \bigoplus_{i = 0}^\infty \slantone{F^i \mathcal{D}_\mani}{F^{i-1} \mathcal{D}_\mani},
\eear
where we have defined $\mbox{gr}^i_F (\mathcal{D}_\mani) \defeq F^i \mathcal{D}_\mani / F^{i-1}\mathcal{D}_\mani$. 
{\remark It has to be stressed that, in particular, $F^1 \mathcal{D}_\mani$ is a \emph{Lie sub-superalgebra} of $\mathcal{D}_\mani$, since indeed one has that $
[ \cdot , \cdot ]: F^1 \mathcal{D}_{\mani} \times F^1 \mathcal{D}_{\mani} \rightarrow F^1 \mathcal{D}_{\mani}.$ Further, $F^0 \mathcal{D}_\mani$ is a \emph{Lie ideal} of this Lie superalgebra: indeed if $f \in F^0 \mathcal{D}_\mani$, then $[f, G] \in F^0 \mathcal{D}_\mani$ for any $G \in F^1 \mathcal{D}_\mani$. It thus makes sense to consider the quotient of $F^1 \mathcal{D}_\mani$ by $F^0 \mathcal{D}_\mani$, and it is not hard to realize that
\bear
\mathcal{T}_\mani \cong \slantone{F^1 \mathcal{D}_\mani}{F^0 \mathcal{D}_\mani},
\eear
which in turn leads to  
\bear
\mbox{gr}_F^{\bullet} (\mathcal{D}_\mani) = \cat{S}^\bullet_{\stsheaf} \mathcal{T}_\mani.
\eear}
{\remark One could think about this in analogy with the \emph{Poincaré-Birkhoff-Witt} (PBW) theorem for the \emph{universal enveloping algebra} $U (\mathfrak{g})$ of a certain Lie (super)algebra $\mathfrak{g}$. Indeed, given a filtration as above, the quotient operation does \emph{not} just reduce to the leading terms, but it does remarkably more: it sets all the commutators in $\mbox{gr}_F^{i} (\mathcal{D}_{\mani})$ to zero. Indeed, if two elements $F, G$ are such that their \emph{product} is in $\mbox{gr}_F^{i} (\mathcal{D}_{\mani})$, then their \emph{commutator} is in $\mbox{gr}_F^{i-1} (\mathcal{D}_{\mani })$ so that it is set to zero in the quotient. In other words, elements surviving the quotient are those that do \emph{not} come from commutators: in particular, all of the elements commute and this leads naturally to the symmetric (super)algebras, where all of the (super)commutators vanish.
Notice that, similarly, 
$\mbox{gr}^{\bullet} U(\mathfrak{g}) \cong S^{\bullet} (\mathfrak{g})$, which is the meaning of PBW theorem. In general, we might think about the following analogies between a Lie (super)algebra and its universal enveloping algebra (together with its PBW filtration) and the derivations on a (super)manifolds, and the differential operators on $\mani$: 
\begin{align}
&\mathcal{T}_{\mani} \quad \leftrightsquigarrow \quad \mathfrak{g}\\
&\mathcal{D}_{\mani} \quad \leftrightsquigarrow \quad U (\mathfrak{g})
\end{align} }
{\remark Finally, it is worth observing that the associated graded module $\mbox{gr}^\bullet_F (\mathcal{D}_\mani)$ is naturally isomorphic to $\pi_{\ast} \mathcal{O}_{\tiny{\cat{T}}^\ast \mani}$, where $\cat{T}^\ast \mani$ is the supermanifold constructed from $\pi : \mathcal{T}^\ast_\mani \rightarrow \mani$, the cotangent bundle of the supermanifold $\mani$. This expresses the fact that \virgolette functions'' on a certain space are given by the symmetric algebra over the dual space - for more on this, see for example the last part of this paper or the recent \cite{Noja}. 

{\remark As made clear by the previous discussion, the sheaf $\mathcal{D}_\mani$ is a sheaf of \emph{non-commutative} algebras, as it follows from the non-trivial relations $[x_i, \partial_{x_j}] = \delta_{ij}$ and $\{ \theta_\alpha, \partial_{\theta_{\beta}}\} = \delta_{\alpha \beta}$. The sheaf of differential operators over a certain (super)manifold, defined as above, is the prototypical example of sheaf of \emph{non-commutative} algebras. Clearly, this leads to the fact that when an action involving $\mathcal{D}_\mani$ is concerned, then it is  important to distinguish between \emph{left} and \emph{right} actions, as in the following definition.}
\begin{definition}[$\mathcal{D}_\mani$-Modules] Let $\mani$ be a complex supermanifold and let $\mathcal{E}$ be a sheaf over $\mani$. We say that $\mathcal{E}$ is a sheaf of \emph{left}/\emph{right} $\mathcal{D}_\mani$-modules, or simply a left/right $\mathcal{D}_\mani$-module, if $\mathcal{E} (U)$ is endowed with a left/right $\mathcal{D}_\mani (U)$-module structure for any open set $U$, which is compatible with the restriction morphisms of the sheaf.
\end{definition}
\noindent It is to be observed that a left $\mathcal{D}_\mani$-module can not at all be endowed with the structure of right $\mathcal{D}_\mani$-module and vice versa. In this paper we will indeed come across left $\mathcal{D}_\mani$-module which are \emph{not} right $\mathcal{D}_\mani$-modules and vice versa. 

\section{Differential Forms and de Rham Cohomology of Supermanifolds}

\noindent Given a smooth real supermanifold $\mani$ of dimension $n|m$ with structure sheaf $\mathcal{O}_\mani$ and having introduced its tangent sheaf $\mathcal{T}_\mani = \mathcal{D}er_{\mathbb{R}} (\stsheaf)$, it is immediate to consider its dual sheaf $\mathcal{H}om_{\stsheaf} (\mathcal{T}_\mani, \mathcal{O}_\mani) \cong \mathcal{T}^{\ast}_\mani$. This is a locally-free sheaf of $\mathcal{O}_\mani$-module of rank $n|m$, 
and one has the usual duality pairing $\mathcal{T}^\ast_\mani \otimes \mathcal{T}_\mani \rightarrow \mathcal{O}_{\mani}$ defined as $dy_a (\partial_{y_b}) = \delta_{ab}$ if $y_a \defeq x_i | \theta_{\alpha}$. Notice that for this to be a \emph{perfect} pairing one needs to assign a $\mathbb{Z}_2$-parity to the generators in a way such that $|dx_i | = 0$ and $|d \theta_\alpha | =1$. When reducing to the underlying real manifold $\manir$, with an eye to the associated de Rham algebra of differential forms with respect to the wedge product, this would lead to the awkward situation of having a basis of commuting $dx$'s. \\
In order to restore the correspondence with the usual de Rham theory on the underlying reduced space, it is therefore customary to define the sheaf 1-forms $\Omega^1_{\mani} $ on a supermanifold $\mani $ to be the \emph{parity shifted} of $\mathcal{T}^\ast_\mani$, \emph{i.e.} we take $\Omega^{1}_\mani \defeq \mathcal{H}om_{\stsheaf} (\Pi \mathcal{T}_\mani, \mathcal{O}_\mani )$. This is a locally-free sheaf of $\mathcal{O}_\mani$-modules of rank $m|n$ and if $x_i | \theta_\alpha$ are local coordinates over a certain open set $U$, then one has
\bear
\Omega^1_{\mani} (U) = \mathcal{O}_{\mani} (U) \cdot \left \{ d\theta_1, \ldots, d\theta_{m} \, | \, dx_1, \ldots, dx_n \right \},
\eear 
where now the parity is assigned such that $|d\theta_\alpha| = 0$ and $|dx_i| = 1$ for any $\alpha = 1, \ldots, m$ and $i = 1, \ldots, n.$ It is important to stress that now $\Omega^{1}_\mani$ has a (perfect) duality pairing with the \emph{parity shifted tangent sheaf} $\Pi \mathcal{T}_\mani$, not with the tangent sheaf $\mathcal{T}_\mani$. Again $\Pi \mathcal{T}_\mani$ is a locally-free sheaf of $\mathcal{O}_\mani$-modules of rank $m|n$ and if $x_i | \theta_\alpha$ are local coordinates over an open set $U$ as above, one has  
\bear
\Pi \mathcal{T}_{\mani} (U) = \mathcal{O}_{\mani} (U)\cdot \{ \pi \partial_{\theta_1}, \ldots, \pi \partial_{\theta_m} \, | \, \pi \partial_{x_1}, \ldots, \pi \partial_{x_n} \},  
\eear
where $| \pi \partial_{x_i} | = |\partial_{x_i}| + 1 = 1$ and $|\pi \partial_{\theta_\alpha}| = |\partial_{\theta_\alpha} | + 1 = 0$ for any $i= 1, \ldots, n $ and $\alpha = 1, \ldots, m$. With this convention, if $y_a = x_i | \theta_\alpha$ is a local system of coordinates, the duality pairing $\langle \cdot , \cdot \rangle : \Omega^{1}_\mani \otimes \Pi \mathcal{T}_\mani \rightarrow \stsheaf$ reads
\bear \label{obpair}
 \langle dy_a , \pi \partial_{y_b} \rangle = \delta_{ab}. 
\eear
Notice that in what follows we will often simply write $dy_a (\pi \partial_{y_b}) = \delta_{ab}$ as indeed $dy_a$ and $\pi \partial_{y_b}$ are generating sections of $\mathcal{H}om (\Pi \mathcal{T}_\mani, \mathcal{O}_\mani) $ and $\Pi \mathcal{T}_\mani$ respectively on a certain open set. Once given these basic definitions and fixed our conventions, we introduce the following \cite{Manin, Penkov}.
\begin{definition}[de Rham Superalgebra] Let $\mani$ be a real supermanifold of dimension $n|m$ with structure sheaf $\mathcal{O}_\mani$. We call the de Rham algebra of $\mani$ the sheaf of $\mathcal{O}_\mani$-superalgebras given by 
\bear
\Omega^\bullet_{\mani} \defeq \bigoplus_{k = 0}^{\infty} \cat{S}^k_{\mathcal{O}_\mani} \Omega^1_{\mani},
\eear
where $\cat{S}^k : \mathbf{Sh}_{\stsheaf} \rightarrow \mathbf{Sh}_{\stsheaf}$ is the $k$-supersymmetric functor and $\Omega^1_{\mani} $ is the sheaf of $1$-forms of $\mani$ defined as above, where the $\mathbb{Z}_2$-grading is induced by that of $\Omega^1_\mani$. 
\end{definition} 
{\remark Notice that the above algebra is readily made into a $\mathbb{Z}$-graded algebra by assigning $\deg ( \stsheaf ) =0$ and $\deg (\Omega^1_\mani) =1$: this is the obvious degree induced by the (local) polynomial superalgebra 
\bear 
\cat{S}^\bullet \Omega^1_{\mani} (U) = \mathcal{O}_{\mani} (U) \cdot [d\theta_1, \ldots, d\theta_m | dx_1, \ldots, dx_n].
\eear
The sections of the locally-free $\mathcal{O}_\mani$-submodule $\cat{S}^k \Omega^1_\mani$ of degree $k$ are called $k$-\emph{forms}, as in the ordinary commutative setting.}
{\remark It is crucial to observe the difference with respect to the usual de Rham or \emph{exterior} algebra on an ordinary manifold $X$, whose non-zero sections can be of degree $n = \dim X$ at most, due to anti-commutativity of the exterior product. Over a supermanifold, instead, a system of local generator obeys the supercommutation relation of the \emph{super}symmetric algebra, \emph{i.e.} if $y_a = x_i |\theta_\alpha$ then $[dy_a, dy_b]_{\pm} = 0$ \cite{Deligne, Manin}. This leads in particular, for any $\alpha, \beta = 1, \ldots, m$ and $i, j = 1, \ldots, n$ to 
\bear 
[dx_i, dx_j ]_+ \defeq dx_i dx_j + dx_j  dx_i \quad \rightsquigarrow \quad dx_i dx_j = - dx_j dx_i. 
\eear
\bear
[ d\theta_\alpha,  d\theta_\beta ]_- \defeq d\theta_\alpha d\theta_\beta - d\theta_\beta d\theta_\alpha  = 0 \quad \rightsquigarrow \quad d\theta_\alpha d\theta_\beta = d\theta_\beta d\theta_\alpha, 
\eear
where, for notational convenience, we have left the supersymmetric product of two elements in $\Omega^\bullet_{\mani}$ understood. 
While the first of these relations are just the characterizing relations of an exterior algebra over an ordinary manifold, thus stating the anticommutativity of two 1-forms, the second of these relations, instead, implies for example that $d\theta_{\alpha}^n \neq 0$ for any $n \geq 1$ and for any $\alpha = 1, \ldots, m.$ It follows that, provided that the supermanifold has odd dimension greater or equal to 1, there are non-zero forms at \emph{any} degree. \\
\noindent We now introduce the following odd homomorphism acting on the de Rham superalgebra. }
\begin{definition}[de Rham differential] Let $\mani$ be a real supermanifold of dimension $n|m$ with structure sheaf $\mathcal{O}_\mani$. Given a section $\omega$ of the de Rham superalgebra $\Omega^\bullet_\mani$ represented as $\omega =  dy^I \otimes f_I$ in a certain trivilization $y_a = x_i |\theta_\alpha$ for some multi-index $I$ with $f_I \in \mathcal{O}_\mani$, we define the de Rham differential $d : \Omega^\bullet_{\mani} \rightarrow \Omega^\bullet_\mani$ to be the \emph{odd derivation} $d \defeq \sum_{a} dy_a \otimes \partial_{y_a}$. More precisely  we have
\bear
\xymatrix@R=1.5pt{
d : \Omega^\bullet_\mani \ar[r] & \Omega^\bullet_{\mani}  \\
dy^I \otimes f_I \ar@{|->}[r] & d (dy^I \otimes f_I) = \sum_a (-1)^{|y_a| |dy^I| }dy_a dy^I \otimes \partial_{y_a }(f_I).
}
\eear 
\end{definition}
{\remark First of all it is easy to see that the $d$ is well-defined, \emph{i.e.} it is \emph{globally defined} - this follows easily from the transformation properties of the $dy_a$'s and the $\partial_{y_a}$'s -, it is \emph{odd} since $|dy_a| = |\partial_{y_a}| + 1$ and it is a \emph{derivation} on the de Rham superalgebra, \emph{i.e.}\ it satisfies the $\mathbb{Z}_2$-graded Leibniz rule in the form 
\bear
d (\omega \eta) = (d \omega) \omega + (-1)^{|\omega|} \omega (d\eta),
\eear
for $\omega$ and $\eta$ any two forms in the de Rham superalgebra, and where $|\omega|$ is the parity of $\omega$.\\
Let indeed $\omega $ and $\eta$ be represented as $\omega = dy^I \otimes f_I$ and $\eta = dy^J \otimes g_J$, for some multi-indices $I$ and $J$, then we have 
\begin{align}
d (\omega \eta) & = d ((dy^I \otimes f_I) (dy^J \otimes g_J )) = (-1)^{|dy^J| |f_I|} d (dy^I dy^J \otimes f_I g_J ) \nonumber \\
& =  \sum_a  (-1)^{ |dy^J| |f_I| + |y_a| ( |dy^I| + |dy^J|)} dy_a  dy^I  dy^J \otimes \partial_{y_a} (f_I g_J) \nonumber \\
& =  \sum_a  (-1)^{ |dy^J| |f_I| + |y_a| ( |dy^I| + |dy^J|)} dy_a  dy^I  dy^J \otimes  \left ( \partial_{y_a} (f_I) g_J + (-1)^{|y_a| |f_I|} f_I \partial_{y_a} (g_J) \right ) \nonumber \\
& = \sum_a (-1)^{|y_a| | dy_I |} (dy_a  dy^I \otimes \partial_{y_a} (f_I)) (dy^J \otimes g_J) + \nonumber \\
& \quad + \sum_a (-1)^{|x_a| |dy^J| + (|y_a| + |y_a| + 1) (|f_I| + |dy^I|)} (dy^I \otimes f_I ) (dx_a  dy^J \otimes \partial_{y_a} (g_J) )\\
& = \sum_a (-1)^{|y_a| | dy_I |} (dy_a  dy^I \otimes \partial_{y_a} (f_I))  (dy^J \otimes g_J) + \nonumber \\
& \quad + \sum_a (-1)^{|y_a| |dy^J| + |d| |\omega|} (dy^I \otimes f_I)   (dx_a dy^J \otimes \partial_{x_a} (g_J) )\nonumber \\
& = (d\omega) \eta + (-1)^{|\omega|} \omega (d \eta),
\end{align}
where we have used that $\partial_{y_a}$ is a (super)derivation of the structure sheaf $\mathcal{O}_\mani $ and that $|d| = 1.$ We are now in the position to prove the following lemma. }
\begin{lemma} \label{dgsa} The pair $(\Omega^\bullet_\mani, d)$ defines a differential graded superalgebra (DGsA).
\end{lemma}
\begin{proof} We have already shown that $d$ is a derivation of the de Rham superalgebra. We are left to prove that $d $ is nilpotent, \emph{i.e.} $d^2 = 0$. To this end, we simply observe that (up to a constant) 
\begin{align}
d^2 & = \sum_{a,b} (dy_a \otimes \partial_{y_a} )(dy_b \otimes \partial_{y_b}) + \sum_{a,b}(dy_b \otimes \partial_{y_b} ) (dy_a \otimes \partial_{y_a }) \nonumber \\
& = \sum_{a,b} (-1)^{|y_a| |dx_b|} dy_a  dy_b \otimes \partial_{y_a} \partial_{y_b} + \sum_{i,j} (-1)^{|y_b| |dx_a|} (dy_b  dy_a  \otimes \partial_{y_b} \partial_{y_a}) \nonumber \\
& = \sum_{a,b} \left ( (-1)^{|y_a| |y_b| + |y_a|} + (-1)^{|y_a||y_b| + |y_a| + 1} \right ) dy_a  dy_b \otimes \partial_{y_a} \partial_{y_b} \nonumber\\
&= 0,
\end{align}
where we have used that $|dy_a | =|y_a| + 1$ for any $a$ even and odd. \end{proof}
\noindent Notice that, with a slight abuse of notation, we are confusing a $DGsA$ with a sheaf of $DGsA$'s. The previous lemma justifies the following definition.
\begin{definition}[de Rham Complex of $\mani$] We call the differential graded superalgebra $(\Omega^\bullet_\mani, d)$ the de Rham complex of $\mani$.
\bear
\xymatrix{
0 \ar[r]  & \stsheaf \ar[r]^{d  } &  \Omega^1_\mani \ar[r]^{ \; d} & \ldots \ar[r]^{d\; } & \Omega^n_\mani \ar[r]^{ \; d} &\ldots
}
\eear
\end{definition}
{\remark Once again notice that the de Rham complex of a supermanifold is \emph{not} bounded from above ultimately due to the characteristic odd dimensions of the supermanifold itself, which reverberate on the presence of commuting even 1-forms.}\\ 

\noindent We are now interested in proving the main result concerning the cohomology of the de Rham complex on a supermanifold, namely a generalization of the ordinary Poincaré lemma. 
\begin{theorem}[Poincaré Lemma for Differential Forms] \label{PoincLemm} Let $\mani$ be a real supermanifold and let $(\Omega^\bullet_\mani, d)$ be the de Rham complex of $\mani$. Then one has 
\bear
H^k_{d} (\Omega^\bullet_\mani) \cong \left \{ \begin{array}{lll}
\mathbb{R}_\mani & & k = 0\\
0 & & k >0,
\end{array}
\right.
\eear
where $H^k_d (\Omega^\bullet_\mani) $ is the $k$-cohomology sheaf of the de Rham complex of $\mani$ and $\mathbb{R}_\mani$ is the sheaf of locally constant function on $\mani$. In particular: 
\begin{enumerate}[leftmargin=*]
\item The de Rham complex is a (right) resolution of the sheaf $\mathbb{R}_\mani$, 
\bear
\xymatrix{
0 \ar[r] & \mathbb{R}_\mani \ar[r] & \Omega^{\bullet}_\mani.
}
\eear 
\item Any \emph{closed} form is \emph{locally exact} on a real supermanifold. 
\end{enumerate}
\end{theorem}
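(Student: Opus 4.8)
The plan is to reduce the assertion to a local, indeed purely algebraic, computation and then to exploit a conservation law special to the odd directions. Since the claim is that $\Omega^\bullet_\mani$ is a resolution of $\set{R}_\mani$, and since exactness of a complex of sheaves is checked stalkwise, it suffices to show that on a coordinate chart $U$ with coordinates $x_1,\dots,x_n\,|\,\theta_1,\dots,\theta_m$, whose reduced part $U'$ may be taken to be an open ball, every $d$-closed form of positive degree is $d$-exact, while the closed $0$-forms are the locally constant functions. On such a $U$ the differential splits as $d=d_x+d_\theta$, with $d_x=\sum_i dx_i\,\partial_{x_i}$ and $d_\theta=\sum_\alpha d\theta_\alpha\,\partial_{\theta_\alpha}$; checking on generators one finds $d_x^2=d_\theta^2=0$ and $\{d_x,d_\theta\}=0$, consistently with Lemma \ref{dgsa}. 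The decisive observation is that the operator $N$ assigning to a monomial its total degree in the \emph{odd sector} --- the number of $\theta$'s plus the polynomial degree in the $d\theta$'s --- is \emph{preserved} by $d$: indeed $d_x$ ignores the odd sector, whereas $d_\theta$ trades one $\theta$ for one $d\theta$. Hence $\Omega^\bullet_\mani(U)=\bigoplus_{\nu\geq 0}\mathcal{C}_\nu$ splits into $d$-subcomplexes indexed by the eigenvalue $\nu$ of $N$.

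The heart of the proof is an explicit algebraic homotopy adapted to this grading. I introduce the odd operator
\bear
h \defeq \sum_{\alpha=1}^m \theta_\alpha\,\frac{\partial}{\partial (d\theta_\alpha)},
\eear
which, like $d$, preserves $N$ and lowers the form degree by one. Treating $d_x$, $d_\theta$ and $h$ as odd derivations of the local de Rham superalgebra, their (anti)commutators are \emph{even} derivations, so it is enough to evaluate them on the generators $x_i,\theta_\alpha,dx_i,d\theta_\alpha$. A short check yields the two key identities
\bear
\{d_x, h\} = 0, \qquad \{d_\theta, h\} = N,
\eear
the first because $h$ is blind to the even coordinates while $d_x$ is blind to the odd ones, the second because $d_\theta$ followed by $h$ (and vice versa) returns $\theta_\alpha\mapsto\theta_\alpha$, $d\theta_\alpha\mapsto d\theta_\alpha$ and annihilates $x_i,dx_i$, which is precisely the action of $N$ as a derivation. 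Consequently $\{d,h\}=N$ on the whole of $\Omega^\bullet_\mani(U)$.

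It remains to combine these inputs. On each subcomplex $\mathcal{C}_\nu$ with $\nu>0$ the operator $N$ acts as the invertible scalar $\nu$, so $\frac{1}{\nu}\,h$ is a genuine contracting homotopy: any closed $\omega\in\mathcal{C}_\nu$ satisfies $\omega=d\bigl(\frac{1}{\nu}\,h\,\omega\bigr)$, and $\mathcal{C}_\nu$ is acyclic in every degree. The remaining sector $\mathcal{C}_0$, on which $N=0$, consists exactly of the forms involving neither $\theta$'s nor $d\theta$'s, with $d$ restricting to $d_x$; this is nothing but the ordinary de Rham complex of the contractible base $U'$, whose cohomology is $\set{R}$ in degree $0$ by the classical Poincar\'e lemma. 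Summing over $\nu$ gives $H^k_d(\Omega^\bullet_\mani(U))=\set{R}$ for $k=0$ and $0$ for $k>0$; sheafifying produces the stated isomorphism of cohomology sheaves together with assertions (1) and (2). I expect the main obstacle to be conceptual rather than computational: because the $d\theta$'s are even, the complex is \emph{unbounded}, so no naive exterior-algebra truncation can succeed, and the whole point is to recognize that the odd-sector degree $N$ is conserved and that the single homotopy $h$ contracts \emph{all} positive-$N$ sectors at once, collapsing the problem onto the classical de Rham complex of $U'$.
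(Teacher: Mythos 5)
Your proof is correct, but it takes a genuinely different route from the paper. The paper adapts the classical analytic argument: it contracts \emph{all} coordinates at once via the scaling map $G_t(y_a)=t\,y_a$ and takes as homotopy $h^k(\omega)=\int_0^1 dt\,\iota_{\partial_t}G_t^*(\omega)$, so that $hd+dh=G_1^*-G_0^*=\mathrm{id}$ in positive degree; this requires making sense of $[0,1]\times\mani$, pullbacks of the flow, and integration in $t$, and the same template is reused later for integral forms (Theorem \ref{PoincLemmInt}). You instead split $d=d_x+d_\theta$, observe that the odd-sector weight $N$ (number of $\theta$'s plus $d\theta$-degree) is conserved, and contract every sector $\nu>0$ with the purely algebraic homotopy $h=\sum_\alpha\theta_\alpha\,\partial_{d\theta_\alpha}$ via the identity $\{d,h\}=N$ (your generator computations are right: both sides are even derivations, so checking them on $x_i,\,dx_i,\,\theta_\alpha,\,d\theta_\alpha$ — and, strictly speaking, on arbitrary smooth $f(x)$, where both sides visibly vanish since the algebra is not polynomially generated in the even directions — suffices), leaving the $N=0$ sector, which is the ordinary de Rham complex of the reduced ball, to the classical Poincar\'e lemma. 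What your route buys: the supergeometric content is isolated as a finite-dimensional algebraic computation (an Euler-operator homotopy), with no flows or $t$-integration, it works verbatim for the odd directions in the holomorphic or algebraic categories (only the $N=0$ sector needs analytic input, and division by $\nu$ only needs characteristic zero), and it exhibits explicitly the deformation retraction of $\Omega^\bullet_\mani(U)$ onto $\Omega^\bullet_{U'}$, which is precisely the local statement underlying Theorem \ref{qiso}. What the paper's route buys: a single uniform homotopy treating even and odd coordinates on the same footing, needing no appeal to the classical lemma as a black box, and structurally parallel to the later proof for the Spencer complex.
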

\begin{proof} We start observing that a form of zero degree is a section $f \in \mathcal{O}_\mani$ and, as in the ordinary setting it is immediate to see that the request $d f = 0 $ forces $f$ to be locally constant, so that one finds indeed $H^0_{d} (\Omega^{\bullet}_\mani) \cong \mathbb{R}_\mani$.\\
We now work locally, and we consider the basic case $\mathpzc{U} = \mathbb{R}^{p|q}$. We let $\omega \in \Omega^k_{\mani}$ for $k \geq 1$ and we show that for any $k \geq 1$ there exists a \emph{homotopy} $h^k : \Omega^{k}_{\mani} \rightarrow \Omega^{k-1}_{\mani}$ for the differential $d$, \emph{i.e.}\ a map such that
\bear
h^{k+1} \circ d^{k} + d^{k-1} \circ h^k = id_{\Omega^k},
\eear 
where we have specified the degree involved for the sake of clarity, and where the maps go as follows \bear
\xymatrix{
\cdots \ar[r] & \Omega_{\mathpzc{U}}^{k-1} \ar[r] & \Omega^{k}_{\mathpzc{U}} \ar@{-->}[d]_{id}\ar[dl]_{h^k} \ar[r]^{d^k} & \Omega^{k+1}_{\mathpzc{U}} \ar[dl]^{h^{k+1}} \ar[r] & \cdots \\
\cdots \ar[r] & \Omega^{k-1}_{\mathpzc{U}} \ar[r]_{d^{k-1}} & \Omega^{k}_{\mathpzc{U}} \ar[r] & \Omega_{\mathpzc{U}}^{k+1} \ar[r] & \cdots. \\
}
\eear
Let us consider the homotopy between the identity and the constant zero map in $\mathpzc{U} = \mathbb{R}^{p|q}$ defined by 
\bear
\xymatrix@R=1.5pt{ 
G : [0,1] \times \mathpzc{U} \ar[r] & \mathpzc{U} \\
(t, y_a \defeq x_1, \ldots, x_p | \theta_1, \ldots, \theta_q) \ar@{|->}[r] & t y_a \defeq (tx_1, \ldots, t x_p | t\theta_1, \ldots t \theta_q).
}
\eear
This induces a map on differential forms via its pull-back, that can be written as a family
of pull-back maps $G^\ast_t : \Omega^k_{\mathpzc{U}} \rightarrow \Omega^k_{\mathpzc{U}}$, indexed by $t$. 
We define the homotopy $h^k$ to be the map
\bear
\Omega^{k}_{\mathpzc{U}} \owns \omega \longmapsto \int^{t=1}_{t=0}dt \left (\iota_{\partial_t} G^\ast_t (\omega) \right ) \in \Omega^{k-1}_{\mathpzc{U}}.
\eear
where $\iota_{\partial_t}$ is the contraction with respect to the vector field $\partial_{t}$ on the interval $[0,1]$.
We have 
\begin{align}
\left (h^{k+1} \circ d^{k} + d^{k-1} \circ h^k \right ) (\omega) & = \int_{0}^1 dt \left ( \iota_{\partial_t} G^\ast_t (d^k \omega) \right ) + \int_0^1 dt \left ( d^{k-1} \iota_{\partial_t} G^\ast_t (\omega) \right ) \nonumber \\
& = \int_{0}^1 dt \left ( \iota_{\partial_t} d^k + d^{k-1} \iota_{\partial_t} \right )G^\ast_t (\omega) \nonumber \\
& = \int_{0}^1 dt \mathcal{L}_{\partial_{t}} G^\ast_t (\omega) \nonumber \\
& = \int^{1}_0 dt \, \partial_t G^\ast_t (\omega) \nonumber \\
& = G^\ast_1 (\omega) - G^\ast_0 (\omega).
\end{align}
We now observe that, by definition, $G^\ast_0 \omega = 0$ and $G^\ast_1 \omega = \omega, $ so that $
\left (h^{k+1} \circ d^{k} + d^{k-1} \circ h^k \right ) (\omega) = \omega$ for any $\omega \in \Omega^{k > 0}_{\mani}.
$
In particular, one has that $H^{k > 0}_{d} (\Omega^{\bullet}_\mathpzc{U}) = 0 $ for $k > 0,$ and the cohomology is concentrated in degree zero. This is the \virgolette standard'' Poincaré lemma for $\mathbb{R}^{p|q}$, the local model for a generic supermanifold $\mani$. Getting back to $\mani$ and $\Omega_{\mani}^\bullet$ it is enough to observe that on a stalk over a point $x \in \mani$ clearly $\mbox{Im}(d^k_x) \subseteq \ker (d^{k+1}_x)$. On the other hand, conversely, if $\omega_x \in \ker (d_x^{k+1})$ with representative $\omega \in \Omega^{k+1}_\mani (U)$, then by Poincaré lemma for $\mathbb{R}^{p|q}$ there exists a $V \subset U$ with $x\in V$ such that $\omega \lfloor_{V} = d\eta$ for some $\eta \in \Omega^{p}_\mani (V)$, therefore $\omega_x = d\eta_x \in \mbox{Im} (d^{p}_x)$.
\end{proof}
{\remark This result says something remarkable, but in some sense predictable: we cannot expect new \emph{topological} invariants arising from the odd part of the geometry of a supermanifold. Instead, the topology is fully encoded in the reduced manifold $\manir$. Intuitively, this is to be ascribed to the characteristic nilpotency of the odd part of the geometry, which is not \virgolette strong enough'' to modify rather rough invariants related to a geometric space, such as the topological ones. In some circles this goes under the slogan \emph{\virgolette fermions are very small''} - whatever it means.}
\begin{definition}[de Rham Cohomology of $\mani$] Let $\mani$ be a real supermanifold. We define the de Rham cohomology of $\mani$ to be the cohomology of the \emph{global sections} of the (sheaf of) differentially graded superalgebras $(\Omega^\bullet_{\mani}, d)$, \emph{i.e.}
\bear
H_{\mathpzc{dR}}^k ( \mani) \defeq H^k_d (\check{H}^0_{\check \delta} (\Omega^{\bullet}_\mani) ), 
\eear
where $\check{H}^0_{\check \delta}(\Omega^\bullet_{\mani})$ is $0$-\v{C}ech cohomology group of $\Omega^{\bullet}_\mani$, \emph{i.e.} the global sections or $\Omega^{\bullet}_\mani$.
\end{definition}
{\remark Note that this is the usual definition of de Rham cohomology of a real manifold upon considering an ordinary manifold instead of a supermanifold $\mani.$ In particular, we have the following easy consequence of the previous Poincaré lemma \ref{PoincLemm}.}
\begin{theorem}[Quasi-Isomorphism I]\label{qiso} Let $\mani$ be a real supermanifold and let $\manir$ be its reduced manifold. Then the de Rham complex of $\mani$ is \emph{quasi-isomorphic} to the de Rham complex of $\manir.$
In particular, one has that 
\bear
H^\bullet_{\mathpzc{dR}} (\mani) \cong H^\bullet_{\mathpzc{dR}} (\manir).
\eear
\end{theorem}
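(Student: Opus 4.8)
\emph{Proof proposal.} The plan is to deduce the statement from the two Poincar\'e lemmas --- the super one just proved (Theorem \ref{PoincLemm}) and the classical one on $\manir$ --- by recognizing both de Rham complexes as \emph{acyclic resolutions} of one and the same constant sheaf, living on one and the same topological space. The crucial, almost tautological, observation is that passing from $\mani$ to its reduced space does not alter the underlying topology: $|\mani| = |\manir|$, since the reduced space is obtained by quotienting the structure sheaf by the nilpotent ideal $\mathcal{J}_\mani$ without touching $|\mani|$. Consequently the sheaf $\mathbb{R}_\mani$ of locally constant functions on $\mani$ coincides with the sheaf $\mathbb{R}_{\manir}$ on $\manir$.

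First I would produce the comparison chain map. The closed embedding $\iota : \manir \longhookrightarrow \mani$, dual to the surjection $\iota^\sharp : \stsheaf \rightarrow \stsheafred$, induces a pullback on forms $\iota^\ast : \Omega^\bullet_\mani \rightarrow \Omega^\bullet_{\manir}$. On generators it sends $x_i \mapsto x_i$, $dx_i \mapsto dx_i$ and kills the odd data, $\theta_\alpha \mapsto 0$ and $d\theta_\alpha \mapsto 0$; in particular it collapses the commuting even differentials, so that the image lands precisely in the finite, anticommutative de Rham algebra of $\manir$. One checks that $\iota^\ast$ commutes with $d$ (both are built from the same partials $\partial_{x_i}$) and hence is a morphism of complexes; moreover it restricts to the identity on locally constant functions, so it is compatible with the augmentations $\mathbb{R}_\mani \hookrightarrow \Omega^\bullet_\mani$ and $\mathbb{R}_{\manir} \hookrightarrow \Omega^\bullet_{\manir}$ under the identification $\mathbb{R}_\mani = \mathbb{R}_{\manir}$.

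By Theorem \ref{PoincLemm} the complex $(\Omega^\bullet_\mani, d)$ is a resolution of $\mathbb{R}_\mani$, and the ordinary Poincar\'e lemma makes $(\Omega^\bullet_{\manir}, d)$ a resolution of $\mathbb{R}_{\manir}$. Since $\iota^\ast$ induces the identity on the zeroth cohomology sheaves $\mathbb{R}_\mani = \mathbb{R}_{\manir}$ and both complexes have vanishing higher cohomology sheaves, $\iota^\ast$ is a quasi-isomorphism of complexes of sheaves, which is the first assertion. To pass to global sections and obtain $H^\bullet_{\mathpzc{dR}}$, I would invoke that in the smooth category every $\Omega^k_\mani$ is a locally free module over the fine sheaf $\stsheaf$, hence itself fine and therefore $\Gamma$-acyclic (and likewise for $\Omega^k_{\manir}$). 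Thus both complexes compute the sheaf cohomology $H^\bullet(|\mani|, \mathbb{R}) = H^\bullet(|\manir|, \mathbb{R})$, and the map induced by $\iota^\ast$ on the cohomology of global sections is an isomorphism, giving $H^\bullet_{\mathpzc{dR}}(\mani) \cong H^\bullet_{\mathpzc{dR}}(\manir)$.

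I expect the only genuine content to reside in the Poincar\'e lemma already at our disposal; the remaining obstacle is bookkeeping, namely verifying cleanly that $\iota^\ast$ is a chain map landing in $\Omega^\bullet_{\manir}$ --- the collapse of the unbounded, commuting even-differential part of $\Omega^\bullet_\mani$ onto the bounded exterior algebra of $\manir$ is the one place where the supergeometry visibly intervenes --- and that the sheaves involved are acyclic for global sections, so that the chain-level quasi-isomorphism descends to an isomorphism on de Rham cohomology.
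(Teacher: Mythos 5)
Your proposal is correct, and it takes a genuinely different route from the paper. The paper never constructs a map between the two de Rham complexes: it runs the \v{C}ech-to-de Rham spectral sequence (equivalently, the generalized Mayer--Vietoris argument of Bott--Tu) twice, once on $\manir$ and once on $\mani$, using the respective Poincar\'e lemmas together with partitions of unity to identify \emph{each} de Rham cohomology with the \v{C}ech cohomology $\check{H}^\bullet(|\manir|, \mathbb{R}_\mani)$ of the constant sheaf on the common underlying space, and then composes the two abstract isomorphisms. You instead exhibit an explicit comparison morphism, the pullback $\iota^\ast : \Omega^\bullet_\mani \rightarrow \Omega^\bullet_{\manir}$ along the embedding $\iota : \manir \longhookrightarrow \mani$ (which indeed kills $\theta_\alpha$ and $d\theta_\alpha$ and is a chain map compatible with the augmentations), observe via the two Poincar\'e lemmas that it is a quasi-isomorphism of complexes of sheaves, and then descend to global sections by the abstract de Rham theorem, using that all $\Omega^k_\mani$ and $\Omega^k_{\manir}$ are fine, hence $\Gamma$-acyclic --- a fact the paper itself asserts elsewhere. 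What your approach buys: it realizes the quasi-isomorphism as a concrete geometric map (restriction of forms to the reduced space), which is literally what the statement asserts, whereas the paper only produces an isomorphism of cohomologies through the \v{C}ech intermediary. What the paper's approach buys: uniformity --- the identical \v{C}ech-de Rham mechanism is reused verbatim for the Spencer complex of integral forms (Quasi-Isomorphism II), where no analogue of your pullback $\iota^\ast$ into $\Omega^\bullet_{\manir}$ is available, so the paper's method is the one that generalizes to both halves of Corollary \ref{geniso}. Both arguments have the same essential inputs: the super Poincar\'e lemma and the existence of partitions of unity on real supermanifolds.
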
 
\begin{proof} The theorem is an easy consequence of the \emph{\v{C}ech-to-de Rham spectral sequence} for the double complex $(\Omega^{\bullet}_{\mani}, \delta, d)$, where $\delta$ is the \v{C}ech differential and $d$ is the de Rham differential, see \cite{BottTu}. More in detail, the \emph{generalized} Mayer-Vietoris short exact sequence (hence the existence of a partition of unity) and Poincaré lemma yield $H^\bullet_{\mathpzc{dR}} (\manir) \cong \check{H}^\bullet (|\manir|, \mathbb{R}_\mani)$ in the ordinary setting and, analogously, $H^\bullet_{\mathpzc{dR}} (\mani) \cong \check{H}^\bullet (|\manir|, \mathbb{R}_\mani )$ in the supergeometric setting respectively, where $\check{H}^\bullet (|\manir|, \mathbb{R}_{\mani})$ is the \v{C}ech cohomology of the sheaf of locally-constant functions $\mathbb{R}_\mani$. It follows that   
\bear
H^\bullet_{\mathpzc{dR}} (\mani) \cong \check{H}^\bullet (|\manir|, \mathbb{R}_\mani) \cong H^\bullet_{\mathpzc{dR}} (\manir),
\eear
thus concluding the proof.\end{proof} 
\noindent In particular, one has a generalization of the ordinary Poincaré Lemma to the \virgolette local model'' real supermanifold $\mathbb{R}^{p|q}$.
\begin{theorem}[Poincaré Lemma for $\mathbb{R}^{p|q}$] The de Rham cohomology of the supermanifold $\mathbb{R}^{p|q}$ is concentrated in degree zero, \emph{i.e.} 
\bear
H_{\mathpzc{dR}}^k (\mathbb{R}^{p|q}) \cong \left \{ \begin{array}{lll}
\mathbb{R} & & k = 0\\
0 & & k >0.
\end{array}
\right.
\eear
\end{theorem}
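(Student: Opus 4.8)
The plan is to obtain this statement as an immediate corollary of the two preceding theorems. First I would observe that the reduced manifold underlying the local model supermanifold $\mathbb{R}^{p|q}$ is simply $\mathbb{R}^p$: setting to zero all the nilpotent sections of its structure sheaf (here the trivial exterior algebra $\bigwedge^\bullet \mathcal{E}^\ast$ with $\mathcal{E}$ of rank $q$) leaves exactly the sheaf of smooth functions on $\mathbb{R}^p$. Invoking Theorem \ref{qiso} (Quasi-Isomorphism I) with $\mani = \mathbb{R}^{p|q}$ and $\manir = \mathbb{R}^p$ then gives
\[
H^\bullet_{\mathpzc{dR}}(\mathbb{R}^{p|q}) \cong H^\bullet_{\mathpzc{dR}}(\mathbb{R}^p).
\]
The right-hand side is computed by the \emph{ordinary} Poincar\'e lemma: since $\mathbb{R}^p$ is contractible, its de Rham cohomology is $\mathbb{R}$ in degree $0$ and vanishes in positive degrees. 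Combining the two facts yields precisely the claimed concentration of cohomology in degree zero.

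Alternatively, and more directly, I would run the argument of Theorem \ref{PoincLemm} at the level of global sections for $\mathbb{R}^{p|q}$, bypassing the \v{C}ech-to-de Rham spectral sequence altogether. The crucial observation is that the contracting homotopy $G : [0,1] \times \mani \to \mani$, $(t, y_a) \mapsto (t y_a)$, employed in the proof of the sheaf-theoretic Poincar\'e lemma is defined \emph{globally} on $\mathbb{R}^{p|q}$, because $\mathbb{R}^{p|q}$ is globally star-shaped with respect to the origin. Hence the homotopy operator $h^k(\omega) = \int_0^1 dt\, \iota_{\partial_t} G_t^\ast(\omega)$ is a genuine global map on $\Omega^\bullet_{\mathbb{R}^{p|q}}$, and the relation $h^{k+1} \circ d^k + d^{k-1} \circ h^k = \mathrm{id}$ holds on global $k$-forms for every $k \geq 1$. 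Since the de Rham cohomology is defined via $\check H^0_{\check\delta}(\Omega^\bullet_\mani)$, i.e.\ as the cohomology of the complex of \emph{global} sections, this shows at once that the global complex is exact in positive degrees, while in degree zero the $d$-closed functions are the locally constant ones, which on the connected space $\mathbb{R}^p$ are exactly $\mathbb{R}$.

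I do not anticipate any genuine obstacle: the statement is a formal consequence of the general Poincar\'e lemma for supermanifolds together with the quasi-isomorphism theorem, or equivalently of the global star-shapedness of $\mathbb{R}^{p|q}$. The only point deserving a word of care is the identification of the cohomology: since $\mathbb{R}^{p|q}$ is covered by a \emph{single} global chart, the zeroth \v{C}ech cohomology collapses to the global sections and the two routes described above manifestly agree, so no spectral-sequence bookkeeping is actually needed.
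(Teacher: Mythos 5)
Your first argument is exactly the paper's own proof: the paper simply cites Theorem \ref{PoincLemm} together with the quasi-isomorphism Theorem \ref{qiso}, reducing the claim to the ordinary Poincar\'e lemma for the contractible reduced space $\mathbb{R}^p$, just as you do. Your second, alternative route --- observing that $\mathbb{R}^{p|q}$ is globally star-shaped so the contracting homotopy $h^k(\omega) = \int_0^1 dt\, \iota_{\partial_t} G_t^\ast(\omega)$ from the proof of Theorem \ref{PoincLemm} is already a map on \emph{global} sections, making the global complex exact in positive degrees without any \v{C}ech-to-de Rham bookkeeping --- is also correct and is in fact more self-contained than the paper's argument, since it avoids invoking the spectral sequence (and the partition-of-unity input behind Theorem \ref{qiso}) for a supermanifold covered by a single chart; the only price is that it is special to the star-shaped case, whereas the paper's route yields the statement as an instance of the general comparison $H^\bullet_{\mathpzc{dR}}(\mani) \cong H^\bullet_{\mathpzc{dR}}(\manir)$.
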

\begin{proof} Follows immediately from the above \ref{PoincLemm} and \ref{qiso}.
\end{proof}
{\remark Analogous results hold true for the \emph{compactly suppported} de Rham cohomology on a supermanifold. More in detail one finds that 
\bear
H^\bullet_{\mathpzc{dR}, \mathpzc{c}} (\mani) \cong H^\bullet_{\mathpzc{dR}, \mathpzc{c}} (\manir),
\eear 
so that in particular, one gets the compactly supported Poincaré lemma for $\mathbb{R}^{p|q}$, which reads
\bear
H_{\mathpzc{dR}, \mathpzc{c}}^k (\mathbb{R}^{p|q}) \cong \left \{ \begin{array}{lll}
\mathbb{R} & & k = p\\
0 & & k \neq p.
\end{array}
\right.
\eear
As can be imagined, a representative is given by the lift of the top form on $\manir$ on the supermanifold, \emph{i.e.} $dz_1  \ldots  dz_p \mathpzc{B}_{\mathpzc{c}} (z_1, \ldots z_p)$, where $\mathpzc{B}_{\mathpzc{c}}$ is a bump function.}
{\remark \label{cohomorem} The above theorem \ref{qiso} guarantees that even if the de Rham complex of a supermanifold $\mani$ of dimension $n|m$ is not bounded from above, its de Rham cohomology groups $H^k_{\mathpzc{dR}} (\mani)$ can only be non-zero up to degree $n$, \emph{i.e.} up to the degree which equals the even dimension of the supermanifold or analogously the dimension of its reduced manifold $\manir$. All the other higher degrees do not contribute to the de Rham cohomology or $\mani$. In other words, the de Rham cohomology of the supermanifold can be non-zero only in the framed part of the complex 
\bear 
\xymatrix{
0 \ar[r] & \Omega^0_\mani (\mani)  \ar[r] & \Omega^1_{\mani} (\mani) \ar[r] & \ldots \ar[r] & \Omega^{n-1}_\mani (\mani) \ar[r] & \Omega^n_{\mani} (\mani) \ar[r] & \Omega^{n+1}_\mani (\mani) \ar[r] & \ldots
\save "1,2"."1,6"*[F]\frm{}
\restore 
}
\eear
where we have denoted $\Omega^k_{\mani} (\mani)$ the global sections of the sheaf $\Omega^{k}_{\mani}$ for any $k$.
\remark Finally, it is to be noted that the fact that the de Rham complex of a supermanifold is not bounded from above implies - among things - that there is \emph{no} tensor density playing the role of the top exterior bundle $\Omega^{\dim X}_{X}$ over an ordinary manifold $X$. This is the starting point of the quite unique integration theory on supermanifold, whose main character is introduced in the following section. }

\section{The Berezinian Sheaf: Constructions and Geometry}

\noindent In this section we introduce one of the crucial and most peculiar notions arising in supergeometry, that of \emph{Berezinian sheaf} of a supermanifold, which we will denote as $\mathcal{B}er (\mani)$. An \virgolette operative'' first definition can be given by characterizing $\mathcal{B}er (\mani)$ in terms of its transition functions. 
\begin{definition}[Berezinian - via Transition Functions] Let $\mani$ be a real or complex supermanifold of dimension $n|m$ and let $ \{ ( U_i , x_{U_i} | \theta_{U_i} )\}_{i \in I}$ be an atlas of charts covering $\mani$. Then we define the Berezinian sheaf $\mathcal{B}er (\mani)$ of $\mani$ as the locally-free sheaf of right $\mathcal{O}_\mani$-modules of rank $\delta_{0, n+m} | \delta_{0, n+m +1}$, whose local generators $\{ \mathcal{D}_{U_i} (x_{U_i}, \theta_{U_i}) \}_{i \in I}$ transforms as
\bear
\mathcal{D}_{U_j \lfloor_{U_i \cap U_j}} (x_{U_j} | \theta_{U_j}) =  \mathcal{D}_{U_i \lfloor_{U_i \cap U_j}} (x_{U_i} |\theta_{U_i})  Ber ( \mathcal{J}ac (\varphi_{ij} ))
\eear 
with 
\bear \label{transf}
Ber ( \mathcal{J}ac (\varphi_{ij} )) = \det (A - BD^{-1}C) \det (D)^{-1}, 
\eear
where $\mathcal{J}ac (\varphi_{ij})$ is the super Jacobian of the change of coordinates 
\bear
\xymatrix@R=1.5pt{
\varphi_{ij} : U_i \lfloor_{U_i \cap U_j} \ar[r] &  U_j \lfloor_{U_i \cap U_j} \nonumber \\
x_{U_i} | \theta_{U_i} \ar@{|->}[r] &  \varphi_{ij, 0} (x | \theta) = x_{U_j} | \,\varphi_{ij, 1} (x | \theta) = \theta_{U_j}, 
}
\eear
and where we have posed 
\bear
\left ( \begin{array}{c|c}
A & B \\
\hline
C & D
\end{array}
\right )
\defeq 
 \left ( \begin{array}{c|c}
\partial_x \varphi_{ij, 0} & \partial_\theta \varphi_{ij, 0} \\
\hline
\partial_x \varphi_{ij, 1} & \partial_{\theta} \varphi_{ij, 1}
\end{array}
\right ) = \mathcal{J}ac (\varphi_{ij}). 
\eear
\end{definition}
\noindent Besides its importance in relation to the quite unique and highly non-trivial integration theory on supermanifolds \cite{Voronov, Witten}, the Berezinian sheaf deserves special attention on its own. In what follows we will present and review three of its constructions, all of them inspired by algebraic-geometric methods. \\
The first one, due to Hern\'andez Ruip\'erez and Mu\~{n}oz Masque \cite{Ruiperez}, is a beautiful realization of the Berezinian sheaf of a real smooth supermanifold as a certain quotient of natural sheaves, which has the merit of being relatively easy and to make apparent its relation with integration theory. \\
The second construction that we will present has the complex analytic or algebraic category in sight: in this context the Berezinian sheaf emerges from the cohomology of a supergeometric generalization of the ordinary Koszul complex. As mentioned in the introduction, the original idea is due to Ogievetsky and Penkov, and it appeared first in \cite{OP}. Later on, building upon \cite{OP}, the super Koszul complex has been briefly sketched by Manin in \cite{Manin}. Quite recently a self-contained and encompassing discussion of the topic has been given by the author and Re in \cite{NojaRe}.   

\subsection{Berezinian Sheaf as a Quotient Sheaf} Working on a real smooth supermanifold of dimension $n|m$, the construction of the Berezinian sheaf of $\mani$ as a quotient sheaf is obtained starting from the sheaf $\Omega^n_{\mani} \otimes_{\stsheaf} \mathcal{D}^{(m)}_\mani $ of differential operators $\mathcal{D}^{(m)}_\mani$ of degree $m$ on $\stsheaf $ taking values in (the sheaf of) differential forms $\Omega^n_{\mani}$ of degree $n$. We denote this (locally-free) sheaf of $\mathcal{O}_\mani$-modules as $  
\mathcal{D}^{(m)}_{\mani} (\Omega^n_{\mani}) 
$ for short and its elements will be written as $\omega \otimes F \in   
\mathcal{D}^{(m)}_{\mani} (\Omega^n_{\mani}).$ \\
Notice that, locally over an open set $U$ with coordinates $y_a = x_i | \theta_\alpha$, for $i = 1, \ldots, n$ and $\alpha = 1, \ldots, m$ a system of generators over $\mathcal{O}_\mani$ is given by
\bear  \label{GenFormOp}
\mathcal{D}^{(m)}_{\mani} (\Omega^n_{\mani}) (U) = \Bigg \{ \underbrace{dy_{j_1} \ldots dy_{j_n}}_{n} \otimes \underbrace{\frac{\partial}{\partial{y_{k_1}}} \ldots \frac{\partial}{\partial{y_{k_m}}}}_{m} \Bigg \} \cdot \stsheaf (U) \defeq \bigg \{ dy_{I_{j}} \otimes \frac{\partial}{\partial y_{I_k}} \bigg \} \cdot \stsheaf (U)
\eear
where $I_j$ and $I_k$ are two multi-indices such that $|I_j| = n$ and $|I_k| = m$ and where $\mathcal{D}^{(m)}_{\mani} (\Omega^n_{\mani})$ has been considered with the structure of right $\mathcal{O}_\mani$-module.\\
The key object in the construction is a pretty subtle sub-sheaf of $\mathcal{D}^{(m)}_\mani (\Omega^n_{\mani})$. If $\iota : \manir \rightarrow \mani$ is the embedding of the reduced manifold into the supermanifold $\mani$ as in \eqref{splittingseq}, we have a corresponding pull-back map $\iota^\ast : \Omega^\bullet_{\mani} \rightarrow \Omega^\bullet_{\manir}$ on differential forms. We introduce a sheaf of $\mathcal{O}_\mani$-modules $\mathcal{K}_{\mani}$ as the sub-sheaf of sections of  $ \mathcal{D}^{(m)}_{\mani} (\Omega^{n}_{\mani})$ having the following property  
\bear
\mathcal{K}_{\mani} (U) \defeq \left \{\forall f \in \mathcal{O}_{\mani, \mathpzc{c}}(U),\;  \omega \otimes F \in \mathcal{D}^{(m)}_{\mani} (\Omega^n_{\mani}) (U) \mbox{ if } \exists \, \eta \in \Omega^{n-1}_{\manir , \mathpzc{c}}(U): \iota^\ast (\omega \otimes F(f) ) = d\eta \; \;  \right \}, 
\eear
where $U$ is an open set and $f \in \mathcal{O}_{\mani, \mathpzc{c}}(U) \defeq \Gamma_{\mathpzc{c}} (U, \mathcal{O}_\mani )$ and $\eta \in \Omega^{n-1}_{\manir, \mathpzc{c}}(U) \defeq \Gamma_\mathpzc{c} (U, \Omega^{n-1}_{\manir} )$ are a compactly supported function and a compactly supported form on $U$ respectively. 
Then one can prove the following theorem, see \cite{Ruiperez}.
\begin{theorem}[Berezinian as a Quotient] Let $\mani$ be a real supermanifold of dimension $n|m$ and let $\mathcal{D}^{(m)}_{\mani} (\Omega^n_{\mani}) $ and $\mathcal{K}_{\mani}$ be defined as above. Then the sheafification of the quotient pre-sheaf $\mathcal{D}^{(m)}_{\mani} (\Omega^n_{\mani}) / \mathcal{K}_{\mani} $ is a locally-free sheaf of (right) $\stsheaf$-module of rank $\delta_{0, n+m} | \delta_{0, n+m +1}$, whose generator reads on an open set $U$ with coordinates $x_i | \theta_\alpha$ for $i = 1,\ldots, n$ and $\alpha = 1, \ldots, m$
\bear
\slantone{\mathcal{D}^{(m)}_{\mani} (\Omega^n_{\mani}) (U)}{\mathcal{K}_{\mani} (U)} \cong \left [ dx_1 \ldots dx_n \otimes \frac{\partial}{\partial \theta_1 } \ldots \frac{\partial}{\partial \theta_m} \right ] \cdot \mathcal{O}_{\mani} (U),
\eear  
where the square bracket stays for the class of the form-valued differential operator modulo $\mathcal{K}_\mani.$\\
In particular, the above quotient is naturally isomorphic to the Berezinian sheaf of the supermanifold, \emph{i.e.}
\bear
\mathcal{B}er (\mani) \cong \slantone{\mathcal{D}^{(m)}_{\mani} (\Omega^n_{\mani})}{\mathcal{K}_{\mani}}.
\eear
\end{theorem}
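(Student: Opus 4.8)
The plan is to work entirely on a chart $U$ with coordinates $x_i\mid\theta_\alpha$, to cut the local description of the quotient down to a single surviving generator, to establish local freeness of rank one through the nondegeneracy of the (Berezin) integration pairing, and finally to globalise by computing the transition functions of the distinguished generator and recognising them as $Ber(\mathcal{J}ac(\varphi_{ij}))$. The whole local computation is driven by two elementary observations about $\iota^\ast$. Since $\iota$ sets $\theta=0$ one has $\iota^\ast(d\theta_\alpha)=d(\iota^\ast\theta_\alpha)=0$, so any generator of $\mathcal{D}^{(m)}_{\mani}(\Omega^n_{\mani,\mathpzc{c}})(U)$ from \eqref{GenFormOp} whose form part $dy_{I_j}$ contains a factor $d\theta_\alpha$ satisfies $\iota^\ast\big((dy_{I_j}\otimes\partial_{y_{I_k}})(f)\big)=0=d(0)$ for every $f$, hence lies in $\mathcal{K}_{\mani}$; the only form part that survives is the pure top form $dx_1\cdots dx_n$. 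Secondly, since $\partial_{x_i}$ commutes with the restriction $\theta=0$, for an operator of the shape $\partial_{x_i}Q$ one finds $\iota^\ast\big(dx_1\cdots dx_n\,\partial_{x_i}Q(f)\big)=dx_1\cdots dx_n\,\partial_{x_i}\big(\iota^\ast(Q(f))\big)=d\big(\pm\,dx_1\cdots\widehat{dx_i}\cdots dx_n\,\iota^\ast(Q(f))\big)$, which is exact with compactly supported primitive; thus every generator whose operator part contains an even derivative also lies in $\mathcal{K}_{\mani}$. What remains is the single class $\mathcal{D}_U=dx_1\cdots dx_n\otimes\partial_{\theta_1}\cdots\partial_{\theta_m}$, so the quotient is generated over $\stsheaf(U)$ by $\mathcal{D}_U$.

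For local freeness I would show that $\mathcal{D}_U\cdot f\in\mathcal{K}_{\mani}(U)$ forces $f=0$. Using the right-module structure, $(\mathcal{D}_U\cdot f)(g)=dx_1\cdots dx_n\,\partial_{\theta_1}\cdots\partial_{\theta_m}(fg)$, and $\iota^\ast$ extracts the coefficient of $\theta_1\cdots\theta_m$ in $fg$, restricted to $\manir$. By the compactly supported de Rham lemma on the $n$-dimensional manifold $\manir$, this top form is exact precisely when its integral vanishes; hence $\mathcal{D}_U\cdot f\in\mathcal{K}_{\mani}(U)$ for all compactly supported $g$ means that the pairing $g\mapsto\int_{\manir}\iota^\ast\big(\partial_{\theta_1}\cdots\partial_{\theta_m}(fg)\big)\,dx_1\cdots dx_n$ vanishes identically, which by a standard bump-function argument forces $f=0$. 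Counting parities, $\lvert\,dx_1\cdots dx_n\otimes\partial_{\theta_1}\cdots\partial_{\theta_m}\,\rvert\equiv n+m\ (\mathrm{mod}\ 2)$, so the sheafified quotient is a locally free sheaf of right $\stsheaf$-modules of rank $\delta_{0,n+m}\mid\delta_{0,n+m+1}$, exactly as asserted, with the stated local generator.

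Finally I would identify this sheaf with $\mathcal{B}er(\mani)$ by computing how $\mathcal{D}_U$ transforms under a change of chart $\varphi_{ij}$. Expanding $dx'_1\cdots dx'_n$ by the chain rule and discarding, via the first reduction above, every monomial containing a $d\theta$, the surviving form part is $\det(A)\,dx_1\cdots dx_n$ with $A=\partial_x\varphi_{ij,0}$; expanding $\partial_{\theta'_1}\cdots\partial_{\theta'_m}$ through the inverse super-Jacobian and discarding, via the second reduction, every monomial containing a $\partial_{x_i}$, the surviving operator part is governed by the lower-right block $(D-CA^{-1}B)^{-1}$ of $\mathcal{J}ac(\varphi_{ij})^{-1}$, contributing $\det(D-CA^{-1}B)^{-1}$. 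Combining the two and invoking the standard Berezinian identity $\det(A)\det(D-CA^{-1}B)^{-1}=\det(A-BD^{-1}C)\det(D)^{-1}=Ber(\mathcal{J}ac(\varphi_{ij}))$ reproduces exactly the transition rule \eqref{transf} defining the Berezinian.

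I expect this last step to be the main obstacle. As soon as the Jacobian entries depend on the odd coordinates, $\det(A)$ is itself $\theta$-dependent, and the naive vanishing of the $\partial_{x_i}$-terms underlying the second reduction can no longer be applied verbatim: it must be replaced by a careful integration-by-parts bookkeeping, whose correction terms are precisely what turn the two bare block-determinants into the full Berezinian through the Schur-complement term $BD^{-1}C$. Keeping track of signs, of the tensor relations over $\stsheaf$, and of the right-module ordering throughout — rather than the conceptual strategy, which is clear — is where the genuine work lies.
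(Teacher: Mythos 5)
Your two reduction steps are exactly the paper's own: generators whose form part contains a $d\theta$ die under $\iota^\ast$, and generators whose operator part contains a $\partial_{x_i}$ produce exact top forms with compactly supported primitives. (The paper verifies only the ``crucial case'' $f = g_{\mathpzc{c}}(x)\,\theta_2\cdots\theta_m$, since every other component of $f$ is killed by $\iota^\ast$ anyway; your formulation via $\iota^\ast\circ\partial_{x_i}=\partial_{x_i}\circ\iota^\ast$ is the same argument stated uniformly, and is cleaner.) Two points genuinely differ. First, you prove local freeness --- that $\mathcal{D}_U\cdot f\in\mathcal{K}_{\mani}(U)$ forces $f=0$ --- via nondegeneracy of the integration pairing against bump functions; the paper never spells this step out, so this is a welcome addition. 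Just note that ``exact if and only if vanishing integral'' should be invoked on a connected coordinate chart of $\manir$ (or with $g$ supported in small balls), which is all your argument needs. Second, the identification with $\mathcal{B}er(\mani)$: you outline a direct chain-rule computation of the transition functions of $\mathcal{D}(x|\theta)$ and correctly flag that it does not go through verbatim once the Jacobian blocks depend on $\theta$. The paper sidesteps precisely this difficulty by postponing the check to the super Koszul subsection, where the super Jacobian is factored as (upper unipotent) $\times$ (block diagonal) $\times$ (lower unipotent); because one computes \emph{modulo} $\mathcal{K}_{\mani}$, the unipotent factors act trivially on the class, and only the block-diagonal factor contributes, giving $\det(A)\det(D)^{-1}$ and hence the rule \eqref{transf}. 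That factorization is exactly the device that dissolves the obstacle you name: it replaces the integration-by-parts bookkeeping by invariance of the class under shears and reduces the general transformation to the split case you can already handle. With that substitution, your argument is complete and coincides with the paper's.
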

\begin{proof} We consider $\omega \otimes F \in \mathcal{D}^{(m)}_{\mani} (\Omega^n_{\mani})$ and we work in a coordinate system $x_i | \theta_\alpha $ over an open set $U$, such that $\mathcal{D}^{(m)}_{\mani} (\Omega^n_{\mani})$ has a basis as above in \eqref{GenFormOp}. Let us consider the following instances.
If it appears a term of the form $d\theta_{\alpha}^n$ for any $\alpha $ and any $n \geq 1$ in $\omega$, then the corresponding $\omega \otimes F$ goes to zero under $\iota^\ast$, and as such it is in $\mathcal{K}_\mani$. So this forces $\omega$ to be of the kind $dx_1\ldots dx_n $.
Now consider $\omega \otimes F$ to be of the kind $\omega \otimes F = dx_1 \ldots dx_n \otimes \partial_{x_I} \partial_{\theta_J}$ for some multi-indices $I$ and $J$ such that $|I| + |J| = m$. If $I \neq 0$ then $\omega \otimes F \in \mathcal{K}_{\mani} (U)$, indeed consider for example $dx_1 \ldots dx_n \otimes \partial_{\theta_2} \ldots \partial_{\theta_{m}} \partial_{x_1}$: the crucial case is that of $f$ of the form $f (x| \theta) = g_{\mathpzc{c}} (x) \theta_2 \ldots \theta_m$, with $g_{\mathpzc{c}}$ a compact supported function on $U$ to get $\omega \otimes F(f) = dx_1 \ldots dx_n \partial_{x_1} g_{\mathpzc{c}} (x) = d ( dx_2 \ldots dx_n g_{\mathpzc{c}}(x) )$, which implies $\omega \otimes F \in \mathcal{K}_{\mani}.$
This is enough to prove that the class $\mathcal{D} (x| \theta) \defeq [dx_1 \ldots dx_n \otimes \partial_{\theta_1} \ldots \partial_{\theta_m}]$ defines a generator for the above quotient sheaf. Also, this can indeed be identified with the Berezinian sheaf, upon checking that the class $\mathcal{D} (x |\theta)$ transform indeed as in \eqref{transf} under a change of local coordinates. This is a local check: a careful computation is postponed to the next subsection, in a slightly different context.
\end{proof}
{\remark As said above, the previous intrinsic construction of the Berezinian as the sheafification of a quotient pre-sheaf of differential operators valued into differential forms has the unquestionable merit of being relatively easy and, at the same time, as explained at the end of the second section of \cite{Ruiperez}, it makes the relationship with the Berezinian sheaf and the related integration theory apparent. A minor drawback of the construction is that it only holds true for real supermanifolds, as the existence of compactly supported functions is crucial to the above proof.}

\subsection{Berezinian Sheaf from Koszul Complex} Homological algebra comes in help to provide an intrinsic construction of the Berezinian sheaf on complex or algebraic supermanifolds, where compactly supported functions are not available and therefore the above quotient construction breaks down. As hinted in the introduction to this section, the idea of a suitable generalization to a supergeometric setting of the Koszul complex originally appeared in \cite{OP} and was subsequently in \cite{Manin}. Very recently an encompassing construction has been given in \cite{NojaRe}. \\
We let $\mathcal{E}$ be any locally-free sheaf of rank $p|q$ on a supermanifold $\mani$. We define
\begin{align}
& \mathcal{R} \defeq \bigoplus_{k \geq 0 } \mathcal{R}^{\mathcal{E}}_k \quad \mbox{with} \quad \mathcal{R}_{k}^{\mathcal{E}} \defeq \cat{S}^k \mathcal{E} \\
& \mathcal{R}^\Pi \defeq \bigoplus_{k\geq 0} \mathcal{R}_k^{\Pi \mathcal{E}} \quad \mbox{with} \quad \mathcal{R}_k^{\Pi\mathcal{E}} \defeq \cat{S}^k \Pi \mathcal{E},
\end{align}
and in turn we consider the following sheaf of $\stsheaf$-superalgebras given by the tensor product 
\bear
\mathcal{K}^{\mathcal{E}}_\bullet \defeq \bigoplus_{k \geq 0 } \mathcal{K}_{-k} = \mathcal{R} \otimes_{\stsheaf} \bigoplus_{k \geq 0 } \mathcal{R}^{\Pi}_k  = \mathcal{R} \otimes_{\stsheaf} \mathcal{R}^\Pi.
\eear
Further, let us consider two bases of local generators for $\mathcal{E}$ and $\Pi \mathcal{E}$ respectively given by $\{v_i | \chi_\alpha \}$ and a $\{ \pi \chi_\alpha | \pi v_i \}$. Using these we can define the following operator acting on $\mathcal{K}^\mathcal{E}_\bullet$:  
\bear \label{operatordelta}
\xymatrix@R=1.5pt{
\delta : \mathcal{K}^\mathcal{E}_\bullet = \mathcal{R} \otimes_{\stsheaf} \mathcal{R}^\Pi \ar[r] & \mathcal{K}^{\mathcal{E}}_\bullet = \mathcal{R} \otimes_{\stsheaf} \mathcal{R}^\Pi \\
\mathpzc{r} \otimes \mathpzc{r}^\Pi \ar@{|->}[r] & \delta (\mathpzc{r} \otimes \mathpzc{r}^\pi ) \defeq \left ( \sum_{i=1}^p v_i \otimes \partial_{\pi v_i} + \sum_{j=1}^q \chi_j \otimes \partial_{\pi \chi_j} \right ) (\mathpzc{r} \otimes \mathpzc{r}^\Pi), 
}
\eear
where $\mathpzc{r} \in \mathcal{R}$ and $\mathpzc{r}^\Pi \in \mathcal{R}^\Pi$. Since the derivations can be seen as the dual bases to the bases of $\mathcal{E}$ and $\Pi \mathcal{E}$ it is not hard to see that the above is globally well-defined and independent of the choice of local bases. Further, $\delta$ is homogenous of degree $-1$ with respect to the $\mathbb{Z}$-gradation of $\mathcal{K}_{\bullet}$ seen as a sheaf of $\mathcal{R}$-modules and it is \emph{odd} with respect to the $\mathbb{Z}_2$-graded structure on $\mathcal{K}_{\bullet}^\mathcal{E}$: more in particular it is not hard to show that it is nilpotent, \emph{i.e.} $\delta \circ \delta = 0 $, so that the pair $(\mathcal{K}_\bullet^{\mathcal{E}}, \delta)$ defines a differentially graded sheaf of $\mathcal{R}$-algebras. We can thus give the following definition.
\begin{definition}[Super Koszul Complex] Let $\mani$ be a real, complex or algebraic supermanifold. Given any locally-free sheaf of $\mathcal{O}_\mani$-modules $\mathcal{E}$, we call the pair $(\mathcal{K}^{\mathcal{E}}_\bullet, \delta)$ defined as above the \emph{super Koszul complex} associated to $\mathcal{E}$: 
\bear
\xymatrix{
\cdots \ar[r]^{ \delta \qquad } & \mathcal{R} \otimes \cat{S}^k \Pi \mathcal{V} \ar[r]^{\qquad \delta} & \cdots \ar[r]^{\delta \qquad } & \mathcal{R} \otimes \cat{S}^2 \Pi \mathcal{E} \ar[r]^{\; \delta} & \mathcal{R} \otimes \Pi \mathcal{E} \ar[r]^{\quad \delta} & \mathcal{R} \ar[r] & 0.  
}
\eear
\end{definition}
\noindent One of the main results of \cite{NojaRe} is concerned with the homology of this complex of sheaves. 
\begin{theorem}[Homology of the Super Koszul Complex] \label{homologykos} Let $\mani$ be a real, complex or algebraic supermanifold with structure sheaf $\mathcal{O}_\mani$, let $\mathcal{E}$ be a locally-free sheaf of $\mathcal{O}_\mani$-modules on $\mani$ and let $(\mathcal{K}_\bullet^{\mathcal{E}}, \delta)$ be the super Koszul complex associated to $\mathcal{E},$ defined as above. Then the super Koszul complex $(\mathcal{K}^\mathcal{E}_\bullet, \delta)$ is an exact resolution of $\mathcal{O}_\mani$ endowed with the structure of sheaf of $\mathcal{R}$-modules, \emph{i.e.}\ the homology $\mathcal{H}_i (\mathcal{K}^\mathcal{E}_\bullet, \delta)$ is concentrated in degree 0,
\bear
\mathcal{H}_i \left ( (\mathcal{K}^{\mathcal{E}}_\bullet, \delta )  \right ) \cong \left \{ \begin{array}{ccc}
\mathcal{O}_\mani   & &  i = 0\\
0  & & i \neq 0.
\end{array}
\right.
\eear
\end{theorem}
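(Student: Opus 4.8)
The plan is to prove that $(\mathcal{K}^\mathcal{E}_\bullet,\delta)$ is acyclic in every positive homological degree by exhibiting an explicit contracting homotopy, the surviving homology in degree $0$ being forced to be $\stsheaf$ for degree reasons. Since the homology sheaves $\mathcal{H}_i(\mathcal{K}^\mathcal{E}_\bullet,\delta)$ are local invariants and every term of the complex is locally free, I would first pass to a trivializing open set, where $\mathcal{E}$ acquires an even basis $\{v_i\}_{i=1}^p$ and an odd basis $\{\chi_j\}_{j=1}^q$. There $\mathcal{R}=\cat{S}^\bullet\mathcal{E}$ and $\mathcal{R}^\Pi=\cat{S}^\bullet\Pi\mathcal{E}$ are the (super)symmetric algebras over $\stsheaf$ on the generators $\{v_i\,|\,\chi_j\}$ and $\{\pi\chi_j\,|\,\pi v_i\}$ respectively, and $\delta$ is the operator \eqref{operatordelta}, which we already know to be nilpotent. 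A homotopy defined only locally suffices, precisely because homology is local.

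The natural candidate is the \virgolette transpose'' of $\delta$,
\bear
s \defeq \sum_{i=1}^p \partial_{v_i}\otimes \pi v_i + \sum_{j=1}^q \partial_{\chi_j}\otimes \pi\chi_j,
\eear
which lowers the $\mathcal{R}$-degree by one and raises the $\mathcal{R}^\Pi$-degree by one, hence is homogeneous of homological degree $+1$. Both $\delta$ and $s$ are \emph{odd}, so the relevant object is their anticommutator, and the heart of the proof is the Cartan-type identity
\bear
\delta s + s\delta = N,
\eear
where $N$ acts on $\cat{S}^a\mathcal{E}\otimes\cat{S}^k\Pi\mathcal{E}$ as multiplication by the total polynomial degree $a+k$.

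I would verify this identity by expanding the anticommutator and sorting the resulting terms. The diagonal terms, where the index of a multiplication matches that of the paired derivation, collapse via the canonical (anti)commutation relations between generators and dual derivations on each tensor factor into the Euler (number) operators of $\mathcal{R}$ and of $\mathcal{R}^\Pi$, whose sum is $N$. The off-diagonal terms, coming from mismatched indices and from the mixed even/odd pairs, cancel in pairs: transposing two distinct $\pi$-derivations produces exactly the sign that cancels the term arising from the opposite order of composition. This sign bookkeeping, intrinsic to the $\set{Z}_2$-graded setting, is the only genuinely delicate point and the step I would write out in full; the instructive warm-up is the rank $1|0$ and rank $0|1$ cases, where one checks directly that $\delta s+s\delta$ multiplies a homogeneous element by its total degree.

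Granting the identity, the conclusion is immediate. On any summand $\cat{S}^a\mathcal{E}\otimes\cat{S}^k\Pi\mathcal{E}$ with $a+k\geq 1$ the operator $\delta s+s\delta$ is multiplication by the invertible scalar $a+k$, so the identity map is null-homotopic there and the homology vanishes. The homological degree $-k$ with $k\geq 1$ consists only of such summands, whence $\mathcal{H}_i(\mathcal{K}^\mathcal{E}_\bullet,\delta)=0$ for $i\neq 0$; in homological degree $0$ the same argument annihilates every piece with $a\geq 1$, leaving $\mathcal{H}_0=\mathcal{R}/\mbox{im}\,\delta=\cat{S}^0\mathcal{E}=\stsheaf$, with the $\mathcal{R}$-module structure induced by the augmentation $\mathcal{R}\to\stsheaf$. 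This is exactly the asserted resolution. As a cross-check, $(\mathcal{K}^\mathcal{E}_\bullet,\delta)$ is the (super)tensor product over the individual generators of the elementary single-generator Koszul complexes, each of which resolves $\stsheaf$, so a K\"unneth argument recovers the same homology.
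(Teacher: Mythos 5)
Your proof is correct, but note that the paper itself never proves Theorem \ref{homologykos}: the result is quoted from \cite{NojaRe}, so your argument actually supplies what the text omits, and the only in-paper point of comparison is the proof given for the \emph{dual} statement, Theorem \ref{dualhomology}. That proof uses a different device: the generators are regrouped as $(s_1,\ldots,s_N)=(v_i,\partial_{\pi\chi_j})$ and $(\psi_1,\ldots,\psi_N)=(\partial_{\pi v_i},\chi_j)$, so that the complex is recognized as (the dual of) an ordinary commutative Koszul complex over $\mathcal{O}(U)[s_1,\ldots,s_N]$, and the classical theorem cited from \cite{Eisenbud} finishes the argument. Applied to the direct complex, the same regrouping exhibits $\mathcal{K}^{\mathcal{E}}_\bullet$ as a tensor product of elementary factors -- classical Koszul factors in the pairs $(v_i,\pi v_i)$ and transposed ones in the pairs $(\pi\chi_j,\chi_j)$ -- which is exactly your closing K\"unneth cross-check; so your ``alternative route'' is in fact the route closest to the paper's own toolkit. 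Your main argument, the Cartan-type identity $\delta s+s\delta=(a+k)\,\mathrm{id}$ on $\cat{S}^a\mathcal{E}\otimes\cat{S}^k\Pi\mathcal{E}$ with $s$ the transpose of $\delta$, is sound: the off-diagonal terms cancel exactly as you say (for $a\neq b$, after commuting multiplications past derivations on each tensor factor, the two orderings acquire the opposite signs $(-1)^{|u_a|+1}$ and $(-1)^{|u_a|}$), and the diagonal terms assemble into the Euler operators of $\mathcal{R}$ and $\mathcal{R}^\Pi$.

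One point deserves an explicit sentence in your write-up rather than being left implicit: your conclusion inverts the integer $a+k$, hence requires the base ring to contain $\mathbb{Q}$. This is not an artifact of the method -- the statement itself fails in characteristic $p$. Already in the rank $0|1$ case one has $\delta\bigl(f\otimes(\pi\chi)^{k+1}\bigr)=(k+1)\,f\chi\otimes(\pi\chi)^{k}$, so whenever $p\mid k+1$ the cycle $\chi\otimes(\pi\chi)^{k}$ is not a boundary and gives nonzero homology in degree $-k\neq 0$. Thus the word ``algebraic'' in the statement must be read over a field of characteristic zero; your K\"unneth variant needs the same hypothesis, for the same reason, in its odd-generator factors, whereas the purely even factors $(v_i,\pi v_i)$ are exact over any base.
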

{\remark With reference to the above result, notice that $\mathcal{O}_\mani$ is indeed a $\mathcal{R}$-module thanks to the following short exact sequence of sheaves
\bear
\xymatrix{
0 \ar[r] & \mathcal{I}_{\mathcal{R}} \ar[r] & \mathcal{R} \ar[r] & \mathcal{O}_\mani \ar[r] & 0, 
}
\eear
where $\mathcal{I}_\mathcal{R} \defeq \bigoplus_{k\geq 1} \cat{S}^k \mathcal{E}$ is the sheaf of ideals of $\mathcal{R}$ generated by $\mathcal{E} \subset R$ so that $\mathcal{O}_{\mani} \cong \slantone{\mathcal{R}}{\mathcal{I}_\mathcal{R} \mathcal{R}}.$}

{\remark \noindent The \emph{Koszul resolution} of theorem \ref{homologykos} allows us to compute other derived functors in a supergeometric context. In particular, given the Koszul super complex $\mathcal{K}^{\mathcal{E}}_{\bullet}$ as above, one can introduce the \emph{dual} construction via the functor $\mathcal{H}om_{\mathcal{R}}( - , \mathcal{R})$, which yields the pair $(\mathcal{K}_\bullet^{\mathcal{E} \ast }, \delta^\ast ) \defeq (\mathcal{H}om_\mathcal{R} (\mathcal{K}^\mathcal{E}_\bullet, \mathcal{R}), \mathcal{H}om_\mathcal{R} (\delta, \mathcal{R}))$. Defining 
\bear
\mathcal{R}^{\Pi \ast}_k \defeq \bigoplus_{k \geq 0 } \mathcal{R}^{\pi \ast}_i \quad \mbox{with} \quad \mathcal{R}^{\Pi \ast}_k \defeq \cat{S}^k \Pi \mathcal{E}^\ast
\eear
the sheaf of $\mathcal{O}_{\mani}$-superalgebras generated by $\{ \partial_{\pi \chi_\alpha} |\partial_{\pi v_i} \}$ for $i = 1, \ldots, p$ and $\alpha = 1, \ldots, q$ in $\Pi \mathcal{E}^\ast$, it is easy to see that  
\bear
\mathcal{K}^{\mathcal{E} \ast}_{\bullet} \defeq \bigoplus_{k \geq 0} \mathcal{K}_{k}^{\mathcal{E}\ast} = \mathcal{R} \otimes_{\stsheaf} \bigoplus_{k \geq 0} \mathcal{R}_k^{\Pi \ast} = \mathcal{R} \otimes_{\stsheaf} \mathcal{R}^{\Pi \ast}.
\eear
The fundamental observation is that $\mathcal{K}^{\mathcal{E} \ast}_{\bullet} $ is acted by an operator $\delta^\ast$, whose definition is \emph{formally identical} to that of $\delta $ given above in \eqref{operatordelta}. But here $\delta^\ast$ has to be looked at as a \emph{multiplication operator} by the element $ \sum_{i=1}^p v_i \otimes \partial_{\pi v_i} + \sum_{j=1}^q \chi_j \otimes \partial_{\pi \chi_j} $ in $\mathcal{K}^{\mathcal{E}\ast}_\bullet$. This is enough to guarantee that $\delta^{\ast} \circ \delta^{\ast} = 0,$ as the element corresponding to $\delta^\ast$ is odd, so that we can introduce the following. }
\begin{definition}[Dual of the Super Koszul Complex] Let $\mani$ be a real, complex or algebraic supermanifold. Given any locally-free sheaf of $\mathcal{O}_\mani$-modules $\mathcal{E}$, we call the pair $(\mathcal{K}^{\mathcal{E}\ast}_\bullet, \delta^\ast)$ defined as above the \emph{dual of the super Koszul complex} associated to $\mathcal{E}$: 
\bear
\xymatrix{ 
0 \ar[r] & \mathcal{R} \ar[r]^{\delta^\ast \quad} & \mathcal{R} \otimes \Pi \mathcal{E}^\ast \ar[r]^{\delta^\ast } & \mathcal{R} \otimes \cat{S}^2 \Pi \mathcal{E}^\ast \ar[r]^{\qquad  \delta^\ast} & \ldots \ar[r]^{\delta^\ast\quad \; \;} & \mathcal{R} \otimes \cat{S}^k \Pi \mathcal{E}^\ast \ar[r]^{ \qquad \delta^\ast} & \ldots 
}
\eear
\end{definition}
\noindent Now, we aim at computing the (co)homology of the dual of the super Koszul complex. \\
Recalling that by definition we have $\mathcal{K}_{\bullet}^{\mathcal{E}\ast} \defeq \mathcal{H}om_{\mathcal{E}} (\mathcal{K}_{\bullet}^\mathcal{E}, \mathcal{R}), $ then
\bear
\mathcal{E}xt^{i}_{\mathcal{R}} (\mathcal{O}_\mani, \mathcal{R}) = 
 \mathcal{H}^i \left ( ( \mathcal{K}^{ \mathcal{E} \ast}_\bullet, \delta^\ast ) \right ).
\eear
The homology (sheaf) of the dual of the Koszul complex is computed in the following theorem, see \cite{NojaRe}.
\begin{theorem}[Homology of the dual of the Super Koszul Complex] \label{dualhomology} Let $\mani$ be a real, complex or algebraic supermanifold with structure sheaf $\mathcal{O}_\mani$, let $\mathcal{E}$ be a locally-free sheaf of $\mathcal{O}_\mani$-modules on $\mani$ and let $(\mathcal{K}_\bullet^{\mathcal{E}}, \delta)$ be the dual of the super Koszul complex associated to $\mathcal{E},$ defined as above. Then its homology is concentrated in degree $p$ and locally-generated over $\mathcal{O}_\mani$ by the class
\bear
\mathcal{E}xt^p_\mathcal{R} (\stsheaf, \mathcal{R}) \cong 
[  \chi_1 \ldots \chi_q \otimes \partial_{\pi v_1 } \ldots \partial_{\pi v_p} ] \cdot \mathcal{O}_\mani 
\eear
where $\chi_1 \ldots \chi_q \in \cat{\emph{S}}^q \mathcal{E}$ and $\partial_{\pi v_1} \ldots \partial_{\pi v_p} \in \cat{\emph{S}}^p \Pi \mathcal{E}^\ast.$
\end{theorem}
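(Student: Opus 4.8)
The plan is to compute the cohomology of the dual complex $(\mathcal{K}^{\mathcal{E}\ast}_\bullet, \delta^\ast)$ directly, using that $(\mathcal{K}^{\mathcal{E}}_\bullet, \delta)$ is a free resolution of $\stsheaf$ over $\mathcal{R}$ (Theorem \ref{homologykos}), so that $\mathcal{E}xt^i_{\mathcal{R}}(\stsheaf, \mathcal{R}) \cong \mathcal{H}^i(\mathcal{K}^{\mathcal{E}\ast}_\bullet, \delta^\ast)$ as already recorded. Since the assertion is local and $\mathcal{E}$ is locally free, I would first pass to a local trivialization in which $\mathcal{E}$ has even generators $v_1,\dots,v_p$ and odd generators $\chi_1,\dots,\chi_q$, so that $\mathcal{R}$ is the super-polynomial algebra on $\mathcal{E}$ --- polynomial in the $v_i$ and exterior in the $\chi_\alpha$ --- while $\mathcal{R}^{\Pi\ast}$ is polynomial in the even duals $\partial_{\pi\chi_\alpha}$ and exterior in the odd duals $\partial_{\pi v_i}$.

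The key structural observation is that the multiplication operator splits as $\delta^\ast = \delta^\ast_{\mathrm{ev}} + \delta^\ast_{\mathrm{odd}}$, with $\delta^\ast_{\mathrm{ev}} = \sum_{i} v_i\otimes\partial_{\pi v_i}$ and $\delta^\ast_{\mathrm{odd}} = \sum_{j}\chi_j\otimes\partial_{\pi\chi_j}$, and that these two pieces act on complementary tensor factors. I would write $\mathcal{K}^{\mathcal{E}\ast}_\bullet \cong C_{\mathrm{ev}}\otimes_{\stsheaf} C_{\mathrm{odd}}$, where $C_{\mathrm{ev}} = (\stsheaf[v]\otimes\Lambda(\partial_{\pi v}),\, \delta^\ast_{\mathrm{ev}})$ and $C_{\mathrm{odd}} = (\Lambda(\chi)\otimes\stsheaf[\partial_{\pi\chi}],\, \delta^\ast_{\mathrm{odd}})$, after checking that the two sub-differentials super-commute (so that $\delta^\ast\circ\delta^\ast = 0$ follows). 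Both factors are complexes of free $\stsheaf$-modules, so the K\"unneth theorem --- equivalently the spectral sequence of the bicomplex, which collapses --- gives $\mathcal{H}^\bullet(\mathcal{K}^{\mathcal{E}\ast}_\bullet)\cong\mathcal{H}^\bullet(C_{\mathrm{ev}})\otimes_{\stsheaf}\mathcal{H}^\bullet(C_{\mathrm{odd}})$ with no $\mathrm{Tor}$ correction, since the factor cohomologies computed below are free.

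The even factor $C_{\mathrm{ev}}$ is exactly the classical dual Koszul complex of the regular sequence $(v_1,\dots,v_p)$ in the polynomial ring $\stsheaf[v]$; its cohomology is therefore concentrated in top degree $p$ and equals $\stsheaf[v]/(v_1,\dots,v_p)\cong\stsheaf$, generated by the top exterior monomial $\partial_{\pi v_1}\cdots\partial_{\pi v_p}$. The odd factor $C_{\mathrm{odd}}$ is more delicate, since its differential \emph{raises} the exterior degree in the $\chi_\alpha$ rather than lowering it, so it is not a standard Koszul resolution. I would show its cohomology is concentrated in degree $0$, equal to $\stsheaf\cdot\chi_1\cdots\chi_q$, by two complementary computations: in degree $0$ the kernel of $\delta^\ast_{\mathrm{odd}}$ consists of those $\eta\in\Lambda(\chi)$ with $\chi_\alpha\eta = 0$ for all $\alpha$, which is precisely the top power $\langle\chi_1\cdots\chi_q\rangle$; and in positive degree I would exhibit the contracting homotopy $h=\sum_\alpha\iota_{\chi_\alpha}\otimes\frac{\partial}{\partial(\partial_{\pi\chi_\alpha})}$, verifying, via the Clifford relation $\{\chi_\alpha\wedge,\,\iota_{\chi_\beta}\}=\delta_{\alpha\beta}$ and $[\frac{\partial}{\partial(\partial_{\pi\chi_\alpha})},\,\partial_{\pi\chi_\beta}\,]=\delta_{\alpha\beta}$, that $\delta^\ast_{\mathrm{odd}}h + h\delta^\ast_{\mathrm{odd}}$ acts as the invertible Euler degree operator, forcing acyclicity away from degree $0$.

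Combining the two factors, the total cohomology sits in degree $p + 0 = p$ and is freely generated over $\stsheaf$ by $\chi_1\cdots\chi_q\otimes\partial_{\pi v_1}\cdots\partial_{\pi v_p}$, which is the asserted class. I expect the main obstacle to be the odd factor $C_{\mathrm{odd}}$: the sign bookkeeping in the identity $\delta^\ast_{\mathrm{odd}}h + h\delta^\ast_{\mathrm{odd}} = \mathrm{(degree)}$ is where supercommutativity genuinely intervenes and must be handled with care, and one must simultaneously justify the K\"unneth decomposition --- that the two partial differentials truly super-commute and that the factor cohomologies are flat, so that no $\mathrm{Tor}$ term appears. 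The final globalization, namely that these local generators patch with the transition rule \eqref{transf} so that the $\mathcal{E}xt$ sheaf is the Berezinian, is a separate local coordinate computation that I would defer, as indicated in the previous subsection.
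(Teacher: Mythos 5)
Your proof is correct, but it takes a genuinely different route from the paper's. The paper does not split $\delta^\ast$ into even and odd parts at all: it regroups the generators \emph{crosswise}, collecting the $N = p+q$ commuting elements $(s_1,\ldots,s_N) \defeq (v_1,\ldots,v_p,\partial_{\pi\chi_1},\ldots,\partial_{\pi\chi_q})$ and the $N$ anticommuting elements $(\psi_1,\ldots,\psi_N) \defeq (\partial_{\pi v_1},\ldots,\partial_{\pi v_p},\chi_1,\ldots,\chi_q)$, so that $\delta^\ast = \sum_{i=1}^N s_i\psi_i$ and the whole complex is recognized in one stroke as the dual of the \emph{ordinary commutative} Koszul complex, namely the exterior algebra on $\psi_1,\ldots,\psi_N$ over the polynomial ring $\mathcal{O}(U)[s_1,\ldots,s_N]$; the classical result (cited to Eisenbud) then gives cohomology concentrated in a single spot and generated by $\psi_1\cdots\psi_N = \mathcal{D}$, with no K\"unneth argument and no homotopy. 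Your decomposition $C_{\mathrm{ev}}\otimes_{\stsheaf} C_{\mathrm{odd}}$ buys self-containedness at the cost of extra work: the even factor is classical, but you must compute the odd factor by hand and justify the K\"unneth collapse (both are fine here, since each factor cohomology is free of rank one over $\stsheaf$, so no $\mathrm{Tor}$ appears). Two remarks. First, your Euler-operator claim does check out: the anticommutator $\delta^\ast_{\mathrm{odd}}h + h\delta^\ast_{\mathrm{odd}}$ acts on a monomial $\chi_I\otimes(\partial_{\pi\chi})^K$ as multiplication by $|K| + q - |I|$, which vanishes exactly on $\stsheaf\cdot\chi_1\cdots\chi_q$ and is strictly positive elsewhere, so acyclicity away from that line follows as you expect. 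Second, you could have skipped the homotopy entirely by observing that $C_{\mathrm{odd}}$ is itself a classical dual Koszul complex with the roles of polynomial and exterior variables exchanged --- over $\stsheaf[\partial_{\pi\chi_1},\ldots,\partial_{\pi\chi_q}]$ with exterior generators $\chi_1,\ldots,\chi_q$ --- which is precisely the observation that the paper's crosswise regrouping exploits globally, merging your two factors into one. A small dividend of your route is the grading bookkeeping: the generator visibly sits in degree $p + 0 = p$ of the original grading by $\cat{S}^\bullet\Pi\mathcal{E}^\ast$, whereas in the paper's regrouped complex the generator sits in regrouped exterior degree $N = p+q$, and one must translate back to the grading in which the theorem is stated.
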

\begin{proof} It is immediate to observe that the element $\mathcal{D} \defeq \chi_1 \ldots \chi_q \otimes \partial_{\pi v_1 } \ldots \partial_{\pi v_p} \in \cat{S}^q \mathcal{E} \otimes \cat{S}^p \Pi \mathcal{E}^\ast$ belong to the kernel of $\delta^\ast.$ We can then observe that locally $
\mathcal{R} \otimes_{\mathcal{O}_\mani} \mathcal{R}^{\Pi \ast} $ is generated over $\mathcal{O}_\mani$ by the elements $ ( v_1, \ldots v_p, \partial_{\pi \chi_1}, \ldots, \partial_{\pi \chi_q} | \partial_{\pi v_1}, \ldots, \partial_{\pi v_p} , \chi_1, \ldots, \chi_q )$. Posing $N \defeq p + q$, we can redefine the generators as  
$ (s_1, \ldots, s_N) \defeq \left ( v_1, \ldots v_p, \partial_{\pi \chi_1}, \ldots, \partial_{\pi \chi_q} \right )$ and $ (\psi_1, \ldots, \psi_N) \defeq \left (  \partial_{\pi v_1}, \ldots, \partial_{\pi v_p} , \chi_1, \ldots, \chi_q \right )$, so that in particular one has that $ \delta^\ast =  \sum_{i=1}^N u_i \psi_i $ and $\mathcal{D} = \prod_{j=1}^N \psi_i $ and $\mathcal{R} \otimes \mathcal{R}^{\Pi \ast}$ becomes a sheaf of exterior algebras 
\bear
\left ( \mathcal{R} \otimes \mathcal{R}^{\Pi \ast} \right ) (U) \cong \bigwedge^\bullet_{\mathcal{O}(U)[s_1, \ldots, s_N]} (\psi_1, \ldots, \psi_N)
\eear
over the commutative ring $\mathcal{O} (U)[s_1, \ldots, s_n]$. This is the dual of the ordinary commutative Koszul complex, whose cohomology is concentrated in top-degree, in this case $N$, and generated over $\mathcal{O}_\mani$ by the element $\mathcal{D} =  \psi_1 \ldots \psi_N,$ see \cite{Eisenbud}. 
\end{proof}
{\remark The crucial point is now that the class singled out by the dual of the Koszul complex transforms by the multiplication by the Berezinian of an automorphism of the sheaf $\mathcal{E}$. More precisely, we prove the following result.}
\begin{theorem} Let $\mani$ be a real, complex, or algebraic supermanifold with structure sheaf $\mathcal{O}_\mani$, let $\mathcal{E}$ be a locally-free sheaf of $\mathcal{O}_\mani$-modules on $\mani$ and let $\varphi \in \mathcal{A}ut (\mathcal{E})$ be an automorphism of $\mathcal{E}$. Then the induced automorphism $\widehat \varphi \in \mathcal{A}ut (\mathcal{E}xt^p_\mathcal{R} (\stsheaf, \mathcal{R}))$ is given by the multiplication by the {inverse} of the Berezinian of the automorphism, \emph{i.e.}
\bear
\xymatrix@R=1.5pt{
\widehat{\varphi} : \mathcal{E}xt^p_\mathcal{R} (\stsheaf, \mathcal{R}) \ar[r] & \mathcal{E}xt^p_\mathcal{R} (\stsheaf, \mathcal{R}) \\
\mathcal{D} \ar@{|->}[r] & \mathcal{D} \cdot Ber (\varphi)^{-1} 
}
\eear
\end{theorem}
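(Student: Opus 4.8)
The plan is to realize $\widehat{\varphi}$ as the map induced on top homology by the automorphism that $\phi$ induces on the whole dual super Koszul complex, and then to exploit that this homology is free of rank one over $\mathcal{O}_\mani$ in order to reduce the statement to the computation of a single scalar. First I would observe that $\phi \in \mathcal{A}ut(\mathcal{E})$ functorially induces an automorphism of $\mathcal{R} = \cat{S}^\bullet \mathcal{E}$ and, via the contragredient (inverse-dual) action, an automorphism of $\mathcal{R}^{\Pi \ast} = \cat{S}^\bullet \Pi \mathcal{E}^\ast$; together these yield an algebra automorphism $g$ of $\mathcal{K}^{\mathcal{E}\ast}_\bullet = \mathcal{R} \otimes_{\stsheaf} \mathcal{R}^{\Pi \ast}$. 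The key point is that $\delta^\ast$ is multiplication by the element $\sum_i v_i \otimes \partial_{\pi v_i} + \sum_j \chi_j \otimes \partial_{\pi \chi_j}$, which is precisely the image of $1$ under the coevaluation $\mathcal{O}_\mani \to \mathcal{E} \otimes \mathcal{E}^\ast$ (up to the parity identifications) and is therefore fixed by $g$. A short check in the diagonal case confirms this invariance, so $g(\delta^\ast \cdot x) = \delta^\ast \cdot g(x)$, i.e.\ $g$ is a chain map and descends to $\widehat{\varphi}$ on $\mathcal{E}xt^p_\mathcal{R}(\stsheaf, \mathcal{R})$.

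By Theorem \ref{dualhomology} this homology is locally free of rank one over $\mathcal{O}_\mani$ with generator $\mathcal{D}$, so necessarily $\widehat{\varphi}(\mathcal{D}) = \lambda \cdot \mathcal{D}$ for a unique unit $\lambda \in \mathcal{O}_\mani$, and the whole statement collapses to identifying $\lambda = Ber(\varphi)^{-1}$. To compute $\lambda$ I would use the reindexing from the proof of Theorem \ref{dualhomology}, where the even generators $s_i \defeq (v_1,\ldots,v_p,\partial_{\pi\chi_1},\ldots,\partial_{\pi\chi_q})$ form a polynomial ring, the odd generators $\psi_i \defeq (\partial_{\pi v_1},\ldots,\partial_{\pi v_p},\chi_1,\ldots,\chi_q)$ an exterior algebra, $\delta^\ast = \sum_i s_i \psi_i$, and $\mathcal{D} = \psi_1 \cdots \psi_N$. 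In this description the class in $H^N$ of any element $P(s)\,\psi_1 \cdots \psi_N$ is obtained by reducing $P$ modulo the ideal $(s_1,\ldots,s_N)$, that is, by setting $s=0$: this observation is exactly what will let me discard the spurious terms produced by $g$.

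I would then determine $\lambda$ by factoring $\phi$. Writing $\phi$ in block form relative to the even/odd splitting and using the factorization $\left(\begin{smallmatrix} A & B \\ C & D \end{smallmatrix}\right) = \left(\begin{smallmatrix} 1 & BD^{-1} \\ 0 & 1 \end{smallmatrix}\right)\left(\begin{smallmatrix} A-BD^{-1}C & 0 \\ 0 & D \end{smallmatrix}\right)\left(\begin{smallmatrix} 1 & 0 \\ D^{-1}C & 1 \end{smallmatrix}\right)$, both $Ber$ and $\phi \mapsto \widehat{\varphi}$ are multiplicative, so it suffices to treat the block-diagonal and the two unipotent factors separately. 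For a block-diagonal automorphism $\mathrm{diag}(A,D)$ one has $\chi_1 \cdots \chi_q \mapsto \det(D)\,\chi_1 \cdots \chi_q$ and, by contragredience, $\partial_{\pi v_1} \cdots \partial_{\pi v_p} \mapsto \det(A)^{-1}\,\partial_{\pi v_1} \cdots \partial_{\pi v_p}$, whence $\widehat{\varphi}(\mathcal{D}) = \det(A)^{-1}\det(D)\,\mathcal{D} = Ber(\phi)^{-1}\mathcal{D}$, in agreement with the claim (and with $A$ replaced by $A-BD^{-1}C$ for the middle factor). For the two unipotent factors $Ber = 1$, and the extra terms that $g$ introduces all carry at least one factor $s_i$, hence land in $(s_1,\ldots,s_N)\cdot\psi_1 \cdots \psi_N = \delta^\ast(\textstyle\bigwedge^{N-1})$ and vanish in $H^N$, leaving $\widehat{\varphi}(\mathcal{D}) = \mathcal{D}$.

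The main obstacle is precisely this last point: the off-diagonal odd blocks $B,C$ of $\phi$ genuinely mix the generators of $\mathcal{R}$ with those of $\mathcal{R}^{\Pi\ast}$, so $g(\mathcal{D})$ is \emph{not} proportional to $\mathcal{D}$ already at the cochain level in $\bigwedge^N$ — one must verify carefully that every correction term produced by the unipotent factors is $\delta^\ast$-exact, i.e.\ cohomologically trivial. Controlling the Koszul signs in this reduction, and pinning down that the contragredient action on $\mathcal{R}^{\Pi\ast}$ is the one that simultaneously fixes $\delta^\ast$ and produces the \emph{inverse} determinant in the diagonal computation, is where the real care is required; everything else is bookkeeping on the free rank-one module supplied by Theorem \ref{dualhomology}.
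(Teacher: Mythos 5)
Your proposal is correct and follows essentially the same route as the paper's proof: the same unipotent--diagonal--unipotent factorization of $[\varphi]$, multiplicativity (functoriality) of the induced action on $\mathcal{E}xt^p_{\mathcal{R}}(\mathcal{O}_{\mathpzc{M}},\mathcal{R})$, the computation $\det(D)\det(A)^{-1}$ for the block-diagonal factor, and triviality of the two unipotent factors at the level of homology. The only difference is one of explicitness: you spell out what the paper leaves to a remark, namely that the induced map commutes with $\delta^\ast$ (invariance of the coevaluation element) and that the correction terms produced by the unipotent factors lie in $(s_1,\ldots,s_N)\cdot \psi_1\cdots\psi_N$, hence are $\delta^\ast$-exact and vanish in $H^N$.
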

\begin{proof} Fixing a local system of generator for $\mathcal{E}$ given by $\{v_1, \ldots, v_p | \chi_1, \ldots, \chi_q \}$, then $\varphi \in Aut (\mathcal{E})$ is represented by a matrix $[M] \in GL(p|q, \mathcal{O}_{\mani} (U))$
\bear
[M (\varphi)]_{\alpha \beta} = \left ( 
\begin{array}{c|c}
A & B \\
\hline 
C & D
\end{array}
\right ) = \left ( 
\begin{array}{c|c}
a_{hi} & b_{hj} \\
\hline c_{ki} & d_{kj}
\end{array}
\right )
\eear
with $A, D$ even and $B, C$ odd submatrices. 
Now, if $\varphi_i$ for $i=1,2$ are automorphisms of $\mathcal{E}$, contravariant functoriality of the construction, see \cite{NojaRe}, implies that the product of two matrices $M (\varphi_1) \cdot M (\varphi_2) $ induces the product of two elements, we call them $ \widehat{\varphi}_{M (\varphi_2)} \cdot \widehat{\varphi}_{M (\varphi_1)} $, which acts as automorphisms of $\mathcal{E}xt^p_{\mathcal{R}} (\stsheaf,\mathcal{R}).$ It follows that we can use the standard decomposition
\bear
\left (
\begin{array}{c|c}
A & B \\
\hline 
C & D 
\end{array} 
\right )= \left ( 
\begin{array}{c|c}
1 & BD^{-1} \\
\hline 
0 & 1
\end{array}
\right )
\left ( 
\begin{array}{c|c}
A - BD^{-1}C & 0 \\
\hline 
0 & D
\end{array}
\right )
\left ( 
\begin{array}{c|c}
1 & 0 \\
\hline 
D^{-1}C & 1
\end{array}
\right )
\eear
to identify the action of the automorphisms $\mathcal{E}xt^p_{\mathcal{R}} (\stsheaf,\mathcal{R}) $ in terms of $M (\varphi)$. 
We consider separately the following cases:
\bear
(1): \;  M (\varphi) = \left (
\begin{array}{c|c}
A & 0 \\
\hline 
0 & D 
\end{array} 
\right ), \qquad
(2): \;  M (\varphi) = \left (
\begin{array}{c|c}
1 & 0 \\
\hline 
\ast & 1 
\end{array} 
\right ), \qquad
(3):  \;  M (\varphi ) = \left (
\begin{array}{c|c}
1 & * \\
\hline 
0 & 1 
\end{array} 
\right ).
\eear 
It is easy to see that only in the first case we have an induced transformation of the homology class $\mathcal{D} = [\chi_1 \ldots \chi_q \otimes \partial_{\pi v_1} \ldots \partial_{\pi v_p}]$ given by the multiplication by $ \det{(D)} \cdot \det{(A)}^{-1}$. In the two remaining cases the homology class $\mathcal{D}$ is invariant. It follows from the above decomposition that $\widehat \varphi_{M(\varphi)} = \det (D) \det (A- BD^{-1}C)^{-1}$, which is indeed $Ber([\varphi])^{-1}$ as claimed. 
\end{proof}
{\remark Notice that what plays a crucial role in the previous proof is the fact that \emph{$\mathcal{D}$ is a cohomology class}, so that those parts of the transformation on the representative which yield exact elements do not contribute to the induced automorphism. }
{\remark Clearly, it is enough to consider $\mathcal{E}^\ast \defeq \mathcal{H}om (\mathcal{E}, \mathcal{O}_{\mani})$ instead of $\mathcal{E}$ and the related homology of the dual of the super Koszul complex as to obtain the expected transformation
\bear
\xymatrix@R=1.5pt{
\widehat \varphi_{\mathcal{E}^\ast} : \mathcal{E}xt^p_{\scriptsize{\cat{S}}^\bullet \mathcal{E}^\ast} (\stsheaf, \cat{S}^\bullet \mathcal{E}^\ast) \ar[r] & \mathcal{E}xt^p_{\scriptsize{\cat{S}}^\bullet \mathcal{E}^\ast} (\stsheaf, \cat{S}^\bullet \mathcal{E}^\ast) \\
\mathcal{D}^\ast \ar@{|->}[r] & \mathcal{D}^\ast \cdot Ber (\varphi_\mathcal{E})  
}
\eear
where $\varphi_{\mathcal{E}}$ is an automorphism of $\mathcal{E}$. This suggests that given a locally-free sheaf $\mathcal{E}$ on a real, complex, or algebraic supermanifold, one can see the previous construction as a \emph{defining} one for the notion of \emph{Berezinian sheaf of $\mathcal{E}$}, by posing 
\bear
\mathcal{B}er(\mathcal{E}) \defeq \mathcal{E}xt^p_{\scriptsize{\cat{S}}^\bullet \mathcal{E}^\ast} (\stsheaf, \cat{S}^\bullet \mathcal{E}^\ast).
\eear
In particular, to make contact with the previous subsection, we give the following definition which agrees also with that of Manin \cite{Manin} and Witten \cite{Witten}.}
\begin{definition}[Berezinian of a Supermanifold] Let $\mani$ be a real, complex, or algebraic supermanifold with structure sheaf $\mathcal{O}_\mani$. We call the Berezinian sheaf of $\mani$ and we denote it by $\mathcal{B}er(\mani)$ the locally-free sheaf of $\mathcal{O}_\mani$-modules of rank $\delta_{0, n+m} | \delta_{1, n+m} $ defined by 
\bear
\mathcal{B}er (\mani) \defeq \mathcal{B}er ( \Omega^{1}_\mani), 
\eear
where $\mathcal{B}er (\Omega^{1}_\mani ) =  \mathcal{E}xt^p_{\scriptsize{\cat{S}}^\bullet (\Omega^1_{\mani})^\ast} (\stsheaf, \cat{S}^\bullet (\Omega^1_{\mani})^\ast)$. 
\end{definition}
{\remark Notice that, as above, the Berezinian sheaf of $\mani$ is locally generated by the class $[dx_1 \ldots dx_p \otimes \partial_{\theta_1} \ldots \partial_{\theta_q}]$ and that it can be equivalently defined as $\mathcal{B}er (\mani) \defeq \mathcal{B}er (\mathcal{T}_\mani)$, since if $A$ is an automorphism, then $Ber (A^{st} ) = Ber (A)$, $Ber (A^{-1}) = Ber (A)^{-1}$ and $Ber (\Pi A) = Ber (A)^{-1}$. }

\subsection{Berezinian Sheaf from Cohomology of Forms and Operators} The last construction of the Berezinian sheaf that we will discuss stands somewhat in between the previous ones, as it employs the sheaf $\mathcal{D}_\mani$ to \virgolette deform'' in a non-commutative fashion the previous Koszul complex construction \cite{CNR}. In particular, we consider the following tensor product of sheaves.
\begin{definition}[Universal de Rham Sheaf] Let $\mani$ be a real or complex supermanifold. We call the tensor product sheaf $\mathpzc{DR}_\mani \defeq \Omega^\bullet_\mani \otimes_{\stsheaf} \mathcal{D}_\mani$ the universal de Rham sheaf of $\mani$.
\end{definition}
{\remark It is easy to see that $\mathpzc{DR}_\mani$ is both $\mathbb{Z}$-graded and $\mathbb{Z}_2$-graded. Further, it is a sheaf of left $\Omega^\bullet_\mani$-modules - and hence also left $\mathcal{O}_\mani$-modules - and a sheaf of right $\mathcal{D}_\mani$-modules. Notice, by the way, that the structure of right $\mathcal{O}_\mani$-module induced by $\mathcal{D}_\mani$ does not coincide with that of left $\mathcal{O}_\mani$-module, since $\mathcal{D}_\mani$ is non-commutative. }
{\remark There is an obvious operator acting on $\mathpzc{DR} (\mani)$, whose action is given by
\bear
\xymatrix@R=1.5pt{
\mathscr{D} : \mathpzc{DR}_\mani \ar[r] & \mathpzc{DR}_\mani \\
\omega \otimes F \ar@{|->}[r] &  \mathscr{D} \left (\omega \otimes F \right ) \defeq  \sum_{a} (-1)^{|\omega| |x_a|} dx_a  \omega \otimes \partial_{x_a}  \circ F,
 }
\eear
for any $\omega \otimes F \in \mathpzc{DR}_\mani.$ It is not hard to prove that $\mathscr{D}$ is globally well-defined - as the $dx_a$'s and the $\partial_{x_a}$'s transform dually - and that for any $f \in \mathcal{O}_\mani$ one has that indeed $\mathscr{D} (\omega f \otimes F) = \mathscr{D} (\omega \otimes f F)$. Further, it is immediate to observe that the operator $\mathscr{D}$ is \emph{nilpotent}, since it can be seen as the multiplication by the odd element $\sum_a dx_a \otimes \partial_{x_a} \in \mathpzc{DR}_\mani$, \emph{i.e.}\ 
\bear
\mathscr{D} (\omega \otimes F) \defeq \left (\sum_a dx_a \otimes \partial_{x_a} \right ) \cdot ( \omega \otimes F).
\eear
We thus have that the pair $(\mathpzc{DR}^\bullet_\mani, \mathscr{D})$ defines a complex of sheaves, whose $\mathbb{Z}$-grading is induced by the one of $\Omega^\bullet_\mani$. The cohomology of this complex provides another construction of the Berezinian sheaf of $\mani$.}
\begin{theorem}[Cohomology of $\mathpzc{DR}_\mani^\bullet$] \label{Con3} Let $\mani$ be a real or complex supermanifold of dimension $p|q$. Then the homology of the complex $(\mathpzc{DR}^\bullet_\mani, \mathscr{D})$ is naturally isomorphic to the Berezinian sheaf of $\mani$, \emph{i.e.}\
\bear
\mathcal{H}^p \left ( (\mathpzc{DR}^\bullet_\mani, \mathscr{D} ) \right ) \cong \mathcal{B}er (\mani)
\eear
and $\mathcal{H}^i \left ( (\mathpzc{DR}^\bullet_\mani, \mathscr{D} ) \right ) \cong 0 $ for any $i \neq p$.
\end{theorem}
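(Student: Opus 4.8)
The plan is to identify $(\mathpzc{DR}^\bullet_\mani,\mathscr{D})$ as an order-filtered deformation of the dual super Koszul complex of $\Omega^1_\mani$, and to read off its cohomology from Theorem \ref{dualhomology}. Since both $\mathpzc{DR}^\bullet_\mani$ and $\mathcal{B}er(\mani)$ are assembled from local data with prescribed transition behaviour, I would first localize to a chart with coordinates $x_i\,|\,\theta_\alpha$, compute the cohomology there while tracking the surviving generator, and only at the end reassemble the global, natural isomorphism.

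The key computation is the principal symbol of $\mathscr{D}$. The order filtration $F^\bullet\mathcal{D}_\mani$ induces a filtration on $\mathpzc{DR}^\bullet_\mani=\Omega^\bullet_\mani\otimes_\stsheaf\mathcal{D}_\mani$; since post-composing with $\partial_{y_a}$ raises the order of a differential operator by exactly one, writing $\partial_{y_a}\circ F=F\,\partial_{y_a}+[\partial_{y_a},F]$ splits $\mathscr{D}$ into its symbol, which strictly raises the order by one, and a lower-order Leibniz correction. Using the identification $\mathrm{gr}^\bullet_F\mathcal{D}_\mani\cong\cat{S}^\bullet_\stsheaf\mathcal{T}_\mani$ recorded earlier, the induced differential on the associated graded is multiplication by $\sum_a dy_a\otimes\partial_{y_a}$ on $\cat{S}^\bullet\Omega^1_\mani\otimes_\stsheaf\cat{S}^\bullet\mathcal{T}_\mani$. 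This is exactly the differential $\delta^\ast$ of the dual super Koszul complex $(\mathcal{K}^{\mathcal{E}\ast}_\bullet,\delta^\ast)$ for the choice $\mathcal{E}=\Omega^1_\mani$, under the identifications $\cat{S}^\bullet\mathcal{E}=\Omega^\bullet_\mani$ and $\cat{S}^\bullet\Pi\mathcal{E}^\ast=\cat{S}^\bullet\mathcal{T}_\mani$, so that the dual basis of the even generators $d\theta_\alpha$ of $\Omega^1_\mani$ is precisely $\partial_{\theta_\alpha}$.

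At this point Theorem \ref{dualhomology} carries the weight: the cohomology of the associated graded complex is concentrated in the single bidegree (form-degree $p$, operator-order $q$) and is locally generated over $\stsheaf$ by the one class $[dx_1\cdots dx_p\otimes\partial_{\theta_1}\cdots\partial_{\theta_q}]$, which is exactly the local generator of $\mathcal{B}er(\mani)$ met in the previous subsections. Because this cohomology lives in a single form-degree, the spectral sequence of the order filtration has no room for higher differentials and there is no extension problem; hence $\mathcal{H}^i(\mathpzc{DR}^\bullet_\mani,\mathscr{D})=0$ for $i\ne p$, while $\mathcal{H}^p$ is the free rank-one module on the surviving class. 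Sending that class to the Berezinian generator yields the isomorphism $\mathcal{H}^p\cong\mathcal{B}er(\mani)$; invoking the transformation rule already established, namely that the Koszul class is multiplied by $Ber$ of the Jacobian under a change of chart, promotes this to a global, natural isomorphism of (right $\mathcal{D}_\mani$-module) sheaves.

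I expect the main obstacle to be the passage from the associated graded back to $\mathpzc{DR}^\bullet_\mani$ itself, that is, controlling the Leibniz correction. The operator $\mathscr{D}$ is genuinely non-commutative and agrees with the Koszul differential only to leading order, and the order filtration, though exhaustive and bounded below, is unbounded above, so some care with the convergence of the symbol spectral sequence is required. The decisive simplification, which I would stress, is that the graded cohomology sits in the single total degree $p$, which forces both degeneration and rigidity of the outcome irrespective of these analytic subtleties. A secondary point worth verifying is the parity and degree bookkeeping: it is the choice $\mathcal{E}=\Omega^1_\mani$, of rank $q|p$, rather than $\mathcal{T}_\mani$, that places the generator $dx_1\cdots dx_p\otimes\partial_{\theta_1}\cdots\partial_{\theta_q}$ in form-degree $p$, in agreement with the claimed degree and with the two earlier constructions of $\mathcal{B}er(\mani)$.
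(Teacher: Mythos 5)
Your proposal is correct, but it takes a genuinely different route from the paper's own proof. The paper proceeds by a direct, self-contained homotopy argument: in a local chart it exhibits an explicit operator $\mathscr{H}$, built from the contractions $\iota_{\pi \partial_{x_a}} = \partial_{dx_a}$ and the commutators $[\partial_J, x_a]$, and computes that $\mathscr{H}\mathscr{D}+\mathscr{D}\mathscr{H}$ acts on a monomial $\omega \otimes F$ as multiplication by the Euler-type scalar $p+q+\deg_0(\omega)+\deg_0(\partial_J)-\deg_1(\omega)-\deg_1(\partial_J)$, which is strictly positive except precisely on the monomials $dz_1 \ldots dz_p \otimes \partial_{\theta_1}\ldots \partial_{\theta_q} f$; these are $\mathscr{D}$-closed and generate $\mathcal{B}er(\mani)$, so no Koszul complex, filtration, or spectral sequence ever enters. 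You instead realize $(\mathpzc{DR}^\bullet_\mani,\mathscr{D})$ as a filtered non-commutative deformation of the dual super Koszul complex of $\mathcal{E}=\Omega^1_\mani$ and reduce to Theorem \ref{dualhomology} by a degeneration argument; this buys conceptual clarity -- it exhibits the statement as rigidity of the Koszul cohomology under the deformation $\cat{S}^\bullet\mathcal{T}_\mani \rightsquigarrow \mathcal{D}_\mani$ and reuses the earlier computation instead of redoing it, making the appearance of the paper's $\mathcal{E}xt$-definition of $\mathcal{B}er(\mani)$ tautological rather than a matter of matching generators -- at the price of spectral-sequence bookkeeping that the paper's computation avoids. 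Two points of that bookkeeping should be tightened. First, the naive filtration $\Omega^\bullet_\mani \otimes F^j\mathcal{D}_\mani$ is \emph{not} preserved by $\mathscr{D}$, which raises the operator order together with the form degree; you must use the shifted filtration $G_j \mathpzc{DR}^k_\mani \defeq \Omega^k_\mani \otimes F^{j+k}\mathcal{D}_\mani$, which is a filtration by subcomplexes whose associated graded is exactly the weight decomposition of the dual Koszul complex. Second, your claim that concentration of the graded cohomology settles matters \virgolette irrespective of the analytic subtleties'' is backwards: concentration of $E_1$ gives degeneration and triviality of the extension problem, but convergence of the spectral sequence must come from the standard hypothesis -- which does hold here -- namely that for each fixed cohomological degree $k$ the filtration is exhaustive and bounded below ($G_j\mathpzc{DR}^k_\mani = 0$ for $j< -k$), so the classical convergence theorem applies degreewise even though the complex is unbounded above.
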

\begin{proof} We need to construct a homotopy for the operator $\mathscr{D}$. We work in a local chart $(U, x_a)$ so that the sheaf $\mathpzc{DR}_\mani \defeq \Omega^\bullet_{\mani} \otimes_{\stsheaf} \mathcal{D}_\mani$ is given by the sheaf of vector spaces generated by monomials having the form $ \omega \otimes F $, with $\omega = dx_I$ and $F = \partial_J f$ for multi-indices $I$ and $J$ and some function $f \in \stsheaf \lfloor_{U}$. We claim that the homotopy is given by the (local) operator 
\bear
\mathscr{H} (\omega \otimes F) \defeq \sum_a (-1)^{|x_a|( |\omega| + |\partial^J| + 1)}{\iota_{\pi \partial_{x_a}} }dx_I \otimes [\partial_J , x_a] f.
\eear
where the derivation $\iota_{\pi \partial_{x_a}} \defeq \partial_{dx_a}$ is the contraction with respect to the coordinate field $\pi \partial_a$, so that $\iota_{\pi \partial_{x_a}} (dx_b) = \delta_{ab}.$ A lengthy but not too hard computation gives
\begin{align}
( \mathscr{H} \mathscr{D} + \mathscr{D} \mathscr{H} ) (\omega \otimes F) & =  \left ( p + q + \mbox{deg}_0 (\omega) + \mbox{deg}_0 (\partial_J) - \mbox{deg}_1 (\omega) - \mbox{deg}_1 (\partial_J) \right ) (\omega \otimes F), 
\end{align}
where $\deg_0$ and $\deg_1$ are the degrees with respect to the even and odd generators. We have that $\mbox{deg}_1 (\omega) \leq p $ and $\mbox{deg}_1 (\partial_J) \leq q$, therefore the homotopy fails if and only if $\mbox{deg}_0 (\omega) = 0 = \mbox{deg}_0 (\partial_J) $ and $\mbox{deg}_1 (\omega) = p$, $\mbox{deg}_1 (\partial^J) = q:$ the monomial $ \omega \otimes F$ is given by $ dz_1 \ldots dz_p \otimes \partial_{\theta_1} \ldots \partial_{\theta_q}  f$
for $f \in \mathcal{O}_\mani \lfloor_{U}$.  This element is clearly in the kernel of $\mathscr{D}$ and it generates the Berezinian sheaf $\mathcal{B} er(\mani)$ as $\mathcal{O}_\mani$-module.
\end{proof}
{\remark Notice, once again, that this construction holds true in the smooth and holomorphic category, but also in the algebraic category.}

\section{Properties of the Berezinian Sheaf}

\noindent Having defined the Berezinian sheaf in the previous section, we are now interested in studying its properties. In the first subsection, in theorem \ref{bercan}, we will show how the Berezinian sheaf of a supermanifold $\mani$ is related to the canonical sheaf of its underlying manifold $\manir$: this will prove crucial in the definition of the Berezin integral. Further, in the second subsection, we will show that the Berezinian sheaf is a \emph{right} $\mathcal{D}_\mani$-module. This theory has been first developed by Penkov in the marvelous \cite{Penkov}, where \emph{Serre duality} - see also \cite{OP} - and \emph{Mebkhout duality} for complex supermanifolds are also proved. Later on, results in this direction appeared also in the book \cite{Manin}, however the relation with $\mathcal{D}_\mani$-module theory is left somewhat hidden. In this section we make this connection apparent, by spelling out all of its details and stressing differences and similarities with the ordinary commutative theory. Indeed, the presence of a right $\mathcal{D}_\mani$-module structure on the Berezinian sheaf is another striking analogy between the Berezinian sheaf and its commutative counterpart, the canonical sheaf on an ordinary manifold $X$, which carries as well the structure of right $\mathcal{D}_X$-module. More precisely we will see that in a similar fashion as in ordinary commutative theory, the right $\mathcal{D}_\mani$-module structure on $\mathcal{B}er (\mani)$ is related to the action of the Lie derivative on it.
{\remark As it should be clear from the previous section, it has to be noticed that the Berezinian sheaf is \emph{not} a sheaf of differential forms, and as such it does not appear in the de Rham complex $\Omega^\bullet_\mani$ of $\mani$. It follows that it is not trivial to define a notion of Lie derivative acting on sections of $\mathcal{B}er (\mani)$. Indeed, the so-called \emph{Cartan formula} $\mathcal{L}_V \omega = \{ d, \iota_V \} (\omega) $ which holds true for differential forms $\omega \in \Omega^i_X$ for any $i = 0, \ldots , \dim X$ and can be readily generalized to the de Rham complex of a supermanifold, does not apply to the Berezinian.  }

\subsection{Berezinian and Canonical Sheaf} In this first subsection we prove an easy, yet very important isomorphism, which establishes a crucial relation between the Berezinian sheaf $\mathcal{B}er (\mani) $of a supermanifold $\mani$ of dimension $p|q$ and the canonical sheaf $\Omega^p_{\manir}$ of the reduced space of $\manir$. We start by proving the following ancillary result, see for example \cite{Manin}.
\begin{lemma} \label{ferm} Let $\mani $ be a real or complex supermanifold. Then we have the following isomorphism of sheaves of $\stsheafred$-modules
\bear
\mathcal{F}_\mani \cong \Pi \left ( \slantone{\Omega^1_\mani}{\mathcal{J}_\mani \Omega^1_\mani} \right )_0  
\eear
where the subscript $0$ refers to the $\mathbb{Z}_2$-grading of the quotient sheaf $\Omega^1_\mani / \mathcal{J}_\mani \Omega^1_\mani.$
\end{lemma}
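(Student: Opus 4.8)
The plan is to realise the claimed isomorphism concretely through the de Rham differential. I would define a morphism of sheaves of $\stsheafred$-modules by restricting the intrinsic odd derivation $d\colon \stsheaf \to \Omega^1_\mani$ (whose global well-definedness is recorded around Lemma~\ref{dgsa}) to the nilpotent sheaf $\mathcal{J}_\mani$ and composing with the canonical projection onto the quotient,
\[
\mathcal{J}_\mani \xrightarrow{\ d\ } \Omega^1_\mani \xrightarrow{\ \mathrm{pr}\ } \slantone{\Omega^1_\mani}{\mathcal{J}_\mani \Omega^1_\mani}.
\]
Since $d$ is coordinate-independent, this composition involves no choice of local trivialisation. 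The first step is to check that it factors through $\mathcal{F}_\mani = \slantone{\mathcal{J}_\mani}{\mathcal{J}^2_\mani}$: for $f,g \in \mathcal{J}_\mani$ the Leibniz rule gives $d(fg) = (df)g + (-1)^{|f|} f(dg)$, and each summand carries a factor lying in $\mathcal{J}_\mani$, so $d(\mathcal{J}^2_\mani) \subseteq \mathcal{J}_\mani \Omega^1_\mani$ and the map descends to a morphism $\overline{d}\colon \mathcal{F}_\mani \to \slantone{\Omega^1_\mani}{\mathcal{J}_\mani\Omega^1_\mani}$.

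Next I would verify $\stsheafred$-linearity and pin down the parity. For the action of $\bar a \in \stsheafred$, represented by an even $a \in \stsheaf$ (admissible, as $\stsheafred$ is purely even), Leibniz gives $d(af) = (da)f + a\,(df)$; the first term lies in $\mathcal{J}_\mani\Omega^1_\mani$ and dies in the quotient, so $\overline{d}(\bar a\,[f]) = \bar a\,\overline{d}([f])$, which is $\stsheafred$-linearity. For the parity, one observes that $\mathcal{J}^2_\mani$ contains $\mathcal{O}_{\mani,1}^2$, so $\mathcal{F}_\mani$ is purely \emph{odd}; since $d$ is odd, the image of $\overline{d}$ lands in the even part $\left(\slantone{\Omega^1_\mani}{\mathcal{J}_\mani\Omega^1_\mani}\right)_0$. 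Absorbing this single parity reversal into the functor $\Pi$ turns $\overline{d}$ into an \emph{even} morphism $\mathcal{F}_\mani \to \Pi\left(\slantone{\Omega^1_\mani}{\mathcal{J}_\mani\Omega^1_\mani}\right)_0$, which is exactly the shape required by the statement.

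Finally, to see that $\overline{d}$ is an isomorphism it suffices to argue locally, both sheaves being locally free over $\stsheafred$. In a chart with coordinates $x_i | \theta_\alpha$ the ideal $\mathcal{J}_\mani$ is generated by the $\theta_\alpha$, so $\mathcal{F}_\mani$ is free over $\stsheafred$ on the classes $[\theta_\alpha]$, while $\slantone{\Omega^1_\mani}{\mathcal{J}_\mani\Omega^1_\mani}$ is free on the $[d\theta_\alpha]$ (even) and $[dx_i]$ (odd); because $\stsheafred$ carries no odd elements, its even part is freely generated precisely by the $[d\theta_\alpha]$. The map sends $[\theta_\alpha] \mapsto [d\theta_\alpha]$, a bijection between bases of two free $\stsheafred$-modules of rank $m$, hence an isomorphism. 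I expect the main point requiring care—rather than any hard computation—to be the bookkeeping of the $\mathbb{Z}_2$-grading: one must confirm that no term built from the odd generators $[dx_i]$ survives in the even part of the quotient (which holds exactly because $\stsheafred$ is purely even), and that the single parity shift carried by $d$ is precisely the one recorded by $\Pi$.
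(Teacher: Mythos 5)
Your proof is correct, and it takes a genuinely different route from the paper's. The paper defines the map chart by chart on generators, $\theta_\alpha \,\mathrm{mod}\, \mathcal{J}^2_\mani \mapsto d\theta_\alpha \,\mathrm{mod}\, \mathcal{J}_\mani\Omega^1_\mani$, and the entire proof consists of verifying by an explicit coordinate-change computation that these local prescriptions glue: one expands $d\theta'_\alpha$ in a second coordinate system, observes that the $dx$-terms die modulo $\mathcal{J}_\mani\Omega^1_\mani$, and is left with exactly the transformation $\sum_\beta d\theta_\beta\, f_{\alpha\beta}(x)$ obeyed by the classes $[\theta_\alpha]$ in $\mathcal{F}_\mani$. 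You instead realize the same map as the descent of the globally defined de Rham differential, so coordinate-independence costs nothing; the work shifts to the two Leibniz-rule checks — $d(\mathcal{J}^2_\mani)\subseteq \mathcal{J}_\mani\Omega^1_\mani$ for the factorization, and the vanishing of $(da)f$ in the quotient for $\stsheafred$-linearity, which is the genuinely non-obvious point since $d$ itself is only a derivation, not $\stsheaf$-linear — followed by a comparison of local bases. Your version is cleaner and more conceptual. What the paper's computation buys in exchange is the explicit identity $\det(\partial_\theta \theta')_{\mathrm{red}} = \det(f_{\alpha\beta})_{\mathrm{red}}$ identifying the transition matrix of $\mathcal{F}_\mani$ with the reduced transition matrix of the $[d\theta_\alpha]$'s; this identity is invoked verbatim in the proof of Theorem \ref{bercan} immediately afterwards. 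It still follows from your formulation — apply $\overline{d}$ to $[\theta'_\alpha]=\sum_\beta f_{\alpha\beta,\mathrm{red}}[\theta_\beta]$ and compare with the expansion of $[d\theta'_\alpha]$ in the basis $[d\theta_\beta]$ — but one would need to record that one-line extraction to keep the later proof self-contained. A last small point in your favor: your parity bookkeeping is the correct one (the classes $[d\theta_\alpha]$ are even, so the image of $\overline{d}$ lies in the subscript-$0$ part, matching the statement), whereas the paper's proof momentarily writes the subscript $1$ for that same submodule.
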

\begin{proof} First of all, notice that locally, a basis of $\mathcal{F}_\mani = \slanttwo{\mathcal{J}_\mani}{\mathcal{J}^2_\mani}$ is given by $\theta_\alpha \,\mbox{mod}\, \mathcal{J}_\mani^2$ for $\alpha = 1, \ldots, q$ where $q$ is the odd dimension of $\mani$. Further, locally, a basis of $\left ( \Omega^1_{\mani}/\mathcal{J}_\mani \Omega^1_\mani \right )_1 $ read $d\theta_\alpha \mbox{mod} \mathcal{J}_\mani \Omega^1_\mani$, again for $\alpha= 1, \ldots, q$ where $m$
is the odd dimension of $\mani$. 
We claim that the isomorphism reads $\theta^j \, \mbox{mod} \, \mathcal{J}_\mani^2 \mapsto d\theta^j \,\mbox{mod}\, \mathcal{J}_\mani \Omega^1_\mani$, and we prove that it is well-defined and independent of the charts. Indeed, let $x^\prime_a = z_i | \theta^\prime_\alpha $ be another local system of coordinates, then 
the transformation for the transformation of $\mathcal{F}_\mani = \mathcal{J}_\mani / \mathcal{J}_\mani^2$ one has that $\theta^\prime_\alpha \equiv \sum_{\beta} f_{\alpha \beta} (x) \theta_\beta \, \mbox{mod}\, \mathcal{J}^2_\mani.$ It follows that
\begin{align}
d \theta^\prime_\alpha & = \sum_b dx_b \frac{\partial \theta^\prime_\alpha}{\partial x^b}  + \sum_\beta d\theta_\beta \frac{\partial \theta^\prime_\alpha}{\partial \theta_\beta}  \nonumber \\ 
& = \sum_b dx_b  \frac{\partial}{\partial x_b } \left (  \sum_\gamma f_{\alpha \gamma}  (x) \theta_\gamma \, \mbox{mod} \, \mathcal{J}^2_\mani \right ) + \sum_\beta d\theta_\beta \frac{\partial}{\partial \theta_\beta} \left (  \sum_\gamma f_{\alpha \gamma} (x) \theta_\gamma \, \mbox{mod} \, \mathcal{J}^2_\mani\right )  \nonumber \\
& = \sum_{b, \gamma} dx_b \frac{\partial f_{\alpha \gamma} (x)}{\partial x_b} \theta_\gamma \, \mbox{mod}\, \mathcal{J}^2_\mani  + \sum_{\beta} d\theta_\beta f_{\alpha \beta} (x) \, \mbox{mod} \, \mathcal{J}^2_\mani  \nonumber \\
& \equiv \sum_\beta d\theta_\beta \, \mbox{mod} \, \left (  \mathcal{J}_\mani \Omega^1_\mani \right ) f_{\alpha \beta} (x) , 
\end{align}
since $\sum_{b, \gamma} dx_b \frac{\partial f_{\alpha \gamma} (x)}{\partial x_b} \theta_\gamma \, \mbox{mod}\, \mathcal{J}^2_\mani \equiv 0 \, \mbox{mod}\, \mathcal{J}_\mani \Omega^1_\mani.$ Reversing the parity of the local generators $d\theta_\beta$ concludes the proof. 
\end{proof}
\noindent Using the above lemma we prove the result we claimed at the beginning of the subsection.
\begin{theorem} \label{bercan} Let $\mani$ be a real or complex supermanifold and let $\mathcal{J}_\mani \subset \mathcal{O}_\mani$ be its nilpotent sheaf. Then there is a canonical isomorphism of sheaves of $\mathcal{O}_{\manir} \cong \mathcal{O}_\mani / \mathcal{J}_\mani$-modules 
\bear
\varphi: \mathcal{J}_\mani^q \mathcal{B}er (\mani) \stackrel{\cong}{\longrightarrow} \Omega^p_{\manir}, 
\eear 
where $\mathcal{B}er (\mani)$ is the Berezinian sheaf of $\mani$ and $\Omega^p_{\manir}$ is the canonical sheaf of $\manir.$ In local coordinates $x_a = z_i | \theta_\alpha$ the above isomorphism reads
\bear
\mathcal{D}(x) \theta_1 \ldots \theta_q f  \stackrel{\cong}{\longmapsto} dz_{1, \mathpzc{red}} \ldots dz_{p, \mathpzc{red}} f_{ \mathpzc{red}},
\eear
where $\mathcal{D}(x) \in \mathcal{B}er(\mani)$, $\theta_1\ldots \theta_q \in \mathcal{J}^q_\mani$, $dz_{1, \mathpzc{red}} \wedge \ldots \wedge dz_{p, \mathpzc{red}} f_{\mathpzc{red}} \in \Omega^p_{\manir}$ and where $f \in \mathcal{O}_\mani$ and $f_{\mathpzc{red}} = f \, \mbox{\emph{mod}}\, \mathcal{J}_\mani \in \mathcal{O}_{\mani_{\mathpzc{red}}}$.
\end{theorem}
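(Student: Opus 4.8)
The plan is to define $\varphi$ on local generators and then show that the prescription is independent of the chosen chart, so that it glues to a global morphism of sheaves which is fibrewise bijective. The first observation is structural and makes the statement meaningful: since $\mani$ has odd dimension $q$, the filtration \eqref{filt} gives $\mathcal{J}_\mani^{q+1} = 0$, whence $\mathcal{J}_\mani \cdot \left( \mathcal{J}_\mani^q \mathcal{B}er(\mani) \right) = \mathcal{J}_\mani^{q+1}\mathcal{B}er(\mani) = 0$. Thus $\mathcal{J}_\mani^q \mathcal{B}er(\mani)$ is annihilated by $\mathcal{J}_\mani$ and is naturally a sheaf of $\stsheafred \cong \stsheaf/\mathcal{J}_\mani$-modules, as required by the statement.

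Next I would pin down the local structure. Over a chart $U$ with coordinates $x_a = z_i | \theta_\alpha$, the Berezinian is locally free of rank one over $\stsheaf$ on the generator $\mathcal{D}(x) = [dz_1 \ldots dz_p \otimes \partial_{\theta_1} \ldots \partial_{\theta_q}]$, while $\mathcal{J}_\mani^q\lfloor_U$ is locally free of rank one over $\stsheafred$ on $\theta_1 \ldots \theta_q$ (any higher odd monomial already lies in $\mathcal{J}_\mani^{q+1}=0$). Consequently $\mathcal{J}_\mani^q \mathcal{B}er(\mani)\lfloor_U = \mathcal{D}(x)\,\mathcal{J}_\mani^q\lfloor_U$ is locally free of rank one over $\stsheafred$, freely generated by $\mathcal{D}(x)\theta_1 \ldots \theta_q$. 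I then declare $\varphi$ to be the $\stsheafred$-linear map sending $\mathcal{D}(x)\theta_1 \ldots \theta_q\, f \mapsto dz_{1,\mathrm{red}} \ldots dz_{p,\mathrm{red}}\, f_{\mathrm{red}}$. Being a bijection between free rank-one generators, $\varphi$ is a local isomorphism the moment it is shown to be well-defined, and its independence of all choices will be precisely what makes it canonical.

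The heart of the argument, and the step I expect to be the most delicate, is checking compatibility with the transition functions on both sides. Pass to a second chart with coordinates $z'_i | \theta'_\alpha$ and write $\mathcal{J}ac(\varphi) = \left(\begin{smallmatrix} A & B \\ C & D \end{smallmatrix}\right)$ for the super Jacobian, with $A,D$ even and $B,C$ odd. The defining rule of the Berezinian gives $\mathcal{D}'(x) = \mathcal{D}(x)\, Ber(\mathcal{J}ac(\varphi))$ with $Ber(\mathcal{J}ac(\varphi)) = \det(A - BD^{-1}C)\det(D)^{-1}$ as in \eqref{transf}. Since $\theta'_\alpha = \sum_\beta (D)_{\mathrm{red},\,\alpha\beta}\,\theta_\beta$ modulo $\mathcal{J}_\mani^2$ and products of more than $q$ odd generators vanish, the top odd monomial transforms by the determinant of the reduced $D$-block, $\theta'_1 \ldots \theta'_q = \det(D)_{\mathrm{red}}\,\theta_1 \ldots \theta_q$. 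The crucial cancellation is now visible: because $B,C$ have entries in $\mathcal{O}_{\mani,1}$, the correction $BD^{-1}C$ lies entrywise in $\mathcal{J}_\mani$, so modulo $\mathcal{J}_\mani$ one has $Ber(\mathcal{J}ac(\varphi)) \equiv \det(A)_{\mathrm{red}}\det(D)_{\mathrm{red}}^{-1}$; multiplying by $\theta_1 \ldots \theta_q \in \mathcal{J}_\mani^q$ annihilates the $\mathcal{J}_\mani$-part of every coefficient and yields
\bear
\mathcal{D}'(x)\,\theta'_1 \ldots \theta'_q = \mathcal{D}(x)\, \det(A)_{\mathrm{red}}\,\theta_1 \ldots \theta_q.
\eear

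Finally I would match this against the reduced side. On $\manir$ the even coordinates transform by $z'_i = \varphi_{0,i}(z)$ with reduced Jacobian $(A)_{\mathrm{red}} = (\partial z'_i/\partial z_j)_{\mathrm{red}}$, so the canonical generator obeys $dz'_{1,\mathrm{red}} \ldots dz'_{p,\mathrm{red}} = \det(A)_{\mathrm{red}}\, dz_{1,\mathrm{red}} \ldots dz_{p,\mathrm{red}}$. Thus both $\mathcal{D}(x)\theta_1 \ldots \theta_q$ and $dz_{1,\mathrm{red}} \ldots dz_{p,\mathrm{red}}$ transform by multiplication by the identical cocycle $\det(A)_{\mathrm{red}}$, so $\varphi$ intertwines the two transition systems and glues to a global morphism of $\stsheafred$-modules. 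As it carries a free local generator to a free local generator, it is an isomorphism on each chart, hence a global isomorphism $\mathcal{J}_\mani^q\mathcal{B}er(\mani) \xrightarrow{\cong} \Omega^p_{\manir}$, which completes the proof.
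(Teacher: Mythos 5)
Your proof is correct and takes essentially the same route as the paper: both arguments compare transition cocycles, showing that the local generator $\mathcal{D}(x)\,\theta_1\ldots\theta_q$ transforms by $\det\left(\partial z'/\partial z\right)_{\mathrm{red}}$ via the cancellation of the factor $\det(D)_{\mathrm{red}}^{-1}$ coming from the reduced Berezinian against the factor $\det(D)_{\mathrm{red}}$ coming from the top odd monomial, which matches the cocycle of $\Omega^p_{\manir}$. The only presentational difference is that you obtain the identity $\theta'_1\ldots\theta'_q=\det(D)_{\mathrm{red}}\,\theta_1\ldots\theta_q$ by a direct expansion of $\theta'_\alpha$ in the odd coordinates, whereas the paper extracts the same equality $\det(\partial_\theta\theta')_{\mathrm{red}}=\det(f_{\alpha\beta})_{\mathrm{red}}$ from its Lemma \ref{ferm}.
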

\begin{proof} First of all we observe that $\mathcal{J}^q_\mani \mathcal{B}er(\mani) $ is obviously a sheaf of $\mathcal{O}_{\manir}$-modules, as $\mathcal{J}_\mani^q \cong \mathcal{J}^q_\mani / \mathcal{J}^{q+1}_\mani$ is a sheaf of $\mathcal{O}_{\manir}$-modules. Also, it is clear that if a generating section $\theta_\alpha \in \mathcal{J}_\mani / \mathcal{J}^2_\mani$ is such that $\theta^\prime_\alpha \equiv \sum_\beta f_{\alpha \beta} (z) \theta_\beta\, \mbox{mod} \, \mathcal{J}^2_\mani$ then 
\bear
\mathcal{J}^q_\mani \owns \theta_1 \ldots \theta_q = \det (f_{\alpha \beta}) \theta_1 \ldots \theta_q = \det (f_{\alpha \beta})_{\mbox{\emph{\tiny{red}}}} \theta_1 \ldots \theta_q,
\eear 
Now, considering local coordinates $x_a = z_i | \theta_\alpha$, for an index $a $ running on both even and odd coordinates, for a generic change of coordinates $x^\prime_a = z^\prime_i (x) | \theta_\alpha^\prime (x)$ one has that 
\begin{align}
\mathcal{D} (x^\prime ) \theta_1^\prime \ldots \theta^\prime_q & = \mathcal{D}(x) Ber \left ( \frac{\partial x^\prime}{\partial x} \right )_{\mathpzc{red}} \det (f_{\alpha \beta})_{\mathpzc{red}} \, \theta_1 \ldots \theta_q \nonumber \\
& = \mathcal{D}(x) \det \left (\frac{\partial z^\prime}{\partial z} \right )_{\mathpzc{red}} \det \left (\frac{\partial \theta^\prime}{\partial \theta} \right )^{-1}_{\mathpzc{red}} \det (f_{\alpha \beta})_{\mathpzc{red}} \, \theta_1 \ldots \theta_q. 
\end{align}
It follows from the previous lemma \ref{ferm} that $\det (\partial_{\theta} \theta^\prime)_{\mathpzc{red}} = \det (f_{\alpha \beta})_{\mathpzc{red}}$, which concludes the theorem since $\det (\partial_z z^\prime)_{\mathpzc{red}}$ is indeed the transformation of a generating section of the canonical sheaf $\Omega^p_{\manir} $ of the reduced manifold. \end{proof}
{\remark The above theorem holds true also if one considers \emph{compactly supported} sections instead: this will prove crucial to define a meaningful notion of integral on supermanifolds, as we shall see later on in this paper, see Section 7.}

\subsection{$\mathcal{D}_\mani$-modules and Connections} It is an easy yet fundamental result of $\mathcal{D}$-module theory that giving a $\mathcal{D}$-module structure on a sheaf corresponds to define a \emph{flat connection} on it. Nonetheless, as observed above, attention must be paid due to the non-commutativity of the sheaf $\mathcal{D}$ so that one has to distinguish between \emph{left} and \emph{right} action and thus \emph{left} and \emph{right} $\mathcal{D}$-module structures: accordingly, we will introduce \emph{left} and \emph{right} connections on sheaves and the related notion of \emph{flatness}.  \\
Let us start from \emph{left} $\mathcal{D}$-modules. In this case, the needed left action is induced by a \emph{left} connection: this is nothing but the standard notion of affine (or Koszul) connection on a sheaf introduced in ordinary differential geometry. In particular, we will work over a real or complex supermanifold, and thus the base field will be either $\mathbb{K} = \mathbb{R}$ or $\mathbb{K} = \mathbb{C}$. The notations employed mostly follow \cite{Manin}, in particular for any $f \in \mathcal{O}_\mani$ and $X \in \mathcal{T}_{\mani}$ we denote the commutator $[X, f] = X(f)$ as
\bear
[X, f] \defeq X \circ f - (-1)^{|X||f|} f \circ X,
\eear  
as to stress that we are considering the operator product $\circ$ in $\mathcal{D}_\mani$.
\begin{definition}[Left Connection on $\mathcal{E}$] Let $\mani$ be a real or complex supermanifold with structure sheaf $\mathcal{O}_\mani$ and let $\mathcal{E}$ be a sheaf of $\mathcal{O}_\mani$-modules. Then we say that a \emph{left} connection on $\mathcal{E}$ is a $\mathbb{K}$-bilinear morphism $\Delta_{\mathpzc{L}} : F^1 \mathcal{D}_\mani \otimes_{\mathbb{K}} \mathcal{E} \rightarrow \mathcal{E}$ such that the following are satisfied for any $f \in \mathcal{O}_\mani$, $X \in \mathcal{T}_\mani$ and $e \in \mathcal{E}:$
\begin{enumerate}
\item $\Delta_{\mathpzc{L}} (f \otimes e) = f e$,
\item $\Delta_\mathpzc{L} (f \circ X \otimes e) = f \Delta_{\mathpzc{L}} (X \otimes e), $
\item $\Delta_\mathpzc{L} (X \circ f \otimes e) = \Delta_\mathpzc{L} (X \otimes f e).$
\end{enumerate} 
In particular, we say that the left connection $\Delta_\mathpzc{L}$ is \emph{flat} if for any $e \in \mathcal{E}$ and $X, Y \in \mathcal{T}_\mani $ it satisfies
\bear
\Delta_\mathpzc{L} ([X, Y] \otimes e) = \Delta_\mathpzc{L} (X \otimes \Delta_\mathpzc{L} (Y \otimes e)) -  (-1)^{|X||Y|}\Delta_{\mathpzc{L}} (Y \otimes \Delta_\mathpzc{L} (X \otimes e)).
\eear
\end{definition}
{\remark It is to be noted that the previous definition is adapted to serve the needs of $\mathcal{D}$-module theory. In particular, notice that since $F^1 \mathcal{D}_\mani = \mathcal{O}_\mani \oplus \mathcal{T}_\mani$ generates $\mathcal{D}_\mani$, the definition has been extended as to include also the action of the structure sheaf $\mathcal{O}_\mani \subset F^1 \mathcal{D}_\mani$ on $\mathcal{E}$ in the first point. The second point is the usual $\mathcal{O}_\mani$-linearity in the first entry of the connection and the third point is the \emph{Leibniz rule}, since
\begin{align}
\Delta_{\mathpzc{L}} ( X \circ f \otimes e) & = \Delta_\mathpzc{L} \left ( \left ( X (f ) + (-1)^{|f| |X|}f \circ X\right )\otimes e \right )  \nonumber \\ 
& = \Delta_\mathpzc{L} (X(f) \otimes e)+ (-1)^{|f||X|}\Delta_{\mathpzc{L}} (f\circ X \otimes e)  \nonumber \\
& =  X(f)e + (-1)^{|f||X|} f \Delta_\mathpzc{L} (X \otimes e ).  
\end{align}
Finally, the condition of flatness, or \emph{integrability} for a connection can be rephrased by saying that the connection commutes with the operation of commutator, \emph{i.e.}
\bear
\Delta_{\mathpzc{L}} ([X, Y] \otimes  -) = [ \Delta_\mathpzc{L} (X\otimes - ), \Delta_\mathpzc{L} (Y \otimes - ) ],   
\eear
 for any $X, Y \in \mathcal{T}_\mani.$\\
We can now state the following crucial result.} 
\begin{theorem}[Left $\mathcal{D}_\mani$-Modules \& $\mathcal{O}_\mani$-Modules] \label{leftD} Let $\mani$ be a real or complex supermanifold and let $\mathcal{E}$ be a sheaf on $\mani$. Then $\mathcal{E}$ is a sheaf of \emph{left} $\mathcal{D}_\mani$-module on $\mani$ if and only if $\mathcal{E}$ is a sheaf of $\mathcal{O}_\mani$-modules endowed with a \emph{flat left connection}.
\end{theorem}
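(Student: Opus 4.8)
The plan is to exploit that $\mathcal{D}_\mani$ is generated as a sheaf of algebras by $F^1\mathcal{D}_\mani = \mathcal{O}_\mani \oplus \mathcal{T}_\mani$ subject to the defining relations recorded in \eqref{comm} — in intrinsic terms, the relations $[X,f] = X(f)$ for $f \in \mathcal{O}_\mani$, $X \in \mathcal{T}_\mani$, together with $[X,Y]$ being the Lie superbracket for $X, Y \in \mathcal{T}_\mani$, and the obvious $\mathcal{O}_\mani$-bimodule relations. Thus a left $\mathcal{D}_\mani$-module structure on $\mathcal{E}$ is the same datum as a morphism of sheaves of $\mathbb{K}$-algebras $\rho : \mathcal{D}_\mani \to \mathcal{E}nd_{\mathbb{K}}(\mathcal{E})$, and the whole content of the theorem is that prescribing such a $\rho$ is equivalent to prescribing its restriction to $F^1\mathcal{D}_\mani$, \emph{i.e.}\ a flat left connection.

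First I would prove the easy implication $(\Rightarrow)$. Given a left $\mathcal{D}_\mani$-module $\mathcal{E}$, the inclusion $\mathcal{O}_\mani \hookrightarrow \mathcal{D}_\mani$ restricts the action to an $\mathcal{O}_\mani$-module structure, and I set $\Delta_{\mathpzc{L}}(X \otimes e) := X \cdot e$ for $X \in \mathcal{T}_\mani$, extended by $\Delta_{\mathpzc{L}}(f \otimes e) := f e$ on $F^1\mathcal{D}_\mani$. Axioms (1) and (2) of a left connection are then immediate from associativity of the module action and the product $f \circ X$ in $\mathcal{D}_\mani$; axiom (3) is exactly the relation $X \circ f = X(f) + (-1)^{|X||f|} f \circ X$ applied inside the module, reproducing the Leibniz computation already displayed in the remark preceding the theorem. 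Flatness is likewise automatic: since $[X,Y] = X \circ Y - (-1)^{|X||Y|} Y \circ X$ holds in $\mathcal{D}_\mani$, acting on $e$ yields precisely the integrability identity.

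The substance is the converse $(\Leftarrow)$. Starting from an $\mathcal{O}_\mani$-module $\mathcal{E}$ with a flat left connection $\Delta_{\mathpzc{L}}$, I would define $\rho$ on generators by $\rho(f)e = fe$ and $\rho(X)e = \Delta_{\mathpzc{L}}(X \otimes e)$, and extend to products of elements of $F^1\mathcal{D}_\mani$ multiplicatively. The point to check is that this prescription descends to $\mathcal{D}_\mani$, \emph{i.e.}\ that $\rho$ respects all the defining relations. Axiom (1) guarantees $\rho$ is an algebra map on $\mathcal{O}_\mani$; axioms (2) and (3) together force $\rho(X)\rho(f) - (-1)^{|X||f|}\rho(f)\rho(X) = \rho(X(f))$ as endomorphisms, matching $[X,f] = X(f)$; and flatness forces $\rho(X)\rho(Y) - (-1)^{|X||Y|}\rho(Y)\rho(X) = \rho([X,Y])$, matching the bracket relation. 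Since these are exactly the relations presenting $\mathcal{D}_\mani$ over $F^1\mathcal{D}_\mani$, the map $\rho$ is well defined on the whole sheaf of algebras and endows $\mathcal{E}$ with a left $\mathcal{D}_\mani$-module structure.

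The main obstacle is the well-definedness in this last step: one must argue that no further relations arise in passing to higher-order operators, so that checking compatibility on the three families of $F^1$-relations suffices. I expect to handle this by invoking the explicit local presentation in \eqref{localD} and \eqref{comm} as the Weyl superalgebra, ordering the monomials $\partial_x^\ell \partial_\theta^\varepsilon$ and verifying that the reordering rules forced by the relations are precisely those reproduced by $\rho$ through repeated use of axioms (2), (3) and flatness; a PBW-type argument (compare the remark identifying $\mathrm{gr}^\bullet_F(\mathcal{D}_\mani)$ with $\cat{S}^\bullet_{\stsheaf}\mathcal{T}_\mani$) then shows these suffice globally. Throughout, the only genuine care needed is the bookkeeping of the Koszul signs $(-1)^{|X||f|}$ and $(-1)^{|X||Y|}$, which is where the super setting departs from the classical statement.
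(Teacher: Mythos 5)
Your proof is correct and follows essentially the same route as the paper's: the paper's own (largely omitted) argument likewise reduces everything to checking that the connection axioms and flatness reproduce the defining relations of $\mathcal{D}_\mani$ on the generating subsheaf $F^1 \mathcal{D}_\mani = \mathcal{O}_\mani \oplus \mathcal{T}_\mani$, exactly as in your construction of $\rho$ in both directions. The one point where you go beyond the paper --- noting that well-definedness of $\rho$ on all of $\mathcal{D}_\mani$ requires knowing that no relations beyond the $F^1$-level ones occur, and settling this by a PBW/normal-ordering argument via the local Weyl superalgebra presentation \eqref{localD} and the identification $\mbox{gr}^\bullet_F(\mathcal{D}_\mani) \cong \cat{S}^\bullet_{\stsheaf}\mathcal{T}_\mani$ --- is precisely the subtlety the paper's remark glosses over when it calls the proof ``obvious'', so your treatment is if anything more complete.
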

{\remark The proof of the above theorem is obvious and it simply amounts to check that \emph{associativity} of the left $\mathcal{D}_\mani$-action on $\mathcal{E}$ is reproduced by the flat connection. We will prove instead the analogous result in case of right $\mathcal{D}_\mani$-modules, which is equally simple but less ordinary, and might cause some confusion.   }
{\remark Notice also that the above theorem does \emph{not} require the sheaf $\mathcal{E}$ of $\mathcal{O}_\mani$-modules to be locally-free of finite rank, therefore one can look at a $\mathcal{D}_\mani$-module as a generalization of a vector bundle endowed with a flat connection.\\
\noindent The next corollary gives an obvious example of sheaf of left $\mathcal{D}_\mani$-modules. }
\begin{corollary}[Structure Sheaf $\mathcal{O}_\mani$] Let $\mani $ be a real or complex supermanifold and let $\mathcal{O}_\mani$ be its structure sheaf. Then $\mathcal{O}_\mani$ is a sheaf of \emph{left} $\mathcal{D}_\mani$-modules. 
\end{corollary}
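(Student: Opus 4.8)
The plan is to apply Theorem \ref{leftD}: since that result identifies sheaves of left $\mathcal{D}_\mani$-modules with $\mathcal{O}_\mani$-modules equipped with a flat left connection, it suffices to equip the structure sheaf $\stsheaf$---which is tautologically a sheaf of $\mathcal{O}_\mani$-modules over itself---with such a connection. The natural candidate is the \emph{tautological} one: recalling that $\mathcal{D}_\mani$ was defined as a subsheaf of $\mathcal{E}nd_{\mathbb{C}}(\stsheaf)$, I would set $\Delta_{\mathpzc{L}}(D \otimes f) \defeq D(f)$ for $D \in F^1 \mathcal{D}_\mani$ and $f \in \stsheaf$; explicitly, $\Delta_{\mathpzc{L}}(g \otimes f) = gf$ for $g \in \mathcal{O}_\mani \subset F^1 \mathcal{D}_\mani$ and $\Delta_{\mathpzc{L}}(X \otimes f) = X(f)$ for $X \in \mathcal{T}_\mani$. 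Since $F^1\mathcal{D}_\mani = \mathcal{O}_\mani \oplus \mathcal{T}_\mani$ and both $g \circ X$ and $X \circ g$ remain in $F^1\mathcal{D}_\mani$, this map is defined on all the arguments appearing in the axioms.

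The three defining axioms of a left connection are then immediate. Axiom (1) holds by construction. For axiom (2), the operator $g \circ X$ acts as $(g \circ X)(f) = g\,X(f) = g\,\Delta_{\mathpzc{L}}(X \otimes f)$. For axiom (3), using the relation $X \circ g = X(g) + (-1)^{|g||X|} g \circ X$ one checks $(X \circ g)(f) = X(gf) = \Delta_{\mathpzc{L}}(X \otimes g f)$, the middle equality being exactly the graded Leibniz rule satisfied by the derivation $X$. In each case the verification reduces to the observation that $\Delta_{\mathpzc{L}}$ is nothing but the restriction of operator evaluation, so that compatibility with the composition product $\circ$ of $\mathcal{D}_\mani$ is automatic.

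Finally I would verify flatness. For $X, Y \in \mathcal{T}_\mani$ and $f \in \stsheaf$ the right-hand side of the flatness condition is $\Delta_{\mathpzc{L}}(X \otimes \Delta_{\mathpzc{L}}(Y \otimes f)) - (-1)^{|X||Y|}\Delta_{\mathpzc{L}}(Y \otimes \Delta_{\mathpzc{L}}(X \otimes f)) = X(Y(f)) - (-1)^{|X||Y|} Y(X(f))$, while the left-hand side is $\Delta_{\mathpzc{L}}([X,Y]\otimes f) = [X,Y](f)$. These agree precisely because the bracket on the tangent sheaf is, by definition, the $\mathbb{Z}_2$-graded commutator $X \circ Y - (-1)^{|X||Y|} Y \circ X$ of derivations. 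Thus flatness holds tautologically, and this is really the only place any check is required; the single point demanding attention is the consistent bookkeeping of the graded signs, which I expect to be the mild ``obstacle'' rather than a genuine difficulty.

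Alternatively, one can dispense with the connection reformulation entirely and argue directly: because $\mathcal{D}_\mani$ is by definition a subsheaf of $\mathcal{E}nd_{\mathbb{C}}(\stsheaf)$, the evaluation $D \cdot f \defeq D(f)$ makes $\stsheaf$ a left $\mathcal{D}_\mani$-module on the nose, the associativity axiom $(D_1 \circ D_2)\cdot f = D_1 \cdot (D_2 \cdot f)$ being merely the associativity of operator composition. The connection-theoretic route above is simply the translation of this observation through Theorem \ref{leftD}, and either version yields the corollary.
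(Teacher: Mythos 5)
Your proposal is correct and follows essentially the same route as the paper: both invoke Theorem \ref{leftD} and equip $\mathcal{O}_\mani$ with the evaluation connection $\Delta_{\mathpzc{L}}(X \otimes f) = X(f)$, which is precisely the paper's exterior-derivative pairing $X \otimes f \mapsto (df)(X)$, with multiplication giving the $\mathcal{O}_\mani$-part of the action. Your explicit verification of the axioms and of flatness (and the closing remark that everything reduces to $\mathcal{D}_\mani \subset \mathcal{E}nd_{\mathbb{C}}(\stsheaf)$ acting by evaluation) simply spells out what the paper dismisses as obvious.
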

\begin{proof} Obviously,  $\mathcal{O}_\mani$ can be endowed with a flat left connection, which is nothing but the {exterior derivative}, seen as a map $d : \mathcal{T}_\mani \otimes \mathcal{O}_\mani \rightarrow \mathcal{O}_\mani$ via the $X \otimes f \mapsto (df)(X)$ and where the action $\mathcal{O}_\mani \otimes \mathcal{O}_\mani \rightarrow \mathcal{O}_\mani$ is given by the superalgebra structure of $\mathcal{O}_\mani.$ Flatness is obvious.
\end{proof}
\noindent Let us now pass to the case of right $\mathcal{D}_\mani$-modules. In order to prove the analogous result of the theorem \ref{leftD} for right $\mathcal{D}_\mani$-modules we need to introduce a different kind of connection, which is to be related to a right action. As in \cite{Manin}, employing the same notation as above we have the following.
\begin{definition}[Right Connection on $\mathcal{E}$] Let $\mani$ be a real or complex supermanifold with structure sheaf $\mathcal{O}_\mani$ and let $\mathcal{E}$ be a sheaf of $\mathcal{O}_\mani$-modules. Then we say that a \emph{right} connection on $\mathcal{E}$ is a $\mathbb{K}$-bilinear morphism $\Delta_{\mathpzc{R}} : \mathcal{E} \otimes_{\mathbb{K}} F^1 \mathcal{D}_\mani  \rightarrow \mathcal{E}$ such that the following are satisfied for any $f \in \mathcal{O}_\mani$, $X \in \mathcal{T}_\mani$ and $e \in \mathcal{E}:$
\begin{enumerate}
\item $\Delta_\mathpzc{R} (e \otimes f) = e f $;
\item $\Delta_\mathpzc{R} (e \otimes X \circ f) =  \Delta_{\mathpzc{R}} (e \otimes X) f; $
\item $\Delta_\mathpzc{R} (e \otimes f \circ X) = \Delta_\mathpzc{R} (e f \otimes X),$
\end{enumerate} 
In particular, we say that the right connection $\Delta_\mathpzc{R}$ is \emph{flat} if for any $e \in \mathcal{E}$ and $X, Y \in \mathcal{T}_\mani $ it satisfies
\bear
\Delta_\mathpzc{R} ( e \otimes [X, Y] ) = \Delta_\mathpzc{R} ( \Delta_\mathpzc{R} (e \otimes X ) \otimes Y ) -  (-1)^{|X||Y|}\Delta_{\mathpzc{R}} ( \Delta_\mathpzc{R} (e \otimes Y ) \otimes X ).
\eear
\end{definition}
{\remark Notice that the third point in the definition is $\mathcal{O}_\mani$-linearity and that we have a \emph{modified} Leibniz rule, adapted to right structures. Indeed, one has
\begin{align}
\Delta_{\mathpzc{R}} (e \otimes X \circ f) & \defeq \Delta_\mathpzc{R} (e \otimes ( X(f) + (-1)^{|X||f|} f \circ X) )\nonumber \\ 
& = \Delta_\mathpzc{R} (e \otimes X(f)) + (-1)^{|X||f|}\Delta_\mathpzc{R} (e \otimes f \circ X)  \nonumber \\ 
& =  e X(f) + (-1)^{|X| |f|} \Delta_\mathpzc{R} (e f \otimes X),
\end{align}
so that the second property above can be rewritten as 
\bear \label{modified}
\Delta_\mathpzc{R} (e f \otimes X) = (-1)^{|X| |f|} ( \Delta_{\mathpzc{R}} (e \otimes X) f - e X(f) ).
\eear
Using right connections we can prove the following. }
\begin{theorem}[Right $\mathcal{D}_\mani$-Modules \& $\mathcal{O}_\mani$-Modules] \label{rightD} Let $\mani$ be a real or complex supermanifold and let $\mathcal{E}$ be a sheaf on $\mani$. Then $\mathcal{E}$ is a sheaf of \emph{right} $\mathcal{D}_\mani$-module on $\mani$ if and only if $\mathcal{E}$ is a sheaf of $\mathcal{O}_\mani$-modules endowed with a \emph{flat right connection}.
\end{theorem}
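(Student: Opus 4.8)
The plan is to prove both implications by reducing everything to the generators $F^1\mathcal{D}_\mani = \mathcal{O}_\mani\oplus\mathcal{T}_\mani$, exploiting the fact — recorded in the discussion leading to $\mbox{gr}^\bullet_F(\mathcal{D}_\mani)=\cat{S}^\bullet_{\stsheaf}\mathcal{T}_\mani$ — that $\mathcal{D}_\mani$ is generated as a sheaf of algebras by $F^1\mathcal{D}_\mani$ subject precisely to the $\mathcal{O}_\mani$-bimodule relations, the Leibniz relation $X\circ f - (-1)^{|X||f|}f\circ X = X(f)$, and the bracket relation $X\circ Y - (-1)^{|X||Y|}Y\circ X = [X,Y]$ coming from \eqref{comm}. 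The statement is then an instance of the general principle that a right module over such a ``universal enveloping'' sheaf is the same as a compatible pair of right actions of $\mathcal{O}_\mani$ and $\mathcal{T}_\mani$, and that a flat right connection is exactly such a pair.

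For the direction ($\Rightarrow$), I would suppose $\mathcal{E}$ carries a right $\mathcal{D}_\mani$-action and set $\Delta_\mathpzc{R}(e\otimes P)\defeq e\cdot P$ for $P\in F^1\mathcal{D}_\mani$. Axiom (1) is just the $\mathcal{O}_\mani$-module structure; axioms (2) and (3) are immediate rewritings of the associativity identities $e\cdot(X\circ f)=(e\cdot X)\cdot f$ and $e\cdot(f\circ X)=(e\cdot f)\cdot X$, using that both $X\circ f$ and $f\circ X$ lie in $F^1\mathcal{D}_\mani$. Flatness then follows by applying the action to $[X,Y]=X\circ Y-(-1)^{|X||Y|}Y\circ X$ and once more invoking associativity. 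This direction requires only bookkeeping.

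The substance lies in ($\Leftarrow$). Given an $\mathcal{O}_\mani$-module $\mathcal{E}$ equipped with a flat right connection, I would define a candidate right action on an arbitrary local operator, written as a product $P=P_1\circ\cdots\circ P_k$ of elements of $F^1\mathcal{D}_\mani$, by iterating the connection, $e\cdot P\defeq \Delta_\mathpzc{R}(\cdots\Delta_\mathpzc{R}(e\otimes P_1)\cdots\otimes P_k)$. The heart of the argument — and the main obstacle — is to check that this is \emph{well defined}, i.e.\ insensitive to rewriting $P$ by means of the defining relations of $\mathcal{D}_\mani$. There are exactly three relations to verify: the $\mathcal{O}_\mani$-multiplication relation reduces to the module associativity $(ef)g=e(fg)$; the Leibniz relation is precisely the modified Leibniz rule \eqref{modified} obtained by combining axioms (2) and (3); and the bracket relation is precisely the flatness condition. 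Once these are confirmed, the iterated formula descends to a genuine right $\mathcal{D}_\mani(U)$-action on $\mathcal{E}(U)$, compatible with the restriction morphisms since every construction is natural, thereby endowing $\mathcal{E}$ with the structure of a sheaf of right $\mathcal{D}_\mani$-modules.

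Finally I would stress the point the preceding remark warns about: all verifications hinge on the twisted signs $(-1)^{|X||f|}$ and $(-1)^{|X||Y|}$, and the right-handed Leibniz rule \eqref{modified} carries the extra factor $(-1)^{|X||f|}$ absent from the left case treated in Theorem \ref{leftD}. Keeping this sign bookkeeping consistent across the three relations is the only genuinely delicate aspect; the remainder is a faithful transcription of the left-module argument.
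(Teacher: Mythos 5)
Your proposal follows essentially the same route as the paper: both directions are reduced to the generating subsheaf $F^1\mathcal{D}_\mani \cong \mathcal{O}_\mani \oplus \mathcal{T}_\mani$, with the connection axioms read off from associativity of the right action and flatness from the bracket relation $[X,Y] = X\circ Y - (-1)^{|X||Y|}Y\circ X$. The only difference is one of emphasis: the paper dispatches the converse in a single sentence, whereas you spell out the well-definedness of the iterated action against the three defining relations of $\mathcal{D}_\mani$ (module structure, modified Leibniz rule, flatness), which is a correct elaboration of exactly what the paper leaves implicit.
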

\begin{proof} The right $\mathcal{D}_\mani$-module structure on $\mathcal{E}$ corresponds to a right action 
\bear
\xymatrix@R=1.5pt{
\sigma_{\mathpzc{R}} : \mathcal{E} \times \mathcal{D}_\mani \ar[r] &  \mathcal{E}  \\
(e, F) \ar@{|->}[r] & e \cdot F \defeq \sigma_{\mathpzc{R}} (e, F)
}
\eear
In particular, associativity reads
\bear
\sigma_{\mathpzc{R}} (\sigma_{\mathpzc{R}} (e, D), H) = \sigma_{\mathpzc{R}} (e , D\circ H)
\eear
or analogously $( e \cdot D ) \cdot H = e \cdot (D \circ H)$ for $e \in \mathcal{E}$ and $D, H \in \mathcal{D}_\mani.$ Let us define $\sigma_{\mathpzc{R}} $ as acting by right multiplication on functions. Then for $X \in \mathcal{T}_\mani$ and $f \in \mathcal{O}_\mani$ we have that associativity reads
\bear
\sigma_{\mathpzc{R}} (e , f \circ X) = \sigma_{\mathpzc{R}} (\sigma_{\mathpzc{R}} (e, f) , X ) = \sigma_{\mathpzc{R}} (e f, X),
\eear
which is the last of the defining conditions for a right connection. Furthermore, we have
\begin{align}
\sigma_{\mathpzc{R}} (e, X(f)) & = \sigma_{\mathpzc{R}} (e, (X\circ f - (-1)^{|X||f|} f \circ X)) \nonumber \\
& = \sigma_{\mathpzc{R}} (e, X \circ f) - (-1)^{|X||f|} \sigma_{\mathpzc{R}} (ef , X ) \nonumber \\
& = \sigma_{\mathpzc{R}} (\sigma_{\mathpzc{R}}(e, X) , f) - (-1)^{|X||f|} \sigma_{\mathpzc{R}} (ef , X ) \nonumber \\
& = \sigma_{\mathpzc{R}} (e, X) f  - (-1)^{|X||f|} \sigma_{\mathpzc{R}} (ef , X ),
\end{align}
where we have used linearity and associativity. On the other hand, by definition $\sigma_{\mathpzc{R}} (e, X(f)) = e X(f)$, so that 
\bear
\sigma_{\mathpzc{R}} (ef , X ) = (-1)^{|X||f|} ( \sigma_{\mathpzc{R}} (e, X) f - e X(f)),
\eear
which is the modified Leibniz rule and it corresponds to the second defining condition of a right connection. Lastly, 
\begin{align}
\sigma_{\mathpzc{R}} (e, [X, Y]) & = \sigma_{\mathpzc{R}} (e, X\circ Y- (-1)^{|X||Y|} Y \circ X ) \nonumber \\
& = \sigma_{\mathpzc{R}} (\sigma_{\mathpzc{R}} (e, X ), Y ) - (-1)^{|X||Y|} \sigma_{\mathpzc{R}} (\sigma_{\mathpzc{R}} (e, Y), X)
\end{align}
which is the requirement of flatness. Notice that, conversely, starting from a right connection, it is enough to prove that it defines a right action on the generating $F^1 \mathcal{D}_\mani \cong \mathcal{O}_\mani \oplus \mathcal{T}_\mani,$ with $[X, f] = X(f)$ to have a right action of $\mathcal{D}_\mani$ via associativity. 
\end{proof}
\noindent Just like above for left $\mathcal{D}_\mani$-modules, the above theorem give an alternative characterization of right $\mathcal{D}_\mani$-modules. Notice, also, that the notion of right connection is not at all exotic as it might sound at first: namely working over an ordinary real or complex manifold $X$ it is easy to prove that - up to a sign - the \emph{Lie derivative} defines a right connection on the canonical sheaf $\omega_X \defeq \wedge^{\dim X} \mathcal{T}^\ast_X$, which is therefore a sheaf of \emph{right} $\mathcal{D}_X$-modules. 
\begin{lemma}[$\omega_X$ is a Right $\mathcal{D}_X$-module] \label{DrightX} Let $X$ be a real or complex manifold and let $\Omega^{\dim X}_X$ be its canonical sheaf. Then $\Omega^{\dim X}_X$ is a sheaf of right $\mathcal{D}_\mani$-module.
\end{lemma}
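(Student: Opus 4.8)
The plan is to appeal directly to Theorem \ref{rightD}: since $\Omega^{\dim X}_X = \omega_X$ is already a sheaf of $\mathcal{O}_X$-modules, it suffices to equip it with a \emph{flat right connection}, and the natural candidate is the Lie derivative, up to a sign. Concretely, I would define
\[
\Delta_{\mathpzc{R}} : \omega_X \otimes_{\mathbb{K}} F^1 \mathcal{D}_X \longrightarrow \omega_X, \qquad \Delta_{\mathpzc{R}}(\omega \otimes f) \defeq \omega f, \qquad \Delta_{\mathpzc{R}}(\omega \otimes X) \defeq -\mathcal{L}_X \omega,
\]
for $f \in \mathcal{O}_X$ and $X \in \mathcal{T}_X$, extended $\mathbb{K}$-bilinearly. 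This map is globally well defined because the Lie derivative on forms is a canonical, coordinate-free operation, so no compatibility check across charts is required.

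Next I would verify the three defining axioms of a right connection. Axiom (1) is simply the module multiplication built into the definition. Axiom (3), the $\mathcal{O}_X$-linearity $\Delta_{\mathpzc{R}}(\omega \otimes f \circ X) = \Delta_{\mathpzc{R}}(\omega f \otimes X)$, is exactly the content of the Leibniz rule $\mathcal{L}_X(f\omega) = X(f)\,\omega + f\,\mathcal{L}_X\omega$: substituting the definition turns the required identity into the modified Leibniz rule \eqref{modified}, which in the purely even setting reads $\Delta_{\mathpzc{R}}(\omega f \otimes X) = \Delta_{\mathpzc{R}}(\omega \otimes X)\,f - \omega\, X(f)$. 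Axiom (2) then follows formally from (1) and (3) together with the decomposition $X \circ f = X(f) + f \circ X$ in $F^1\mathcal{D}_X$, exactly as in the remark preceding Theorem \ref{rightD}.

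Finally I would check flatness. With the sign convention above, the flatness identity $\Delta_{\mathpzc{R}}(\omega \otimes [X,Y]) = \Delta_{\mathpzc{R}}(\Delta_{\mathpzc{R}}(\omega \otimes X)\otimes Y) - \Delta_{\mathpzc{R}}(\Delta_{\mathpzc{R}}(\omega \otimes Y)\otimes X)$ becomes $-\mathcal{L}_{[X,Y]}\omega = \mathcal{L}_Y \mathcal{L}_X \omega - \mathcal{L}_X \mathcal{L}_Y \omega$, which is precisely the classical relation $[\mathcal{L}_X, \mathcal{L}_Y] = \mathcal{L}_{[X,Y]}$ acting on $\omega$. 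By Theorem \ref{rightD} this flat right connection endows $\Omega^{\dim X}_X$ with a right $\mathcal{D}_X$-module structure.

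I do not expect a genuine obstacle here, as this is the ordinary commutative situation; the only delicate point is the sign. The real content of the lemma is that it is the \emph{minus} Lie derivative, rather than the plus, that reproduces the right-handed modified Leibniz rule \eqref{modified} demanded of a right connection, and keeping that sign consistent across all three axioms and the flatness relation is the single place where a slip would break the argument.
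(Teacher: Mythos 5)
Your overall strategy --- reduce to Theorem \ref{rightD} and define the right connection by $\Delta_{\mathpzc{R}}(\omega\otimes f)\defeq \omega f$, $\Delta_{\mathpzc{R}}(\omega\otimes X)\defeq-\mathcal{L}_X\omega$ --- is exactly the paper's (the proof is carried out in appendix \ref{app2}, Lemma \ref{DrightXApp}), and your flatness check is correct. However, your verification of the connection axioms contains a genuine gap: you have swapped the roles of axioms (2) and (3), and as a result the hypothesis that $\omega$ is a \emph{top} form is never used anywhere in your argument. Axiom (3) is \emph{not} ``the content of the Leibniz rule.'' Unwinding the definitions, since the operator $f\circ X\in F^1\mathcal{D}_X$ is the vector field $fX$, axiom (3) reads
\[
\mathcal{L}_{fX}(\omega)\;=\;\mathcal{L}_X(f\omega),
\]
whereas the Leibniz rule $\mathcal{L}_X(f\omega)=X(f)\,\omega+f\,\mathcal{L}_X\omega$ involves no $\mathcal{L}_{fX}$ at all. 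Since $\mathcal{L}_{fX}\omega=f\,\mathcal{L}_X\omega+df\wedge\iota_X\omega$, axiom (3) is equivalent to $df\wedge\iota_X\omega=X(f)\,\omega$, that is to $\iota_X(df\wedge\omega)=0$; this holds precisely because $df\wedge\omega=0$ when $\omega$ has top degree, and it fails in lower degree. For instance on $\mathbb{R}^2$ with $\omega=dx$, $f=y$ and the field $\partial_y$, one has $\mathcal{L}_{y\partial_y}(dx)=0$ while $\mathcal{L}_{\partial_y}(y\,dx)=dx$. This identity is exactly the step the paper proves with the Cartan homotopy formula, using $d\omega^{top}=0$ and $df\wedge\omega^{top}=0$, and it is the crux of the lemma.

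Conversely, it is axiom (2) that reduces --- via the decomposition $X\circ f=X(f)+f\circ X$, bilinearity, axiom (1) and axiom (3) --- to the modified Leibniz rule \eqref{modified}, i.e.\ to the classical Leibniz rule for $\mathcal{L}_X$; so your claim that (2) follows ``formally'' from (1) and (3) silently invokes the Leibniz rule as well, which is harmless but should be said, since that is where it actually enters. The reason this mix-up is not cosmetic: as structured, your proof uses no property special to the canonical sheaf, so it would apply verbatim to $\Omega^i_X$ for every $i$ and ``prove'' that each $\Omega^i_X$ is a right $\mathcal{D}_X$-module --- which is false, as the paper remarks immediately after Lemma \ref{DrightXApp} (and which your counterexample above witnesses). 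Contrary to your closing assessment, the single delicate point is therefore not the sign but the top-degree identity $\mathcal{L}_{fX}\omega=\mathcal{L}_X(f\omega)$; the proof is repaired by inserting the Cartan-formula computation for axiom (3).
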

\noindent The proof of this theorem, together with a detailed discussion about the relation between the Lie derivative and the right $\mathcal{D}_X$-module structure of the canonical sheaf of an ordinary manifold, is deferred to the appendix, as to keep the focus on the case of supermanifolds in the main text. \\

\noindent In light of the previous section and the definition of the Berezinian sheaf of a supermanifold as the correct super-analog of the notion of canonical sheaf for an ordinary manifold, it is natural to ask if the $\mathcal{D}_\mani$-module property of lemma \ref{DrightX} goes through to the super setting and also the Berezinian sheaf is a sheaf of \emph{right} $\mathcal{D}_\mani$-modules. We will prove that this is indeed the case in the next section.

\subsection{Lie Derivative of $\mathcal{B}er (\mani)$ and Right $\mathcal{D}_\mani$-module Structure} \label{LieBerRight} Before we actually compute the action of the Lie derivative on a section of the Berezinian sheaf of a supermanifold, we start with some remarks concerning the relationship between left and right structures on a supermanifold. In particular, let $\mathcal{E}$ be a locally-free sheaf of \emph{left} $\mathcal{O}_\mani$-modules of rank $p|q$ which is generated over an open set $U$ by a set $\{ e_a \}_{a \in I}$ of $p$ even and $q$ odd generators. Then $\mathcal{E}$ is naturally also a locally-free sheaf of \emph{right} $\mathcal{O}_\mani$-modules simply taking into account the \emph{sign rule}, \emph{i.e.}\ given a local section $s_U \in \mathcal{E}$ over an open set $U$ such that $s_U = \sum_a {f^a} e_a $ for $f^a \in \mathcal{O}_\mani (U)$, then
\bear
s_U = \sum_a {f^a} e_a = \sum_a (-1)^{|e_a| |f^a| } e_a f^a.
\eear  
In particular, the tangent sheaf $\mathcal{T}_\mani$, realized as sheaf of derivations of the structure sheaf $\mathcal{D}er_{\mathbb{C}}(\mathcal{O}_\mani)$, can be seen as a sheaf of \emph{left} $\mathcal{O}_\mani$-modules. Accordingly, one has that a vector field acts as usual from the left, \emph{i.e.}\
\bear
\overset{\rightarrow}{X} (f) \defeq \sum_a X^a \overset{\rightarrow}{\partial_a} (f),
\eear 
where we have defined $\partial_a \defeq \frac{\partial}{\partial x_a} $, for $x_a$ an even or odd coordinate function. The action of the left derivation associated to $\overset{\rightarrow}{\partial_a} $ is defined as usual, \emph{i.e.}\ $\overset{\rightarrow}\partial_a (f) = \partial_a f$, and it is such that on coordinate functions one has $\overset{\rightarrow}{\partial_a} (x_b) = \delta_{ab}$. As seen above, $\overset{\rightarrow}{X}$ satisfies the Leibniz rule in the form 
\bear 
\overset{\rightarrow}{X} (fg) = \overset{\rightarrow}{X}(f) g + (-1)^{|X| |f|} f \overset{\rightarrow}{X} (g),
\eear
 for $f, g \in \mathcal{O}_\mani$. 
On the other hand, it makes sense to consider a right action of a vector field on a function, when $\mathcal{T}_\mani$ is seen as a sheaf of \emph{right} $\mathcal{O}_\mani$-modules. This is defined following the sign rule, by posing
\bear
(f) \overset{\leftarrow }{X} \defeq (-1)^{|X||f|} \overset{\rightarrow}{X} (f).
\eear
In particular, one sees that the action of the right derivation associated to $\overset{\leftarrow}{\partial_a}$ is the same as that of the left derivations on even coordinate functions and it is such that $(\theta_b) \overset{\leftarrow}{\partial_{\theta_a}} = - \overset{\rightarrow}{\partial_{\theta_a}} \theta_b$ on odd coordinate functions. 
Also, notice that in the case of right derivations the Leibniz rule reads 
\bear
 (fg) \overset{\leftarrow}{X} =  f ((g) \overset{\leftarrow}{X}) + (-1)^{|X||g|} ((f) \overset{\leftarrow}{X} )g.
\eear
for any $f , g \in \mathcal{O}_\mani.$
This agrees with the definition given via the sign rule, as indeed one finds
\begin{align}
\overset{\rightarrow}{X} (fg) & = (-1)^{|X| (|f| + |g|)}  (fg) \overset{\leftarrow}{X}. 
\end{align}
Having these considerations in mind, we look for an action of the Lie derivative on sections of the Berezinian sheaf of a supermanifold in a similar fashion as in \eqref{LieComm} for the canonical sheaf on an ordinary manifold. We remark, though, that this is not trivial. Indeed, sections of the Berezinian sheaf are not differential forms, and as such the usual Cartan calculus and Cartan homotopy formula for the Lie derivative does not apply.\\ 
In order to introduce a notion of Lie derivative we follow Deligne and Morgan, see \cite{Deligne} Chapter 3, Section 3.7. One first observes that an even vector field $X \in \mathcal{T}_X$ defines a flow $\varphi^t_X = \exp (tX)$
which is in general only defined in a neighborhood of $t = 0$ if $\mani$ is not compact. Then one defines the Lie derivative as follows - see \cite{Deligne}.
\begin{definition}[Lie Derivative] \label{LieDev} Let $\mani$ be a real or complex supermanifold, let $\mathcal{E}$ be a locally free sheaf on $\mani$ such that its sections carry a natural action of local diffeomorphisms and let $X \in \mathcal{T}_{\mani}$ be an even vector field. Then we define the Lie derivative of a section $\mathpzc{S} \in \mathcal{E}$ along $X$ as
\bear \label{LieFlow}
\mathcal{L}_{X} (\mathpzc{S}) \defeq \lim_{t \rightarrow 0} \frac{1}{t} \left ( \exp (tX)^\ast \mathpzc{S} - \mathpzc{S}\right ).
\eear 
Let now $X \in \mathcal{T}_\mani$ be an odd vector field and $k [\varepsilon]$ the ring over the field of real or complex numbers in one formal odd variable $\varepsilon.$ Then we define the Lie derivative of a section $\mathpzc{S}$ in $\mathcal{E}$ along $X$ by the condition that
\bear \label{conL}
\mathcal{L}_{\varepsilon X} (\mathpzc{S}) = \varepsilon \mathcal{L}_X (\mathpzc{S}),
\eear
where $\varepsilon X$ is an even vector field in the locally-free sheaf (of $\mathcal{O}_{\mani \times \mathfrak{spec}\, k[\varepsilon]}$-modules) $\mathcal{T}_{\mani \times \mathfrak{spec} \,k [\varepsilon]}$. 
\end{definition}
{\remark As in the ordinary context, this formalizes the idea of \emph{infinitesimal variation} induced on a certain section by the action of a certain vector field. It is possible to define the Lie derivative along an odd vector field using a notion of flow. In particular, given a real or complex supermanifold $\mani$ there exists a map
\bear
\varphi^{\varepsilon}_X: \mani \times \mathfrak{spec}\, k[\varepsilon]\rightarrow \mani
\eear
for $k$ the field of real or complex numbers and $\varepsilon $ an odd formal variable, such that $\varphi_X^{\varepsilon} \lfloor_{\mani_{\mathpzc{red}} \times \mathfrak{spec}\, k [\varepsilon]} \defeq id$ and such that 
$
(\varphi^\varepsilon_X)^\ast (f) \defeq f + \varepsilon X(f).
$
Then, for a section $f$, one defines the Lie derivative along $X$ as 
\bear
\mathcal{L}_X (f) \defeq \frac{d}{d\varepsilon} \left ((\varphi^{\varepsilon}_X)^\ast (f) \right ) |_{\varepsilon = 0},
\eear
so that one has $\mathcal{L}_X (f) = X(f)$. This is the same as the previous definition due to Deligne and Morgan, as indeed  $\varepsilon \mathcal{L}_X (f) = \mathcal{L}_{\varepsilon X} (f) = \varepsilon X (f)$ from \eqref{conL}. The case of differential forms is similar, while in the case of a vector field $Y$ one defines
\bear
\mathcal{L}_X (Y) \defeq \frac{d}{d\varepsilon} (\varphi^{- \varepsilon}_X)^{\ast} \left ( (\varphi^\varepsilon_{X } \right )_{\ast} Y) |_{\varepsilon = 0}, 
\eear
where $(\varphi^\varepsilon_{X})_{\ast} Y$ is the push-forward of $Y$.
Computing on a function $f$, one finds that 
\bear
\mathcal{L}_X (Y (f)) = \frac{d}{d\varepsilon} \left ( Y(f) + \varepsilon [X ,Y](f) \right ) |_{\varepsilon = 0} = [X, Y] (f), 
\eear
which agrees with Proposition 3.7.2 in \cite{Deligne} - that follows from the previous definition \ref{LieDev} -, and where the above bracket is a commutator or an anti-commutator depending on the parity of $Y$.
We now apply the definition \ref{LieDev} to recover an expression for the action of the Lie derivative on sections of the Berezinian sheaf. }
\begin{theorem}[Lie Derivative of Sections of Berezinian Sheaf] Let $\mani$ be a real or complex supermanifold. Let $\mathcal{D}$ be a section of the Berezianian sheaf $\mathcal{B}er ( \mani )$ and $X$ be a vector field in $\mathcal{T}_\mani$, such that they have trivializations $\mathcal{D} = \mathcal{D}(x) f(x)$ and $X = \sum_a X^a \partial_a $ in a certain local chart $(U, x_a)$. Then the Lie derivative $\mathcal{L}_X (\mathcal{D})$ defined as in \eqref{LieFlow} of $\mathcal{D} \in \mathcal{B}er (\mani)$ along the field $X \in \mathcal{T}_\mani$ is given in $U$ by
\bear
\mathcal{L}_X (\mathcal{D}) = (-1)^{|X| |\mathcal{D}|} \mathcal{D} (x) \sum_a (f X^a) \overset{\longleftarrow}{\partial_{a}},
\eear
for $a = 1, \ldots n |1, \ldots, m$ even and odd coordinates.
\end{theorem}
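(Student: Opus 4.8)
The plan is to unwind the flow definition \eqref{LieFlow} in a local chart $(U, x_a)$ and to linearize every factor to first order in $\varepsilon$. First I would write the flow of $X = \sum_a X^a \partial_a$ to first order as $\varphi^X_\varepsilon : x_a \mapsto x_a + \varepsilon X^a(x)$; here $\varepsilon$ carries parity $|X|$ and $X^a$ carries parity $|X| + |x_a|$, so each increment $\varepsilon X^a$ has parity $|x_a|$ and the map is parity-preserving, as it must be. Since $\mathcal{D} = \mathcal{D}(x) f$ with $\mathcal{D}(x)$ a Berezinian generator (transforming by right multiplication by $Ber(\mathcal{J}ac)$ as in \eqref{transf}) and $f \in \stsheaf$ a coefficient pulled back by composition, the pullback factorizes as
\[
(\varphi^X_\varepsilon)^\ast \mathcal{D} = \mathcal{D}(x)\, Ber\big(\mathcal{J}ac(\varphi^X_\varepsilon)\big)\, (f \circ \varphi^X_\varepsilon),
\]
where the direction of the Jacobian (plain $Ber$, not its inverse) is fixed by the density interpretation, consistently with the commutative top-form case.

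The second step is the linearization. The super Jacobian reads $\mathcal{J}ac(\varphi^X_\varepsilon)_{ab} = \delta_{ab} + (-1)^{|x_b||X|}\varepsilon\, \partial_b X^a$, where the sign is born when the constant (odd, if $|X|=1$) parameter $\varepsilon$ is commuted past $\partial_b$. Writing this as $I + \varepsilon N$ and using $Ber(I + \varepsilon N) = 1 + \varepsilon\, \mbox{str}(N)$ produces the super-divergence $s \defeq \mbox{str}(N) = \sum_a (-1)^{|x_a|(1 + |X|)}\partial_a X^a$, the sign $(-1)^{|x_b||X|}$ from the Jacobian being exactly what makes this the correct supertrace. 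Simultaneously $f \circ \varphi^X_\varepsilon = f + \varepsilon\, X(f)$ with $X(f) = \sum_a X^a \partial_a f$. Substituting and discarding the $O(\varepsilon^2)$ terms (automatic when $|X|=1$, genuine first order when $|X|=0$) gives $(\varphi^X_\varepsilon)^\ast \mathcal{D} = \mathcal{D}(x) f + \mathcal{D}(x)\,\varepsilon\,(s f + X(f))$, with all $\varepsilon$'s sitting to the right of the generator $\mathcal{D}(x)$.

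To extract $\frac{d}{d\varepsilon}\big\lfloor_{\varepsilon=0}$ I would then move $\varepsilon$ to the far left past $\mathcal{D}(x)$, which is where the global sign appears: $\mathcal{D}(x)\varepsilon = (-1)^{|X||\mathcal{D}(x)|}\varepsilon\, \mathcal{D}(x)$, whence $\mathcal{L}_X(\mathcal{D}) = (-1)^{|X||\mathcal{D}(x)|}\mathcal{D}(x)(s f + X(f))$. The final step is to repackage the left-derivative expression $s f + X(f)$ into the compact right-derivative form. Using the sign rule $\overset{\rightarrow}{\partial_a}(g) = (-1)^{|x_a||g|}(g)\overset{\leftarrow}{\partial_a}$ together with the right Leibniz rule, a direct check of both the divergence term and the transport term yields the key identity
\[
\sum_a (f X^a)\overset{\leftarrow}{\partial_a} = (-1)^{|X||f|}\big(s f + X(f)\big).
\]
Feeding this back, the factor $(-1)^{|X||f|}$ combines with $(-1)^{|X||\mathcal{D}(x)|}$, and since $|\mathcal{D}| = |\mathcal{D}(x)| + |f|$ the two signs collapse into $(-1)^{|X||\mathcal{D}|}$, giving precisely the claimed formula.

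I expect the main obstacle to be the sign bookkeeping rather than any conceptual difficulty: one must track the parity of $\varepsilon$, the sign picked up when $\varepsilon$ passes through $\partial_b$ in the Jacobian (which is what endows $\mbox{str}(N)$ with its $(-1)^{|x_a|(1+|X|)}$ weight), and the left-to-right derivative conversion that turns $s f + X(f)$ into $\sum_a (f X^a)\overset{\leftarrow}{\partial_a}$ while simultaneously promoting $|\mathcal{D}(x)|$ to $|\mathcal{D}|$. A useful sanity check is the purely even case $X = \partial_{x_i}$, for which the formula reduces to the familiar $\mathcal{L}_{\partial_{x_i}}(\mathcal{D}(x) f) = \mathcal{D}(x)\,\partial_i(X^i f)$, the super-analog of the commutative Lie derivative of a top form.
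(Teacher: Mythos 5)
Your proposal is correct and follows essentially the same route as the paper's proof: expand the flow $x_a \mapsto x_a + \varepsilon X^a$ to first order, linearize the Berezinian of the super Jacobian via the supertrace, Taylor-expand $f$, move $\varepsilon$ past $\mathcal{D}(x)$ to produce the sign $(-1)^{|X||\mathcal{D}(x)|}$, and convert to right derivatives using $|\mathcal{D}| = |\mathcal{D}(x)| + |f|$. The only difference is cosmetic: the paper first merges the divergence and transport terms into $(-1)^{|X^a|(|x_a|+|f|)}\partial_a(fX^a)$ and converts to $\overset{\leftarrow}{\partial}_a$ at the very end, whereas you keep $sf + X(f)$ and absorb everything into a single left-to-right conversion identity, which checks out.
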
 
\begin{proof} We use \eqref{LieFlow} and directly compute the variation of the sections $\mathcal{D} = \mathcal{D}(x) f(x)$ in the chart $U \subset X$ for an infinitesimal diffeormorphism controlled by a parameter $\varepsilon$, having parity $|\varepsilon| = |X|$, so that it makes sense to consider the even transformation $x^a \mapsto x^a + \varepsilon X^a.$ On the one hand one has 
\bear \label{sectionber}
\mathcal{D} (x^a + \varepsilon X^a) = \mathcal{D} (x^a) \mbox{Ber} ( (x^a + \varepsilon X^a ) \overset{\leftarrow}{\partial}_b) = \mathcal{D} (x^a) \mbox{Ber} ( \delta^a_b + \varepsilon \partial_b X^a ).  
\eear
The Berezinian of the coordinates transformation can be rewritten as
\bear \label{coorber}
\mbox{Ber} (\delta^a_b + \varepsilon \partial_b X^a) = 1 + t\,\mbox{Str} (\partial_b X^a) = 1 + \varepsilon \, \sum_a (-1)^{|x_a| |X_a|} \partial_a X^a,
\eear
where the sign is due to the super trace of the Jacobian matrix. Equivalently, acting from the left instead, one has
\bear
\mathcal{D} (x^a + \varepsilon X^a) = \mathcal{D} (x^a) \mbox{Ber} ( \overset{\rightarrow}{\partial}_b (x^a + \varepsilon X^a ) ) = \mathcal{D} (x^a) \mbox{Ber} ( \delta^a_b + \partial_b (\varepsilon  X^a )),  
\eear
so that this gives
\begin{align}
\mathcal{D} (x^a + \varepsilon X^a) & = \mathcal{D} (x^a) (1 + \mbox{Str} (\partial_b (\varepsilon  X^a))) \nonumber \\ 
&= \mathcal{D} (x^a) \left (1 + \sum_a (-1)^{|x_a|(|X| + |X^a|)} (\partial_a (\varepsilon  X^a)) \right) \nonumber \\ 
& = \mathcal{D} (x^a)\left  (1 + \varepsilon \sum_a (-1)^{|x_a||x_a| + |x_a| (|X^a| + |x_a|)}  \partial_a  X^a \right) \nonumber \\
& = \mathcal{D} (x^a)\left  (1 + \varepsilon \sum_a (-1)^{|x_a| |X^a|}  \partial_a X^a  \right )
\end{align}
where we have used that $|x_a| = |x_a||x_a|$ and that $|\varepsilon| = |X|.$ 
Considering the expansion of the local function, one finds
\bear
\mathcal{D}(x^a + \varepsilon X^a ) f(x^a + \varepsilon X^a) = \mathcal{D} (x^a) \left (1 + \varepsilon \, \sum_a (-1)^{|x_a| |X_a|} \partial_a X^a \right ) \left (f (x^a) + \varepsilon \sum_a X^a \partial_a f \right ).
\eear
In turn, this gives
\begin{align}
\mathcal{D}(x^a + \varepsilon X^a ) f(x^a + \varepsilon X^a)  & = \mathcal{D} (x^a) f(x^a) +  \nonumber  \\
& \; + \mathcal{D} (x) \varepsilon \sum_a  \left ( (-1)^{|x_a| |X_a|} (\partial_a X^a  ) f +  X^a (\partial_a f) \right ). 
\end{align}
We rearrange the summands inside the parentheses as follows: 
\begin{align}
 (-1)^{|x_a| |X_a|} (\partial_a X^a  ) f +  X^a (\partial_a f) & = (-1)^{|x_a||X^a| + |f| (|X|^a + |x_a|)} f (\partial_a X^a) + (-1)^{|X^a| (|x|_a + |f|)} (\partial_a f ) X^a \nonumber \\
& = (-1)^{|X^a| (|x_a| + |f|)} \partial_a (f X^a).
\end{align}
This leads to the following expression
\begin{align}
\mathcal{D}(x^a + \varepsilon X^a ) f(x^a + \varepsilon X^a) & = \mathcal{D}(x^a) f(x^a) + \mathcal{D}(x) \varepsilon \sum_a (-1)^{|X^a| (|x_a|) + |f|)} \partial_a (fX^a) \nonumber \\
& = \mathcal{D}(x^a) f(x^a) + \varepsilon \mathcal{D} (x) \sum_a (-1)^{|X^a| (|x_a| + |f|) + |\mathcal{D} (x)| (|X^a| + |x_a|)} \partial_a (fX^a) \nonumber \\
& = \mathcal{D}(x^a) f(x^a) + \varepsilon (-1)^{|\mathcal{D} (x)||X|} \mathcal{D} (x) \sum_a (-1)^{|X^a| (|x_a| + |f|)} \partial_a (fX^a) 
\end{align}
The only summand that matters in the computation of the Lie derivative \eqref{LieFlow} is the one that is linear in the parameter $\varepsilon$ and indeed we have that 
\bear
\mathcal{L}_X (\mathcal{D})= (-1)^{|\mathcal{D} (x)||X|} \mathcal{D} (x) \sum_a (-1)^{|X^a| (|x_a| + |f|)} \partial_a (fX^a).
\eear
Finally, notice that the right-hand side of the above expression for the Lie derivative can be re-written as
\begin{align}
\mathcal{L}_X (\mathcal{D}) & =  (-1)^{|\mathcal{D} (x)||X|} \mathcal{D} (x) \sum_a (-1)^{|X^a| (|x_a| + |f|) + |x_a| (|f| + |X^a|)}  (fX^a) \overset{\leftarrow}{\partial}_a \nonumber \\
& = (-1)^{|\mathcal{D}| |X|}  \mathcal{D}(x) \sum_a  (fX^a) \overset{\leftarrow}{\partial}_a 
\end{align}
taking into account that $|\mathcal{D}| = |\mathcal{D}(x)|+ |f|$, thus concluding the proof. 
\end{proof}
{\remark As already observed, the resemblance with the expression \eqref{LieComm} for the Lie derivative of the canonical sheaf is apparent. Nonetheless, in order to get the supergeometric analog of lemma \ref{LieLemma} it is useful to introduce the following re-definition of the Lie derivative to get rid of an inconvenient sign
\bear
\mathfrak{L}_X (\mathcal{D}) \defeq (-1)^{| \mathcal{D} | | X | }\mathcal{L}_X(\mathcal{D}).
\eear
\noindent In local coordinates, for sections $\mathcal{D} = \mathcal{D} (x)f$ and $X = \sum_a X^a \partial_a$, one has the following action
\bear \label{DressedLie}
\mathfrak{L}_X (\mathcal{D}) = \mathcal{D} (x) \sum_a (f X^a) \overset{\leftarrow}{\partial}_a = \mathcal{D} (x) \sum_a (-1)^{|x_a| (|f|  + |X^a|)}\partial_a (f X^a).
\eear
\noindent We can thus finally prove the following theorem.}
\begin{theorem}[$\mathcal{B}er (\mani)$ is a Right $\mathcal{D}_\mani$-module] \label{BerRightTheo} Let $\mani$ be a real or complex supermanifold and let $\Delta_{\mathpzc{R}}^{\mathcal{B}er} : \mathcal{B}er (\mani) \otimes_{\mathbb{K}} F^{1}\mathcal{D}_\mani \rightarrow \mathcal{B}er (\mani)$ be defined as 
\begin{align}
& \Delta^{\mathcal{B}er}_\mathpzc{R} (\mathcal{D} \otimes f) \defeq \mathcal{D} f, \\ 
& \Delta_\mathpzc{R}^{\mathcal{B}er} (\mathcal{D} \otimes X) \defeq - \mathfrak{L}_X (\mathcal{D} ), 
\end{align}
for any $\mathcal{D} \in \mathcal{B}er (\mani)$, $f \in \stsheaf \subset F^1 \mathcal{D}_\mani$ and $X \in \mathcal{T}_\mani \subset F^1 \mathcal{D}_\mani.$
Then the followings hold true.
\begin{enumerate}[leftmargin=*]
\item $\Delta_\mathpzc{R}^{\mathcal{B}er}$ defines a flat right connections on the Berezinian sheaf $\mathcal{B}er (\mani )$. 
\item In any system of coordinates $\Delta_\mathpzc{R}^{\mathcal{B}er}$ is the unique right connection on $\mathcal{B}er (\mani)$ satisfying 
\bear
\Delta^{\mathcal{B}er}_\mathpzc{R} (\mathcal{D} (x) \otimes \partial_a ) \defeq - \mathfrak{L}_{\partial_a} (\mathcal{D} (x))= 0 \quad \forall \, a = 1, \ldots, n | 1, \ldots, m,
\eear  
where $\mathcal{D}(x)$ is a local generating section of $\mathcal{B}er (\mani)$ and $\partial_a \in \mathcal{T}_\mani$ is a coordinate vector field.\\
\end{enumerate}
In particular $\Delta_{\mathpzc{R}}^{\mathcal{B}er}$ endows $\mathcal{B}er(\mani)$ with a \emph{right} $\mathcal{D}_\mani$-module structure.
\end{theorem}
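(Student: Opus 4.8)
The plan is to invoke Theorem \ref{rightD}, which reduces the problem to showing that $\Delta_\mathpzc{R}^{\mathcal{B}er}$ is a \emph{flat right connection} on $\mathcal{B}er(\mani)$, regarded as a sheaf of $\stsheaf$-modules; once this is established, the right $\mathcal{D}_\mani$-module structure follows automatically. The guiding principle throughout is the analogy with the commutative situation of Lemma \ref{DrightX}, where (minus) the Lie derivative furnishes the right connection on the canonical sheaf $\Omega^{\dim X}_X$; here the role of the Lie derivative is played by the sign-adjusted operator $\mathfrak{L}_X$, whose local expression \eqref{DressedLie} is the computational engine of the whole argument.

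First I would verify the three defining axioms of a right connection for $\Delta_\mathpzc{R}^{\mathcal{B}er}$. Axiom (1) is the definition $\Delta_\mathpzc{R}^{\mathcal{B}er}(\mathcal{D}\otimes f) = \mathcal{D}f$. For the $\stsheaf$-linearity axiom $\Delta_\mathpzc{R}^{\mathcal{B}er}(\mathcal{D}\otimes f\circ X) = \Delta_\mathpzc{R}^{\mathcal{B}er}(\mathcal{D}f\otimes X)$, I observe that $f\circ X$ carries no zeroth-order part and hence equals the vector field $fX\in\mathcal{T}_\mani$; in \eqref{DressedLie} the field $X$ enters only through its components $X^a$, so replacing $X^a$ by $fX^a$ reproduces exactly $\mathfrak{L}_X$ applied to $\mathcal{D}f$. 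The modified Leibniz axiom \eqref{modified} is precisely the Leibniz rule for the right derivative $\overset{\leftarrow}{\partial}_a$ appearing in \eqref{DressedLie}: expanding $\mathfrak{L}_X(\mathcal{D}(x)f)$ and separating the term in which $\overset{\leftarrow}{\partial}_a$ lands on $f$ from the one in which it lands on $X^a$ yields the two summands of \eqref{modified}, with the sign $(-1)^{|X||f|}$ accounted for by the parities. These are short computations driven entirely by \eqref{DressedLie}.

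The heart of the matter --- and the step I expect to be the main obstacle --- is flatness, i.e.
\[
\Delta_\mathpzc{R}^{\mathcal{B}er}(\mathcal{D}\otimes[X,Y]) = \Delta_\mathpzc{R}^{\mathcal{B}er}(\Delta_\mathpzc{R}^{\mathcal{B}er}(\mathcal{D}\otimes X)\otimes Y) - (-1)^{|X||Y|}\Delta_\mathpzc{R}^{\mathcal{B}er}(\Delta_\mathpzc{R}^{\mathcal{B}er}(\mathcal{D}\otimes Y)\otimes X).
\]
Substituting $\Delta_\mathpzc{R}^{\mathcal{B}er}(\mathcal{D}\otimes X) = -\mathfrak{L}_X(\mathcal{D})$, this amounts to a graded-commutator identity for $\mathfrak{L}$ along $\mathcal{T}_\mani$, namely that $\mathfrak{L}_{[X,Y]}$ coincides with the suitably signed composite of $\mathfrak{L}_X$ and $\mathfrak{L}_Y$. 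I would establish this either directly from the flow definition \eqref{LieFlow}, using that the flow of $[X,Y]$ is the graded commutator of the flows of $X$ and $Y$ together with functoriality of the pullback on $\mathcal{B}er(\mani)$, or by a direct local computation from \eqref{DressedLie}. The delicate point is the bookkeeping of the signs introduced by the redefinition $\mathfrak{L}_X = (-1)^{|\mathcal{D}||X|}\mathcal{L}_X$: one must check that this twist is exactly what converts the naive identity $\mathcal{L}_{[X,Y]} = [\mathcal{L}_X,\mathcal{L}_Y]$ into the flatness relation carrying the sign $(-1)^{|X||Y|}$ demanded by the definition of a flat right connection.

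Finally, for part (2) the normalization is immediate: setting $f=1$ and $X = \partial_a$ (so that $X^b = \delta^b_a$) in \eqref{DressedLie} gives $\mathfrak{L}_{\partial_a}(\mathcal{D}(x)) = \mathcal{D}(x)\,(1)\overset{\leftarrow}{\partial}_a = 0$, whence $\Delta_\mathpzc{R}^{\mathcal{B}er}(\mathcal{D}(x)\otimes\partial_a)=0$. For uniqueness I would argue, exactly as in the commutative case underlying Lemma \ref{DrightX}, that any right connection is determined by its values on the generators $\partial_a$ of $\mathcal{T}_\mani$ (via $\stsheaf$-linearity) and on the local generator $\mathcal{D}(x)$ of $\mathcal{B}er(\mani)$ (via the Leibniz axiom); since these values are forced to vanish, the connection is unique, and a short check that the vanishing condition is preserved under a change of coordinates --- using that $\mathcal{D}(x)$ transforms by the Berezinian cocycle --- shows that it is globally well defined. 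Combining the flat right connection of part (1) with Theorem \ref{rightD} yields the asserted right $\mathcal{D}_\mani$-module structure on $\mathcal{B}er(\mani)$.
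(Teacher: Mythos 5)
Your proposal is correct and follows essentially the same route as the paper: reduction to a flat right connection via Theorem \ref{rightD}, verification of the connection axioms through the local formula \eqref{DressedLie}, flatness by checking the graded-commutator identity for $\mathfrak{L}$ (the paper carries this out as the direct coordinate computation you offer as your second option, with exactly the sign bookkeeping you flag as the delicate point), and part (2) by the generators-plus-normalization argument, with the coordinate-invariance check deferred to \cite{Manin}.
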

\begin{proof} We start using \eqref{DressedLie} to show that the axioms of a right connection hold true. 
The only non trivial verification to carry out is that $\Delta^{\mathcal{B}er}_\mathpzc{R} (\mathcal{D} \otimes X \circ f) = \Delta_\mathpzc{R}^{\mathcal{B}er} (\mathcal{D} \otimes X) f$. We recall that $
X \circ f = X (f) + (-1)^{|X||f|} f \circ X.$ Upon using this, one computes  
\begin{align}
& \Delta^{\mathcal{B}er}_{\mathpzc{R}} \left (\mathcal{D}(x) g \otimes \sum_a X^a \partial_a \circ f \right ) = \Delta^{\mathcal{B}er}_\mathpzc{R} \left (\mathcal{D} (x) g \otimes \sum_a X^a \partial_a f \right ) + \Delta^{\mathcal{B}er}_{\mathpzc{R}} \left (\mathcal{D}(x) g \otimes \sum_a (-1)^{|f| |X|} f X^a \partial_a \right ) \nonumber \\
& = \mathcal{D} (x)  \sum_a g X^a \partial_a f - \mathcal{D} (x) \sum_a (-1)^{|f| (|X^a| + |x_a|) + |x_a| (|X^a| + |f| + |g|)} \partial_a (gfX^a) \nonumber \\
& = \mathcal{D} (x) \sum_a g X^a \partial_a f - \mathcal{D} (x) \sum_a (-1)^{|x_a| (|X^a| + |g|) } \left ( \partial_a (gX^a) f + (-1)^{|x_a | (|X^a| + |g|)} gX^a \partial_a f \right ) \nonumber \\
& = - \mathcal{D} (x) \sum_a (-1)^{|x_a| (|X^a| + |g|) }  \partial_a (gX^a) f,
\end{align}
upon observing that the first term and the last term cancel pairwise in the semi-last equality above. This gives
\bear
\Delta^{\mathcal{B}er}_\mathpzc{R} (\mathcal{D}(x) g \otimes \sum_a X^a \partial_a ) f = - \mathcal{D} (x) \sum_a (-1)^{|x_a| (|X^a| + |g|) }  \partial_a (gX^a) f,
\eear
thus proving that $\Delta^{\mathcal{B}er}_\mathpzc{R} (\mathcal{D} \otimes X \circ f) = \Delta_\mathpzc{R}^{\mathcal{B}er} (\mathcal{D} \otimes X) f.$\\
Also, it follows trivially from equation \eqref{DressedLie} that the action of $\Delta^{\mathcal{B}er}_\mathpzc{R}$ satisfies the third defining property of a right connection, while the first one is just the right multiplication.\\
Let us now prove that the right connection defined by $\Delta_\mathpzc{R}^{\mathcal{B}er}$ is flat, \emph{i.e.}\ it satisfies
\bear \label{superflat}
\Delta^{\mathcal{B}er}_{\mathpzc{R}} (\mathcal{D} \otimes [X, Y]) = \Delta^{\mathcal{B}er}_{\mathpzc{R}} ( \Delta^{\mathcal{B}er}_\mathpzc{R} (\mathcal{D} \otimes X ) \otimes Y ) - (-1)^{|X||Y|} \Delta^{\mathcal{B}er}_\mathpzc{R} (\Delta^{\mathcal{B}er}_\mathpzc{R} (\mathcal{D} \otimes Y) \otimes X),
\eear
for any pair of fields $X, Y \in \mathcal{T}_\mani.$ On the one hand, recalling that the supercommutator $[\cdot , \cdot]$ is given by  
\bear
[X, Y] \defeq \left [\sum_a X^a \partial_a ,\; \sum_b Y^b \partial_b \right ] = \sum_a \left ( X^b \partial_b Y^a - (-1)^{|X||Y|}Y^b \partial_b X^a \right ) \partial_a, 
\eear
one has
\begin{align}
\Delta_\mathpzc{R} (\mathcal{D} \otimes [X, Y]) = & - \mathcal{D} (x) \sum_{a, b} \Big [  (-1)^{|x_a | (|f| + |X| + |Y^a|)} \partial_a (f X^b \partial_b (Y^a)) + \nonumber \\ 
& - (-1)^{ |X| |Y| +|x_a | (|f| + |Y| + |X^a|)} \partial_a (f Y^b \partial_b (X^a))  \Big ],
\end{align}
where we have used that $
| X^b \partial_b Y^a| = |X| + |Y^a|$ and $ | Y^b \partial_b X^a| = |Y| + |X^a|.$
On the other hand, one has
\begin{align}
\Delta^{\mathcal{B}er}_\mathpzc{R} ( \Delta^{\mathcal{B}er}_\mathpzc{R} (\mathcal{D} \otimes X ) \otimes Y ) & = \Delta_\mathpzc{R}^{\mathcal{B}er} \left ( - \mathcal{D}(x) \sum_a (-1)^{|x_a| (|f| + |X^a|)} \partial_{a} (f X^a)\otimes \sum_b Y^b \partial_b \right ) \nonumber \\
& = + \mathcal{D} (x) \sum_{a,b} (-1)^{|x_a| (|f| + |X^a|) + |x_b| ( |f| + |X| + |Y^b| )} \partial_b ( \partial_a (f X^a ) Y^b).
\end{align}
Now note that
$
\partial_a (f X^a ) Y^b = \partial_a (f X^a Y^b) - (-1)^{|x_a| (|f| + |X^a|)} fX^a (\partial_a Y^b),
$
so that plugging this into the above one
\begin{align}
\Delta^{\mathcal{B}er}_\mathpzc{R} ( \Delta^{\mathcal{B}er}_\mathpzc{R} (\mathcal{D} \otimes X ) \otimes Y ) & = + \mathcal{D} (x) \sum_{a,b} (-1)^{|x_a| (|f| + |X^a|) + |x_b| ( |f| + |X| + |Y^b| )} \partial_b  \partial_a (f X^a  Y^b) + \nonumber \\
& - \mathcal{D} (x) \sum_{a, b} (-1)^{|x_b| ( |f| + |X| + |Y^b| )} \partial_b (f X^a \partial_a (Y^b)).
\end{align}
The same holds true for the other part, that is one finds
\begin{align}
\Delta^{\mathcal{B}er}_\mathpzc{R} ( \Delta^{\mathcal{B}er}_\mathpzc{R} (\mathcal{D} \otimes Y ) \otimes X ) & = \Delta_\mathpzc{R}^{\mathcal{B}er} \left ( - \mathcal{D}(x) \sum_a (-1)^{|x_b| (|f| + |Y^b|)} \partial_{b} (f Y^b)\otimes \sum_a X^a \partial_a \right ) \nonumber \\
& = + \mathcal{D} (x) \sum_{a,b} (-1)^{|x_b| (|f| + |Y^b|) + |x_a| ( |f| + |Y| + |X^a| )} \partial_a ( \partial_b (f Y^b ) X^a).
\end{align}
It follows that, by Leibniz rule, one gets
\begin{align}
\Delta^{\mathcal{B}er}_\mathpzc{R} ( \Delta^{\mathcal{B}er}_\mathpzc{R} (\mathcal{D} \otimes Y ) \otimes X ) & = + \mathcal{D} (x) \sum_{a,b} (-1)^{|x_b| (|f| + |Y^b|) + |x_a| ( |f| + |Y| + |X^a| )} \partial_a  \partial_b (f Y^b  X^a) + \nonumber \\
& - \mathcal{D} (x) \sum_{a,b} (-1)^{ |x_a| ( |f| + |Y| + |X^a| )} \partial_a  (f Y^b  \partial_b  (X^a)).
\end{align}
Let us now consider the full expression
\begin{align}
& \Delta^{\mathcal{B}er}_\mathpzc{R} ( \Delta^{\mathcal{B}er}_\mathpzc{R} (\mathcal{D} \otimes X ) \otimes Y ) - (-1)^{|X||Y|} \Delta^{\mathcal{B}er}_\mathpzc{R} (\Delta^{\mathcal{B}er}_\mathpzc{R} (\mathcal{D} \otimes Y) \otimes X)  =  \nonumber \\
& = - \mathcal{D} (x) \sum_{a, b} \left ( (-1)^{|x_a| ( |f| + |X| + |Y^b| )} \partial_a (f X^b \partial_b (Y^a)) 
- (-1)^{|X||Y|+  |x_a| ( |f| + |Y| + |X^a| )} \partial_a  (f Y^b  \partial_b  (X^a)) \right ) + \nonumber \\
& \quad + \mathcal{D} (x) \sum_{a,b} \Big ( (-1)^{|x_b| (|f| + |X^b|) + |x_a| ( |f| + |X| + |Y^a| )} \partial_a  \partial_b (f X^b  Y^a) + \nonumber \\ 
& \quad - (-1)^{|X||Y| + |x_b| (|f| + |Y^b|) + |x_a| ( |f| + |Y| + |X^a| )} \partial_a  \partial_b (f Y^b  X^a)  \Big ). \nonumber 
\end{align}
We need the last two terms to cancel pairwise. We observe that
\bear
\partial_a  \partial_b (f Y^b  X^a) = (-1)^{|x_a| |x_b| + |X^a | |Y^b|}\partial_b \partial_a ( fX^a Y^a). 
\eear
Renaming the indexes in the last addendum one finds that 
\begin{align} \label{zero}
 \mathcal{D} (x) & \sum_{a,b}  \partial_a \partial_b (f X^b Y^a )  \Big ( (-1)^{|x_b| (|f| + |X^b|) + |x_a| ( |f| + |X| + |Y^a| )} + \nonumber \\
 & \quad - (-1)^{|X||Y|+ |x_a| (|f| + |Y^a|) + |x_b| ( |f| + |Y| + |X^b| ) + |x_b||x_a| + |X^b||Y^a|}  \Big ).
\end{align}
This leads us to consider the following equality
\begin{align}
|x_b| (|f| + |X^b|) & + |x_a| ( |f| + |X| + |Y^a| ) = \nonumber \\
&= |X| || Y| + x_b| ( |f| + |Y| + |X^b| ) + |x_a| (|f| + |Y^a|) + |x_b||x_a| + |X^b||Y^a|,
\end{align}
which is easily verified by rearranging the right-hand side. 
This tells that term in \eqref{zero} is identically zero, thus proving flatness of $\Delta_\mathpzc{R}^{\mathcal{B}er}$ and concluding the proof of the first point.\\
\noindent For the second point, uniqueness of $\Delta_\mathpzc{R}^{\mathcal{B}er} $ follows from the fact that $\mathcal{D}(x)$ is a local generator for $\mathcal{B}er (\mani)$ and that the $\partial_a$'s give a system of local generators for $\mathcal{T}_\mani$. Also, choosing a system of coordinates, 
$\Delta_\mathpzc{R}^{\mathcal{B}er} (\mathcal{D}(x) \otimes \partial_a) = - \mathfrak{L}_{\partial_a} (\mathcal{D}(x)) = 0$
is readily verified simply applying \eqref{DressedLie}. That this remains zero in any system of coordinates $x^\prime = x^\prime (x)$ is a (lengthy) local check, which is carried out in \cite{Manin}.\end{proof}

{\remark The above result establishes that there exists a right action of vector fields on the Berezinian sheaf of a supermanifold given - up to a sign - by the Lie derivative 
\bear
\xymatrix@R=1.5pt{
\mathcal{B}er (\mani) \otimes \mathcal{T}_\mani \ar[r] & \mathcal{B}er (\mani) \\
\mathcal{D} \otimes X \ar@{|->}[r] & \mathcal{D} \cdot X \defeq - (-1)^{|\mathcal{D}||X| }\mathcal{L}_X (\mathcal{D}),
}
\eear
so that, explicitly, using \eqref{DressedLie}, one has 
\bear \label{ActionLie}
\mathcal{D} \cdot X = - \mathcal{D}(x)\sum_a (-1)^{|x_a| (|X^a| + |f|)}  \partial_a (fX^a),
\eear
if $\mathcal{D}$ is trivialized as $\mathcal{D} (x) f$ for a function $f$. It is worth noticing that the above construction can be obtained in a rather more neat and economic way in a purely algebraic fashion, by using the third construction of the Berezinian sheaf that we have discussed in the previous section in theorem \ref{Con3}, which introduces the sheaf $\mathcal{D}_\mani$ from the very beginning. In particular, one proves the following.}
\begin{corollary}[$\mathcal{B}er (\mani)$ is a Right $\mathcal{D}_\mani$-Module - Cohomological Version] \label{BerRightCor} Let $\mani$ be a real or complex supermanifold. Then right action 
\bear
\mathcal{H}^p \left ( (\mathpzc{DR}^\bullet_\mani, \mathscr{D} ) \right ) \otimes_{\stsheaf } \mathcal{D}_\mani \longrightarrow \mathcal{H}^p \left ( (\mathpzc{DR}^\bullet_\mani, \mathscr{D} ) \right )
\eear
is uniquely characterized by the condition $\mathcal{D}(x) \cdot \partial_a = 0 $ for any $a$ even and odd, where $\mathcal{D}(x) \in \mathcal{H}^p \left ( (\mathpzc{DR}^\bullet_\mani, \mathscr{D} ) \right ) \cong \mathcal{B}er (\mani)$ is a generating section of $\mathcal{B}er(\mani)$. More in particular, the action is given (up to an overall sign) by the Lie derivative on $\mathcal{B}er (\mani).$
\end{corollary}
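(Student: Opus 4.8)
The plan is to exploit the bimodule nature of the universal de Rham sheaf $\mathpzc{DR}_\mani = \Omega^\bullet_\mani \otimes_\stsheaf \mathcal{D}_\mani$ introduced before Theorem \ref{Con3}. By construction $\mathpzc{DR}_\mani$ is a sheaf of right $\mathcal{D}_\mani$-modules, the action being right multiplication in the $\mathcal{D}_\mani$-factor, whereas the differential $\mathscr{D}$ is, as noted in the remark preceding Theorem \ref{Con3}, \emph{left} multiplication by the fixed odd element $e \defeq \sum_a dx_a\otimes\partial_{x_a}$. First I would observe that left and right multiplication in an associative sheaf of algebras commute, so $\mathscr{D}(\xi\cdot F) = (\mathscr{D}\xi)\cdot F$ for every local section $\xi$ of $\mathpzc{DR}_\mani$ and every $F\in\mathcal{D}_\mani$; that is, $\mathscr{D}$ is a morphism of right $\mathcal{D}_\mani$-modules. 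Consequently $\ker\mathscr{D}$ and $\operatorname{im}\mathscr{D}$ are right $\mathcal{D}_\mani$-submodules and the quotient $\mathcal{H}^p\big((\mathpzc{DR}^\bullet_\mani,\mathscr{D})\big)$ inherits a canonical right $\mathcal{D}_\mani$-module structure. Transporting it along the isomorphism $\mathcal{H}^p((\mathpzc{DR}^\bullet_\mani,\mathscr{D}))\cong\mathcal{B}er(\mani)$ of Theorem \ref{Con3} endows $\mathcal{B}er(\mani)$ with a right $\mathcal{D}_\mani$-module structure with no further work.

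Next I would compute this action on the distinguished local generator $\mathcal{D}(x) = [dz_1\cdots dz_p \otimes \partial_{\theta_1}\cdots\partial_{\theta_q}]$. For an odd coordinate field $\partial_{\theta_\alpha}$ the representative $dz_1\cdots dz_p\otimes\partial_{\theta_1}\cdots\partial_{\theta_q}\partial_{\theta_\alpha}$ vanishes already at the level of cochains, since $\partial_{\theta_\alpha}^2=0$ forces $\partial_{\theta_1}\cdots\partial_{\theta_q}\partial_{\theta_\alpha}=0$. For an even coordinate field $\partial_{z_i}$ the representative $dz_1\cdots dz_p\otimes\partial_{\theta_1}\cdots\partial_{\theta_q}\partial_{z_i}$ is $\mathscr{D}$-closed: indeed $\mathscr{D}$ multiplies on the left by $dz_b$ for $b=1,\dots,p$, and $dz_b\,dz_1\cdots dz_p=0$ because the $dz$'s anticommute. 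It is then \emph{exact}: feeding it into the homotopy identity of the proof of Theorem \ref{Con3} one finds the eigenvalue $p+q+\mbox{deg}_0(\omega)+\mbox{deg}_0(\partial_J)-\mbox{deg}_1(\omega)-\mbox{deg}_1(\partial_J)=p+q+0+1-p-q=1\neq 0$, so that $\omega\otimes F = \mathscr{D}(\mathscr{H}(\omega\otimes F))$ is a coboundary and hence zero in $\mathcal{H}^p$. This proves $\mathcal{D}(x)\cdot\partial_a=0$ for every $a$, even and odd.

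Finally I would upgrade this pointwise computation to the full characterization. Since $F^1\mathcal{D}_\mani=\stsheaf\oplus\mathcal{T}_\mani$ generates $\mathcal{D}_\mani$, since $\mathcal{D}(x)$ generates $\mathcal{B}er(\mani)$ as an $\stsheaf$-module, and since the $\partial_a$ generate $\mathcal{T}_\mani$, the entire right action is determined by the values $\mathcal{D}(x)\cdot\partial_a$ together with $\stsheaf$-linearity and the modified Leibniz rule \eqref{modified}; thus the condition $\mathcal{D}(x)\cdot\partial_a=0$ pins the structure down uniquely. This is precisely the characterization appearing in Theorem \ref{BerRightTheo}(2) for the connection $\Delta_\mathpzc{R}^{\mathcal{B}er}$ built from the Lie derivative, so by uniqueness the two right $\mathcal{D}_\mani$-module structures coincide and the action is, up to the overall sign in $\mathfrak{L}_X$, the Lie derivative on $\mathcal{B}er(\mani)$.

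The main obstacle is the even case $\mathcal{D}(x)\cdot\partial_{z_i}=0$: unlike the odd case it does not hold at cochain level and genuinely requires the cohomological mechanism, namely the vanishing $dz_b\,dz_1\cdots dz_p=0$ to see closedness and the reuse of the explicit homotopy $\mathscr{H}$ (now with eigenvalue $1$ rather than $0$) to see exactness. All remaining verifications are bookkeeping of Koszul signs, which I would suppress.
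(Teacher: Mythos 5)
Your proposal is correct, and it rests on the same two pillars as the paper's own proof: the identification $\mathcal{H}^p\left((\mathpzc{DR}^\bullet_\mani,\mathscr{D})\right)\cong\mathcal{B}er(\mani)$ of Theorem \ref{Con3} and the key vanishing $\mathcal{D}(x)\cdot\partial_a=0$ in cohomology. The differences lie in how two steps are executed, and in both cases your version is the more explicit one. First, the paper merely asserts $[dz_1\cdots dz_p\otimes\partial_{\theta_1}\cdots\partial_{\theta_q}\partial_a]=0$ ``with reference to Theorem \ref{Con3}''; you actually prove it, separating the odd case (cochain-level vanishing from $\partial_{\theta_\alpha}^2=0$) from the even case (closedness plus exactness extracted from the homotopy $\mathscr{H}$, whose eigenvalue is $1$ rather than $0$), and you also record why the right $\mathcal{D}_\mani$-action descends to cohomology at all, namely that $\mathscr{D}$, being a left multiplication, is a morphism of right $\mathcal{D}_\mani$-modules --- a point the paper leaves implicit. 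One small touch-up to your closedness argument: $\mathscr{D}$ also contains the summands $d\theta_\beta\otimes\partial_{\theta_\beta}$, and these annihilate the representative $dz_1\cdots dz_p\otimes\partial_{\theta_1}\cdots\partial_{\theta_q}\partial_{z_i}$ not through the form factor but because the operator factor then contains $\partial_{\theta_\beta}$ twice --- the same mechanism as your odd case --- so closedness indeed holds, but for both reasons at once. Second, for the identification with the Lie derivative the paper computes directly: taking $\mathcal{D}=\mathcal{D}(x)f$ and $X=\sum_a X^a\partial_a$, it commutes $fX^a$ past $\partial_a$ inside $\mathcal{D}_\mani$, discards the term proportional to $\mathcal{D}(x)\cdot\partial_a$, and lands exactly on \eqref{ActionLie}. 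You instead observe that the induced action restricted to $F^1\mathcal{D}_\mani$ is a right connection killing $\mathcal{D}(x)\otimes\partial_a$ and invoke the uniqueness clause of Theorem \ref{BerRightTheo}(2). Both closings are valid: the paper's buys the explicit local formula \eqref{ActionLie} as output, while yours avoids the sign bookkeeping entirely at the price of leaning on the already-proven uniqueness statement.
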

\begin{proof} With reference to Theorem \ref{Con3}, one has that in cohomology $\mathcal{D}(x) \cdot \partial_a = [dz_1 \ldots dz_p \otimes \partial_{\theta_1} \otimes \ldots \partial_{\theta_q} \partial_a ] = 0 $ for any $a$, which characterizes the right action of $\mathcal{D}_\mani$ on $\mathcal{B}er (\mani).$ \\
In particular, one sees that considering a section $\mathcal{D} = dz_1 \ldots dz_p \otimes \partial_{\theta_1}  \ldots \partial_{\theta_q}  f \in \mathcal{B}er(\mani)$ and a vector fields $X = \sum_a X^a \partial_a $, one finds 
\begin{align}
\mathcal{D} \cdot X & = \mathcal{D}(x) \sum_a (-1)^{|x_a| (|X^a| + |f|)} \left ( - \partial_a (fX^a) + \partial_a \cdot f X^a \right ) \nonumber \\
& = -\mathcal{D}(x) \sum_a (-1)^{|x_a| (|X^a| + |f|)} \partial_a (f X^a),
\end{align} 
where we have used that $\mathcal{D}(x) \cdot \partial_{x_a} = 0$ is zero in the cohomology. This matches \eqref{ActionLie}.
\end{proof}
\noindent This shows the relevance of the construction of the Berezinian via the total de Rham algebra $\Omega^\bullet \otimes_{\mathcal{O}_{\mani}} \mathcal{D}_\mani.$ 
{\remark Finally, it is worth observing the similarities between these results and constructions and those elucidated in appendix \ref{app2} regarding the right $\mathcal{D}_X$-module structure and its relation with the Lie derivative for the canonical sheaf of an ordinary manifold $X$.}

\section{Integral Forms and Spencer Cohomology of Supermanifolds}

\noindent In the previous section we have shown that, given a real or complex supermanifold $\mani$, there exists a flat right connection which endows $\mathcal{B}er (\mani)$ with the structure of a right $\mathcal{D}_\mani$-module. In analogy with the ordinary case of the canonical sheaf of a real or complex manifold, we want to study the structure of the so-called \emph{Spencer complex} related to $\mathcal{B}er(\mani)$ \cite{Sabbah}. This is particularly relevant in the context of supermanifolds since the Berezinian sheaf does not appear in the de Rham complex - this is instead the case for the canonical sheaf on ordinary manifolds - thus giving rise to a genuinely new complex. Also, as we shall see shortly, sections of the Berezinian sheaf are the objects to look at for a meaningful notion of integration over a supermanifold. We start with the main definition \cite{BL1, BL2, Manin, Deligne}.
\begin{definition}[Integral Forms] Let $\mani$ be a real or complex supermanifold of dimension $p|q$. We call integral forms of degree $p-i$ the sections of the sheaves 
\bear
\Sigma^{p-i}_\mani \defeq \mathcal{B}er (\mani) \otimes_{\stsheaf} \cat{S}^i \Pi \mathcal{T}_\mani 
\eear
for any $i\geq 0$. 
\end{definition}
\noindent Notice that, equivalently, one can define integral forms of degree $p-i$ to be sections of the sheaves $\Sigma^{p-i}_\mani \defeq \mathcal{H}om_{\mathcal{O}_\mani} (\Omega^{i}_\mani, \mathcal{B}er (\mani)).$
{\remark The degree is assigned so that sections of the Berezinian sheaf $\Sigma_\mani^p = \mathcal{B}er (\mani)$ are \emph{top-integral forms} in degree $p$, which equals the even dimension of the supermanifold, mimicking what happens for the canonical sheaf in the ordinary setting. Further, it is to be stressed that integral forms can have any \emph{negative} degree: this is in some sense \virgolette dual'' to what happens in the case of differential forms on a supermanifold, as seen above.}\\

\noindent We are now interested in structuring the sheaves of integral forms into a complex: to this end, we need to introduce a suitable differential. This is where the (flat) right connection $\Delta^{\mathcal{B}er}_{\mathpzc{R}} : \mathcal{B}er (\mani) \otimes \mathcal{T}_\mani \rightarrow \mathcal{B}er (\mani)$ discussed in the previous section serves its purposes \cite{Manin, Penkov}. In particular, we define a morphism of sheaves as follows.
\bear \label{delta1}
\xymatrix@R=1.5pt{
\delta : \Sigma^{p-1} = \mathcal{B}er (\mani) \otimes \Pi \mathcal{T}_\mani \ar[r] & \Sigma^p = \mathcal{B}er (\mani) \\ 
\mathcal{D} \otimes \pi X \ar@{|->}[r] & \delta (\mathcal{D} \otimes \pi X) \defeq (-1)^{|\mathcal{D}| + |X|} \Delta_{\mathpzc{R}}^{\mathcal{B}er} (\mathcal{D}\otimes X).
}
\eear 
\noindent By the previous section this is well-defined and it does not depend on the choice of local coordinates. Also, notice that it is $\mathcal{O}_\mani$-linear since 
\bear
\delta (\mathcal{D}f \otimes \pi X ) = (-1)^{|D| + |f| + |X|} \Delta_{\mathpzc{R}}^{\mathcal{B}er} (\mathcal{D} f \otimes X) = (-1)^{|D| + |f| + |X|} \Delta_{\mathpzc{R}}^{\mathcal{B}er} (\mathcal{D}  \otimes f X) = \delta (\mathcal{D} \otimes f \pi X),
\eear
by the property of the right connection $\Delta^{\mathcal{B}er}_\mathpzc{R}$. Moreover, since $\Delta_{\mathpzc{R}}^{\mathcal{B}er} (- \otimes X) = - \mathfrak{L}_X (-) $, the above can also be rewritten as
\begin{align}
\delta (\mathcal{D} \otimes \pi X) & = (-1)^{|\mathcal{D}| + |X| +1 } \mathfrak{L}_X (\mathcal{D}) =  (-1)^{|\mathcal{D}| + |\pi X|} \mathfrak{L}_X (\mathcal{D}), 
\end{align}
which stress the dependence from the parity of the $\Pi$-vector field $\pi X \in \Pi \mathcal{T}_\mani.$ Using the explicit expression of the Lie derivative \eqref{ActionLie}, in local coordinates, for $\mathcal{D} = \mathcal{D}(x) f$ with $f$ a section of the structure sheaf and $\pi X = \sum_a X^a \pi \partial_a$ the action of $\delta$ is therefore given by
\begin{align} \label{actiondelta}
\delta (\mathcal{D} \otimes \pi X) & = (-1)^{|X| + |\mathcal{D}|} \Delta^{\mathcal{B}er}_{\mathpzc{R}} (\mathcal{D} \otimes X)  = (-1)^{|X| + |\mathcal{D}| +1} \mathfrak{L}_X (\mathcal{D}) \nonumber \\
& = (-1)^{|X| + |\mathcal{D}| +1} \mathcal{D}(x) \sum_a (-1)^{|x_a| (|f| + |X^a|)} \partial_a (f X^a),
\end{align}
where we have used \eqref{ActionLie}.
{\remark In order to extend the above morphism to integral forms of any degree, in a similar fashion as in ordinary differential geometry, we introduce the \emph{contraction operator} $\iota_\omega : \cat{S}^\bullet \Pi \mathcal{T}_\mani \rightarrow \cat{S}^\bullet\Pi \mathcal{T}_\mani$ where $\omega \in \Omega^1_\mani$, which is a derivation of the supersymmetric algebra $\cat{S}^\bullet \Pi \mathcal{T}_\mani$ characterized by the properties that $\iota_{\omega } (f) = 0$ for any $f  \in \stsheaf $ and $\iota_{\omega} (\pi X) = \omega( \pi X)$ for any $\pi X \in \Pi \mathcal{T}_\mani$. In particular, on the bases of $\Omega^1_\mani$ and $\Pi \mathcal{T}_\mani$ we set $\iota_{dx_a} (\pi \partial_{b}) = \delta_{ab}$. We will employ the notation $\iota_{dx_a} = \partial_{\pi \partial_a}$ as to stress that $\iota_{dx_a}$ acts indeed as a derivation. In a certain trivialization, for sections $\omega \in \Omega^1_\mani$ and $\pi X \in \Pi \mathcal{T}_\mani$ one has that
\bear
\iota_\omega (\pi X) = \sum_{a} \omega_a \partial_{\pi \partial_{a}} \left ( \sum_b X^b \pi \partial_{b} \right ) = \sum_{a}(-1)^{(|x_a | + 1)|X^a|} \omega_a X^a.
\eear
Using this, we prove the following lemma. }
\begin{lemma}[Representation Lemma] Let $\mani$ be a real or complex supermanifold. Then the action of $\delta : \mathcal{B}er (\mani) \otimes \Pi \mathcal{T}_\mani \rightarrow \mathcal{B}er (\mani)$ defined as in \eqref{delta1} can be given the following operator representation
\bear \label{represent}
\delta = \sum_a (-1)^{|x_a| +1 } \mathcal{L}_{\partial_{a}} \otimes \iota_{dx_a},
\eear
where $\mathcal{L}_{\partial_{a}}$ is the Lie derivative with respect to the coordinate vector field $\partial_{x_a}$ acting on sections of the Berezinian sheaf $\mathcal{B}er (\mani) $ and $\iota_{dx_a}$ is the contraction with respect to the coordinate 1-form $dx_a$ acting on sections of the $\Pi$-tangent sheaf $\Pi \mathcal{T}_\mani.$
\end{lemma}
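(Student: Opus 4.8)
The plan is to establish the operator identity \eqref{represent} by a direct local computation: I fix a chart $(U,x_a)$, evaluate the right-hand side of \eqref{represent} on an arbitrary decomposable section $\mathcal{D}\otimes\pi X$ of $\mathcal{B}er(\mani)\otimes_{\stsheaf}\Pi\mathcal{T}_\mani$, and compare the outcome with the explicit expression \eqref{actiondelta} for $\delta$. Writing $\mathcal{D}=\mathcal{D}(x)f$ and $\pi X=\sum_b X^b\,\pi\partial_b$, the single most important step is to exploit that the tensor product is taken over $\stsheaf$: the balanced relation lets me absorb each coefficient $X^b$ into the (right) Berezinian factor, so that $\mathcal{D}\otimes\pi X=\sum_b \mathcal{D}(x)(fX^b)\otimes\pi\partial_b$. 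This is precisely what repairs the mismatch one might fear at first sight, namely that in \eqref{represent} the derivative $\mathcal{L}_{\partial_a}$ appears to differentiate only $f$ and not the coefficient $X^a$: after absorption the coordinate Lie derivative acts on the full coefficient $fX^a$ and thereby reproduces the product $\partial_a(fX^a)$ occurring in \eqref{actiondelta}. Since both $\delta$ and the right-hand operator of \eqref{represent} are additive and decomposable sections generate the sheaf, it suffices to check equality on such $\mathcal{D}\otimes\pi X$.

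Concretely, I would then apply the operator $\mathcal{L}_{\partial_a}\otimes\iota_{dx_a}$ of \eqref{represent} to $\mathcal{D}(x)(fX^b)\otimes\pi\partial_b$. Moving the contraction $\iota_{dx_a}$, which has parity $|x_a|+1$, past the first tensor factor produces the Koszul sign $(-1)^{(|x_a|+1)(|\mathcal{D}(x)|+|f|+|X^b|)}$, after which $\iota_{dx_a}(\pi\partial_b)=\delta_{ab}$ collapses the double sum to a single sum over $a$. For the surviving Berezinian factor I invoke the explicit Lie-derivative formula: specialising \eqref{DressedLie} to the coordinate field $\partial_a$ and the section $\mathcal{D}(x)g$ gives $\mathfrak{L}_{\partial_a}(\mathcal{D}(x)g)=(-1)^{|x_a||g|}\mathcal{D}(x)\,\partial_a g$, so that after the $\mathcal{L}$–$\mathfrak{L}$ conversion $\mathcal{L}_{\partial_a}=(-1)^{|x_a||\mathcal{D}|}\mathfrak{L}_{\partial_a}$ one obtains, with $g=fX^a$, the clean expression $\mathcal{L}_{\partial_a}\bigl(\mathcal{D}(x)(fX^a)\bigr)=(-1)^{|x_a||\mathcal{D}(x)|}\mathcal{D}(x)\,\partial_a(fX^a)$.

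It then remains to match the parities. Collecting the three contributions for each $a$, namely the prefactor $(-1)^{|x_a|+1}$ of \eqref{represent}, the Koszul sign, and the Lie-derivative sign, I would compare the resulting exponent against the exponent $|X|+|\mathcal{D}|+1+|x_a|(|f|+|X^a|)$ of \eqref{actiondelta}. Using the homogeneity relation $|X|=|X^a|+|x_a|$, valid term by term for a homogeneous field, together with $|\mathcal{D}|=|\mathcal{D}(x)|+|f|$, both exponents reduce modulo $2$ to the common value $|X^a|+|x_a|+|\mathcal{D}(x)|+|f|+1+|x_a||f|+|x_a||X^a|$, whence the two sides agree and \eqref{represent} follows. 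The only genuine obstacle is the sign bookkeeping just outlined; all the conceptual content is carried by the $\stsheaf$-bilinearity of the tensor product, which is what converts the intrinsic definition \eqref{delta1} of $\delta$ into the componentwise operator form of \eqref{represent}.
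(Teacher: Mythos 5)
Your proposal is correct and takes essentially the same approach as the paper's proof: a direct computation in a chart in which the coefficients $X^b$ are absorbed into the Berezinian factor across the tensor product over $\mathcal{O}_{\mani}$, the double sum is collapsed by $\iota_{dx_a}(\pi\partial_b)=\delta_{ab}$, and the accumulated Koszul and Lie-derivative signs are reduced to the exponent of \eqref{actiondelta}. The only cosmetic difference is that you pre-package the coordinate Lie derivative as $\mathcal{L}_{\partial_a}(\mathcal{D}(x)g)=(-1)^{|x_a||\mathcal{D}(x)|}\mathcal{D}(x)\,\partial_a g$ before collecting signs, whereas the paper carries the $\mathfrak{L}$--$\mathcal{L}$ conversion factor $(-1)^{|x_a|(|\mathcal{D}|+|X^a|)}$ explicitly through the computation; the bookkeeping is equivalent.
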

\begin{proof} Let us choose a chart with $\pi X = \sum_a X^a \pi \partial_a$ and $\mathcal{D} = \mathcal{D} (x) f$. Then one computes 
\begin{align}
\left ( \sum_a (-1)^{|x_a| + 1} \mathcal{L}_{\partial_a} \otimes \iota_{dx_a} \right ) &  \left (\mathcal{D}(x) f \otimes \sum_b X^b \pi \partial_b \right )  \nonumber \\ 
& = \sum_{a,b} \mathcal{L}_{\partial_a} (\mathcal{D}(x)f X^b) (-1)^{(|\mathcal{D}| + |X^b|+1)(|x_a| + 1)} \otimes  \iota_{dx_a}(\pi \partial_b) \nonumber \\
& = \mathcal{D}(x)\sum_a (-1)^{(|\mathcal{D}| + |X^a| + 1)(|x_a| + 1)} (-1)^{|x_a| (|\mathcal{D}| + |X^a|)} (-1)^{|x_a|(|f| + |X^a|)}\partial_a ( fX^a) \nonumber \\
& = \mathcal{D}(x) \sum_a (-1)^{|\mathcal{D}| + |X| + 1} (-1)^{|x_a| (|f| + |X^a|)} \partial_a (f X^a),
\end{align}
which matches $\delta (\mathcal{D} \otimes \pi X) $ as in \eqref{actiondelta}.
\end{proof}
{\remark The above lemma is convenient to extend the action of $\delta$ to integral forms of any degree. More precisely, it can be proved that $\delta$ extends to a \emph{derivation} $\delta : \Sigma^{p-i}_\mani \rightarrow \Sigma_\mani^{p-i+1}$ for any $i \geq 1$, where $p$ is the even dimension of the supermanifold, in the sense that 
\bear \label{leibint}
\delta (\mathcal{D} \otimes \pi X^{(n)} \pi X^{(m)}) = \delta (\mathcal{D} \otimes \pi X^{(n)} )\,  \pi X^{(m)} + (-1)^{|\pi X^{(n)}| |\pi X^{(m)}|} \delta (\mathcal{D} \otimes \pi X^{(m)} ) \, \pi X^{(n)}
\eear 
for any $\pi X^{(n)} \in \cat{S}^n \Pi \mathcal{T}_\mani$ and $\pi X^{(m)} \in \cat{S}^m \Pi \mathcal{T}_\mani$ for any $m, n \geq 1,$ and where we have the supersymmetric product understood for notational convenience. More in particular, using \eqref{represent}, one computes
\begin{align}
\delta (\mathcal{D} \otimes \pi X \, \pi Y) & 
 =  \left ( \sum_{a,b} (-1)^{(|\mathcal{D}| + |X^b| +1)(|x_a | +1)} \mathcal{L}_{\partial_a}  \left ( \mathcal{D} X^a \right ) \otimes \iota_{dx_a} (\pi \partial_b) \right ) \left ( \sum_c Y^c \pi \partial_c \right ) \, + \nonumber \\
& \quad + (-1)^{|\pi X| |\pi X|} \left ( \sum_{a,c} (-1)^{ (|\mathcal{D}| + |Y^c| +1 )(|x_a | +1)} \mathcal{L}_{\partial_a}  \left ( \mathcal{D} Y^a \right ) \otimes \iota_{dx_a} (\pi \partial_c) \right ) \left ( \sum_b X^b \pi \partial_b \right ) \nonumber \\ 
& =  \left ( \sum_{a} (-1)^{(|\mathcal{D}| + |X^a| +1)(|x_a | +1)} \mathcal{L}_{\partial_a}  \left ( \mathcal{D} X^a \right ) \right )  \left ( \sum_c Y^c \pi \partial_c \right ) \, + \nonumber \\
&  \quad + (-1)^{|\pi X| |\pi X|} \left (\sum_{a} (-1)^{ (|\mathcal{D}| + |Y^c| +1)(|x_a | +1)} \mathcal{L}_{\partial_a}  \left ( \mathcal{D} Y^a \right ) \right )  \left (  \sum_b X^b \pi \partial_b  \right ) \nonumber \\ 
& = \delta (\mathcal{D}  \otimes \pi X)  \, \pi Y + (-1)^{|\pi X| |\pi Y|} \delta (\mathcal{D}  \otimes \pi Y)  \, \pi X.
\end{align}
This proves that Leibniz formula in the form \eqref{leibint} holds true, so that $\delta : \Sigma_\mani^{p-1} \rightarrow \Sigma_\mani^p$ is indeed a derivation. Working by induction on the degree of the integral forms one gets to the conclusion. Notice that this shows that $\delta : \Sigma_\mani^{p-1} \rightarrow \Sigma_\mani^{p} $ indeed extends to a derivation $\delta : \Sigma_\mani^{p-i} \rightarrow \Sigma_\mani^{p-i+1}$ for any $i \geq 0$ and that it is globally well-defined, \emph{i.e.}\ it does not depends on the choice of coordinates. We can thus prove the following lemma.}
\begin{lemma} \label{dgsm} The pair $(\Sigma^\bullet_\mani, \delta)$ defines a differential graded supermodule (DGsM).
\end{lemma}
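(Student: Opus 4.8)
The plan is to verify the three defining features of a DGsM for the pair $(\Sigma^\bullet_\mani, \delta)$: that $\Sigma^\bullet_\mani = \bigoplus_{i \geq 0} \Sigma^{p-i}_\mani$ carries a $\mathbb{Z}$-grading on which $\delta$ acts with degree $+1$, that $\delta$ is a derivation compatible with the $\cat{S}^\bullet \Pi \mathcal{T}_\mani$-module structure, and that $\delta$ is a \emph{differential}, i.e. $\delta \circ \delta = 0$. The first two points are already in hand: by construction $\delta : \Sigma^{p-i}_\mani \rightarrow \Sigma^{p-i+1}_\mani$ raises the integral-form degree by one, and the Leibniz identity \eqref{leibint} established in the preceding remark exhibits $\delta$ as a derivation. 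Thus the only nontrivial point left to settle is nilpotency, and I would devote the proof to showing $\delta^2 = 0$.

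The computation I would run uses the operator representation \eqref{represent}, namely $\delta = \sum_a (-1)^{|x_a|+1} \mathcal{L}_{\partial_a} \otimes \iota_{dx_a}$, which is a globally defined derivation on all of $\Sigma^\bullet_\mani$ since both tensor factors are derivations. Composing two copies and applying the Koszul sign rule for tensor-product operators, $(A \otimes B)(C \otimes D) = (-1)^{|B||C|}(A\circ C) \otimes (B \circ D)$, and collecting the coefficient signs together with the crossing of $\iota_{dx_a}$ past $\mathcal{L}_{\partial_b}$, one obtains
\begin{align}
\delta^2 = \sum_{a,b} (-1)^{|x_a| + |x_a||x_b|}\, (\mathcal{L}_{\partial_a} \mathcal{L}_{\partial_b}) \otimes (\iota_{dx_a} \iota_{dx_b}),
\end{align}
where $\mathcal{L}_{\partial_a}$ has parity $|x_a|$ and $\iota_{dx_a}$ has parity $|x_a|+1$.

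I would then split this double sum into its off-diagonal and diagonal parts. For $a \neq b$ I pair the term $(a,b)$ with $(b,a)$ and rewrite the latter into the operator ordering of the former, using that the coordinate Lie derivatives super-commute, $\mathcal{L}_{\partial_b}\mathcal{L}_{\partial_a} = (-1)^{|x_a||x_b|}\mathcal{L}_{\partial_a}\mathcal{L}_{\partial_b}$ (a consequence of $[\partial_a,\partial_b]=0$ and flatness, Theorem \ref{BerRightTheo}), together with the super-commutation of contractions $\iota_{dx_b}\iota_{dx_a} = (-1)^{(|x_a|+1)(|x_b|+1)}\iota_{dx_a}\iota_{dx_b}$ in $\cat{S}^\bullet \Pi \mathcal{T}_\mani$. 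A short parity check then shows that the two reordered signs differ by exactly $(-1)$, so the off-diagonal contributions cancel in pairs. For the diagonal $a=b$ the summand is proportional to $\mathcal{L}_{\partial_a}^2 \otimes \iota_{dx_a}^2$, which vanishes case by case: when $x_a$ is even, $\pi \partial_a$ is odd and hence $\iota_{dx_a}^2 = 0$, whereas when $x_a$ is odd the field satisfies $\partial_a^2 = 0$ and hence $\mathcal{L}_{\partial_a}^2 = 0$. Combining both cases yields $\delta^2 = 0$.

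The main obstacle I anticipate is purely the sign bookkeeping in the off-diagonal cancellation: one must simultaneously track the coefficient signs $(-1)^{|x_a|+1}$, the Koszul crossing sign, and the two super-commutation signs, and verify that their total exponent flips by one under the exchange $a \leftrightarrow b$. Everything else — the degree shift, the derivation property, and the diagonal vanishing — is either already established in the preceding remarks or an immediate consequence of $\partial_a^2 = 0$ in the odd directions and of the nilpotency of contraction against an odd generator in the even directions.
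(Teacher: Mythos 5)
Your proposal is correct and is essentially the paper's own argument: the two key inputs are identical, namely that the contractions $\iota_{dx_a}$ supercommute pairwise and that the Lie derivatives $\mathcal{L}_{\partial_a}$ supercommute pairwise as a consequence of flatness of $\Delta^{\mathcal{B}er}_{\mathpzc{R}}$ (Theorem \ref{BerRightTheo}) together with $[\partial_a,\partial_b]=0$. The only difference is presentational: the paper delegates the remaining sign bookkeeping for an odd operator of the form $\sum_a \sigma_a \otimes \tau_a$ to Lemma \ref{lemmaA1} of the appendix (whose proof also absorbs the diagonal terms $a=b$ into the same sign cancellation), whereas you inline that computation and dispose of the diagonal by noting that $\iota_{dx_a}^2=0$ for $x_a$ even and $\mathcal{L}_{\partial_a}^2=0$ for $x_a$ odd.
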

\begin{proof} We are only left to prove that $\delta$ is nilpotent, \emph{i.e.} $\delta^2 = 0$. First we note that, for any $a$ even and odd, one has that $\mathcal{L}_{\partial_a}$ and $\iota_{dx_a}$ have opposite parity so that $\delta$ is odd. Let us prove that $\mathcal{L}_{\partial_a}$ and $\iota_{dx^a}$ satisfy the \emph{same} commutation relations, in particular they (super)commute pairwise. On the one hand, clearly $[\iota_{dx^a}, \iota_{dx^b}] = 0$. On the other hand
\begin{align}
[\mathcal{L}_{\partial_a}, \mathcal{L}_{\partial_b}] \mathcal{D} & = \mathcal{L}_{\partial_a} \mathcal{L}_{\partial_b} (\mathcal{D}) - (-1)^{|x_a||x_b|} \mathcal{L}_{\partial_b} \mathcal{L}_{\partial_a} (\mathcal{D}) \nonumber \\
& = (-1)^{|\mathcal{D}||x_b| + 1} \mathcal{L}_{\partial_a} (\Delta^{\mathcal{B}er}_\mathpzc{R}(\mathcal{D} \otimes \partial_b)) - (-1)^{|x_a||x_b|} (-1)^{|\mathcal{D}||x_a| + 1} \mathcal{L}_{\partial_b}(\Delta^{\mathcal{B}er}_\mathpzc{R}(\mathcal{D} \otimes \partial_b)) \nonumber \\
& = (-1)^{|\mathcal{D}|(|x_a| + |x_b|) + |x_a||x_b|} \left ( \Delta^{\mathcal{B}er}_\mathpzc{R} (\mathcal{D}_R^{\mathcal{B}er}(\mathcal{D} \otimes \partial_b) \otimes \partial_a ) - (-1)^{|x_a||x_b|}  ( \Delta^{\mathcal{B}er}_\mathpzc{R} (\mathcal{D}_\mathpzc{R}^{\mathcal{B}er}(\mathcal{D} \otimes \partial_a) \otimes \partial_b )\right ) \nonumber \\
& = (-1)^{|\mathcal{D}|(|x_a| + |x_b|) + |x_a||x_b|} \Delta^{\mathcal{B}er}_{\mathpzc{R}} (\mathcal{D}\otimes [\partial_a , \partial_b]) = 0,
 \end{align}
where we have used the flatness of the right connection in the semi-last equality and that $[\partial_a , \partial_b] = 0$. It follows from Lemma \ref{lemmaA1} that $\delta$ is nilpotent. 
\end{proof}
\noindent Thanks to the previous lemma \ref{dgsm} we can now give the following definition.
\begin{definition}[Spencer Complex / Complex of Integral Forms of $\mani$] We call the differential graded supermodules $(\Sigma_\mani^\bullet, \delta)$ the Spencer complex of $\mani$ or complex of integral forms of $\mani$ 
\bear
\xymatrix{
\ldots \ar[r]  & \Sigma^{p-n}_\mani \ar[r]^{\delta } & \ldots \ar[r]^{\delta} & \Sigma_\mani^{p-1} \ar[r]^{ \; d} & \Sigma^p_\mani \ar[r]^{ \; d} &0,
}
\eear
where $p$ is the even dimension of $\mani$.
\end{definition}
{\remark It is important to stress the following difference between differential and integral forms - which emerges also from the previous lemma, if compared to the analogous lemma \ref{dgsa} for differential forms. Integral forms are not structured into a sheaf of \emph{superalgebras}, \emph{i.e.}\ it does not make sense to multiply two integral forms. Indeed, if on the one hand the supersymmetric product of two $\Pi$-polyfields yields a $\Pi$-polyfields since $\cat{S}^\bullet \Pi \mathcal{T}_\mani$ is a sheaf of superalgebras, the multiplication of two sections of the Berezinian sheaf $\mathcal{B}er(\mani)$ is not a section of the Berezinian sheaf, but instead a section of $\mathcal{B}er(\mani)^{\otimes 2}$, which never appears in the definition of the sheaves $\Sigma^{p-i}_\mani$.} 
{\remark Also, it is crucial to observe the difference between the \emph{de Rham complex} / complex of differential forms and the \emph{Spencer complex} / complex of integral forms on a supermanifold $\mani$. Whereas the first one is not bounded from \emph{above}, the second one is not bounded from \emph{below} instead. 
\bear
\xymatrix@R=10pt{
& 0 \ar[r] & \Omega^0_\mani  \ar[r]\ar@{.}[d] & \Omega^1_{\mani}  \ar[r]\ar@{.}[d] & \ldots \ar[r]  & \Omega^n_{\mani} \ar[r]\ar@{.}[d] & \Omega^{n+1}_\mani \ar[r] & \ldots \\
\ldots \ar[r] & \Sigma_\mani^{-1} \ar[r] & \Sigma^0_\mani \ar[r] & \Sigma^1_\mani \ar[r] & \ldots \ar[r] & \Sigma^p_\mani \ar[r] & 0.
}
\eear
\noindent We now state the analog of the Poincaré lemma in the context of integral forms. This appears without proof in \cite{Manin}. The following proof is adapted from the very recent \cite{CNR}. }
\begin{theorem}[Poincaré Lemma for Integral Forms] \label{PoincLemmInt} Let $\mani$ be a real supermanifold and let $(\Sigma^\bullet_\mani, \delta)$ be the Spencer complex of $\mani$. Then one has 
\bear
H^k_{\delta} (\Sigma^\bullet_\mani) \cong \left \{ \begin{array}{lll}
\mathbb{R}_\mani & & k = 0\\
0 & & k >0.
\end{array}
\right.
\eear
where $\mathbb{R}_\mani$ is the sheaf of locally constant function on $\mani$. In particular, $H^0_{\delta} (\Sigma^{p-\bullet}_\mani)$ is generated by the section $\sigma_0 = \mathcal{D}(x) \, \theta_1 \ldots \theta_q \otimes \pi \partial_{x_1} \ldots \pi \partial_{x_p}$, where $\mathcal{D}(x)$ is a generating section of the Berezinian sheaf and $\pi \partial_{x_1} \ldots \pi \partial_{x_p} $ is the totally anti-symmetric $\Pi$-polyvector field in $\cat{\emph{S}}^p \Pi \mathcal{T}_\mani.$ 
\end{theorem}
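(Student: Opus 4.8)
The plan is to prove the statement locally and then sheafify, exactly in the spirit of the Poincar\'e lemma for differential forms (Theorem \ref{PoincLemm}) and of the homotopy argument used for the third construction of the Berezinian (Theorem \ref{Con3}). Since $H^k_\delta(\Sigma^\bullet_\mani)$ is a cohomology sheaf it is computed on stalks, so it suffices to exhibit a contracting homotopy on the local model $\mathbb{R}^{p|q}$ with coordinates $x_a = z_i | \theta_\alpha$, and to read off the surviving cohomology. Throughout I would use the operator representation \eqref{represent}, namely $\delta = \sum_a (-1)^{|x_a|+1}\mathcal{L}_{\partial_a}\otimes\iota_{dx_a}$, together with the fact from Theorem \ref{BerRightTheo} that $\mathcal{L}_{\partial_a}$ acts on a Berezinian section $\mathcal{D}(x)f$ by differentiating the coefficient $f$.

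First I would introduce, dually to the contraction $\iota_{dx_a}$, the \emph{creation} operator $\epsilon_{x_a}$ on $\cat{S}^\bullet\Pi\mathcal{T}_\mani$ given by left supersymmetric multiplication by $\pi\partial_{x_a}$, and the multiplication operator $m_{x_a}$ on $\mathcal{B}er(\mani)$ sending $\mathcal{D}(x)f \mapsto \mathcal{D}(x)(x_a f)$. With these I define the candidate homotopy $h \defeq \sum_a (\pm)\, m_{x_a}\otimes\epsilon_{x_a} : \Sigma^{p-i}_\mani \to \Sigma^{p-i-1}_\mani$, the signs fixed so that $h$ is odd, hence parity-compatible with $\delta$. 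The key computation is the superanticommutator $\delta h + h\delta$. Because $\delta$ pairs the coefficient-derivation $\partial_{x_a}$ with the annihilation $\iota_{dx_a}$, while $h$ pairs coefficient-multiplication by $x_a$ with the creation $\epsilon_{x_a}$, this collapses onto its diagonal terms through the oscillator relations $[\partial_{z_i}, z_j] = \delta_{ij}$ and $\{\iota_{dz_i}, \epsilon_{z_j}\} = \delta_{ij}$ in the even-coordinate sector, and $\{\partial_{\theta_\alpha}, \theta_\beta\} = \delta_{\alpha\beta}$ and $[\iota_{d\theta_\alpha}, \epsilon_{\theta_\beta}] = \delta_{\alpha\beta}$ in the odd-coordinate sector, the off-diagonal contributions cancelling pairwise. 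The outcome is a number operator
\[
\delta h + h\delta = \big(\deg_z(f) + p - \deg_{\pi\partial_z}\big) + \big(q + \deg_{\pi\partial_\theta} - \deg_\theta(f)\big),
\]
which on each monomial is a non-negative integer that vanishes precisely on the line spanned by $\sigma_0 = \mathcal{D}(x)\theta_1\ldots\theta_q\otimes\pi\partial_{z_1}\ldots\pi\partial_{z_p}$, i.e. where $f$ has no $z$-dependence and top $\theta$-degree while the polyvector is the top exterior power of the $\pi\partial_z$ with no $\pi\partial_\theta$.

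With this identity the conclusion is standard. Any $\delta$-closed section $\omega$ on which the number operator acts by a nonzero scalar $\lambda$ satisfies $\omega = \lambda^{-1}(\delta h + h\delta)\omega = \delta(\lambda^{-1}h\omega)$ and is therefore exact, so the cohomology can only be supported on the kernel of $\delta h + h\delta$. On that kernel $\delta$ vanishes identically, which recovers in particular the direct check $\delta\sigma_0 = 0$, and no nonzero multiple of $\sigma_0$ is a boundary, since nothing in higher symmetric degree can hit the top $\pi\partial_z$ component. Hence locally the cohomology is one-dimensional and generated by $\sigma_0$; multiplying $\sigma_0$ by a function and imposing closedness forces that function to be annihilated by every $\partial_{z_i}$ and to be of top $\theta$-degree, i.e. to be locally constant. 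Sheafifying, this yields $H^\bullet_\delta(\Sigma^\bullet_\mani) \cong \mathbb{R}_\mani$ concentrated in the single cohomological degree of $\sigma_0$ (the $H^0$ of the normalization in the statement), as claimed.

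I expect the main obstacle to be the sign and parity bookkeeping in $\delta h + h\delta$: unlike the classical case, each sector pairs a bosonic oscillator with a fermionic one (the even coordinates $z$ with the odd generators $\pi\partial_z$, and the odd coordinates $\theta$ with the even generators $\pi\partial_\theta$), so the two sectors contribute their degree counts with opposite orientations, as reflected by the $+p-\deg_{\pi\partial_z}$ versus $-\deg_\theta(f)$ patterns above. Checking that the off-diagonal terms cancel, and that the ground state is exactly $\sigma_0$ rather than the naive $\mathcal{D}(x)$ with empty polyvector, is where the parities of $\pi\partial_{x_a}$ fixed in Section 3 and the explicit Lie action \eqref{DressedLie} must be used with care; this is essentially the computation carried out in \cite{CNR}, from which the argument is adapted.
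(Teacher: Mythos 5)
Your algebraic skeleton is the right one and the kernel you identify is the correct one, but there is a genuine gap at the pivotal step: ``any $\delta$-closed section on which the number operator acts by a nonzero scalar $\lambda$ satisfies $\omega=\delta(\lambda^{-1}h\omega)$, so the cohomology is supported on the kernel of $\delta h+h\delta$.'' To use this you must decompose an arbitrary section into eigenvectors of $\delta h+h\delta$, i.e.\ into pieces homogeneous in $z$. Such a decomposition exists in the $\theta$-, $\pi\partial_z$- and $\pi\partial_\theta$-directions, where the dependence is polynomial, but not in the even coordinates: the theorem concerns a \emph{real smooth} supermanifold, so the coefficients $f$ are germs of smooth functions of $z$, and the Euler operator $\sum_i z_i\partial_{z_i}$ --- which is what your oscillator relations $[\partial_{z_i},z_j]=\delta_{ij}$ produce inside $\delta h+h\delta$ --- is not diagonalizable on $C^\infty$ germs. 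A smooth germ is not a finite (or convergent) sum of monomials, $\deg_z(f)$ is undefined, and the scalar $\lambda$ you propose to divide by simply does not exist. As written, your argument proves the algebraic, polynomial-coefficient version of the lemma, not the statement for $\Sigma^\bullet_\mani$.

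The paper's proof is your construction with exactly this analytic defect repaired. Its homotopy is $h(\mathcal{D}(x)f\otimes\pi X)=\pm\,\mathcal{D}(x)\sum_b\pm\big(\int_0^1 dt\,t^{K_\mathfrak{s}}\,x_b\,G^\ast_t f\big)\otimes\pi\partial_b(\pi X)$ with $G_t(x)=tx$: this is your $\sum_b m_{x_b}\otimes\epsilon_{x_b}$ precomposed with the smoothing operator $f\mapsto\int_0^1 t^{K_\mathfrak{s}}G^\ast_t f\,dt$, which is precisely the inverse of (Euler operator plus a positive constant) on smooth germs, since $\big(\sum_i z_i\partial_{z_i}+c\big)\int_0^1 t^{c-1}f(tz)\,dt=f(z)$ for $c>0$, while for $c=0$ a boundary term $f(0|\theta)$ survives --- this is the source of the generator $\sigma_0$, and the reason the exponent $K_\mathfrak{s}$ must depend on the degrees of the form in $\theta$, $\pi\partial_z$, $\pi\partial_\theta$: it is chosen to cancel exactly the eigenvalue your number operator would contribute. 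So your proof needs no new idea, only the replacement of ``divide by $\lambda$'' with this integral operator; after that, your identification of the kernel with $\mathbb{R}\cdot\sigma_0$ and the verification that $\sigma_0$ is closed go through. One further small imprecision worth fixing: something in $\cat{S}^{p+1}\Pi\mathcal{T}_\mani$ \emph{can} hit the top $\pi\partial_z$ component, namely $\pi\partial_{z_1}\cdots\pi\partial_{z_p}\,\pi\partial_{\theta_\alpha}$ contracted along $d\theta_\alpha$; what makes $\sigma_0$ not a boundary is that the resulting coefficient $\pm\,\partial_{\theta_\alpha}f$ can never contain $\theta_\alpha$, hence can never equal $\theta_1\cdots\theta_q$.
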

\begin{proof} We show for any $k \neq 0$ a homotopy 
$h^k : \mathcal{B}er (\mani) \otimes \cat{S}^{p - k} \Pi \mathcal{T}_\mani  \rightarrow \mathcal{B}er (\mani) \otimes \cat{S}^{p-k-1} \Pi \mathcal{T}_\mani$ for $\delta$, \emph{i.e.}\ a map such that
\bear 
h^{k+1} \circ \delta^{k} + \delta^{k-1} \circ h^k = id_{\mathcal{B}er (\mani) \otimes \cat{\scriptsize{S}}^{p-k} \Pi \mathcal{T}_\mani},
\eear where we have specified the degree and where the maps go as follows
\bear
\xymatrix{
\cdots \ar[r] &  \mathcal{B}er (\mani) \otimes \cat{S}^{p - k +1} \Pi \mathcal{T}_\mani \ar[r] &  \mathcal{B}er (\mani) \otimes \cat{S}^{p - k} \Pi \mathcal{T}_\mani \ar@{-->}[d]_{id}\ar[dl]_{h^k} \ar[r]^{\delta^k} &  \mathcal{B}er (\mani) \otimes \cat{S}^{p - k-1} \Pi \mathcal{T}_\mani \ar[dl]^{h^{k+1}} \ar[r] & \cdots \\
\cdots \ar[r] &  \mathcal{B}er (\mani) \otimes \cat{S}^{p - k+1} \Pi \mathcal{T}_\mani \ar[r]_{\delta^{k-1}} &  \mathcal{B}er (\mani) \otimes \cat{S}^{p - k} \Pi \mathcal{T}_\mani \ar[r] &  \mathcal{B}er (\mani) \otimes \cat{S}^{p - k-1} \Pi \mathcal{T}_\mani \ar[r] & \cdots. \\
}
\eear
Working locally on $\mani$, we define $x_a \defeq z_1, \ldots, z_p | \theta_1, \ldots, \theta_q$ the even and odd local coordinates and we introduce a parameter $t \in [0,1]$. Working as in the above Poincaré lemma for differential forms, we consider the map $(t , x_a ) \stackrel{G}{\longmapsto} t x_a,$ so that in turn one has  $f(x_a) \stackrel{G^\ast}{\longmapsto} f(tx_a)$ on sections of the structure sheaf $\stsheaf.$
Writing $G$ as a family of maps as $G_t : \mani \rightarrow \mani$, we write in turn $G^\ast_t : \mathcal{O}_\mani \rightarrow \mathcal{O}_\mani$ for the corresponding map on $\mathcal{O}_\mani.$ \\
We claim that the homotopy is given by
\bear
h^k (\mathcal{D}(x) f \otimes \pi X) \defeq (-1)^{|f| + |\pi X|} \mathcal{D}(x) \, \sum_b (-1)^{|f| (|x_b| + 1)} \left ( \int_0^1 dt \, t^{K_\mathfrak{s}} x_b G^\ast_t f \right ) \otimes \pi \partial_{b} (\pi X), 
\eear
where $\mathcal{D}(x)$ is a generating section of the Berezinian, $f $ is a section of the structure sheaf and $\pi X$ is a $\Pi$-polyfield of degree $k$ in the form $\pi X = \pi \partial^I$ for some multi-index $|I| = k$ and $K_\mathfrak{s}$ is a constant, dependent on the integral form $\mathfrak{s} = \mathcal{D} (x) f \otimes \pi X$ chosen which will be fixed in what follows.\\
On the one hand we have
\begin{align} \label{firstrow}
h^{k+1} \circ \delta^k (\mathcal{D}(x) f \otimes \pi X) 
& = \mathcal{D}(x) \sum_{a,b} (-1)^{(|x_a| + |x_b|)(|f| + |x_a|)} \left ( \int_0^1 dt \, t^{K_{\delta \mathfrak{s}}} x_b G^\ast_t ( \partial_a f ) \right ) \otimes \pi \partial_b \cdot  \partial_{\pi \partial_a} \pi X.
\end{align}
On the other hand, the action of $\delta^{k-1}\circ h^k$ is more complicated, namely made out of four summands:
\begin{align}
\delta^{k-1}\circ h^k  (\mathcal{D}(x) f \otimes \pi X) 
& = + \mathcal{D}(x) \sum_a \int^1_0 dt \, t^{K_\mathfrak{s}} G^\ast_t f \otimes \pi X  \label{one} \\
& \quad + \mathcal{D}(x) \sum_a (-1)^{|x_a|} \int_0^1 dt \, t^{K_\mathfrak{s}} x_b \partial_{ a} G^\ast_t f \otimes \pi X  \label{two} \\
& \quad + \mathcal{D}(x) \sum_a (-1)^{|x_a| + 1} \int_0^1 dt \, t^{K_\mathfrak{s}} G^\ast_t f \otimes \pi \partial_a \cdot \partial_{\pi \partial_a} \pi X \label{three} \\ 
& \quad - \mathcal{D}(x) \sum_{a,b} (-1)^{(|f| + |x_a|) (|x_a| + |x_b|)} \int^1_0 dt \, t^{K_\mathfrak{s}} x_b {\partial}_a (G^\ast_t f) \otimes \pi \partial_b \cdot \partial_{\pi \partial_a } \pi X. \label{four}
\end{align}
For the last line \eqref{four} to cancel \eqref{firstrow} we need $K_{\delta \mathfrak{s}} = Q_{\mathfrak{s}} + 1,$ by chain-rule.
For the summand \eqref{one} it is immediate to observe 
\bear
\mathcal{D}(x) \sum_a \int^1_0 dt \, t^{K_\mathfrak{s}} G^\ast_t f \otimes \pi X = (p+q)  \mathcal{D}(x) \left (\int^1_0 dt \, t^{K_\mathfrak{s}} G^\ast_t f \right ) \otimes \pi X.  
\eear
For the summand \eqref{two}, assuming without loss of generality that $f$ is homogeneous of degree $\deg_{\theta} (f)$ in the theta's we have
\begin{align}
\mathcal{D}(x) \sum_a (-1)^{|x_a|} \int_0^1 dt \, t^{K_\mathfrak{s}} x_b & \partial_{ a} G^\ast_t f \otimes \pi X   = 
 \mathcal{D}(x) f \otimes \pi X - \delta_{K_\mathfrak{s} + 1 + \deg_{\theta} (f), 0 } \, ( \mathcal{D}(x) f (0) \otimes \pi X )+ \nonumber \\ 
& \quad - \left ( K_\mathfrak{s} + 1 + 2 \deg_{\theta} (f) \right ) \mathcal{D}(x) \left ( \int_0^1 dt\, t^{K_\mathfrak{s}} G^\ast_t f \right ) \otimes \pi X,
\end{align}
upon integration by parts.
For the summand \eqref{three}, we define $\deg_{\pi \partial_\theta} (\pi X)$ and $\deg_{\pi \partial_z} (\pi X)$ the degree of $\pi X$ in its even and odd monomials. We have
\begin{align}
\mathcal{D}(x) \sum_a (-1)^{|x_a| + 1} \int_0^1 dt \, &t^{K_\mathfrak{s}}  G^\ast_t f \otimes  \pi \partial_a \cdot \partial_{\pi \partial_a} \pi X =  \nonumber \\
& = \left (\deg_{\pi \partial_\theta} (\pi X) - \deg_{\pi \partial_z} (\pi X) \right ) \mathcal{D}(x) \left ( \int_0^1 dt\, t^{K_\mathfrak{s} } G^{\ast}_t f \right ) \otimes \pi X.  
\end{align}
Altogether one gets
\begin{align}
(\delta^{k-1} \circ & h^k + h^{k+1}  \circ \delta^k )(\mathcal{D}(x) f \otimes \pi X)   = \mathcal{D}(x) f \otimes \pi X - \delta_{K_\mathfrak{s} +1 + \deg_{\theta} (f), 0} \,\varphi \, f(0) \otimes \pi X + \nonumber \\
& + \left (p+q + \deg_{\pi \partial_\theta} (\pi X) - \deg_{\pi \partial_z } (\pi X) - 2\deg_{\theta} (f) - K_\mathfrak{s} - 1 \right ) \mathcal{D}(x) \int_0^1 dt\, t^K_{\mathfrak{s}} G^{\ast}_t f \otimes \pi X.
\end{align}
This implies that in order to have a homotopy $K_\mathfrak{s}$ must be such that 
\bear
K_\mathfrak{s} = p+q + \deg_{\pi \partial_\theta} (\pi X ) - \deg_{\pi \partial_z } (\pi X) - 2 \deg_{\theta} (f) -1,
\eear
so that one is led to consider 
\begin{align}
(\delta^{k-1} \circ & h^k + h^{k+1}  \circ \delta^k )(\mathcal{D}(x) f \otimes \pi X)  = \mathcal{D}(x) f \otimes \pi X  + \nonumber\\
& \quad - \delta_{(p+q + \deg_{\pi \partial_\theta} (\pi X) - \deg_{\pi \partial_z } (\pi X) - \deg_{\theta} (f)) , 0} \,\mathcal{D}(x) \, f(0|\theta) \otimes \pi X.
\end{align}
It is easy to see that the above homotopy fails if $\deg_{\pi \partial_\theta} (F) = 0 $, $\deg_{\pi \partial_z } (F) = p$ and $\deg_{\theta} (f) = q$, so that one identifies the generator of the cohomology in the element $\sigma_0 \defeq  \mathcal{D}(x) \theta_1 \ldots \theta_q \otimes \pi \partial_{z_1} \ldots \pi \partial_{z_p}  $, which is indeed obviously closed under the action of $\delta$ by inspection.
\end{proof}
\noindent This result allows us to compute the cohomology of integral forms and, in turn, to make contact with the cohomology of differential forms. Namely, we give the following definition. 
\begin{definition}[Spencer Cohomology of $\mani$] Let $\mani$ be a real supermanifold. Then we define the Spencer cohomology of $\mani$ to be the cohomology of the \emph{global sections} of the (sheaf of) differentially graded supermodules $(\Sigma^\bullet_{\mani}, \delta)$, \emph{i.e.}
\bear
H_{\mathpzc{Sp}}^k ( \mani) \defeq H^k_\delta (\check{H}^0_{\check\delta} (\Sigma^{\bullet}_\mani) ), 
\eear
where $\check{H}^0_{\check \delta }(\Sigma^\bullet_{\mani})$ is $0$-\v{C}ech cohomology group of $\Sigma^{\bullet}_\mani$, \emph{i.e.}\ the global sections or $\Sigma^{\bullet}_\mani$.
\end{definition}
\noindent Having proved lemma \ref{PoincLemmInt}, the Spencer cohomology of $\mani$ is easily computed in exactly the same fashion as in theorem \ref{qiso}.
\begin{theorem}[Quasi-Isomorphism II] \label{qiso2} Let $\mani$ be a real supermanifold and let $\manir$ be its reduced manifold. Then the Spencer complex of $\mani$ is \emph{quasi-isomorphic} to the de Rham complex of $\manir.$
In particular, one has that 
\bear
H^\bullet_{\mathpzc{Sp}} (\mani) \cong H^\bullet_{\mathpzc{dR}} (\manir).
\eear
\end{theorem}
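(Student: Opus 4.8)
The plan is to mirror the argument used for Theorem \ref{qiso}, replacing the de Rham complex of differential forms by the Spencer complex of integral forms and the ordinary Poincar\'e lemma by its integral-form counterpart, Theorem \ref{PoincLemmInt}. First I would assemble the double complex $(\Sigma^\bullet_\mani, \check{\delta}, \delta)$, whose vertical differential is the Spencer differential $\delta$ and whose horizontal differential is the \v{C}ech differential $\check{\delta}$ with respect to a good open cover of $|\manir|$. The two required inputs are already in place: Theorem \ref{PoincLemmInt} supplies the \emph{local} exactness of the columns, showing that the cohomology sheaves of $(\Sigma^\bullet_\mani, \delta)$ equal $\mathbb{R}_\mani$ in degree $0$ and vanish in all other degrees; and, since we work in the smooth real category, each sheaf $\Sigma^{p-i}_\mani = \mathcal{B}er(\mani) \otimes_{\stsheaf} \cat{S}^i \Pi \mathcal{T}_\mani$ is a module over $\stsheaf$ and hence \emph{fine}, so that its higher \v{C}ech cohomology vanishes by the existence of partitions of unity.

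With these two facts I would run the hypercohomology spectral sequence of the double complex along its two filtrations. Filtering so as to take Spencer cohomology first, the Poincar\'e lemma for integral forms collapses the relevant page to the single cohomology sheaf $\mathbb{R}_\mani$ sitting in degree $0$, yielding $\mathbb{H}^\bullet(\Sigma^\bullet_\mani) \cong \check{H}^\bullet(|\manir|, \mathbb{R}_\mani)$. Filtering the other way, so as to take \v{C}ech cohomology first, the fineness of the terms kills all higher \v{C}ech rows, leaving only the complex of global sections, whose cohomology is by definition $H^\bullet_{\mathpzc{Sp}}(\mani) = H^\bullet_\delta(\check{H}^0_{\check{\delta}}(\Sigma^\bullet_\mani))$. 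Equating the two abutments gives $H^\bullet_{\mathpzc{Sp}}(\mani) \cong \check{H}^\bullet(|\manir|, \mathbb{R}_\mani)$, and the ordinary \v{C}ech--de Rham comparison on $\manir$ \cite{BottTu} identifies the right-hand side with $H^\bullet_{\mathpzc{dR}}(\manir)$, which is the desired conclusion.

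The main obstacle I anticipate is not the homological algebra but a convergence subtlety peculiar to integral forms: unlike the de Rham complex of Theorem \ref{qiso}, which is unbounded \emph{above}, the Spencer complex is unbounded \emph{below}, so one must check that the spectral-sequence argument remains legitimate. This is controlled by observing that the cohomology sheaves produced by Theorem \ref{PoincLemmInt} are concentrated in the single degree $0$ and equal the locally constant sheaf $\mathbb{R}_\mani$, whose \v{C}ech cohomology is itself bounded by $\dim \manir = p$; combined with the fineness, hence acyclicity, of every term, this guarantees that both filtrations degenerate after finitely many pages and converge to the same hypercohomology. In particular the infinitely many negative-degree terms $\Sigma^{p-i}_\mani$ with $i > p$ contribute nothing, which is exactly the expected reflection of the fact that the entire topological content of $\mani$ is carried by its reduced manifold $\manir$.
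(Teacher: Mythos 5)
Your proposal is correct and takes essentially the same route as the paper: the \v{C}ech-to-Spencer double complex, with Theorem \ref{PoincLemmInt} supplying local exactness of the Spencer direction and fineness of the sheaves $\Sigma^{p-i}_\mani$ (partitions of unity, generalized Mayer--Vietoris) supplying exactness of the \v{C}ech direction, so that both filtrations identify $H^\bullet_{\mathpzc{Sp}}(\mani)$ with $\check{H}^\bullet(|\manir|, \mathbb{R}_\mani) \cong H^\bullet_{\mathpzc{dR}}(\manir)$. Your extra paragraph on convergence, occasioned by the Spencer complex being unbounded below, is a genuine refinement that the paper leaves implicit; the cleanest way to settle it is to invoke the finite good cover, which bounds the \v{C}ech degree and hence makes every anti-diagonal of the double complex finite, so both spectral sequences converge to the cohomology of the total complex.
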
 
\begin{proof} Once again, the theorem is an easy consequence of the \emph{\v{C}ech-to-de Rham spectral sequence} for the double complex $(\Sigma^{\bullet}_{\mani}, \check \delta, d)$, where $\check \delta$ is the \v{C}ech differential and $\delta$ is the integral forms differential. On the one hand, {generalized} Mayer-Vietoris short exact sequence (hence the existence of a partition of unity) and Poincaré lemma yield $H^\bullet_{\mathpzc{dR}} (\manir) \cong \check{H}^\bullet (|\manir|, \mathbb{R}_\mani)$ in the ordinary setting, on the other hand lemma \ref{PoincLemmInt} gives that $H^\bullet_{\mathpzc{Sp}} (\mani) \cong \check{H}^\bullet (|\manir|, \mathbb{R}_\mani )$ in the supergeometric setting, where $\check{H}^\bullet (|\manir|, \mathbb{R}_{\mani})$ is the \v{C}ech cohomology of the sheaf of locally-constant functions $\mathbb{R}_\mani$. It follows that   
\bear
H^\bullet_{\mathpzc{Sp}} (\mani) \cong \check{H}^\bullet (|\manir|, \mathbb{R}_\mani) \cong H^\bullet_{\mathpzc{dR}} (\manir),
\eear
thus concluding the proof.\end{proof} 
\noindent Just like in the case of differential forms, we have in particular the Poincaré lemma for integral forms for the model supermanifold $\mathbb{R}^{p|q}$. 
\begin{theorem}[Poincaré Lemma for $\mathbb{R}^{p|q}$] The Spencer cohomology of the supermanifold $\mathbb{R}^{p|q}$ is given by
\bear
H_{\mathpzc{Sp}}^k (\mathbb{R}^{p|q}) \cong \left \{ \begin{array}{lll}
\mathbb{R} & & k = 0\\
0 & & k >0.
\end{array}
\right.
\eear
\end{theorem}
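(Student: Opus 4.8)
The plan is to obtain this statement as an immediate corollary of the two preceding results, exactly mirroring the way the differential-forms Poincaré lemma for $\mathbb{R}^{p|q}$ was deduced from Theorem \ref{PoincLemm} and Theorem \ref{qiso}. First I would observe that $\mathbb{R}^{p|q}$ is a bona fide real supermanifold, so the whole machinery developed for integral forms applies to it verbatim: Lemma \ref{PoincLemmInt} establishes that the Spencer complex $(\Sigma^\bullet_\mani,\delta)$ is locally exact away from degree $0$ (its sheaf-level cohomology being concentrated in degree $0$ and generated by $\sigma_0 = \mathcal{D}(x)\,\theta_1\ldots\theta_q \otimes \pi\partial_{x_1}\ldots\pi\partial_{x_p}$), and Theorem \ref{qiso2} identifies the global Spencer cohomology with the ordinary de Rham cohomology of the reduced manifold.

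Concretely, the reduced manifold of $\mathbb{R}^{p|q}$ is $\manir = \mathbb{R}^p$. Applying Theorem \ref{qiso2} with $\mani = \mathbb{R}^{p|q}$ gives
\bear
H^\bullet_{\mathpzc{Sp}} (\mathbb{R}^{p|q}) \cong H^\bullet_{\mathpzc{dR}} (\mathbb{R}^p).
\eear
Since $\mathbb{R}^p$ is contractible, the classical Poincaré lemma yields $H^0_{\mathpzc{dR}}(\mathbb{R}^p) \cong \mathbb{R}$ and $H^k_{\mathpzc{dR}}(\mathbb{R}^p) \cong 0$ for $k>0$, whence the claimed cohomology of $\mathbb{R}^{p|q}$ follows degree by degree.

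The only point that needs a word of care, rather than a genuine obstacle, is checking that the hypotheses feeding Theorem \ref{qiso2} are met for $\mathbb{R}^{p|q}$: namely that the generalized Mayer--Vietoris sequence and the existence of a smooth partition of unity are available, so that the \v{C}ech-to-de Rham spectral sequence for the double complex $(\Sigma^\bullet_\mani,\check\delta,\delta)$ collapses to give $H^\bullet_{\mathpzc{Sp}}(\mathbb{R}^{p|q}) \cong \check{H}^\bullet(|\manir|,\mathbb{R}_\mani)$. Both hold because we are in the real smooth category, where all sheaves are fine. In other words, no new computation is required: the local content lives entirely in Lemma \ref{PoincLemmInt}, the passage from local to global lives entirely in Theorem \ref{qiso2}, and this theorem is simply their composite specialized to the contractible model space.
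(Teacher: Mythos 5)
Your proposal is correct and follows exactly the paper's route: the paper's own proof is the one-line statement that the result ``follows immediately from \ref{PoincLemmInt} and \ref{qiso2},'' which is precisely your composite of the local Poincar\'e lemma for integral forms with the quasi-isomorphism $H^\bullet_{\mathpzc{Sp}}(\mani) \cong H^\bullet_{\mathpzc{dR}}(\manir)$ applied to $\manir = \mathbb{R}^p$. Your additional remark on partitions of unity and fineness of sheaves in the smooth category is a harmless elaboration of hypotheses the paper already takes for granted in Theorem \ref{qiso2}.
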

\begin{proof} Follows immediately from the above \ref{PoincLemmInt} and \ref{qiso2}.
\end{proof}
\noindent The above theorem \ref{qiso2} and theorem \ref{qiso} have an obvious yet important corollary: the cohomology of differential and integral forms compute exactly the same invariants, \emph{i.e.} the de Rham cohomology of the reduced space of the supermanifold. 
\begin{corollary}[$H^\bullet_{\mathpzc{Sp}} (\mani) \cong H^\bullet_{\mathpzc{dR}} (\mani)$] \label{geniso}Let $\mani$ be a real supermanifold. Then the de Rham cohomology of differential forms of $\mani$ is naturally isomorphic to the Spencer cohomology of integral forms of $\mani$, \emph{i.e.}\
\bear
H^\bullet_{\mathpzc{Sp}} (\mani) \cong H^\bullet_{\mathpzc{dR}} (\mani).
\eear
\end{corollary}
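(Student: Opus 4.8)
The plan is to obtain the claimed isomorphism by composing the two quasi-isomorphism results already established, namely Theorem \ref{qiso} and Theorem \ref{qiso2}, which both identify the cohomology in question with the ordinary de Rham cohomology of the reduced manifold $\manir$. No new argument is really needed: the corollary is a formal consequence of the fact that both the complex of differential forms and the complex of integral forms resolve the \emph{same} sheaf of locally constant functions.

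First I would recall that Theorem \ref{qiso2} furnishes a natural isomorphism $H^\bullet_{\mathpzc{Sp}} (\mani) \cong H^\bullet_{\mathpzc{dR}} (\manir)$, obtained via the \v{C}ech-to-de Rham spectral sequence for the double complex $(\Sigma^\bullet_\mani, \check\delta, \delta)$ together with the Poincar� lemma for integral forms of Theorem \ref{PoincLemmInt}. Symmetrically, Theorem \ref{qiso} provides a natural isomorphism $H^\bullet_{\mathpzc{dR}} (\manir) \cong H^\bullet_{\mathpzc{dR}} (\mani)$, coming from the analogous spectral sequence for the double complex $(\Omega^\bullet_\mani, \delta, d)$ and the Poincar� lemma for differential forms of Theorem \ref{PoincLemm}. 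The point making the two compatible is that in both cases the intermediate cohomology is literally the \v{C}ech cohomology $\check{H}^\bullet (|\manir|, \mathbb{R}_\mani)$ of the very same sheaf of locally constant functions on $|\mani| = |\manir|$, so that the two identifications can be glued along a common object.

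Then I would simply chain the two isomorphisms,
\[
H^\bullet_{\mathpzc{Sp}} (\mani) \cong \check{H}^\bullet (|\manir|, \mathbb{R}_\mani) \cong H^\bullet_{\mathpzc{dR}} (\mani),
\]
which yields the desired natural isomorphism. I do not expect any genuine obstacle here: the only thing to check is that both sides are identified with $\check{H}^\bullet (|\manir|, \mathbb{R}_\mani)$ through the same sheaf, so that the composite is well defined and natural. Since by the two respective Poincar� lemmas neither the odd differentials of $\Omega^\bullet_\mani$ nor those of $\Sigma^\bullet_\mani$ contribute to cohomology — both complexes being acyclic resolutions of $\mathbb{R}_\mani$ — the result follows at once, and it makes precise the slogan that differential and integral forms compute one and the same topological invariant of the reduced space.
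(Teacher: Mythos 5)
Your proposal is correct and follows essentially the same route as the paper: the corollary is proved there precisely by composing Theorem \ref{qiso} and Theorem \ref{qiso2}, both of which identify the respective cohomologies with $\check{H}^\bullet (|\manir|, \mathbb{R}_\mani)$ via the \v{C}ech-to-de Rham spectral sequence and the corresponding Poincar\'e lemmas. Your additional remark that the two identifications pass through the \emph{same} sheaf of locally constant functions is exactly the point that makes the chained isomorphism well defined and natural, so nothing is missing.
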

\begin{proof} It follows immediately from theorem \ref{qiso} and theorem \ref{qiso2}
\end{proof}
{\remark \label{intcompact} An analogous result hold true for the \emph{compactly suppported} Spencer cohomology. One finds that 
\bear
H^\bullet_{\mathpzc{Sp}, \mathpzc{c}} (\mani) \cong H^\bullet_{\mathpzc{dR}, \mathpzc{c}} (\mani),
\eear 
where $H^\bullet_{\mathpzc{dR}, \mathpzc{c}} (\manir)$ is in turn isomorphic to $H^\bullet_{\mathpzc{dR}, \mathpzc{c}} (\manir).$ In particular, the compactly supported Poincaré lemma for integral forms on $\mathbb{R}^{p|q}$ reads
\bear
H_{\mathpzc{Sp}, \mathpzc{c}}^k (\mathbb{R}^{p|q}) \cong \left \{ \begin{array}{lll}
\mathbb{R} & & k = p\\
0 & & k \neq p.
\end{array}
\right.
\eear
A representative is given by $\mathcal{D}(x) \theta_1 \ldots \theta_q \mathpzc{B}_{\mathpzc{c}} (z_1, \ldots, z_p)$ for a compactly supported bump function $\mathpzc{B}_\mathpzc{c}$ which integrate to one on the reduced space. Compactly supported integral forms will play a crucial role in the next section, when integration on supermanifold will be introduced. }

{\remark As an addendum to the above remark \ref{cohomorem}, the previous corollary \ref{geniso} says that, despite both the complex of differential and integral forms are not bounded either from above or below, their cohomology can indeed be non-zero (and isomorphic) only in the framed part of the diagram below - where the degree of differential forms matches the degree of integral forms - from zero to the even dimension of the supermanifold.
\bear
\xymatrix@R=10pt{
& 0 \ar[r] & \Omega^0_\mani (\mani)  \ar[r]\ar@{.}[d] & \Omega^1_{\mani} (\mani) \ar[r]\ar@{.}[d] & \ldots \ar[r]  & \Omega^n_{\mani} (\mani) \ar[r]\ar@{.}[d] & \Omega^{n+1}_\mani (\mani) \ar[r] & \ldots \\
\ldots \ar[r] & \Sigma_\mani^{-1}(\mani) \ar[r] & \Sigma^0_\mani (\mani) \ar[r] & \Sigma^1_\mani (\mani) \ar[r] & \ldots \ar[r] & \Sigma^p_\mani (\mani) \ar[r] & 0.
\save "1,3"."2,6"*[F]\frm{}
\restore 
}
\eear
Once again, here we have denoted $\Omega^k_\mani(\mani)$ and $\Sigma_\mani^k(\mani)$ the global section of the sheaves of differential and integral forms.}

\section{Berezin Integral and Stokes' Theorem on Supermanifolds}

\noindent In this section we introduce the notion of Berezin integral for real supermanifolds \cite{Berezin, BL1, BL2, Voronov, Witten}, following the philosophy and exposition given in \cite{Manin}, that underlines the role of integral forms and their cohomology as introduced above. In particular, we denote with 
\bear
\mathcal{B}er_{\mathpzc{c}} (\mani) \defeq \Gamma_\mathpzc{c} (\mani, \mathcal{B}er (\mani)),
\eear
the $\mathbb{R}$-module of the \emph{compactly supported} sections of the Berezinian on $\mani$ and, more in general, with 
\bear
\Sigma_{\mani, \mathpzc{c}} \defeq \Gamma_{\mathpzc{c}} (\mani, \Sigma^i_{\mani})
\eear 
the $\mathbb{R}$-module of the \emph{compactly supported} integral forms of degree $i\leq p$ on $\mani$, with $\dim \mani = p|q$. Note that $\Sigma_{\mani, \mathpzc{c}}^p = \mathcal{B}er_{\mathpzc{c}} (\mani),$ in the convention previously set. Further, we assume our supermanifold always has a \emph{finite good cover}. We start with the following preparatory lemma, see \cite{Manin}.
\begin{lemma} \label{prep} Let $\mani$ be a real supermanifold of dimension $p|q$. Then one has the following isomorphism of $\mathbb{R}$-modules  
\bear
\mathcal{B}er_{\mathpzc{c}}(\mani) \cong \mathcal{J}_\mani^q \mathcal{B}er_{\mathpzc{c}} (\mani) + {\delta} (\Sigma^{p-1}_{\mani, \mathpzc{c}}).
\eear
More in particular the intersection $ \mathcal{J}_\mani^q \mathcal{B}er_{\mathpzc{c}} (\mani) \cap {\delta} (\Sigma^{p-1}_{\mani, \mathpzc{c}})$ is such that 
\bear
\varphi ( \mathcal{J}_\mani^q \mathcal{B}er_{\mathpzc{c}} (\mani) \cap {\delta} (\Sigma^{p-1}_{\mani, \mathpzc{c}}) ) = d (\Omega^{p-1}_{\mani, \mathpzc{c}}),
\eear
where $\varphi : \mathcal{J}^q_\mani \mathcal{B}er_{\mathpzc{c}}(\mani) \stackrel{\cong}{\longrightarrow} \Omega^p_{\manir, \mathpzc{c}} $ is the isomorphism of Theorem \ref{bercan}.
\end{lemma}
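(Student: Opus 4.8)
The plan is to prove both assertions locally in a chart $(U, z_i | \theta_\alpha)$, using the explicit local expression \eqref{actiondelta} for $\delta$ together with the description of the isomorphism $\varphi$ of Theorem \ref{bercan} (which sends $\mathcal{D}(x)\,\theta_1\ldots\theta_q\, f \mapsto dz_{1,\mbox{\tiny{red}}}\ldots dz_{p,\mbox{\tiny{red}}}\, f_{\mbox{\tiny{red}}}$), and then to globalize by a partition of unity subordinate to the finite good cover. For the first isomorphism I would expand a local generating section $\mathcal{D}(x)\,g$ of $\mathcal{B}er_{\mathpzc{c}}(\mani)$ in the odd coordinates as $g=\sum_{I}g_I(z)\,\theta^I$, with $g_I$ compactly supported. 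The key observation is that every monomial $\theta^I$ of length $|I|<q$ is $\delta$-exact: picking any index $\alpha\notin I$ one has $\partial_{\theta_\alpha}(\theta_\alpha\theta^I)=\theta^I$, so by \eqref{actiondelta} the term $\mathcal{D}(x)g_I(z)\theta^I$ equals, up to sign, $\delta(\mathcal{D}(x)g_I(z)\theta_\alpha\theta^I\otimes\pi\partial_{\theta_\alpha})$, and the auxiliary section is compactly supported because $g_I$ is. Subtracting all such terms leaves only the top monomial $\mathcal{D}(x)g_{\{1,\ldots,q\}}(z)\,\theta_1\ldots\theta_q\in\mathcal{J}^q_\mani\mathcal{B}er_{\mathpzc{c}}(\mani)$, which yields $\mathcal{D}(x)g\in\mathcal{J}^q_\mani\mathcal{B}er_{\mathpzc{c}}(\mani)+\delta(\Sigma^{p-1}_{\mani,\mathpzc{c}})$; the partition of unity then patches this into the claimed sheaf equality.

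For the intersection I would analyze $\delta$ in top $\theta$-degree. Writing a general $\sigma=\sum_a\mathcal{D}(x)f_a\otimes\pi\partial_a\in\Sigma^{p-1}_{\mani,\mathpzc{c}}$, formula \eqref{actiondelta} gives $\delta(\sigma)=\mathcal{D}(x)\sum_a\pm\,\partial_a(f_a)$. The crucial remark is that the odd derivations $\partial_{\theta_\alpha}$ strictly lower the $\theta$-degree and hence can never produce a term of top degree $q$; consequently the component of $\delta(\sigma)$ inside $\mathcal{J}^q_\mani\mathcal{B}er_{\mathpzc{c}}(\mani)$ is governed solely by the even derivations, namely $\mathcal{D}(x)\big(\sum_i\pm\,\partial_{z_i}(\widetilde f_{z_i})\big)\theta_1\ldots\theta_q$, where $\widetilde f_{z_i}(z)$ denotes the coefficient of $\theta_1\ldots\theta_q$ in $f_{z_i}$. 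Applying $\varphi$ converts this into $dz_{1,\mbox{\tiny{red}}}\ldots dz_{p,\mbox{\tiny{red}}}\sum_i\pm\,\partial_{z_i}(\widetilde f_{z_i,\mbox{\tiny{red}}})$, which is exactly $d\eta$ for the compactly supported $(p-1)$-form $\eta=\sum_i\pm\,\widetilde f_{z_i,\mbox{\tiny{red}}}\,dz_{1,\mbox{\tiny{red}}}\ldots\widehat{dz_{i,\mbox{\tiny{red}}}}\ldots dz_{p,\mbox{\tiny{red}}}$ on $\manir$, the signs being precisely those of the de Rham differential. This proves both inclusions at once: any element of the intersection is purely top-degree and $\delta$-exact, so its $\varphi$-image is such a divergence and lies in $d(\Omega^{p-1}_{\manir,\mathpzc{c}})$; conversely every exact $d\eta$ is attained by taking $f_{z_i}=\pm\,\widetilde f_{z_i}\,\theta_1\ldots\theta_q$ and $f_{\theta_\alpha}=0$, which produces a purely top-degree element of $\mathcal{J}^q_\mani\mathcal{B}er_{\mathpzc{c}}(\mani)\cap\delta(\Sigma^{p-1}_{\mani,\mathpzc{c}})$.

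The hard part will be the sign bookkeeping in \eqref{actiondelta}: one must verify that the alternating signs arising from the even-variable divergence reassemble, under $\varphi$, into exactly the signs of $d$ on $\manir$, and that compact support is preserved at every step — which is precisely why the finite good cover and the partition of unity are needed. I would also flag what appears to be a typo in the statement: since $\varphi$ takes values in $\Omega^p_{\manir,\mathpzc{c}}$, the right-hand side $d(\Omega^{p-1}_{\mani,\mathpzc{c}})$ must be read as $d(\Omega^{p-1}_{\manir,\mathpzc{c}})$, the exact compactly supported top forms on the reduced manifold, for the identification to be meaningful.
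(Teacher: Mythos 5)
Your proof is correct, and for the first decomposition and the forward inclusion of the intersection statement it runs essentially parallel to the paper: non-top $\theta$-monomials of a compactly supported Berezinian section are exhibited as $\delta$-images by multiplying with a missing $\theta_\alpha$ and pairing with $\pi\partial_{\theta_\alpha}$, and the top-$\theta$-degree component of $\delta\sigma$ can only be produced by the even derivations, so under $\varphi$ it becomes a divergence, i.e.\ an element of $d(\Omega^{p-1}_{\manir,\mathpzc{c}})$ (your reading of the statement's typo is right: the paper's own proof lands in forms on $\manir$, not on $\mani$). Where you genuinely depart from the paper is the converse inclusion. You simply exhibit, for each exact $d\eta$, \emph{some} preimage in the intersection by lifting $\eta$ chart by chart to $\sum_i \pm\,\mathcal{D}(x)\,\widetilde f_{z_i}\,\theta_1\ldots\theta_q\otimes\pi\partial_{z_i}$ and applying $\delta$. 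The paper performs the same lift, but as a step toward a strictly stronger claim: \emph{every} section of $\mathcal{J}^q_\mani\mathcal{B}er_{\mathpzc{c}}(\mani)$ whose $\varphi$-image is exact already lies in $\delta(\Sigma^{p-1}_{\mani,\mathpzc{c}})$; this is proved by subtracting the lifted primitive, noting the difference has vanishing $\varphi$-image, hence misses some $\theta$ in each chart, hence is $\delta$-exact by the first part. Your weaker statement suffices for the set equality $\varphi(\mathcal{J}^q_\mani\mathcal{B}er_{\mathpzc{c}}(\mani)\cap\delta(\Sigma^{p-1}_{\mani,\mathpzc{c}}))=d(\Omega^{p-1}_{\manir,\mathpzc{c}})$ asserted in the lemma, and also for Theorem \ref{isoint} once one invokes the second isomorphism theorem for $\mathcal{B}er_{\mathpzc{c}}(\mani)=\mathcal{J}^q_\mani\mathcal{B}er_{\mathpzc{c}}(\mani)+\delta(\Sigma^{p-1}_{\mani,\mathpzc{c}})$; what the paper's stronger version buys is the identification of the kernel of $\mathcal{B}er_{\mathpzc{c}}(\mani)\to\Omega^p_{\manir,\mathpzc{c}}/\mathrm{Im}(d)$ with $\delta(\Sigma^{p-1}_{\mani,\mathpzc{c}})$ with no further algebra. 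Two points to be careful about when writing up: $\delta$ is a first-order differential operator, not $\mathcal{O}_\mani$-linear, so cutting with a partition of unity $\{\rho_i\}$ only uses additivity $\delta\sigma=\sum_i\delta(\rho_i\sigma)$; and the coordinate-independence of the \virgolette top $\theta$-degree part'' rests on splitness of real supermanifolds (Batchelor), which the paper invokes at exactly this point.
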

\begin{proof} Let $\mathcal{D}_\mathpzc{c} \in \mathcal{B}er_\mathpzc{c} (\mani) = \Sigma_{\mani, \mathpzc{c}}^p$ be a section of the Berezinian sheaf having compact support. Then, using a partition of unity $\{ \rho_\kappa \}_{\kappa \in I}$ for $\mani$, we represent $\mathcal{D}$ as a (locally) finite sum
\bear \label{decomp}
\mathcal{D}_{\mathpzc{c}} = \sum_{i = 1}^n \mathcal{D}_{\mathpzc{c}}^{(i)}
\eear
where $\mbox{supp} (\mathcal{D}^{(i)}_{\mathpzc{c}})$ is compact and it is contained in a certain open set $U^{(i)}$ locally described by the coordinate system $x_a = z_1, \ldots, z_p | \theta_1, \ldots, \theta_q$, where the dependence of $i$ is understood. Then one can in turn decompose $\mathcal{D}^{(i)}_{\mathpzc{c}}$ as follows 
\bear
\mathcal{D}^{(i)}_{\mathpzc{c}} (x) = \mathcal{D}^{(i)}_{\mathpzc{c}, 0} (x) + \mathcal{D}^{(i)}_{\mathpzc{c}, 1} (x) \theta_1 \ldots \theta_q,
\eear 
such that every monomial in the $\theta$-expansion of $\mathcal{D}_{\mathpzc{c}, 0} (x)$ contains at most $q-1$ theta's, \emph{i.e.} it is of the form 
\bear
\mathcal{D}^{\underline \epsilon }_{\mathpzc{c}, 0} (x) = \widetilde{\mathcal{D}} (x) \theta_1^{\epsilon_1} \ldots \theta_q^{\epsilon_q} f_{\underline \epsilon, \mathpzc{c}}  (x)
\eear 
for $\epsilon = (\epsilon_1, \ldots, \epsilon_q) $ with $\epsilon_i \in \{ 0, 1\}$ and $|\underline \epsilon| < q$ and $\widetilde{\mathcal{D}} (x) = [dz_1 \ldots dz_p \otimes \partial_{\theta_1}\ldots \partial_{\theta_q}]$ a generating section. It is easy to see that any such section is in the image of $\delta^{p-1}: \Sigma_{\mani, \mathpzc{c}}^{p-1} \rightarrow \Sigma_{\mani, \mathpzc{c}}^{p}$. Indeed, let for example be $\epsilon_j = 0$, then, by definition of $\delta$, one sees that up to sign
\bear
\delta (\widetilde{\mathcal{D}} (x) \theta_1^{\epsilon_1} \ldots \theta_j \ldots \theta_q^{\epsilon_q} f_{\underline \epsilon, \mathpzc{c}}  (x) \otimes \pi \partial_{\pi \theta_j}) = \widetilde{\mathcal{D}} (x) \theta_1^{\epsilon_1} \ldots \theta_{j-1}^{\epsilon_{j-1}} \theta_{j+1}^{\epsilon_{j+1}} \ldots \theta_q^{\epsilon_q} f_{\underline \epsilon, \mathpzc{c}}  (x).
\eear
Notice that since every real supermanifold is split, the $\theta$-degree of the expansion is invariant. It follows that 
\bear
\sum_{i} \mathcal{D}^{(i) }_{\mathpzc{c}, 0} \in \delta (\Sigma_{\mani, \mathpzc{c}}^{p-1}),
\eear
and hence the difference $\mathcal{D}_{\mathpzc{c}} - \sum_{i} \mathcal{D}^{(i) }_{\mathpzc{c}, 0}$ lies in $\mathcal{J}^q_\mani \mathcal{B}er_{\mathpzc{c}}(\mani),$ which proves the first statement. \\
For the second statement, let first be $\mathcal{D}_{\mathpzc{c}} \in \mathcal{J}^q_\mani \mathcal{B}er_{\mathpzc{c}} (\mani) \cap \delta (\Sigma^{p-1}_{\mani, \mathpzc{c}})$, with $\mathcal{D}_{\mathpzc{c}} = \delta \omega_{\mathpzc{c}}$ for some $\omega_{\mathpzc{c}} \in \Sigma^{p-1}_{\mani, \mathpzc{c}}$. Let us decompose $\omega_\mathpzc{c} \in \Sigma^{p-1}_{\mathpzc{c}}$ as in \eqref{decomp}, 
\bear
\omega_{\mathpzc{c}} = \sum_{i} \omega^{(i)}_{\mathpzc{c}}. 
\eear
for $\mbox{supp}(\omega^{(i)}_{\mathpzc{c}})$ compact and contained in some open set $U^{(i)}$ locally described by the coordinate system $x_a = z_1, \ldots, z_p | \theta_1, \ldots, \theta_q$, where again we have left the dependence of $i$ understood. Now, without loss of generality, any $\omega^{(i)}_{\mathpzc{c}}$ can be taken of the form 
\bear
\omega^{(i)}_{\mathpzc{c}} = \mathcal{D}(x) \theta_1 \ldots \theta_q f_{\mathpzc{c}}(z) \otimes \sum_a \pi \partial_{x_a} 
\eear
with $f_{\mathpzc{c}}$ depending on the even coordinates only, since we have seen that any monomial which does not have all the theta's is the image of $\delta $, and since $\delta^2 = 0$ it will not contribute to $\mathcal{D}_{\mathpzc{c}} \in \Sigma^p_{\mani, \mathpzc{c}}$. Further there should be at least one of the $\pi \partial_{z_j}$'s, since otherwise either $\delta \omega^{(i)}_{\mathpzc{c}} \notin \mathcal{J}^q_\mani \mathcal{B}er_\mathpzc{c}(\mani)$ or $\delta \omega^{(i)}_\mathpzc{c} = 0$ in $\mathcal{J}^q_\mani \mathcal{B}er_\mathpzc{c} (\mani)$. But then, one concludes that $\delta \omega^{(i)}_\mathpzc{c} $ should be of the form 
\bear
\delta (\omega^{(i)}_\mathpzc{c}) = \sum_{j=1}^p \mathcal{D}(x) \theta_1 \ldots \theta_q \partial_{z_j} f_{\mathpzc{c}}(z),
\eear
so that if follows 
\bear
\varphi  ( \delta (\omega^{(i)}_\mathpzc{c})  ) = \sum_{j=1}^p dz_1 \ldots dz_p \partial_{z_j} f_{\mathpzc{c}} (z) \in d(\Omega^p_{\manir, \mathpzc{c}}),
\eear
proving that $\varphi (\mathcal{J}_\mani^q \mathcal{B}er_{\mathpzc{c}} (\mani) \cap {\delta} (\Sigma^{p-1}_{\mani, \mathpzc{c}}) \subset d (\Omega^p_{\manir, \mathpzc{c}})$.\\
Viceversa, let us take $\mathcal{D}_\mathpzc{c} \in \mathcal{J}_\mani \mathcal{B}er_{\mathpzc{c}} (\mani)$ such that $\varphi (\mathcal{D}_{\mathpzc{c}}) = d \eta_{\mathpzc{\tiny{red, c}}} $, for $\eta_{\mathpzc{red, c}} \in \Omega^{p-1}_{\manir, \mathpzc{c}}$ and let us prove that $\mathcal{D}_\mathpzc{c} \in \delta (\Sigma^{p-1}_{\mani, \mathpzc{c}})$. To this end, once again we decompose $\omega_{\mathpzc{red, {c}}} $ as follows
\bear
\eta_{\mathpzc{red, {c}}} = \sum_{i} \eta^{(i)}_{\mathpzc{red,{c}}}, 
\eear
as above, where now $\mbox{supp}(\eta^{(i)}_{\mathpzc{red, {c}}})$ is compact and contained in some open set $U^{(i)}$ locally described by the coordinate system $ z_1, \ldots, z_p $ for $\manir,$ so that in these coordinates one has
\bear
\eta^{(i)}_{\mathpzc{red, {c}}}  = \sum_k dz_1 \ldots \widehat{dz_k} \ldots dz_p f^k_{\mathpzc{c}} (z)  
\eear
Accordingly, one can lift $\eta^{(i)}_{\mathpzc{red, {c}}}$ to the integral form in $\Sigma^{p-1}_{\mani, \mathpzc{c}}$ given by
\bear
\omega^{(i)}_{\mathpzc{c}} = \sum_k \mathcal{D}(x) \theta_1 \ldots \theta_q f^k_{\mathpzc{c}} (z) \otimes \pi \partial_{z_k}
\eear
so that up to sign one gets $\varphi (\delta \omega^{(i)}_{\mathpzc{c}}) = d \eta_{\mathpzc{red, {c}}}^{(i)}.$ Notice that the support of $\eta^{(i)}_{\mathpzc{red, {c}}} $ is the same as the one of $\omega^{(i)}_{\mathpzc{c}}$ and that, with abuse of notation, we have denoted the even local coordinates in the same way on $\manir$ and $\mani$. Summing over the index $i$, defining $\omega_{\mathpzc{c}} \defeq \sum_i \omega^{(i)}_{\mathpzc{c}} $, one has that 
\bear
\varphi (\mathcal{D}_\mathpzc{c} - \delta \omega_{\mathpzc{c}}) = \varphi (\mathcal{D}_\mathpzc{c}) - \varphi (\delta \omega_{\mathpzc{c}}) = \varphi (\mathcal{D}_\mathpzc{c}) - d \eta_{\mathpzc{red, {c}}} = 0,
\eear
by hypothesis. Let us set $\widetilde{\mathcal{D}_{\mathpzc{c}}} \defeq \mathcal{D}_\mathpzc{c} - \delta \omega_{\mathpzc{c}}$, choose a partition of unity $\rho_j $ subordinate to the above open sets,  so that it commutes with the isomorphism $\varphi$ and posing so that posing $\widetilde{\mathcal{D}}_{\mathpzc{c}}^{(i)} \defeq \rho_i \widetilde{\mathcal{D}_\mathpzc{c}}$ one gets $\varphi (\widetilde{\mathcal{D}}^{(i)}_\mathpzc{c}) = 0$. Then, it follows that in the above domain $\widetilde{\mathcal{D}}^{(i)}_\mathpzc{c}$ does not contain all of the theta's, otherwise one would get $\varphi (\widetilde{\mathcal{D}}^{(i)}_\mathpzc{c}) \neq 0$. In turn, this implies that $\widetilde{\mathcal{D}}^{(i)}_\mathpzc{c}$ is in the image of $\delta$, \emph{i.e.} there exists $\widetilde{\omega}_{\mathpzc{c}}^{(i)} \in \Sigma_{\mani, \mathpzc{c}}^{p-1}$ such that $\delta (\widetilde{\omega}_{\mathpzc{c}}^{(i)}) = \widetilde{\mathcal{D}}^{(i)}_\mathpzc{c}$ and the support of $\widetilde{\omega}_{\mathpzc{c}}^{(i)}$ is at most the same as the one of $\widetilde{\mathcal{D}}^{(i)}_\mathpzc{c}$. Posing $\widetilde{\omega_\mathpzc{c}} \defeq \sum_i \widetilde{\omega}^{(i)}_\mathpzc{c}$ and summing over $i$, recalling that $\widetilde{\mathcal{D}_{\mathpzc{c}}} = \mathcal{D}_\mathpzc{c} - \delta \omega_{\mathpzc{c}}$, we have $\mathcal{D}_\mathpzc{c} = \delta ( \widetilde \omega_{\mathpzc{c}} + \omega_{\mathpzc{c}})$, proving that $\mathcal{D}_{\mathpzc{c}} \in \delta (\Sigma^{p-1}_{\mani, \mathpzc{c}})$.
\end{proof}
\noindent The previous lemma has the following immediate consequence, which we state as a theorem.
\begin{theorem} \label{isoint} Let $\mani$ be a real supermanifold. Then the following natural isomorphism of $\mathbb{R}$-modules holds true
\bear
\slantone{\mathcal{B}er_{\mathpzc{c}} (\mani)}{\mbox{\emph{Im}} (\delta^{p-1})} \cong \slantone{\Omega_{\manir, \mathpzc{c}}^p}{\mbox{\emph{Im}} (d^{p-1})}.
\eear
where $\delta^{p-1} : \Sigma^{p-1}_{\mani, \mathpzc{c}} \rightarrow \Sigma^p_{\mani, \mathpzc{c}}$ and $d^{p-1}: \Omega^{p-1}_{\mani, \mathpzc{c}} \rightarrow \Omega^{p}_{\mani, \mathpzc{c}}.$
\end{theorem}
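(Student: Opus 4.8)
The plan is to read off the claimed isomorphism directly from Lemma \ref{prep}, combining its two statements by means of the second isomorphism theorem for sheaves of modules and then transporting the result along the map $\varphi$ of Theorem \ref{bercan}. First I would record the harmless but necessary bookkeeping: since $\Sigma^p_{\mani,\mathpzc{c}} = \mathcal{B}er_\mathpzc{c}(\mani)$ by definition of the Spencer complex, the image $\mathrm{Im}(\delta^{p-1})$ is precisely the submodule $\delta(\Sigma^{p-1}_{\mani,\mathpzc{c}}) \subseteq \mathcal{B}er_\mathpzc{c}(\mani)$, so the left-hand quotient is exactly $\mathcal{B}er_\mathpzc{c}(\mani)/\delta(\Sigma^{p-1}_{\mani,\mathpzc{c}})$; likewise the right-hand quotient is $\Omega^p_{\manir,\mathpzc{c}}/d(\Omega^{p-1}_{\manir,\mathpzc{c}})$ inside the reduced de Rham complex.

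The first statement of Lemma \ref{prep} gives $\mathcal{B}er_\mathpzc{c}(\mani) = \mathcal{J}^q_\mani\mathcal{B}er_\mathpzc{c}(\mani) + \delta(\Sigma^{p-1}_{\mani,\mathpzc{c}})$ as a sum of submodules of $\mathcal{B}er_\mathpzc{c}(\mani)$. The second isomorphism theorem then yields
\[
\frac{\mathcal{B}er_\mathpzc{c}(\mani)}{\delta(\Sigma^{p-1}_{\mani,\mathpzc{c}})}
=
\frac{\mathcal{J}^q_\mani\mathcal{B}er_\mathpzc{c}(\mani)+\delta(\Sigma^{p-1}_{\mani,\mathpzc{c}})}{\delta(\Sigma^{p-1}_{\mani,\mathpzc{c}})}
\cong
\frac{\mathcal{J}^q_\mani\mathcal{B}er_\mathpzc{c}(\mani)}{\mathcal{J}^q_\mani\mathcal{B}er_\mathpzc{c}(\mani)\cap\delta(\Sigma^{p-1}_{\mani,\mathpzc{c}})}.
\]
Next I would transport this along $\varphi$. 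By Theorem \ref{bercan} in its compactly supported form, $\varphi$ is an isomorphism $\mathcal{J}^q_\mani\mathcal{B}er_\mathpzc{c}(\mani)\stackrel{\cong}{\longrightarrow}\Omega^p_{\manir,\mathpzc{c}}$ of sheaves of $\stsheafred$-modules, and by the second statement of Lemma \ref{prep} it carries the submodule $\mathcal{J}^q_\mani\mathcal{B}er_\mathpzc{c}(\mani)\cap\delta(\Sigma^{p-1}_{\mani,\mathpzc{c}})$ isomorphically onto $d(\Omega^{p-1}_{\manir,\mathpzc{c}})$. Consequently $\varphi$ descends to an isomorphism of quotients,
\[
\frac{\mathcal{J}^q_\mani\mathcal{B}er_\mathpzc{c}(\mani)}{\mathcal{J}^q_\mani\mathcal{B}er_\mathpzc{c}(\mani)\cap\delta(\Sigma^{p-1}_{\mani,\mathpzc{c}})}
\stackrel{\cong}{\longrightarrow}
\frac{\Omega^p_{\manir,\mathpzc{c}}}{d(\Omega^{p-1}_{\manir,\mathpzc{c}})},
\]
and composing the two displayed isomorphisms proves the theorem.

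The only genuinely delicate point — and the step I expect to require the most care — is that all of this must be carried out at the level of \emph{sheaves} rather than merely of sections: one has to ensure that the sheaf-theoretic images, sums, and intersections appearing above are the correct ones and that both the isomorphism theorem and the descent of $\varphi$ respect sheafification. On a real supermanifold this causes no trouble, since the existence of partitions of unity makes the sheaves involved fine and the Poincar\'e lemma for integral forms (Lemma \ref{PoincLemmInt}) controls the relevant exactness; the identifications can therefore be checked stalk-wise, or equivalently on global sections over a finite good cover exactly as in the proof of Lemma \ref{prep}. Everything else is the formal algebra of the isomorphism theorem, so the statement is indeed an immediate consequence of the preceding lemma.
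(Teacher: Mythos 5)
Your proposal is correct and takes essentially the same route as the paper's own proof: the paper likewise deduces the theorem directly from Lemma \ref{prep}, using the decomposition $\mathcal{B}er_{\mathpzc{c}}(\mani) = \mathcal{J}^q_\mani\mathcal{B}er_{\mathpzc{c}}(\mani) + \delta(\Sigma^{p-1}_{\mani,\mathpzc{c}})$ together with the fact that $\varphi$ carries $\mathcal{J}^q_\mani\mathcal{B}er_{\mathpzc{c}}(\mani)\cap\delta(\Sigma^{p-1}_{\mani,\mathpzc{c}})$ onto $d(\Omega^{p-1}_{\manir,\mathpzc{c}})$. The only difference is presentational: you spell out the second isomorphism theorem and the descent of $\varphi$ to quotients, which the paper leaves implicit in its ``follows immediately.''
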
 
\begin{proof} The isomorphism is induced by the map $\varphi : \mathcal{J}^q_\mani \mathcal{B}er_{\mathpzc{c}} (\mani) \rightarrow \Omega^p_{\mani, \mathpzc{c}}$ and it follows immediately from lemma \ref{prep} since $\mathcal{B}er_{\mathpzc{c}}(\mani)$ decomposes as $\mathcal{B}er_{\mathpzc{c}}(\mani) \cong \mathcal{J}_\mani^q \mathcal{B}er_{\mathpzc{c}} (\mani) + {\delta} (\Sigma^{p-1}_{\mani, \mathpzc{c}})$ and $\varphi ( \mathcal{J}_\mani^q \mathcal{B}er_{\mathpzc{c}} (\mani) \cap {\delta} (\Sigma^{p-1}_{\mani, \mathpzc{c}}) = d (\Omega^{p-1}_{\mani, \mathpzc{c}})$.
\end{proof}
\noindent We can thus finally define the Berezin integral of a section of $\mathcal{B}er_\mathpzc{c}(\mani)$. 
\begin{definition}[Berezin Integral] \label{berezinint} Let $\mani$ be a real supermanifold of dimension $p|q$ such that $\manir$ is oriented and let $x_a = z_1, \ldots , z_p | \theta_1, \ldots, \theta_q$ is a local system of coordinates on an open set $U$. Let $\mathcal{D}_{\mathpzc{c}} \in \mathcal{B}er_\mathpzc{c}(\mani)$ be a compactly supported section of the Berezinian sheaf which reads 
\bear
\mathcal{D}_\mathpzc{c} = \sum_{\underline \epsilon} \mathcal{D} (x) \theta_1^{\epsilon_1} \ldots \theta_q^{\epsilon_q} f^{\underline \epsilon}_\mathpzc{c} (z_1, \ldots, z_p),
\eear
with $\underline \epsilon = (\epsilon_1, \ldots, \epsilon_q)$ for $\epsilon_j = \{0,1 \}$ in the above local coordinates in $(U, x_a)$. Then we define the Berezin integral of $\mathcal{D}_\mathpzc{c}$ as the map 
\bear
\int_\mani : \mathcal{B}er_{\mathpzc{c}} (\mani) \longrightarrow \mathbb{R} \\
\eear
given in the coordinate domain $(U, x_a)$ by 
\bear
\int_{U} \mathcal{D}_\mathpzc{c} \defeq \int_{U_{\mathpzc{red}}} dz_1 \ldots dz_p f^{1\ldots 1}_{\mathpzc{c}} (z_1, \ldots, z_p),
\eear
and we extend the definition to all $\mani$ by additivity via a partition of unity.
\end{definition}
{\remark The above definition is well-given. Indeed, it does not depend on the choice of local coordinates as a consequence of the previous lemma \ref{prep} and theorem \ref{isoint}. Indeed, more invariantly, the Berezin integral is the map given by composition of the isomorphism of theorem \ref{isoint} with the ordinary integral, \emph{i.e.}
\bear \label{compomap}
\xymatrix{
{\mathcal{B}er_{\mathpzc{c}} (\mani)}/{\delta (\Sigma^{p-1}_{\mani, \mathpzc{c}})} \ar[r]^{\widetilde{\varphi}} & {\Omega_{\manir, \mathpzc{c}}^p}/{d(\Omega^{p-1}_{\manir, \mathpzc{c}})} \ar[r]^{\qquad \quad \int} & \mathbb{R},
}\eear
which induces the isomorphism $H^p_{\mathpzc{dR}, \mathpzc{c}} (\manir) \cong \mathbb{R}$ in compactly supported de Rham cohomology. More in particular, the above constructions allow us to prove an analog of \emph{Stokes theorem} for supermanifolds. }
\begin{theorem}[Stokes Theorem for Supermanifolds] \label{stokesthm} Let $\mani$ be a real supermanifold of dimension $p|q$ with $\manir$ oriented and let $\mathcal{D}_{\mathpzc{c}} \in \mathcal{B}er_{\mathpzc{c}} (\mani)$. Then the following are true.
\begin{enumerate}[leftmargin=*] \item There exists $\omega_\mathpzc{c} \in \Sigma^{p-1}_{\mani, \mathpzc{c}}$ such that $\mathcal{D}_\mathpzc{c} = \delta \omega_\mathpzc{c}  $ if and only if 
\bear
\int_\mani \mathcal{D}_\mathpzc{c}  = 0.
\eear
In other words a compactly supported integral form of degree $p$ is exact, \emph{i.e.} $[\mathcal{D}_\mathpzc{c}] \equiv 0 \in H^p_{\mathpzc{Sp}, \mathpzc{c}} (\mani)$ if and only if it has vanishing Berezin integral. 
\item If $\mani$ is connected, the Berezin integral defines an isomorphism  
\bear
\int_\mani : H^p_{\mathpzc{Sp},\mathpzc{c}} (\mani) \stackrel{\cong}{\longrightarrow} \mathbb{R}.
\eear
In particular, a representative of $H^p_{\mathpzc{Sp}, \mathpzc{c}} (\mani)$ is given by $\sigma_\mathpzc{c} \defeq \mathcal{D}(x) \theta_1 \ldots \theta_q \mathpzc{B}_\mathpzc{c} (z_1,\ldots, z_p)$, where $\mathpzc{B}_\mathpzc{c}$ is any bump function which integrate to one on $\manir$. 
\end{enumerate}
\end{theorem}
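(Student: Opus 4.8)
The plan is to deduce the entire statement from Theorem \ref{isoint}, combined with the classical theory of compactly supported de Rham cohomology on the reduced manifold $\manir$. The crucial observation is that, by the definition of the Berezin integral and the factorization \eqref{compomap}, the map $\int_\mani$ descends to the quotient $\mathcal{B}er_{\mathpzc{c}}(\mani)/\mbox{Im}(\delta^{p-1})$ and there coincides with the composite $\int_{\manir}\circ\,\widetilde{\varphi}$, where $\widetilde{\varphi}$ is the isomorphism induced by the map $\varphi$ of Theorem \ref{bercan} and $\int_{\manir}$ is the ordinary integral of compactly supported top forms on $\manir$. Thus the whole theorem amounts to repackaging the two injections and one surjection hidden in this composite.

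First I would record that, since the Spencer complex terminates in degree $p$ (that is, $\Sigma^{p+1}_\mani = 0$), the top compactly supported Spencer cohomology is simply the cokernel
\[
H^p_{\mathpzc{Sp},\mathpzc{c}}(\mani) \cong \slantone{\mathcal{B}er_{\mathpzc{c}}(\mani)}{\mbox{Im}(\delta^{p-1})},
\]
and likewise $H^p_{\mathpzc{dR},\mathpzc{c}}(\manir) \cong \Omega^p_{\manir,\mathpzc{c}}/\mbox{Im}(d^{p-1})$. Then Theorem \ref{isoint} reads precisely as an isomorphism $\widetilde{\varphi} : H^p_{\mathpzc{Sp},\mathpzc{c}}(\mani) \stackrel{\cong}{\longrightarrow} H^p_{\mathpzc{dR},\mathpzc{c}}(\manir)$, and the classical fact that for a connected oriented $p$-manifold without boundary the integral $\int_{\manir} : H^p_{\mathpzc{dR},\mathpzc{c}}(\manir) \to \mathbb{R}$ is an isomorphism supplies the final arrow.

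For part (1), the \emph{only if} direction is immediate: if $\mathcal{D}_\mathpzc{c} = \delta\omega_\mathpzc{c}$, then its class in the cokernel vanishes, and since $\int_\mani$ factors through that cokernel one gets $\int_\mani \mathcal{D}_\mathpzc{c} = 0$. For the \emph{if} direction I would use that $\int_\mani = \int_{\manir}\circ\,\widetilde{\varphi}$ is injective, being a composite of two injections, so $\int_\mani \mathcal{D}_\mathpzc{c} = 0$ forces $[\mathcal{D}_\mathpzc{c}] = 0$ in the cokernel, i.e.\ $\mathcal{D}_\mathpzc{c} \in \mbox{Im}(\delta^{p-1})$; this is exactly the assertion that $[\mathcal{D}_\mathpzc{c}] \equiv 0$ in $H^p_{\mathpzc{Sp},\mathpzc{c}}(\mani)$. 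For part (2), injectivity of $\int_\mani$ on $H^p_{\mathpzc{Sp},\mathpzc{c}}(\mani)$ has just been established; surjectivity follows by exhibiting, for any $r \in \mathbb{R}$, the lift $r\,\sigma_\mathpzc{c}$ of the bump representative $\sigma_\mathpzc{c} = \mathcal{D}(x)\,\theta_1 \ldots \theta_q\, \mathpzc{B}_\mathpzc{c}(z_1,\ldots,z_p)$, which is automatically $\delta$-closed since it lives in top degree and satisfies $\int_\mani r\sigma_\mathpzc{c} = r$.

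Since the argument is essentially a packaging of already-proved results, there is no serious analytic obstacle; the only point requiring genuine care is the reduction to the connected case. Part (1) as stated silently needs $\manir$ connected, because on a disconnected oriented manifold a compactly supported top form integrating to zero need not be exact, so I would invoke the classical result precisely for connected manifolds, and the connectedness hypothesis made explicit in part (2) is exactly what guarantees that $\int_{\manir}$ is injective throughout.
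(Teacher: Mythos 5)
Your proposal is correct and takes essentially the same route as the paper: both factor the Berezin integral through the isomorphism of Theorem \ref{isoint} as in \eqref{compomap}, and then invoke the classical fact that integration identifies $H^p_{\mathpzc{dR},\mathpzc{c}}$ of a connected oriented manifold with $\mathbb{R}$ (the paper merely spells out the ``only if'' direction more concretely via the decomposition of Lemma \ref{prep}). Your observation that part (1) silently requires connectedness of $\manir$ is a valid catch: the paper's proof of the \emph{if} direction also rests on that classical isomorphism, and hence uses connectedness without stating it.
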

\begin{proof} It is enough to use \eqref{compomap}. More in particular, on the one hand we have observed that if $\mathcal{D}_\mathpzc{c} \notin \mathcal{J}^q_\mani \mathcal{B}er_\mathpzc{c} (\mani)$, \emph{i.e.} does not have all the theta's, then it is in the image of $\delta$ and, by definition, its Berezin integral yields zero. On the other hand, if $\mathcal{D}_\mathpzc{c} \in \mathcal{J}^q_\mani \mathcal{B}er_{\mathpzc{c}} (\mani) \cap \delta (\Sigma^{p-1}_{\mani, \mathpzc{c}})$, \emph{i.e.}\ it has all of the theta's and it is in the image of $\delta$, then if one has $\mathcal{D}_\mathpzc{c} = \mathcal{D} (x) \theta_1 \ldots \theta_q f_\mathpzc{c}(z)$ then $f_\mathpzc{c} (z)$ is a divergence, and $\mathcal{D}_\mathpzc{c}$ gets mapped to an element in $d(\Omega^p_{\manir, \mathpzc{c}})$, which integrates to zero by ordinary Stokes theorem. The second point follows immediately from \eqref{compomap} and the previous theorem \ref{isoint}. 
\end{proof}
{\remark Explicitly one has the following isomorphism between local representatives
\bear
\xymatrix{
H^p_{\mathpzc{Sp}, \mathpzc{c}} (\mani)   \owns \mathcal{D} (x) \theta_1 \ldots \theta_q \mathpzc{B}_\mathpzc{c} (z_1, \ldots z_p) \ar@{|->}[r]^{ \tilde \varphi \quad } &  dz_1 \wedge \ldots \wedge dz_p \mathpzc{B}_\mathpzc{c} (z_1, \ldots z_p) \in H^p_{\mathpzc{dR}, \mathpzc{c}} (\manir).
} 
\eear
where again $\mathpzc{B}_\mathpzc{c} (z_1,\ldots, z_p)$ is a bump function which integrate to one over $\manir $. More in general, as in remark \ref{intcompact}, it can be proved that the \emph{compactly supported} Spencer cohomology of integral forms is isomorphic to the compactly supported de Rham cohomology, so that one has 
$
H^\bullet_{\mathpzc{Sp}, \mathpzc{c}} (\mani) \cong H^\bullet_{\mathpzc{dR}, \mathpzc{c}} (\manir).
$}
{\remark We will not deal with the subtle case of supermanifolds \emph{with boundaries} and the related Stokes' Theorem. More on this can be found in \cite{Manin} and \cite{Voronov, Witten}.}

\subsection{Supersymmetry and the Berezin Integral} The foremost application of the theory of integration on supermanifolds is related to high-energy physics, in particular with modern \emph{supersymmetric field theories} \cite{CDF, Deligne}. Very roughly speaking, physical \emph{elementary particles} are described mathematically via irreducible (projective and unitary) representations of the Poincaré group, whose Casimir invariants are in turn related to the \emph{mass} and the \emph{spin} (or \emph{helicity} in the zero-mass case) of the particles. In this context, a \emph{supersymmetry} is a physical symmetry that relates particles characterized by integer spins (\emph{bosons}, physically describing \virgolette interactions'') to particles characterized by half-integer spins (\emph{fermions}, physically describing \virgolette matter''). These symmetries are building pillars of the most far-reaching theories in contemporary physics, such as string theory. \\
Briefly, a physical theory on an unspecified space(time) $\mathpzc{M}$, where $\mani $ is an ordinary manifold, is described by an action functional $\mathcal{A} : \mathscr{F}_{\mani} \rightarrow \mathbb{R}$ where $\mathscr{F}_{\mathpzc{M}}$ is the \emph{space of fields} $\varphi^i$ of the theory. One usually writes the action as an integral over $\mani$   
\bear \label{actionfis}
\mathcal{A} \defeq \int_{\mathpzc{M}} \mathscr{L} (\varphi^i) 
\eear  
where $\mathscr{L} $ is the so-called \emph{Lagrangian density} of the theory, which - for an ordinary space-time manifold $\mathpzc{M}$ - is a (compactly supported) section of the canonical sheaf of $\mathpzc{M}$, \emph{i.e.} $\mathscr{L} \in \Omega^{\dim \mathpzc{M}}_{\mani, \mathpzc{c}}$, so that the integral makes sense (we consider $\mathpzc{M}$ to be oriented). We say that the physical theory described by $\mathcal{A}$ is \emph{invariant} under the (infinitesimal) transformation generated by a vector field $\mathcal{X} \in \mathcal{T}_\mathpzc{X}$ if 
\bear \label{invariance}
\delta_\mathcal{X} \mathcal{A} \defeq \int_{\mathpzc{M}} \mathcal{L}_{\mathcal{X}} \mathscr{L} = 0,
\eear
where $\mathcal{L}_{\mathcal{X}} \mathscr{L}$ is the Lie derivative of $\mathscr{L}$ with respect to the field $\mathcal{X}$. In this case one says that the transformation is a symmetry of the theory and the field $\mathcal{X}$ is the generator of the symmetry. \\
The simplest way to make a physical theory manifestly invariant under a given transformation is to construct the theory in a space whose isometry group contains such a transformation: supermanifolds do this job for supersymmetry transformations. More precisely, supersymmetric field theories can be made into manifestly invariant theories under supersymmetry if they can be constructed as theories on particular supermanifolds called \emph{superspacetimes}, which are defined as \emph{homogenous superspaces for the action of Poincaré Lie supergroups} and whose reduced manifolds are ordinary physical spacetimes (\emph{e.g.}\ the Minkowski spacetime $\mathbb{R}^{1, D-1}$ in $D$ dimensions) \cite{AC, DeligneFreed, Freed, Varadarajan}. \\
Superspacetimes are constructed out of three pieces of data, 
\begin{enumerate}[leftmargin=*]
\item a \emph{real quadratic vector space} $(V, \mathpzc{Q})$ of dimension $\dim (V) = D$, with $\mathpzc{Q}: V \rightarrow \mathbb{R}$ whose associated symmetric bilinear form $\mathpzc{B} : V \times V \rightarrow \mathbb{R}$ has signature $(1, D-1)$;
\item a \emph{real spinorial representation} $\mathcal{S} : {\cat{Spin}} (V ) \rightarrow \cat{Aut} (\mathcal{S})$ of dimension $\dim (\mathcal{S}) = q$ of the group $\cat{Spin} (V)$.
\item a $\cat{Spin}(V)$-equivariant symmetric non-zero bilinear map $\gamma : \mathcal{S} \times \mathcal{S} \rightarrow V$.
\end{enumerate}
Notice that the map $\gamma : \mathcal{S} \times \mathcal{S} \rightarrow V$ always exists in the given setting \cite{Varadarajan}. 
In the above data, $V$ is called (abelian) \emph{translation algebra}, as it appears as the summand in the Poincaré Lie algebra   
$
\mathfrak{Iso} (V) \defeq V \rtimes \mathfrak{so} (V)
$
corresponding to spacetime translations. The related Poincaré Lie superalgebra $\mathfrak{SIso} (V) = \mathfrak{g}_0 \oplus \mathfrak{g}_1$ is constructed out of the direct product of vector spaces 
\bear
\mathfrak{SIso} (V) = (V \rtimes \mathfrak{so} (V)) \oplus \mathcal{S},
\eear
where $\mathfrak{g}_0 \defeq V \rtimes \mathfrak{so} (V)$ and $\mathfrak{g}_1 \defeq \mathcal{S}$. Here $\mathcal{S}$ is looked at as a $\mathfrak{g}_0$-module with a trivial action 
\bear \label{actionV}
V \cdot \mathcal{S} \defeq [v, \mathpzc{s}] = 0 
\eear
for any $v \in V$ and $\mathpzc{s} \in \mathcal{S}$. Defining further 
\bear
[\mathpzc{s}_1, \mathpzc{s}_2] \defeq \gamma (\mathpzc{s}_1, \mathpzc{s}_2)
\eear  
for any $\mathpzc{s}_1, \mathpzc{s}_2 \in \mathcal{S}$, one obtains a Lie superalgebra, indeed $[\mathpzc{s}, [\mathpzc{s}, \mathpzc{s}]] $ is zero since $[\mathpzc{s}, [\mathpzc{s}, \mathpzc{s}]] = [\mathpzc{s}, \gamma (\mathpzc{s}, \mathpzc{s})]  = 0$ by \eqref{actionV}. Letting $\mathcal{S}$ be irreducible with basis given by $Q_a$ for $a = 1, \ldots, q$ and $P_\mu$ with $\mu = 1, \ldots D$ the standard basis of $V \cong \mathbb{R}^{1, D-1}$, one can write - upon a suitable normalization of $\gamma$ -:
\bear \label{susyrel}
[Q_a, Q_b] = \sum_{\mu = 1}^D \gamma^\mu_{ab} P_\mu,
\eear
where $[Q_a, Q_b] \defeq \gamma (Q_a, Q_b)$ so that $\gamma^\mu_{ab} = \gamma^\mu_{ba}$: this is the way the crucial commutation relation of Poincaré Lie superalgebra appears in physics. \\
The Poincaré Lie supergroup $\mathbb{S}\mbox{\cat{Iso}} (V)$ is obtained exponentiating this construction, and the related superspacetime is the quotient supermanifold (or \emph{coset superspace}, as most frequently called in physics) obtained by modding out the Lie group $\mbox{\cat{SO}} (V)$ of Lorentz transformations \cite{Varadarajan},
\bear
\mathscr{M} \defeq \slantone{\mathbb{S}\cat{Iso} (V)}{\mbox{\cat{SO}} (V)}.
\eear
 Equally, one can notice that $V \oplus \mathcal{S}$ is also a Lie superalgebra, with $\mathfrak{g}_0 = V$, the ordinary translation algebra and $\mathfrak{g}_1 = \mathcal{S}$: this is called \emph{translation superalgebra} and the superspacetime is given by its related Lie supergroup.  
Notice that $\dim ( \mathscr{M} ) = \dim( V) | \dim (\mathcal{S} ) = D | q$, and if $(x^\mu | \theta^a)$ and $(y^\mu | \psi^a) $ are coordinates for $\mathscr{M}$, then the group law $(z | \lambda) \defeq (x | \theta ) \cdot (y |\psi) $ reads 
\begin{align}
& z^\mu = x^\mu \cdot y^\mu = x^\mu + y^\mu - \frac{1}{2} \sum_{a,b } \gamma^\mu_{ab} \theta^a \psi^b, \qquad \lambda^a = \theta^a + \psi^a. 
\end{align}
Correspondingly, a set of even left-invariant vector fields is given by $\{ \partial_\mu \}_{\mu = 1, \ldots, D}$, while a set of \emph{odd left-invariant vector fields} is given by $\{ \mathcal{Q}_a\}_{a = 1, \ldots, q}$, with 
\bear \label{susygen}
\mathcal{Q}_a \defeq \frac{\partial}{\partial \theta^a} + \frac{1}{2} \sum_{\mu, b} \gamma^\mu_{ab} \theta^b \frac{\partial}{\partial x^\mu}
\eear
and it is not hard to verify that $[\mathcal{Q}_a, \mathcal{Q}_b] = \sum_{\mu = 1}^D \gamma^\mu_{ab} \partial_\mu$, just like the above \eqref{susyrel}, upon identifying $P_\mu \defeq \partial_\mu$, as customary, since the $\partial_\mu$'s generate space-time translations. In view of this, \emph{supersymmetry transformations are generated by the vector fields $\mathcal{Q}_a$}.\\ 
In light of the previous sections, one can generalize the definition \eqref{actionfis} and \eqref{invariance} on a superspacetime. The action of the physical theory is now given by an integral on the superspacetime 
\bear \label{superaction}
\mathcal{A}^{\mathpzc{s}} \defeq \int_{\mathscr{M}} \mathscr{L}^\mathpzc{s} (\varphi^i),
\eear
where now the Lagrangian density is a section of the compactly supported Berezinian sheaf of the superspacetime $\mathscr{M}$, \emph{i.e.} $\mathscr{L}^\mathpzc{s} \in \mathcal{B}er_{\mathpzc{c}} (\mathscr{M})$. It can be trivialized as
\bear
\mathscr{L}^\mathpzc{s} (\varphi^i) = \mathcal{D}(x | \theta) \, \Phi (\varphi^i (x), \theta^a),   
\eear
with $\mathcal{D} (x|\theta) = [dx_1, \ldots, dx_D \otimes \partial_{\theta_1} \ldots \partial_{\theta_q}]$ a generating section for the Berezinian and where $\Phi (\varphi^i, \theta^a) \in \mathcal{O}_\mathscr{M, \mathpzc{c}}$ is a so-called \emph{superfield}, containing the original physical fields $\varphi^i \in \mathcal{O}_{\mathpzc{M}, \mathpzc{c}}$ - plus auxiliary fields - in its component expansion. The component fields transform one into another under supersymmetry, forming a so-called \emph{supersymmetry multiplet}. 
\begin{lemma}[Supersymmetry Invariance] Let $\mathcal{A}^\mathpzc{s} $ be an action on a connected superspacetime $\mathscr{M}$ as in \eqref{superaction} and let $\mathcal{Q}$ be any supersymmetry generator of the form \eqref{susygen}. Then 
\bear
\delta_{\mathcal{Q}} \mathcal{A}^\mathpzc{s} = \int_\mathscr{M} \mathcal{L}_\mathcal{Q} ( \mathscr{L}^\mathpzc{s} ) = 0.
\eear
In particular, the action is invariant under supersymmetry.
\end{lemma}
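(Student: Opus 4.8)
The plan is to reduce the statement to Stokes' theorem for supermanifolds (Theorem \ref{stokesthm}) by recognizing that the Lie derivative of a section of the Berezinian sheaf along \emph{any} vector field is automatically $\delta$-exact in the Spencer complex, in exact analogy with the classical fact that $\mathcal{L}_X \omega^{top} = d(\iota_X \omega^{top})$ is exact for a top form on an ordinary manifold. I stress at the outset that the argument will not use the explicit form \eqref{susygen} of the supersymmetry generator: the conclusion holds verbatim for the Lie derivative along an arbitrary $X \in \mathcal{T}_\mathscr{M}$, the supersymmetric nature of $\mathcal{Q}$ entering only through the physical interpretation of the resulting invariance as invariance under supersymmetry.

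First I would note that, since $\mathscr{L}^\mathpzc{s} \in \mathcal{B}er_\mathpzc{c}(\mathscr{M}) = \Sigma^p_{\mathscr{M}, \mathpzc{c}}$ is a compactly supported top integral form and $\pi \mathcal{Q} \in \Pi \mathcal{T}_\mathscr{M}$, the element $\mathscr{L}^\mathpzc{s} \otimes \pi \mathcal{Q}$ is a compactly supported integral form of degree $p-1$, i.e.\ a section of $\Sigma^{p-1}_{\mathscr{M}, \mathpzc{c}}$. Next, combining the defining relation \eqref{delta1} of the Spencer differential $\delta$ with the identity $\mathfrak{L}_X = (-1)^{|\mathcal{D}||X|}\mathcal{L}_X$ introduced before Theorem \ref{BerRightTheo}, one finds
\[
\delta(\mathscr{L}^\mathpzc{s} \otimes \pi \mathcal{Q}) = (-1)^{|\mathscr{L}^\mathpzc{s}| + |\mathcal{Q}| + 1}\, \mathfrak{L}_\mathcal{Q}(\mathscr{L}^\mathpzc{s}) = (-1)^{|\mathscr{L}^\mathpzc{s}| + |\mathcal{Q}| + 1 + |\mathscr{L}^\mathpzc{s}||\mathcal{Q}|}\, \mathcal{L}_\mathcal{Q}(\mathscr{L}^\mathpzc{s}).
\]
Thus $\mathcal{L}_\mathcal{Q}(\mathscr{L}^\mathpzc{s})$ coincides, up to an overall sign $\pm 1$, with $\delta(\mathscr{L}^\mathpzc{s} \otimes \pi \mathcal{Q})$ and is therefore an exact compactly supported top integral form.

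Then I would invoke the first part of Stokes' theorem (Theorem \ref{stokesthm}), which asserts precisely that a compactly supported integral form of top degree $p$ is $\delta$-exact if and only if its Berezin integral vanishes. Applying the (easy) implication to $\mathcal{L}_\mathcal{Q}(\mathscr{L}^\mathpzc{s}) = \pm\,\delta(\mathscr{L}^\mathpzc{s} \otimes \pi \mathcal{Q})$ gives
\[
\delta_\mathcal{Q} \mathcal{A}^\mathpzc{s} = \int_\mathscr{M} \mathcal{L}_\mathcal{Q}(\mathscr{L}^\mathpzc{s}) = \pm \int_\mathscr{M} \delta(\mathscr{L}^\mathpzc{s} \otimes \pi \mathcal{Q}) = 0,
\]
which is the desired invariance.

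The only delicate points are bookkeeping rather than conceptual. The key identification to make carefully is that the Lie derivative appearing in the physical variation \eqref{invariance} is exactly the operator whose right-$\mathcal{D}_\mathscr{M}$-module incarnation on $\mathcal{B}er(\mathscr{M})$ was computed in Theorem \ref{BerRightTheo}; that is, the flow-based definition \eqref{LieFlow} of $\mathcal{L}_\mathcal{Q}$ must be matched with the operator entering the Spencer differential via \eqref{delta1}. Once this is in place, the proof is simply the super-analog of the classical statement that the integral of a Lie derivative of a compactly supported top form vanishes by Stokes, and the remaining parity-dependent signs are all $\pm 1$ and hence immaterial to the vanishing of the integral.
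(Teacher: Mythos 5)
Your proof is correct, but it takes a genuinely different route from the paper's. The paper proves the lemma by decomposing the supersymmetry generator \eqref{susygen} into its two coordinate pieces and treating them by separate mechanisms: the $\partial_{\theta_a}$ part of $\mathcal{L}_{\mathcal{Q}}(\mathscr{L}^{\mathpzc{s}})$ lacks the top $\theta$-monomial, hence lies outside $\mathcal{J}^q_{\mathscr{M}}\mathcal{B}er_{\mathpzc{c}}(\mathscr{M})$ and is $\delta$-exact (this is the content of Lemma \ref{prep}), while the $\sum_{\mu,b}\gamma^\mu_{ab}\theta^b\partial_{x^\mu}$ part produces a divergence lying in $\mathcal{J}^q_{\mathscr{M}}\mathcal{B}er_{\mathpzc{c}}(\mathscr{M})\cap\delta(\Sigma^{D-1}_{\mathscr{M},\mathpzc{c}})$, killed by ordinary Stokes through the isomorphism of Theorem \ref{isoint}. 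You instead bypass the coordinate split entirely by observing that, directly from the definition \eqref{delta1} of the Spencer differential together with $\Delta^{\mathcal{B}er}_{\mathpzc{R}}(\mathcal{D}\otimes X)=-\mathfrak{L}_X(\mathcal{D})$ and $\mathfrak{L}_X=(-1)^{|\mathcal{D}||X|}\mathcal{L}_X$, one has $\mathcal{L}_X(\mathcal{D})=\pm\,\delta(\mathcal{D}\otimes\pi X)$ for \emph{any} vector field $X$; since $\mathscr{L}^{\mathpzc{s}}\otimes\pi\mathcal{Q}$ is compactly supported whenever $\mathscr{L}^{\mathpzc{s}}$ is, the easy direction of Theorem \ref{stokesthm}(1) immediately gives the vanishing. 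Your identification of the flow-based Lie derivative \eqref{LieFlow} entering \eqref{invariance} with the operator in \eqref{delta1} is indeed the only point requiring care, and you handle it correctly. What each approach buys: yours is shorter, coordinate-free, and strictly more general — it shows the action is invariant under the flow of \emph{every} vector field on $\mathscr{M}$, supersymmetry being a special case, in perfect analogy with $\int_X \mathcal{L}_V\omega^{top}=\int_X d(\iota_V\omega^{top})=0$ classically; the paper's computation, by contrast, exhibits the two distinct physical mechanisms (Berezin integration annihilating $\theta$-deficient terms, ordinary Stokes annihilating spacetime divergences), which is exactly how supersymmetry invariance is verified in the physics literature and is the pedagogical point of that subsection.
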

\begin{proof} The result follows from the action of the Lie derivative on sections of the Berezinian sheaf as in equation \eqref{ActionLie} and Stokes theorem \ref{stokesthm} for supermanifolds. \\
More precisely, the part of the Lie derivative with respect to the odd coordinate vector fields $\partial_{\theta_a} \in \mathcal{Q}_a$ integrate to zero since $\mathcal{L}_{\partial_a} (\mathscr{L}^\mathpzc{s} ) \notin \mathcal{J}^q_\mathscr{M} \mathcal{B}er_\mathpzc{c}(\mathscr{M})$ and, as such, it is $\delta$-exact as an integral form. Similarly, the part of the Lie derivative with respect to the odd vector field $\sum_{\mu, b} \gamma^\mu_{ab} \theta^b \frac{\partial}{\partial x^\mu}\in \mathcal{Q}$ yields a divergence, which is an element in $\mathcal{J}^q_\mathscr{M} \mathcal{B}er_\mathpzc{c} (\mathscr{M}) \cap \delta (\Sigma^{D-1}_{\mathscr{M, \mathpzc{c}}})$ that integrate to zero again by Stokes theorem \ref{stokesthm}. 
\end{proof}
{\remark In general, verifying that a theory is indeed invariant under supersymmetry is not a trivial matter, and it often requires going through delicate and lengthy calculations. The above lemma shows that upon using an adequate and mathematically aware formalism - based on supermanifolds, integral forms, and the related integration theory -, supersymmetry invariance becomes apparent and practically no checks are required. Nonetheless, it is fair to say that writing a superspace action is in general not an easy task and indeed there exist theories of great physical interest for which action on superspacetime is not known \cite{DeligneFreed, Freed}. }

{\remark More in general, Lagrangian densities on superspacetimes might also involve \emph{covariant derivatives}, \emph{i.e.}\ differential operators acting on superfields which commutes with the supersymmetry generators: this is the case for example of the kinetic term (\emph{field strength}) of supersymmetric gauge theories. Geometrically, these can be constructed as \emph{right} invariant vector fields on the superspacetime $\mathscr{M}$, in opposition with $\mathcal{Q}$ being left invariant \cite{DeligneFreed, Freed, Varadarajan}.  }

\section{Poincaré Duality on Supermanifolds} 
\noindent Having available a notion of integration on supermanifolds via the Berezin integral, we can prove the analog of {Poincaré duality} for supermanifolds. Whereas Poincaré duality on ordinary manifolds yields a perfect pairing between the Rham cohomology groups of the manifolds, we will see instead that on a supermanifold Poincaré duality defines a perfect pairing between the cohomology of two different complexes, that of differential and that of integral forms: this is rooted in the peculiar geometry of forms and, in turn, integration theory on supermanifolds. We start with some preliminary remarks.
{\remark As discussed in the first section around equation \eqref{obpair}, there is an obvious pairing $\Pi \mathcal{T}_\mani \times \Omega^1_\mani \rightarrow \stsheaf$, given by the contraction of $\Pi$-vector fields and forms on $\mani.$ This can be extended to higher supersymmetric powers of $\Pi \mathcal{T}_\mani$ and $\Omega^1_\mani$, 
\bear \label{pairn}
\xymatrix@R=1.5pt{
\langle \cdot, \cdot \rangle :  \cat{S}^n \Pi \mathcal{T}_\mani \times \Omega^m_{\mani} \ar[r] & \cat{S}^{n-m} \Pi \mathcal{T}_\mani \\
}
\eear
for any $n, m \geq 1$ with $n\geq m$ so that associativity is met in the form $\langle \langle \pi X^{(n)} , \omega_1\rangle, \omega_2 \rangle = \langle \pi X^{(n)} , \omega_1 \omega_2 \rangle $, for any $\pi X^{(n)} \in \cat{S}^n \Pi \mathcal{T}_\mani$ and $\omega_1, \omega_2 \in \Omega^\bullet_\mani$ whose sum of degrees does not exceed $n$.  This pairing induces a \emph{right multiplication} between integral forms in $\Sigma^{p-n}_\mani = \mathcal{B}er(\mani) \otimes \cat{S}^n \Pi \mathcal{T}_\mani$ and differential forms in $\Omega^m_\mani$, that we write as
\bear
\xymatrix@R=1.5pt{
\cdot  : \Sigma^{p-n}_\mani \times \Omega^m_{\mani} \ar[r] & \Sigma^{p-(n-m)}_\mani, \\
( \sigma, \omega ) \ar@{|->}[r] &  \sigma \cdot \omega  
}
\eear
for $n, m \geq 0$, $n\geq m$ and $n+m$ and where $p$ is the even dimension of $\mani,$ so that $\Sigma^p_\mani = \mathcal{B}er(\mani).$ Notice that, at this stage, this does not define a structure of right $\Omega^\bullet_\mani$-module on integral forms, since the multiplication is defined only for certain degrees. 
The crucial observation is that the above multiplication is compatible with the differential, \emph{i.e.}\ we have a Leibniz rule in the form 
\bear \label{leibnizstrange}
\delta (\sigma \cdot \omega) = \delta \sigma \cdot \omega + (-1)^{|\sigma|} \sigma \cdot d\omega,
\eear
for any $\sigma \in \Sigma^{p-n}_\mani $ and $\omega \in \Omega^m_\mani$ and where $d$ is the de Rham differential. This can be proved in local coordinates by expanding the tensor $\pi X^{(n)}$ over a base of $\cat{S}^n \Pi \mathcal{T}_\mani$ of supersymmetric products of the $\pi \partial_{x_a}$'s and use \eqref{pairn}. This is a lengthy computation, where the bookkeeping of the signs involved plays a crucial role: we check it in the case $\sigma \in \Sigma^{p-2}_\mani$ and $\omega \in \Omega^1_\mani$, for $\sigma = \sum_{a,b } \mathcal{D}(x) f \otimes X_{ab} \pi \partial_{a} \, \pi \partial_{b} $ and $\omega = \sum_c g_c dx_c$. One has 
\begin{align} \label{ex2}
& \delta (\sigma) \cdot \omega +  (-1)^{|\sigma|} \sigma \cdot d \omega =  \nonumber \\
& \sum_{a, b} (-1)^{|x_a|+1 + |X_{ab}| + |\mathcal{D}(x)| + |f|  + |g_b| (|x_b| + 1)} \mathcal{D}(x) \bigg ((-1)^{|x_a| (|f| + |X_{ab}|)} \partial_a (f X_{ab}) g_b + f X_{ab} \partial_a g^b \bigg ) \pm  (a \leftrightarrow b)    \nonumber \\
& \sum_{a, b} (-1)^{|x_a|+1 + |X_{ab}| + |\mathcal{D}(x)| + |f|  + |g_b| (|x_b| + 1) + |x_a| (|f| + |X_{ab}|)} \mathcal{D}(x)  \partial_a (f X_{ab} g_b)\pm  (a \leftrightarrow b).
\end{align}
On the other hand, one computes directly that 
\bear
 \delta (\sigma \cdot \omega) = \sum_{a, b} (-1)^{|x_a|+1 + |X_{ab}| + |\mathcal{D}(x)| + |f|  + |g_b| (|x_b| + 1) + |x_a| (|f| + |X_{ab}|)} \mathcal{D}(x)  \partial_a (f X_{ab} g_b) \pm  (a \leftrightarrow b), 
\eear
matching \eqref{ex2}. The previous Leibniz rule \eqref{leibnizstrange} is the key to proving Poincaré duality for supermanifolds. }
\begin{theorem}[Poincaré Duality for Supermanifolds] \label{PD} Let $\mani$ be a real supermanifold of dimension $p|q$ with $\manir$ oriented. Then, for any $n \geq 0$, the Berezin integral defines a perfect pairing in cohomology 
\bear
\xymatrix@R=1.5pt{
H^{p-n}_{\mathpzc{Sp}, \mathpzc{c}} (\mani) \times H^n_{\mathpzc{dR}} (\mani) \ar[r] & \mathbb{R}  \\
( [\sigma_\mathpzc{c}], [\omega] ) \ar@{|->}[r] & \int_\mani \sigma \cdot \omega
}
\eear
In particular, there is a natural isomorphism 
\bear
(H^{p-n}_{\mathpzc{Sp}, \mathpzc{c}} (\mani))^{\ast} \cong  H^n_{\mathpzc{dR}} (\mani).
\eear
\end{theorem}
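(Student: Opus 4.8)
The plan is to show the map descends to cohomology, to recognize it as a cup-type product followed by the integration isomorphism of Theorem \ref{stokesthm}, and then to transport the whole picture to the reduced manifold $\manir$, where ordinary Poincar\'e duality applies.

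First I would check well-definedness. For a $\delta$-closed compactly supported $\sigma_\mathpzc{c}\in\Sigma^{p-n}_{\mani,\mathpzc{c}}$ and a $d$-closed $\omega\in\Omega^n_\mani$, the product $\sigma\cdot\omega$ lies in $\mathcal{B}er_\mathpzc{c}(\mani)=\Sigma^p_{\mani,\mathpzc{c}}$ and is compactly supported (its support being controlled by that of $\sigma$), so $\int_\mani\sigma\cdot\omega$ is defined. The Leibniz rule \eqref{leibnizstrange} then does the work: if $\sigma=\delta\tau$ then $\sigma\cdot\omega=\delta(\tau\cdot\omega)$ because $d\omega=0$, and if $\omega=d\eta$ then $\sigma\cdot\omega=(-1)^{|\sigma|}\delta(\sigma\cdot\eta)$ because $\delta\sigma=0$; in both cases $\sigma\cdot\omega$ is $\delta$-exact and compactly supported, hence integrates to zero by Theorem \ref{stokesthm}. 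Thus $\int_\mani\sigma\cdot\omega$ depends only on $[\sigma_\mathpzc{c}]$ and $[\omega]$, and the pairing is the composite of the product $H^{p-n}_{\mathpzc{Sp},\mathpzc{c}}(\mani)\times H^n_{\mathpzc{dR}}(\mani)\to H^p_{\mathpzc{Sp},\mathpzc{c}}(\mani)$ with the isomorphism $\int_\mani\colon H^p_{\mathpzc{Sp},\mathpzc{c}}(\mani)\stackrel{\cong}{\longrightarrow}\mathbb{R}$ of Theorem \ref{stokesthm}.

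To prove perfectness I would reduce to $\manir$. Theorem \ref{qiso} gives $H^n_{\mathpzc{dR}}(\mani)\cong H^n_{\mathpzc{dR}}(\manir)$, while remark \ref{intcompact} (the compactly supported version of Theorem \ref{qiso2}) gives $H^{p-n}_{\mathpzc{Sp},\mathpzc{c}}(\mani)\cong H^{p-n}_{\mathpzc{dR},\mathpzc{c}}(\manir)$. The target is then the commutativity of the square whose top row is the super pairing and whose bottom row is the classical pairing $([\bar\sigma],[\bar\omega])\mapsto\int_\manir\bar\sigma\wedge\bar\omega$, with these two quasi-isomorphisms as the vertical maps. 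The content is the integration identity $\int_\mani\sigma\cdot\omega=\int_\manir\bar\sigma\wedge\iota^\ast\omega$, where $\iota^\ast\omega$ is the restriction of $\omega$ along $\iota\colon\manir\hookrightarrow\mani$ and $\bar\sigma$ is the image of $\sigma$ under the reduction realizing the quasi-isomorphism, i.e.\ the passage through $\varphi$ of Theorem \ref{bercan} and Theorem \ref{isoint} that extracts the $\theta_1\ldots\theta_q$-component and converts the Berezinian generator into the top form $dz_1\ldots dz_p$. Once the square commutes, classical Poincar\'e duality on the oriented $p$-manifold $\manir$ forces the top pairing to be perfect, and dualizing yields the stated isomorphism $(H^{p-n}_{\mathpzc{Sp},\mathpzc{c}}(\mani))^\ast\cong H^n_{\mathpzc{dR}}(\mani)$.

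The main obstacle is exactly this compatibility, i.e.\ matching the super integration pairing with the wedge pairing on $\manir$. I expect to prove it by choosing good representatives: by the Poincar\'e lemmas one represents $[\omega]$ by a form carrying its class through $\iota^\ast\omega$, and $[\sigma_\mathpzc{c}]$ by a compactly supported integral form of the type $\mathcal{D}(x)\,\theta_1\ldots\theta_q\otimes\pi\partial_{z_{i_1}}\ldots\pi\partial_{z_{i_n}}\,f_\mathpzc{c}(z)$ appearing in the Poincar\'e lemma for integral forms \ref{PoincLemmInt}, for which $\sigma\cdot\omega$ and its Berezin integral can be computed explicitly and matched against $\int_\manir\bar\sigma\wedge\iota^\ast\omega$ directly from Definition \ref{berezinint}. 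The delicate point is the bookkeeping of signs in expanding $\sigma\cdot\omega$ over the supersymmetric basis of $\cat{S}^n\Pi\mathcal{T}_\mani$, exactly the type of computation exemplified around \eqref{ex2}; alternatively, one can argue more structurally that both products are compatible with the \v{C}ech differential and hence descend to the cup product on $\check H^\bullet(|\manir|,\mathbb{R}_\mani)$ underlying both quasi-isomorphisms, thereby reducing the identity to its classical counterpart.
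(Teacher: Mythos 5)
Your first half (closedness and descent of the pairing to cohomology via the Leibniz rule \eqref{leibnizstrange} and Stokes' theorem \ref{stokesthm}) is exactly the paper's argument. For perfectness, however, you take a genuinely different route. The paper never leaves the supermanifold: it runs a Mayer--Vietoris induction on the cardinality of a finite good cover, in the style of \cite{BottTu} --- base case $\mathbb{R}^{p|q}$ via the two Poincar\'e lemmas, then the paired Mayer--Vietoris sequences for $H^\bullet_{\mathpzc{Sp},\mathpzc{c}}$ and $H^\bullet_{\mathpzc{dR}}$ plus the five lemma. You instead transport the pairing to $\manir$ through the quasi-isomorphisms of Theorem \ref{qiso} and Remark \ref{intcompact} and quote classical Poincar\'e duality for the oriented manifold $\manir$. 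Your route is shorter and conceptually appealing (it makes explicit that super Poincar\'e duality \emph{is} classical duality on the reduced space), but it buys this at the price of an external input (classical PD) and of the compatibility square, which is where the real work hides; the paper's route is self-contained, re-proves duality from scratch with the same technology used throughout (it needs no chain-level realization of any quasi-isomorphism), and its only inputs are the two super Poincar\'e lemmas.

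Concerning that hidden work: as stated, your appeal to ``the Poincar\'e lemmas'' to produce good global representatives is not enough --- those lemmas are local statements about cohomology sheaves, and Theorems \ref{qiso}, \ref{qiso2} are proved abstractly via the \v{C}ech-to-de Rham spectral sequence, so the paper hands you no chain maps to put on the vertical edges of your square. To complete your argument you must build them: on the de Rham side $\iota^\ast$ (or, using Batchelor's theorem \cite{Bat} to fix a splitting, the section $\pi^\ast$ of $\iota^\ast$); on the Spencer side the extraction map $R$ that reads off the $\theta_1\ldots\theta_q\otimes\pi\partial_{z_{j_1}}\ldots\pi\partial_{z_{j_n}}$-components, together with its one-sided inverse $L$ lifting $dz_{i_1}\ldots dz_{i_{p-n}}f_{\mathpzc{c}}(z)$ to $\pm\,\mathcal{D}(x)\theta_1\ldots\theta_q\otimes\pi\partial_{z_{j_1}}\ldots\pi\partial_{z_{j_n}}f_{\mathpzc{c}}(z)$. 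One must check that $L$ is globally well defined (this works in split coordinates: the extra $\pi\partial_\theta$-terms in the transformation of $\pi\partial_{z}$ carry coefficients in $\mathcal{J}_\mani$ and are killed by the factor $\theta_1\ldots\theta_q$, cf.\ Theorem \ref{bercan}), that $R$ and $L$ intertwine $\delta$ with $d$ (the $\mathcal{L}_{\partial_{\theta_\alpha}}\otimes\iota_{d\theta_\alpha}$ terms of $\delta$ cannot produce a full-$\theta$, purely even component, so only the $\mathcal{L}_{\partial_{z_i}}\otimes\iota_{dz_i}$ part survives and reproduces $d$), and finally that $\int_\mani L(\bar\sigma)\cdot\pi^\ast\bar\omega=\pm\int_\manir\bar\sigma\wedge\bar\omega$ by the contraction pairing and Definition \ref{berezinint}. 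None of these steps fails --- the second half of the proof of Lemma \ref{prep} performs precisely this kind of lifting in top degree --- but they constitute the actual content of your proof and must be written out; your alternative \v{C}ech-descent argument would serve the same purpose at comparable cost.
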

\begin{proof} First of all let us observe that the map is well-defined. Indeed if $\sigma_\mathpzc{c} \in \Sigma_{\mani, \mathpzc{c}}^{p-n} $ and $\omega \in \Omega^n_\mani$ are two representatives, then $\sigma_{\mathpzc{c}} \cdot \omega \in \Sigma^p_{\mani, \mathpzc{c}} = \mathcal{B}er_\mathpzc{c}(\mani)$, so that it can indeed be integrated in the Berezin sense. In other words, the map is given by the following composition: 
\bear \label{compodual}
\xymatrix@R=1.5pt{
\Sigma_{\mani, \mathpzc{c}}^{p-n} \times  \Omega^n_\mani \ar[r] & \Sigma^p_{\mani, \mathpzc{c}} \ar[r] & \mathbb{R} \\
(\sigma_\mathpzc{c}, \omega ) \ar@{|->}[r] & \sigma_\mathpzc{c} \cdot \omega \ar@{|->}[r] & \int \sigma \cdot \omega.
}
\eear
Further, let $\sigma_\mathpzc{c}$ and $\omega$ be \emph{closed}, \emph{i.e.}\ $\delta \sigma = 0 $ and $d \omega = 0$. It follows immediately from the  \eqref{leibnizstrange} that $\sigma_\mathpzc{c} \cdot \omega$ is closed, \emph{i.e.} $\delta (\sigma \cdot \omega) = 0.$ \\
Now let $\sigma_\mathpzc{c}$ be exact, \emph{i.e.}\ $\sigma_\mathpzc{c} = \delta \eta_\mathpzc{c}$ for some $\eta_\mathpzc{c} \in \Sigma^{p-(n-1)}_{\mani, \mathpzc{c}}$. Then, using again \eqref{leibnizstrange} one has that
\bear
\sigma_{\mathpzc{c}} \cdot \omega = \delta \eta_\mathpzc{c} \cdot \omega = \delta (\eta_\mathpzc{c} \cdot d\omega),
\eear  
proving that $\sigma_\mathpzc{c} \cdot \omega $ is exact. If instead $\omega \in \Omega^n_\mani$ is exact, \emph{i.e.}\ $\omega = d \gamma$ for some $\gamma \in \Omega^{n-1}_\mani$, then by \eqref{leibnizstrange} one has
\bear
\sigma_\mathpzc{c} \cdot \omega = \sigma_\mathpzc{c} \cdot d\gamma = (-1)^{|\sigma_\mathpzc{c}|} \delta (\delta \sigma_\mathpzc{c} \cdot \gamma), 
\eear
proving that $\sigma_\mathpzc{c} \cdot \omega$ is exact. We thus have that the composition of maps \eqref{compodual} descends to cohomology to give
\bear \label{compocoho}
\xymatrix@R=1.5pt{
 H^{p-n}_{\mathpzc{Sp}, \mathpzc{c}} (\mani) \times H^n_{\mathpzc{dR}} (\mani) \ar[r] &  H^{p}_{\mathpzc{Sp}, \mathpzc{c}} (\mani) \ar[r] & \mathbb{R} \\
([\sigma_\mathpzc{c}], [\omega] ) \ar@{|->}[r] & [\sigma_\mathpzc{c} \cdot \omega] = [\sigma_\mathpzc{c}] \cdot [\omega] \ar@{|->}[r] & \int \sigma \cdot \omega.
}
\eear
Now let us work by induction on the cardinality of the good covering. Namely, let us start proving the result for the pivotal case of a covering of cardinality one. This corresponds to the Poincaré lemma for $\mathbb{R}^{p|q}$, which follows straightforwardly from the Poincaré lemmas for differential forms and compactly supported integral forms, \emph{i.e.}\
\bear
H^k_{\mathpzc{dR}} (\mathbb{R}^{p|q}) \cong
\left \{
\begin{array}{cc}
\mathbb{R} &  \quad k=0 \\
0 & \quad  k \geq 0,
\end{array}
\right. 
\qquad
H^k_{\mathpzc{Sp}, \mathpzc{c}} (\mathbb{R}^{p|q}) \cong
\left \{
\begin{array}{cc}
\mathbb{R} &  \quad k=p \\
0 & \quad  k \neq p.
\end{array}
\right. 
\eear
The only non-zero pairing reads
\bear \label{compocoho}
\xymatrix@R=1.5pt{
 H^{p}_{\mathpzc{Sp}, \mathpzc{c}} (\mathbb{R}^{p|q}) \times H^0_{\mathpzc{dR}} (\mathbb{R}^{p|q}) \ar[r] &  H^{p}_{\mathpzc{Sp}, \mathpzc{c}} (\mani) \ar[r] & \mathbb{R} \\
([\mathcal{D}(x) \theta_1 \ldots \theta_q \mathpzc{B}_\mathpzc{c}], [1] ) \ar@{|->}[r] & [\mathcal{D}(x) \theta_1 \ldots \theta_q \mathpzc{B}_\mathpzc{c}] \cdot [\omega] \ar@{|->}[r] & \int_{\mathbb{R}^{p|q}} \mathcal{D}(x) \theta_1 \ldots \theta_q \mathpzc{B}_\mathpzc{c}.
}
\eear
Upon choosing $\mathpzc{B}_\mathpzc{c} (z_1, \ldots, z_p)$ a bump functions which integrates to one on $\mathbb{R}^p$, one gets
\bear
\int_{\mathbb{R}^{p|q}} \mathcal{D}(x) \theta_1 \ldots \theta_q \mathpzc{B}_\mathpzc{c} = \int_{\mathbb{R}^p} dz_1 \ldots dz_p \mathpzc{B}_\mathpzc{c} (z_1, \ldots, z_p) = 1,
\eear
concluding the proof of the isomorphism $(H^{p}_{\mathpzc{Sp}, \mathpzc{c}} (\mathbb{R}^{p|q}))^{\ast} \cong H^0_{\mathpzc{dR}} (\mathbb{R}^{p|q}).$ \\
Now, if $\mani$ is covered by two open sets $U$ and $V$ so that with abuse of notation we can write $\mani =  U \cup V$ as supermanifolds, thanks to the Mayer-Vietoris sequences for ordinary and compactly supported cohomology 
we have the following (sign-)commutative diagram \cite{BottTu}
\begin{align}
\xymatrix@C=5pt{
\ldots \ar[r] & H^{p-k+1}_{\mathpzc{Sp}} (U)^\ast \oplus H^{p-k+1}_{\mathpzc{Sp}, \mathpzc{c}} (V)^\ast \ar[d]_{\alpha^{k-1}_{U + V}} \ar[r] & H^{p-k+1}_{\mathpzc{Sp}, \mathpzc{c}} (U \cap V)^\ast \ar[d]_{\alpha^{k-1}_{U \cap V}} \ar[r] & H^{p-k}_{\mathpzc{Sp}, \mathpzc{c}} (U \cup V)^\ast \ar[r] \ar[d]_{\alpha^k_{U \cup V}} &  H^{p-k}_{\mathpzc{Sp}, \mathpzc{c}} (U)^\ast \oplus  H^{p-k}_{\mathpzc{Sp}, \mathpzc{c}} (V)^\ast \ar[d]_{\alpha^k_{U + V}} \ar[r] & \ldots  \\ 
\ldots \ar[r] & H^{k-1}_{\mathpzc{dR}} (U) \oplus H^{k-1}_{\mathpzc{dR}} (V) \ar[r] & H^{k-1}_{\mathpzc{dR}} (U \cap V) \ar[r] & H^k_{\mathpzc{dR}} (U \cup V) \ar[r] &  H^k_{\mathpzc{dR}} (U) \oplus H^k_{\mathpzc{dR}} (V) \ar[r] & \ldots 
}
\end{align}
Notice that the above is induced by the pairing of the two Mayer-Vietoris long exact sequences, so that one has 
\bear
\xymatrix{
& \mathbb{R} & & \\
\ldots & \ar[l]  H^{p-k+1}_{\mathpzc{Sp}, \mathpzc{c}} (U \cap V) \ar[u]^{\int_{U \cap V}} \ar@{}[d]_{\otimes} & \ar[l]_{\; \; \delta}  H^{p-k}_{\mathpzc{Sp}, \mathpzc{c}} (U \cup V) \ar@{}[d]_\otimes & \ar[l]  \ldots \\
\ldots & \ar[l]  H^{k-1}_{\mathpzc{dR}} (U \cap V) \ar[r]^d  & H^k_{\mathpzc{dR}} (U \cup V) \ar[r]\ar[d]^{\int_{U \cup V}} & \ldots \\
&  & \mathbb{R} }
\eear
in a way such that (sign-)commutativity means that for any $\sigma_{\mathpzc{c}} \in H^{p-k}_{\mathpzc{Sp}, \mathpzc{c}} (U \cup V)$ and $\omega \in H^{k-1}_{\mathpzc{dR}} (U \cap V)$ one has
\bear
\int_{U \cap V} \delta \sigma_{\mathpzc{c}} \cdot \omega = \pm \int_{U \cup V} \sigma_{\mathpzc{c}} \cdot d\omega.
\eear 
By Poincaré duality for ${\mathbb{R}^{p|q}}$ the maps $\alpha^{k-1}_{U+V}, \alpha^{k}_{U+V}$ and $\alpha_{U \cap V}^{k}, \alpha^k_{U \cap V}$ are isomorphisms. It follows from \emph{five lemma} that also $\alpha^{k}_{U \cap V}$ is an isomorphism. The proof is then concluded by working by induction on the cardinality of the covering. Namely, suppose Poincaré duality holds for supermanifolds admitting a covering by $n$ opens sets at most and consider a manifold with a covering of $n+1$ open sets, $\{ U_1, \ldots, U_{n+1}\}$: then by hypothesis Poincaré duality holds for the supermanifolds $( \bigcup_{i=1}^{n} U_i )\cap U_{n+1}$, $U_{n+1}$ and $\bigcup_{i=1}^{n} U_i$, so reasoning exactly as above one conclude that Poincaré duality holds true also for $\bigcup_{i=1}^{n+1} U_i$. 
\end{proof}

{\remark The same is true also switching the compact support from integral to differential forms, \emph{i.e.} we have a perfect pairing
\bear
\xymatrix@R=1.5pt{
H^{p-n}_{\mathpzc{Sp}} (\mani) \times H^n_{\mathpzc{dR}, \mathpzc{c}} (\mani) \ar[r] & \mathbb{R}  \\
( [\sigma], [\omega_\mathpzc{c}] ) \ar@{|->}[r] & \int_\mani \sigma \cdot \omega_\mathpzc{c}.
}
\eear
Notice that in this context Poincaré duality for the supermanifold $\mathbb{R}^{p|q}$ depends once again on the Poincaré lemmas. The non-trivial part of the pairing is given by $H^0_{\mathpzc{Sp}} (\mathbb{R}^{p|q}) \times H^p_{\mathpzc{dR}, \mathpzc{c}} (\mathbb{R}^{p|q}) \rightarrow \mathbb{R}$, and the representative are $\sigma = \mathcal{D}(x) \theta_1\ldots \theta_q \otimes \pi \partial_{z_1} \ldots \pi \partial_{z_p} \in H^0_{\mathpzc{Sp}} (\mathbb{R}^{p|q}) $ and $\omega_{\mathpzc{c}} = dz_1 \ldots dz_p \mathpzc{B}_\mathpzc{c} (z_1, \ldots, z_p) \in H^n_{\mathpzc{dR},\mathpzc{c}} (\mathbb{R}^{p|q})$ respectively, where $\mathpzc{B}_\mathpzc{c} (z_1, \ldots, z_p)$ is a bump function which integrate to one on $\mathbb{R}^p$. The pairing integral then reads
\begin{align}
\int_{\mathbb{R}^{p|q}} \sigma \cdot \omega_\mathpzc{c} & = \int_{\mathbb{R}^{p|q}} \mathcal{D}(x) \theta_1\ldots \theta_q \otimes \pi \partial_{z_1} \ldots \pi \partial_{z_p} \cdot (dz_1 \ldots dz_p ) \mathpzc{B}_\mathpzc{c} (z_1, \ldots, z_p) \nonumber \\ 
& = \int_{\mathbb{R}^{p|q}} \mathcal{D}(x) \theta_1 \ldots \theta_q \mathpzc{B}_\mathpzc{c} (z_1,\ldots, z_p) \nonumber \\
& = \int_{\mathbb{R}^p} dz_1 \ldots dz_p \mathpzc{B}_{\mathpzc{c}} (z_1, \ldots z_p) = 1,
\end{align}
where we have used that $ \pi \partial_{z_1} \ldots \pi \partial_{z_p} \cdot (dz_1 \ldots dz_p ) = 1$ and the very definition of the Berezin integral. }

\section{Different Perspectives: Forms and Integration on Total Space}

\noindent Given a supermanifold $\mani$, there exists a different point of view on the theory of forms and integration on $\mani$, which highlights the role of the supermanifold associated to the (parity-shifted) tangent bundle $\Pi \mathcal{T}_\mani $ of $\mani$. Even if this point of view has been introduced since the early days by Bernstein and Leites \cite{BL2}, Voronov and Zorich \cite{VZ1}, Gaiduk, Khudaverdian, and Schwarz \cite{Gaiduk}, it came back in the spotlight only recently. Indeed, inspired by the work of Belopolsky \cite{Belo2}, Witten has provided a terse review of this formalism, putting forward new applications to string theory \cite{Witten}. Further, Castellani, Catenacci, Grassi, and collaborators have worked extensively in this direction, providing many examples of applications to physics, and in particular proving its effectiveness in the context of supergravity \cite{CCG}-\cite{CGP}.

\noindent In this section we will discuss this new perspective on forms and integration on supermanifolds. The reader is warned, though, that the present section has a somewhat different flavor compared to the preceding ones. Indeed, the material presented in the previous sections is mathematically well-established, and as such, it is amenable to the unified and rigorous treatment we tried to give in the previous pages. On the other hand, the issues discussed in the present section constitute an active mathematical research area, recently spawned from physical ideas and intuitions \cite{Belo2, Witten, WittenRiem, WittenSuper}. As such, this section should be seen as a first step taken by the author toward a rigorous mathematization of the exciting ideas advocated by Belopolsky and Witten in a string-theoretic framework within the context of a long-term research program. For this reason, the reader cannot expect the same level of completeness and the systematization which characterized the previous part of this paper.  \\ 
More in detail, we start providing rigorous mathematical definitions, see definition \ref{TM}, making precise notions which have been so far discussed up to a \virgolette{physics standard of rigor}'' only. These allow us to describe the local and global geometry of supermanifold associated to the parity-shifted tangent bundle of a supermanifold. Differential forms are then characterized globally via an extension of vector bundles, lemma \ref{ExtO}, which in turn allows one proving a \virgolette triviality property'' for the Berezinian sheaf, lemma \ref{canvol}. This is the starting point for the definition of an integration theory on these superspaces. The second part of the section is dedicated to a specific class of (integrable) \virgolette functions'' that can be defined on these supermanifolds, namely those having a \emph{distributional dependence} of Dirac-delta type of the fiber coordinates, definition \ref{deltaforms}. These are the objects advocated by Belopolski and Witten for string theory applications. We first give a mathematically precise meaning to their formal properties by endowing these \virgolette delta forms'' with several different module structures, and we show that they carry a degree. Finally, upon postulating a reasonable transformation property, we show that delta forms - once organized into locally-free sheaves of modules by their degree - are isomorphic to integral forms as previously introduced in this paper. As a consequence, the related integration theories are equivalent. This result fills a gap in the literature, making clear an otherwise rather obscure connection.

\begin{definition}[The Supermanifold $\Pi {T}\mani $] \label{TM} Let $\mani$ be a smooth supermanifold of dimension $p|q$ and let $\Pi \mathcal{T}_\mani$ be its parity-shifted tangent sheaf. We call $\Pi {T}\mani$ the $p+q|p+q$-dimensional supermanifold given as a ringed space by the pair $(|(\Pi {T}\mani)_{\mathpzc{red}}|, \mathcal{O}_{\Pi T\mani})$. The topological space $|(\Pi{T}\mani)_{\mathpzc{red}}|$ is given by the total space of the smooth vector bundle of rank $q$ over $\manir$ given by $ \pi_{\mathpzc{red}}: (\Pi {T}\mani)_{\mathpzc{red}} {\rightarrow} \manir$, endowed with its canonical topology. The sheaf $\mathcal{O}_{\Pi T \mani}$ is defined to be the sheaf associated to the presheaf defined by 
\bear
U \times V \longmapsto \mathcal{O}_{\Pi T \mani} (U\times V) \defeq \mathcal{O}_\mani (U) \hat{\otimes}_{\mathbb{R}} \mathcal{O}_{\mathbb{R}^{q|p}} (V),
\eear
for $U \times V \subset \pi^{-1}_{\mathpzc{red}} (U)$ an open set in $\pi^{-1}_{\mathpzc{red}} (U) = U \times \mathbb{R}^{q} \subset |(\Pi T\mani)_{\mathpzc{red}}|$, and where sections of $\mathcal{O}_\mani$ are smooth functions on the base supermanifold $\mani$ and sections of $\mathcal{O}_{\mathbb{R}^{q|p}}$ are smooth functions of the even and odd fiber coordinates. Finally, note that the tensor product is an appropriate completion of the ordinary algebraic tensor product.

\end{definition}
\begin{remark} In the above definition the \virgolette canonical topology'' on the total space of a vector bundle is defined locally via the product topology, and then gluing along the transition functions: the related quotient topology is the desired topology on the total space of the vector bundle.  
\end{remark}
\begin{remark} \label{remTM} If we let $(U, x_a)$ be a local chart for the $p|q$-dimensional manifold $\mani$, where the index $a$ runs on even and odd local coordinates, then $(\pi_{\mathpzc{red}}^{-1} (U), x_a, \mathpzc{F}_a)$ is a chart for $\Pi {T}\mani$ with fiber coordinates given by
\bear
\mathpzc{F}_a \defeq dx_a. 
\eear
This means that we impose on the $\mathpzc{F}_a$'s the same transformation rules of the $dx_a$'s. More precisely, if we let $(U, x_a)$ and $(U^\prime, x^\prime_a)$ be two charts on $\mani$ with $U \cap U^\prime \neq \emptyset$ and we let $(\pi^{-1}_{\mathpzc{red}}(U), x_a, dx_a)$ and $(\pi^{-1}_{\mathpzc{red}} (U^\prime), x^\prime_a, dx^\prime_a)$ the corresponding charts on $\Pi T \mani$, then the transition functions in the intersection $\pi^{-1}_{\mathpzc{red}} (U) \cap \pi^{-1}_{\mathpzc{red}} (U^\prime)$ read
\bear \label{transTM}
x_a = x_a (x^\prime), \qquad dx_a = dx^\prime_b \cdot \left ( \frac{\partial x_a}{\partial x^\prime_b} \right ).
\eear
\end{remark}
{\remark \label{pseudorem} The above local description of the real supermanifold $\Pi {T}\mani$ via charts allows to represent local sections of the sheaf $\mathcal{O}_{\Pi T\mani}$ over an open set in terms of the local coordinates $x_a$ and $\mathpzc{F}_a = dx_a$ as a function $f(x_a , \mathpzc{F}_a) = f (x_a, dx_a)$. 
Notice that, as in Definition \ref{TM}, in the smooth category also \emph{generalized} and \emph{transcendental} functions are allowed. In particular, it makes sense to consider - for example - a transcendental dependence on the \emph{even} fiber coordinates $\mathpzc{F}_a$'s. For example, considering the $0|1$-dimensional real supermanifold $\mathbb{R}^{0|1}$ described by an odd coordinate $\theta$, it makes sense for the $1|1$-dimensional supermanifold $\Pi {T}\mathbb{R}^{0|1}$ to consider sections of its structure sheaf $\mathcal{O}_{\Pi T \mathbb{R}^{0|1}}$ of the form 
\bear
f (\theta , d\theta) = \exp \left ( d\theta \right ), \qquad g (\theta, d\theta ) = \log (d\theta), \quad h(\theta, d\theta) = \sin (d\theta), 
\eear
where $d\theta = \mathpzc{F}$ is the even coordinate of the $1|1$ dimensional supermanifold $\cat{T}\mathbb{R}^{0|1}$. \\
In this respect, it is crucial to note that asking that sections of the structure sheaf $\mathcal{O}_{\Pi {T}\mani}$ are restricted to have only \emph{polynomial} dependence on the \emph{even} fiber coordinates $\mathpzc{F}_a$ is equivalent to set $\mathcal{O}_{\Pi {T}\mani} = \Omega^\bullet_\mani$ (with its $\mathcal{O}_\mani$-module structure), indeed, over an open set $U \times V \subset \pi^{-1}_{\mathpzc{red}} (U)$ one would have 
\bear
\mathcal{O}_{\Pi {T} \mani} (U \times V) \defeq \mathcal{O}_\mani (U) \otimes_{\mathbb{R}} \mathbb{R}[\mathpzc{F}_1, \ldots \mathpzc{F}_{p+q}] = \mathcal{O}_\mani (U) \otimes_{\mathbb{R}} \mathbb{R}[dx_1, \ldots, dx_{p+q}], \label{xF2}
\eear
This means that \virgolette functions'' on $\Pi {T}\mani$ with polynomial dependence of their fiber coordinates are ordinary differential forms on $\mani.$ \\ 
More in general, though, differential forms are just a subclass - actually a subalgebra - of sections of the structure sheaf of $\Pi {T} \mani$, \emph{i.e.}\ $\Omega^\bullet_\mani \subset \mathcal{O}_{\Pi {T} \mani}$. This remark justifies the following definition, which assigns a specific name to sections of $\mathcal{O}_{\Pi {T} \mani}$, see \cite{BL2, Manin, Witten}.}
\begin{definition}[Pseudodifferential Forms] Let $\mani$ be a real supermanifold and let $\Pi {T} \mani$ be defined as above. A section of $\mathcal{O}_{\Pi {T}\mani}$ is said to be a pseudodifferential form on $\mani$, or a pseudoform on $\mani$ for short. 
\end{definition}

\noindent Let us now study the geometry of the sheaf of 1-forms $\Omega^1_{\Pi {T} \mani}$ of the supermanifold $\Pi T \mani$. This is a locally-free sheaf of $\mathcal{O}_{\Pi {T} \mani}$-modules of rank  $p+q | p+q$, and over an open set $U \times V \subset \pi^{-1}_{\mathpzc{red}} (U)$ in $\Pi T\mani$ one has 
\bear
U \times V \longmapsto \Omega^{1}_{\Pi T\mani} (U \times V) = \mathcal{O}_{\Pi T \mani} (U \times V) \cdot \{ dx_1, \ldots dx_{p+q}, d\mathpzc{F}_1, \ldots, d\mathpzc{F}_{p+q} \},
\eear 
where the $d\mathpzc{F}_a$'s are the local 1-forms arising from the fiber coordinates $\mathpzc{F}_a$'s. Notice that, since we defined $\mathpzc{F}_a = dx_a$, one could write $d \mathpzc{F}_a = d(dx_a)$ - see remark \ref{WittenNot} later on in this section about this peculiar notation, which might cause some confusion. 
\begin{remark} \label{transtanM}
The transition functions of $\Omega^1_{\Pi {T} \mani}$ are easily characterized thanks to the previous remark \ref{remTM}, in particular equation \eqref{transTM}. More precisely, if we let
$(dx_a, d\mathpzc{F}_a)$ and $(dx^\prime_a, d\mathpzc{F}^\prime_a)$ be two local bases of $\Omega^1_{\Pi T \mani}$ on the open sets $\pi^{-1}_{\mathpzc{red}} (U)$ and $\pi^{-1}_{\mathpzc{red}}(U^\prime)$ in $\Pi T\mani$ with $U \cap U^\prime \neq \emptyset $, then the transition functions of $\Omega^1_{\Pi {T}\mani}$ on the intersection $\pi^{-1}_{\mathpzc{red}} (U) \cap \pi^{-1}_{\mathpzc{red}} (U^\prime)$ read
\bear \label{tfcot1}
dx_a = dx^\prime_b \cdot \left ( \frac{\partial x_a}{\partial x^\prime_b} \right )
\eear
\bear \label{tfcot2}
d\mathpzc{F}_a = d\mathpzc{F}^\prime_b \cdot \left ( \frac{\partial x_a }{\partial x^\prime_b} \right) + (-1)^{|x^\prime_b| +1} \mathpzc{F}^\prime_b  \cdot d \left ( \frac{\partial x_a}{\partial x^\prime_b} \right ). 
\eear
The transition functions of the $dx_a$'s in equation \eqref{tfcot1} are clear. For the transition functions of the $d\mathpzc{F}_a$'s in equation \eqref{tfcot2} one first observes that
\bear \label{summandtrans}
d \mathpzc{F}_a = dx^\prime_b \cdot \left ( \frac{\partial \mathpzc{F}_a}{\partial x^\prime_b} \right ) + d\mathpzc{F}^\prime_b \cdot  \left ( \frac{\partial \mathpzc{F}_a}{\partial \mathpzc{F}^\prime_b} \right ). 
\eear
The first summand in \eqref{summandtrans} reads
\begin{align}
dx^\prime_b \cdot \left ( \frac{\partial \mathpzc{F}_a }{\partial x^\prime_b} \right ) & = dx^\prime_b \frac{\partial}{\partial x^\prime_b} \left ( \mathpzc{F}^\prime_c \frac{\partial x_a}{\partial x^\prime_c}\right ) = (-1)^{|x^\prime_b| + 1} \mathpzc{F}^\prime_b \cdot d \left ( \frac{\partial x_a }{\partial x^\prime_b } \right).
\end{align}
The second summands in \eqref{summandtrans} read 
\begin{align}
d\mathpzc{F}^\prime_b \cdot \left ( \frac{\partial \mathpzc{F}_a}{\partial \mathpzc{F}^\prime_b} \right ) & =  d\mathpzc{F}^\prime_b \frac{\partial}{\partial \mathpzc{F}^\prime_b} \left ( \mathpzc{F}^\prime_c \frac{\partial x_a}{\partial{x^\prime_c}} \right ) = d\mathpzc{F}^\prime_b \cdot \left (\frac{\partial x_a}{\partial x^\prime_b} \right ). \nonumber 
\end{align}
\end{remark}
\noindent The specific form of the transition functions of $\Omega^1_{\Pi T \mani}$ leads to the following lemma which describes the global geometry of $\Omega^1_{\Pi {T} \mani}$. 
\begin{lemma}[$\Omega^1_{\tiny{\cat{T}} \mani}$ as Extension of Vector Bundles] \label{ExtO} Let $ \Pi {T}\mani$ be defined as above. Then the canonical exact sequence 
\bear
\xymatrix{
0 \ar[r] & \pi^\ast \Omega^1_\mani \ar[r] & \Omega^1_{\Pi T \mani} \ar[r] & \Omega^1_{\Pi {T} \mani / \mani} \ar[r] &  0
}
\eear
induces the isomorphism of locally-free sheaves $\Omega^1_{\Pi{T} \mani / \mani} \cong \pi^\ast \mathcal{T}^\ast_\mani$. In particular, $ \Omega^1_{\Pi{T} \mani}$ is defined as the extension of locally-free sheaves
\bear \label{extomega}
\xymatrix{
0 \ar[r] & \pi^\ast \Omega^1_\mani \ar[r] & \Omega^1_{\Pi T \mani} \ar[r] & \pi^\ast \mathcal{T}^\ast_\mani \ar[r] &  0.
}
\eear 
\end{lemma}
\begin{proof} We work in the setting of the previous remark \ref{transtanM}. First, one observes that the transformations of equation \eqref{tfcot1} identify the sections $\{ dx_a \}_{a = 1, \ldots, p+q}$ as a local basis of $\pi^\ast \Omega^1_{\mani}$ - notice the slight abuse of notation regarding the pullbacks. Further, the first summand in the transformations of equation \eqref{tfcot2} identify the transformations of the parity-reversed of $\pi^\ast \Omega^1_{\mani}$, \emph{i.e.} of $\pi^\ast \mathcal{T}_\mani^\ast$, as the $d\mathpzc{F}_a$ have opposite parity with respect to the corresponding $dx_a$. This is hence identified with $\pi^\ast \mathcal{T}^\ast_\mani.$ The second summand in \eqref{tfcot2} gives the off-diagonal terms of the extension of $\pi^\ast \mathcal{T}^\ast_{\mani}$ with $\pi^\ast \Omega^1_\mani.$
\end{proof}
\begin{remark} 
\noindent On a general ground, extensions as in \eqref{extomega} are classified by the first $Ext$-group
\bear \label{cohoclass}
Ext^1_{\Pi T\mani} (\pi^\ast \mathcal{T}^\ast_{\mani}, \pi^\ast \Omega^1_{\mani}) \cong H^1 (|(\Pi {T}\mani)_{\mathpzc{red}}|, \mathcal{H}om_{\tiny{\cat{T}}\mani} (\pi^\ast \mathcal{T}_{\mani}^\ast, \pi^\ast \Omega^1_{\mani} )),
\eear
where the subscript $\Pi T \mani$ means that we are working over $\mathcal{O}_{\Pi {T}\mani}.$
In the case $\mani$ is \emph{smooth} and $\Pi {T} \mani$ is the associated smooth supermanifold defined as above, the extension \eqref{extomega} is always \emph{split}, as the following corollary shows.
\begin{theorem}[Splitting \& Reduction] \label{splittingsmooth} Let $\mani$ be a smooth supermanifold and let $\Pi {T}\mani$ be the smooth supermanifold associated to $\mani$ as defined above. Then the following are true.
\begin{enumerate}[leftmargin=*]
\item[(1)] The extension 
\bear
\xymatrix{
0 \ar[r] & \pi^{\ast} \Omega^1_{\mani} \ar[r] & \Omega^{1}_{\Pi {T}\mani} \ar[r] & \pi^\ast \mathcal{T}^\ast_\mani  \ar[r] \ar@{->}@/_1.2pc/[l] & 0
}
\eear
is split, \emph{i.e.} $\Omega^{1}_{\Pi T\mani} \cong \pi^{\ast} \Omega^1_{\mani} \oplus  \pi^\ast \mathcal{T}^\ast_\mani$ non-canonically.
\item[(2)] There exists a reduction of the structure group of $\Omega^1_{\Pi {T}\mani}$ as follows 
\bear
GL (p+q|p+q) \longrightarrow \left \{ \left ( \begin{array}{c|c} 
T & \\
\hline 
& \Pi T
\end{array}
\right ):
T \in GL (p|q)
\right \},
\eear
where $ GL(p+q|p+q)$ is the general linear supergroup.
\end{enumerate}
\end{theorem}
\begin{proof} The first point follows from the existence of smooth partitions of unity in the smooth category. This leads to the exactness of the \v{C}ech cochain complex of any sheaf in degree $i > 0$, which is therefore fine, hence soft and acyclic. In particular, one has that the cohomology class of equation \eqref{cohoclass} vanishes. 
The second point is a consequence of the first one and the very definition of the $Ext^1$-module.
\end{proof}
\end{remark}
\begin{remark}
Notice that the extension \eqref{extomega} might indeed be \emph{non-split} if one starts from a complex or algebraic supermanifold. Studying the related cohomology class \eqref{cohoclass} is a non-trivial and interesting problem. 
\end{remark}
\noindent The above characterization of $\Omega^1_{\Pi {T}\mani}$ as an extension allows one to prove the following important property of $\Pi {T} \mani$. 
\begin{lemma}[$\mathcal{B}er(\Pi {T}\mani) \cong \pi^\ast \mathcal{O}_{\mani}$] \label{canvol} Let $\mani$ be a real supermanifold and let $\Pi {T} \mani$ be defined as above. Then there is a canonical isomorphism 
\bear
\mathcal{B}er(\Pi {T}\mani) \cong \pi^\ast \mathcal{O}_{\mani}.
\eear
\end{lemma}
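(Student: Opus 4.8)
The plan is to read off the result directly from the extension of locally-free sheaves established in the previous lemma, combined with the multiplicativity of the Berezinian functor. First I would recall that by definition $\mathcal{B}er(\cat{T}\mani) = \mathcal{B}er(\Omega^1_{\tiny{\cat{T}}\mani})$, and that the Berezinian is multiplicative on short exact sequences of locally-free sheaves: for $0 \to \mathcal{E}' \to \mathcal{E} \to \mathcal{E}'' \to 0$ there is a canonical isomorphism $\mathcal{B}er(\mathcal{E}) \cong \mathcal{B}er(\mathcal{E}') \otimes_{\stsheaf} \mathcal{B}er(\mathcal{E}'')$, which is immediate from the block upper-triangular shape of the transition matrices together with the fact that the Berezinian of such a matrix is the product of the Berezinians of its diagonal blocks. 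Applying this to the extension \eqref{extomega}, and using that pullback commutes with the Berezinian, $\mathcal{B}er(\pi^\ast \mathcal{F}) \cong \pi^\ast \mathcal{B}er(\mathcal{F})$ (both being computed from the same cocycle), yields
\[
\mathcal{B}er(\cat{T}\mani) \cong \pi^\ast \mathcal{B}er(\Omega^1_\mani) \otimes_{\mathcal{O}_{\cat{T}\mani}} \pi^\ast \mathcal{B}er(\mathcal{T}^\ast_\mani).
\]

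Next I would identify the two factors. Since $\Omega^1_\mani = \mathcal{H}om_{\stsheaf}(\Pi \mathcal{T}_\mani, \mathcal{O}_\mani) \cong \Pi \mathcal{T}^\ast_\mani$, applying the parity functor once more gives $\mathcal{T}^\ast_\mani \cong \Pi \Omega^1_\mani$. Invoking the property $Ber(\Pi A) = Ber(A)^{-1}$ recorded in the remark following the definition of the Berezinian, this shows $\mathcal{B}er(\mathcal{T}^\ast_\mani) \cong \mathcal{B}er(\Omega^1_\mani)^{-1} = \mathcal{B}er(\mani)^{-1}$, whereas the first factor is simply $\mathcal{B}er(\Omega^1_\mani) = \mathcal{B}er(\mani)$ by definition. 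Substituting these into the previous display, the two pullbacks cancel and one obtains
\[
\mathcal{B}er(\cat{T}\mani) \cong \pi^\ast \big( \mathcal{B}er(\mani) \otimes_{\stsheaf} \mathcal{B}er(\mani)^{-1} \big) \cong \pi^\ast \mathcal{O}_\mani,
\]
which is the claim.

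The step I expect to be the main obstacle is verifying that the isomorphism so produced is genuinely \emph{canonical}, and not merely a non-canonical local triviality. The multiplicativity isomorphism is itself canonical, but one must confirm that the composite is chart-independent; concretely this amounts to observing on overlaps that the local generator of $\mathcal{B}er(\cat{T}\mani)$ transforms by $Ber(\mathcal{J}ac(\varphi_{ij})) \cdot Ber(\mathcal{J}ac(\varphi_{ij}))^{-1} = 1$, the first factor coming from the $\pi^\ast\Omega^1_\mani$-block and the inverse factor from the $\pi^\ast \mathcal{T}^\ast_\mani$-block of the transition matrix read off from \eqref{tfcot2}. It is precisely here that the parity shift between $dx_a$ and $dX_a$ built into the definition of $\cat{T}\mani$ is essential, since it is what turns the lower block into the inverse-transforming $\mathcal{T}^\ast_\mani$ rather than a second copy of $\Omega^1_\mani$, forcing the cancellation. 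I would note that this verification can equally be carried out entirely at the level of transition functions using Lemma \ref{transtanM}, thereby giving a self-contained proof that bypasses the abstract multiplicativity statement while making the canonicity manifest.
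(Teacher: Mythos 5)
Your proof is correct and follows essentially the same route as the paper's: apply multiplicativity of the Berezinian to the extension \eqref{extomega}, use $Ber(\Pi A)=Ber(A)^{-1}$ to identify $\mathcal{B}er(\mathcal{T}^\ast_\mani)\cong\mathcal{B}er(\mani)^{-1}$ against the factor $\mathcal{B}er(\Omega^1_\mani)=\mathcal{B}er(\mani)$, and cancel; your closing observation that canonicity can be checked directly from the transition functions of Lemma \ref{transtanM} is exactly the alternative the paper records in the remark following its own proof. If anything, your bookkeeping of duals is cleaner than the paper's, whose intermediate display carries a misplaced star on $\mathcal{B}er^\ast(\mathcal{T}^\ast_\mani)$, though the cancellation makes the final conclusion identical.
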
 
\begin{proof} From the extension exact sequence \eqref{extomega} and the definition of the Berezinian sheaf it follows that  
\bear
\mathcal{B}er (\Pi {T}\mani) \cong \pi^\ast \left ( \mathcal{B}er (\mani) \otimes \mathcal{B}er (\mathcal{T}^\ast_\mani) \right ),
\eear
Since for any locally-free sheaf $\mathcal{E}$ one has $\mathcal{B}er (\mathcal{E}^\ast) \cong \mathcal{B}er^\ast (\mathcal{E})$ and $\mathcal{B}er (\Pi \mathcal{E}) \cong \mathcal{B}er^\ast (\mathcal{E})$, then $\mathcal{B}er (\mani) = \mathcal{B}er(\Omega^1_\mani) \cong \mathcal{B}er (\mathcal{T}_\mani)$. In particular $\mathcal{B}er(\mathcal{T}_\mani^\ast) \cong \mathcal{B}er^\ast (\mani).$ It follows that 
\bear
\mathcal{B}er (\Pi T \mani) \cong \pi^\ast \left ( \mathcal{B}er (\mani) \otimes \mathcal{B}er^\ast (\mani) \right ) \cong \pi^\ast \mathcal{O}_\mani, 
\eear 
concluding the proof. \end{proof}
{\remark The above result could have also been proved upon using the explicit form of the transition functions for $\Omega^1_{\Pi {{T}} \mani}$, as given in Lemma \ref{transtanM}.   }
{\remark [$\Pi {T}\mani$ is a \virgolette Calabi-Yau'' Supermanifold] While in general there is \emph{no} natural choice for a section of the Berezinian sheaf on the supermanifold $\mani$, instead it is a crucial consequence of lemma \ref{canvol} that the associated supermanifold $\Pi {T}\mani$  comes endowed with a \emph{canonical Berezinian} or \emph{canonical volume form}, which is independent of the choice of coordinates on $\mani$, and therefore on $\Pi {T}\mani$. When working in the complex holomorphic category one would say that the complex supermanifold $\Pi {T}\mani$ is a \emph{Calabi-Yau supermanifold}. We will denote the canonical volume form on $\Pi {T} \mani$ with 
\bear
\mathcal{D}_{\Pi {T} \mani}(x, \mathpzc{F}) \in \mathcal{B}er(\Pi {T}\mani).
\eear}
{\remark [Integration on $\Pi {T} \mani$] Thanks to the existence of a canonical volume form for $\Pi {T} \mani$, any function on $\Pi {T} \mani$ can be mapped naturally to the Berezin integral on the supermanifold $\Pi {T} \mani$, \emph{i.e.} 
\bear \label{divint}
\mathcal{O}_{\Pi {{T}} \mani} \owns f (x, \mathpzc{F}) \longmapsto \int_{\Pi {T} \mani} \mathcal{D}_{\Pi {{T}} \mani}(x, \mathpzc{F}) f (x,\mathpzc{F}). 
\eear
It is important to notice, though, that the above integral can be \emph{divergent}, thus making the mapping ill-defined. This is related to the fact the supermanifold $\Pi {T}\mani$ is in general \emph{not} compact since the fibers of $\Pi {T}\mani$ above every point of the $p|q$-dimensional supermanifold $\mani$ are isomorphic to $\mathbb{R}^{q|p}$, see also \cite{Manin}. \\
An important class of functions in $\mathcal{O}_{\Pi {T}\mani}$ which are in general \emph{not} integrable over $\Pi {T}\mani$ are \emph{differential forms} on $\mani$, \emph{i.e.}\ functions on $\Pi {T} \mani$ which have polynomial dependence on the fiber coordinates. To see this consider the example in Remark \ref{pseudorem}. We take the superpoint $\mathbb{R}^{0|1}$ and the related $1|1$-dimensional supermanifold $\Pi {T}\mathbb{R}^{0|1}$. The differential form $P (\theta, d\theta) = \theta d\theta \in \Omega^1_{\mathbb{R}^{0|1}}$ is indeed an element of $\mathcal{O}_{\Pi {T} \scriptstyle{\mathbb{R}^{0|1}}}$ having polynomial dependence on the fibers, and whose integral is divergent since the ordinary Riemann-Lebesgue integral on the even variable $d\theta$ is divergent,
\bear
\int_{\Pi {T}\mathbb{R}^{0|1}} \mathcal{D} (\theta | d\theta) \, \theta d\theta =  \int_{\mathbb{R}^{0|1}} \left (\mathcal{D} (\theta) \theta \int_{\mathbb{R}^{1|0}} \mathcal{D} (d\theta) d\theta \right ).
\eear
On the other hand, the function $f (\theta, d \theta) = \theta e^{-(d\theta)^2}$ is integrable on $\cat{T}\mathbb{R}^{0|1}$, and indeed one has
\bear
\int_{\Pi {T}\mathbb{R}^{0|1}} \mathcal{D} (\theta | d\theta) \, \theta e^{-(d\theta)^2} =  \int_{\mathbb{R}^{0|1}} \left (\mathcal{D} (\theta) \theta \int_{\mathbb{R}^{1|0}} \mathcal{D} (d\theta) e^{-(d\theta)^2} \right ) = \sqrt{\pi}.
\eear
This justifies the following definition, which singles out the class of functions on $\Pi {T}\mani$ in relation to their Berezin integral \eqref{divint}, see also \cite{BL1, Manin, Witten}.}
\begin{definition}[Integrable Pseudodifferential Form] Let $\mani$ be a real supermanifold and let $\Pi {T}\mani$ be defined as above. We say that the pseudodifferential form $f\in \mathcal{O}_{\Pi {T} \mani}$ is integrable if its Berezin integral on $\Pi {T} \mani$ is convergent, \emph{i.e.}
$
\int_{\Pi {T} \mani}  \mathcal{D}_{\Pi {T} \mani} f  < \infty.
$ We will denote the sheaf of integrable pseudodifferential forms with $\mathcal{O}_{\Pi {T} \mani}^{\tiny{\cat{int}}}$. 
\end{definition}
\begin{remark}[Integral over the Fibers] Let us now restrict to \emph{integrable} pseudodifferential forms so that the integral \eqref{divint} makes sense, \emph{i.e.}\ we have a well-defined map
\bear \label{intTM}
\mathcal{O}_{\Pi {{T}} \mani}^{\tiny{\cat{int}}} \owns f_{\tiny{\cat{int}}} (x, \mathpzc{F}) \longmapsto \int_{\Pi {T} \mani} f_{\tiny{\cat{int}}} (x,\mathpzc{F})\mathcal{D}_{ \Pi {T} \mani}(x, \mathpzc{F}) \in {\mathbb{R}}. 
\eear
This integral can be understood as a (Berezin) integral \emph{along the fibers} of the fibration $\Pi {T} \mani \stackrel{\pi}{\longrightarrow} \mani$, \emph{i.e.}
\bear
 \pi_\ast: \mathcal{O}_{\Pi {T}\mani}^{\tiny{\cat{int}}} \longrightarrow \mathcal{B}er_{\mathpzc{int}} (\mani), 
\eear
followed by the \virgolette usual'' Berezin integral on the base supermanifold $\mani$, so that one has a map
\bear
\xymatrix@R=1.5pt{
\mathcal{O}_{\Pi {{T}}\mani}^{\tiny{\cat{int}}} \ar[rr]^{\pi_\ast} & &  \mathcal{B}er_{\mathpzc{int}} (\mani) \ar[rr]^{\int_\mani} && \mathbb{R} \\
f_{\tiny{\cat{int}}}  \ar@{|->}[rr] & & (\pi_\ast f_{\tiny{\cat{int}}})  \ar@{|->}[rr] && \int_{\mani} (\pi_{\ast} f_{\tiny{\cat{int}}}) . 
}
\eear
Loosely speaking, defining integration on $\Pi {T}\mani$ via the composition of the above maps, \emph{i.e.}\ $\int_{\Pi {T} \mani} \defeq \int_\mani \circ \; \pi_\ast$, has to be seen as a sort of factorization of the integral as an integral in the fiber, or \emph{vertical} directions followed by an integral over the base \cite{BottTu, Witten}. Notice that here $\mathcal{B}er_{\mathpzc{int}} (\mani)$ defines a Berezinian sheaf whose sections are integrable.

\noindent Nonetheless, it is important to stress that the map $\pi_\ast$ corresponds to a \emph{Berezin} integral. We choose a certain trivialization over an open set $U$ in the base of a $p|q$-dimensional real supermanifold $\mani$ and define the corresponding even and odd coordinate $x = y_i|\theta_\alpha$ and $\mathpzc{F} \defeq d\theta_{\alpha} | dy_i$ on the ${q|p}$-dimensional fibers over $U$ for $i = 1, \ldots p$ and $\alpha = 1, \ldots q$. By anticommutativity of the $dy$'s a function $f_{\tiny{\cat{int}}} (x , \mathpzc{F}) \in \Gamma_{\mathpzc{c}} (U, \mathcal{O}_{\Pi {{T}} \mani}^{\tiny{\cat{int}}})$ can be written as 
\bear
f_{\tiny{\cat{int}}} (x , \mathpzc{F}) = dy_{i_1} \ldots dy_{i_\ell} F_{\mathpzc{c}} (y_i, d\theta_\alpha | \theta_\alpha),
\eear
for $\ell \leq p$ and some compactly supported, integrable function $F_{\mathpzc{c}}$ in the coordinates $y$'s and $d\theta$'s. The map $\pi_\ast$ is thus defined so that it acts as a true Berezin integral on the odd fiber coordinates $dx$'s, \emph{i.e.} it yields
\bear
\pi_{\ast} \left ( f_{\tiny{\cat{int}}} (x , \mathpzc{F}) \right ) = \left \{
\begin{array}{lcl}
0 & & \qquad \ell < p \\
\left( \int_{\mathbb{R}^q}  F_{\mathpzc{c}} (y_i, d\theta_\alpha | \theta_\alpha) d\mu (d\theta_\alpha) \right ) \otimes \mathcal{D}_\mani (y_i | \theta_\alpha)& & \qquad \ell = p,
\end{array}
\right. 
\eear
where $d\mu (d\theta_a)$ is a (Lesbegue) measure for the real even variables $d\theta$'s. Notice the result of the integral is indeed a function in $\Gamma_{\mathpzc{c}} (U, \mathcal{O}_{\mani})$, depending on the coordinates $y$'s and $\theta$'s on the base manifold.\\
It can be proved that given a pair of open sets $U, V$ in $\mani$ with $U \cap V \neq \emptyset$ and defining $f_{\tiny{\cat{int}}}^U \in \mathcal{O}_{\Pi {{T}} \mani}^{\tiny{\cat{int}}} (U)$ and $f_{\tiny{\cat{int}}}^V \in \mathcal{O}_{\Pi {{T}}\mani}^{\tiny{\cat{int}}}(V)$, then $\pi_\ast (f^U_{\tiny{\cat{int}}}) = \pi_\ast (f_{\tiny{\cat{int}}}^V)$ in the intersection. It follows that for a certain open covering $\{ U_i\}_{i \in I} $ and the related trivializations, one finds that $\{ \pi_\ast f^{U_i}_{\tiny{\cat{int}}} \}_{i \in I}$ glue together, yielding a section $\pi_\ast f_{\tiny{\cat{int}}} \in \mathcal{B}er_{\mathpzc{int}} (\mani)$, see \cite{BottTu, Witten}.
\end{remark}
{\remark \label{WittenNot} In light of recent physics-oriented literature, a brief remark on notation is in order here. 
In the influential review \cite{Witten}, Witten denotes a section of the Berezinian of a supermanifold $\mani$ as $[dx | d\theta] \in \mathcal{B}er (\mani)$ for a choice of coordinates $x |\theta$ on $\mani$. Accordingly, the canonical volume form on $\Pi {T}\mani$ is denoted with 
\bear
[dx \, d(d\theta)  | d\theta \, d(dx) ] \in \mathcal{B}er(\Pi  {T}\mani),
\eear 
as to reminds that $d\theta$'s and the $dx$'s are now seen as coordinates for $\Pi {T}\mani$. In other words, the expressions $d(dx)$'s and $d(d\theta)$'s are just the symbols corresponding to the $d\mathpzc{F}$'s in the notation of lemma \ref{transtanM}. As such, one should not read them as the application of the de Rham differential $d$ on a local basis of the bundle of 1-forms, which would be vanishing as $d\circ d = 0$.}

\subsection{Special Class of Integrable Pseudoforms: Distributions on the Fibers} In the spirit of \emph{supersymmetric localization} \cite{Pestun} it is convenient to focus on a particular class of pseudoforms in $\mathcal{O}_{\Pi {T}\mani}^{\tiny{\cat{int}}}$, allowing only for a particular dependence of the fiber coordinates $X = d\theta_\alpha | dx_i$. These are the pseudoforms Belopolsky \cite{Belo}, Castellani, Catenacci, and Grassi \cite{CCG} and Witten \cite{Witten} focus on. Nonetheless, they have been introduced in the early days of the theory, see for example \cite{BL2, VZ2}.   
\begin{definition}[Delta Forms] \label{deltaforms} Let $\mani$ be a real supermanifold and let $\Pi T \mani$ be defined as above. We call delta forms the class of integrable pseudoforms in $\mathcal{O}_{\Pi T\mani}^{\tiny{\cat{int}}}$ whose dependence of the \emph{even} fiber coordinates of $\Pi T \mani$ is \emph{distributional} and \emph{supported at the origin}. We denote the sheaf of delta forms by $\mathcal{O}^{\delta}_{\Pi {T} \mani}.$
\end{definition}
\noindent Given an open set $U$ in $\mani$, in the local trivialization of $\Pi T \mani$ over $U$ with coordinates $x = x_i |\theta_\alpha $ and $\mathpzc{F} = d\theta_\alpha | dx_i$, a delta form $\omega \in \mathcal{O}^{\delta}_{\Pi T \mani} $ can be written as 
\bear \label{deltaf}
\omega_U (x, dx) = \sum_{\underline{\epsilon}_k, \underline{\ell}_j} f_{ \underline{\epsilon}_k, \underline{\ell}_j} (x_i | \theta_\alpha) (dx_{1})^{\epsilon_{1}}  \ldots  (dx_{p})^{\epsilon_{p}} \delta^{\ell_1} (d\theta_{1})  \ldots  \delta^{\ell_q} (d\theta_{q}),
\eear
for $\ell_j \geq 0$ and $\epsilon_k \in \{ 0, 1\}$ and where $f_{\underline{\epsilon}, \underline{\ell}} \in \pi^\ast \mathcal{O}_{\mani}$. The expressions $\delta^{\ell_j} (d\theta_j)$ are Dirac's delta distributions \cite{CCG, Witten} - and their derivative, when $\ell_j >0 $ - supported at the origin in the even real variable $d\theta_j$ of $\cat{T}\mani.$ They can be seen as linear functionals acting on differential forms, instead of on ordinary functions: these kinds of mathematical objects are called \emph{de Rham current} \cite{GH}. In this paper they will be treated in a formal algebraic fashion.  \\
\noindent The sheaf $\mathcal{O}^\delta_{\Pi {T} \mani}$ is endowed with several different structures, which interplay with the notion of integration on this particular class of pseudoforms. These structures are spelled out in the following remarks.


{\remark[$\mathcal{O}_\mani$-module structure of $\mathcal{O}^\delta_{\Pi {T} \mani}$] We first note that the sheaf $\mathcal{O}^{\delta}_{\Pi {T} \mani}$ is \emph{not} a sheaf of algebras. This comes from the well-known fact from analysis that the product of two distributions is not well-defined. On the other hand, $\mathcal{O}^{\delta}_{\Pi {T} \mani}$ carries the structure of a sheaf of $\pi^\ast \mathcal{O}_\mani$-modules, as the multiplication of delta forms by sections coming from the supermanifold $\mani$ is well-defined. }

{\remark[$\mathcal{D}_{\Omega^1_{\mani}}$-module structure of $\mathcal{O}^\delta_{\Pi {T} \mani}$] The sheaf of delta forms $\mathcal{O}^\delta_{\Pi {T} \mani}$ is a $\mathcal{D}$-module. More precisely, it carries the structure of $\mathcal{D}_{\pi^\ast\Omega^1_{\mani}}$-module. Indeed, locally, the \emph{Clifford-Weyl superalgebra} $\mathcal{CW}_{q|p} (\mathbb{R}) $ generated by $\{d\theta_\alpha, \partial_{d\theta_\alpha} | dx_i, \partial_{dx_i}\}$, for $i = 1, \ldots, p$ and $\alpha = 1, \ldots, q$ with non trivial (super)commutation relations given by
\bear
\left [ \frac{\partial}{\partial d\theta_\alpha}, d \theta_\beta \right ] = \delta_{\alpha \beta}, \qquad 	\quad \left \{ \frac{\partial}{\partial dx_i}, d x_j \right \} = \delta_{ij},
\eear
acts on sections in $\mathcal{O}^\delta_{\Pi {T} \mani} (U)$ according to the following definitions (given on monomials)
\begin{align} \label{eqD1}
dx_i \cdot & \left ( (dx_{1})^{\epsilon_{1}}  \ldots  (dx_i)^{\epsilon_i}   \ldots  (dx_{p})^{\epsilon_{p}} \delta^{\ell_1} (d\theta_{1})  \ldots  \delta^{\ell_q} (d\theta_{q}) \right )   \nonumber \\
& \defeq  
\delta_{0\epsilon_{i}}\, (-1)^{\sum_{j=1}^i \epsilon_j} (dx_{1})^{\epsilon_{1}} \ldots   {(dx_i)}  \ldots  (dx_{p})^{\epsilon_{p}} \delta^{\ell_1} (d\theta_{1})  \ldots  \delta^{\ell_q} (d\theta_{q})
\\
\frac{\partial}{\partial{dx_i}} \cdot & \left ( (dx_{1})^{\epsilon_{1}}  \ldots  (dx_i)^{\epsilon_i}   \ldots  (dx_{p})^{\epsilon_{p}} \delta^{\ell_1} (d\theta_{1})  \ldots  \delta^{\ell_q} (d\theta_{q}) \right )   \nonumber \\
& \defeq  
\delta_{1 \epsilon_i} \, (-1)^{\sum_{j=1}^i \epsilon_j}(dx_{1})^{\epsilon_{1}} \ldots  \widehat {dx_i}  \ldots  (dx_{p})^{\epsilon_{p}} \delta^{\ell_1} (d\theta_{1})  \ldots  \delta^{\ell_q} (d\theta_{q})
\\
\label{actiondtheta} {d\theta_\alpha} \cdot  & \left ( (dx_{1})^{\epsilon_{1}}  \ldots  (dx_{p})^{\epsilon_{p}} \delta^{\ell_1} (d\theta_{1})  \ldots  \delta^{\ell_\alpha} (d\theta_\alpha)  \ldots \delta^{\ell_q} (d\theta_{q}) \right )   \nonumber \\
& \defeq  
(-1)^{\ell_\alpha} \ell_{\alpha} (dx_{1})^{\epsilon_{1}}  \ldots  (dx_{p})^{\epsilon_{p}} \delta^{\ell_1} (d\theta_{1})  \ldots  \delta^{\ell_\alpha -1} (d\theta_\alpha)  \ldots  \delta^{\ell_q} (d\theta_{q}) 
\\ \label{eqD4}
\frac{\partial}{\partial{d\theta_\alpha}}  & \left ( (dx_{1})^{\epsilon_{1}}  \ldots  (dx_{p})^{\epsilon_{p}} \delta^{\ell_1} (d\theta_{1})  \ldots  \delta^{\ell_\alpha} (d\theta_\alpha)  \ldots \delta^{\ell_q} (d\theta_{q}) \right )   \nonumber \\
& \defeq  
(dx_{1})^{\epsilon_{1}}  \ldots  (dx_{p})^{\epsilon_{p}} \delta^{\ell_1} (d\theta_{1})  \ldots  \delta^{\ell_\alpha +1} (d\theta_\alpha)  \ldots  \delta^{\ell_q} (d\theta_{q}), 
\end{align}
where again $\epsilon_i = \{ 0,1\}$ and $\ell_{\alpha} \geq 0,$ for any $i = 1, \ldots, p $ and any $\alpha = 1, \ldots, q.$ Notice that the action \eqref{actiondtheta} can be seen as arising from \virgolette integration by parts'', and in particular one defines
\bear
d\theta_\alpha \cdot  \left ( (dx_{1})^{\epsilon_{1}} \wedge \ldots \wedge (dx_{p})^{\epsilon_{p}} \delta^{\ell_1} (d\theta_{1}) \wedge \ldots \wedge \delta^{0} (d\theta_\alpha) \wedge \ldots \delta^{\ell_q} (d\theta_{q}) \right )  = 0.
\eear
This is can be seen as the \virgolette de Rham current'' analog of usual relation that characterizes a Dirac delta distribution in functional analysis: working for simplicity over the real line $\mathbb{R}$, for $\delta_{x_0} \in \mathscr{D}^\prime ({\mathbb{R}})$, we have 
\bear \label{diracR}
\delta_{x_0} (f) = f(x_0),
\eear
for any point $x_0 \in \mathbb{R}$ and any test function $f \in \mathscr{S} (\mathbb{R}^n)$ of Schwartz class. The algebraic way to \eqref{diracR} consist into endowing the $\mathcal{O}_\mathbb{R}$-module $\{ \delta^{(\ell)}_{x_0}\}_{\ell \geq 0}$ of Dirac delta and its derivatives with a $\mathcal{D}_\mathbb{R}$-module structure, by defining the $\mathcal{D}$-action by
\bear
\frac{d}{dx} \cdot \delta^{(\ell)}_{x_0} = \delta^{(\ell+1)}_{x_0}, \quad (x-a) \cdot \delta^{(\ell)}_a = (-1)^{\ell} \ell \, \delta_a^{(\ell-1)}, \quad (x-a) \cdot \delta_a^{(0)} = 0.
\eear
In light of these, the previous \eqref{eqD1}-\eqref{eqD4} should not be surprising, as they are exactly the same relations, but given in the context of forms (or de Rham current) on a supermanifold.}

{\remark[$\mathbb{Z}_2$-grading of $\mathcal{O}^\delta_{\Pi {T} \mani}$] There is a twist in the description of the $\mathbb{Z}_2$-grading of the sheaf $\mathcal{O}^{\delta}_{\Pi {T} \mani}.$ Indeed, whereas the $d\theta$'s are even, the $\delta (d\theta)$'s and their derivatives are defined to be odd, so that their product has parity $q$. It follows that a generic monomial in the expression \eqref{deltaf} for $\omega_U $ above has parity given by $q + |f| + \sum_{k=1}^p \epsilon_k $,
where $f$ is assumed homogeneous in its $\mathbb{Z}_2$-degree. The reason behind the odd parity of the delta's can be seen by looking at the following integral \cite{Witten}    
\bear
I = \int_{\scriptsize{\cat{T}}\mathbb{R}^{0|2}} \theta_1 \theta_2 \delta (d\theta_1) \delta (d\theta_2) \mathcal{D} (d\theta | \theta ) 
\eear
Thanks to the delta's, when applying $\pi_\ast$ to integrate along the fibers $d\theta$'s, the integral is localized to the ordinary Berezin integral of $\theta_1 \theta_2$ over the superpoint $\mathbb{R}^{0|2}$, which yields 1. 
Exchanging $\theta_1 \leftrightarrow \theta_2$, would yield an integral equal to $-1$ instead, unless the transformation of $\delta (d\theta_1) \delta (d\theta_2)$ would correct it with $-1$. Formally, we thus say the delta's anticommute, \emph{e.g.} $\delta (d\theta_1)  \delta (d\theta_2) = - \delta (d\theta_2)  \delta (d\theta_1).$ }

{\remark[$\mathbb{Z}$-grading of $\mathcal{O}^\delta_{\Pi {T} \mani}$] \label{Zgrad} A $\mathbb{Z}$-gradation can also be introduced on $\mathcal{O}^{\delta}_{\Pi {T} \mani}$. Considering a section $\omega \in \mathcal{O}^{\delta}_{\Pi {T}\mani}$ trivialized as in \eqref{deltaf}, its $\mathbb{Z}$-degree is given by
\bear
\deg_{\mathbb{Z}} (\omega) = \sum_{k=1}^p \epsilon_k - \sum_{j=1}^q \ell_j,
\eear
which implies that the $\mathbb{Z}$-degree of a delta form is such that $-\infty < \deg_\mathbb{Z} (\omega) < p.$ This is stable under change of coordinates, and therefore the definition of the $\mathbb{Z}$-degree is well-posed, as we shall see shortly. We will call the sub-sheaf the degree-$k$ delta forms $\mathcal{O}^{\delta (k)}_{\Pi {T} \mani}$.\\
Notice that delta forms in degree $k \leq p $ can be generated via the $\mathcal{D}_{\Omega^1_\mani}$-module structure introduced above starting from the (unique, up to a multiplication by a section of $\mathcal{O}_\mani$) delta form in degree $p$, given by
\bear
\omega_{(p)}(x, d\theta | \theta, dx) = dx_1 \ldots  dx_p \delta(d\theta_1)  \ldots  \delta (d\theta_q),
\eear
in a certain choice of coordinates. Then, sections $ \omega_{(k)} \in \mathcal{O}^{\delta (k)}_{\Pi {T} \mani} $ of degree $k < p$ are obtained by acting with differential operators of order $p-k$ on $\omega_{(p)}$. These are locally constructed from the $\partial_{d\theta}$'s and $\partial_{dx}$'s. In particular, we will have that locally
\bear \label{deract}
\frac{\partial^{|I|}}{\partial d\theta^J \partial dx^K } \cdot \omega_{(p)} (x, d\theta | \theta, dx) \in \mathcal{O}^{\delta (p - |I|)}_{\Pi {T} \mani},  
\eear 
where $I$, $J$ and $K$ are multi-indices such that $I = (J,K)$ so that $|I| = |J| + |K|$. Notice that $K$ is such that $|K| \leq p$, since the $\partial_{dx}$'s are anticommuting, while $J$ can be of any order $|J| \geq 0$. On the other hand, given a form in degree $n<p$, a differential form of degree $k$ acts via the $\mathcal{D}_{\Omega^1_\mani}$-action defined above by raising the degree of the delta form by $k$. This means that
\bear \label{multact}
d\theta^I dx^K \cdot \omega_{(n)} (x, d\theta | \theta, dx) \in \mathcal{O}^{\delta (n+ |I|)}_{\Pi {T} \mani},  
\eear
if again $|I| = |J| + |K|$, for some multi-indices $I = (J,K).$ Clearly, the (local) $\mathcal{D}_{\Omega^1_{\mani}}$-action can also result in annihilating the delta form, both in the case of \eqref{deract} and \eqref{multact}, as it is clear from the \eqref{eqD1}-\eqref{eqD4}.\\
Notice that in the above picture delta forms can be seen locally as elements of a \emph{Fock space}, which is constructed via the $\mathcal{D}_{\Omega^1_\mani}$-action starting from a pivot $\omega_{(p)} \in  \mathcal{O}^{\delta (p)}_{\Pi {T}\mani}$ \virgolette state'', see \cite{Witten}. \\
Further, it is to be observed that, given the above definition, delta forms exist in the very same degrees as integral forms in $\Sigma^n_\mani$, where again $-\infty <n \leq p$. This does not happen by chance, indeed it is possible to prove that there exists an isomorphism between integral and delta forms. Before we see this, a remark on the transformation properties of the delta's is in order.}

{\remark[Transformations Properties of the Delta's] It has to be stressed that an expression involving any number of delta's which is lesser than the odd dimension $q$ of base supermanifold $\mani$, \emph{e.g.}\ $\delta (d\theta_1) \ldots  \delta (d\theta_{q-1})$ does \emph{not} make sense, as its transformation properties are \emph{not} well-defined. To see this in an informal way, let us consider a generic $1|2$ dimensional supermanifold and look at the transformation properties of a single delta $\delta (d\theta)$ under a change or coordinates $d\theta^\prime = \alpha d\theta + \beta d\psi + \gamma dx$, for $\alpha, \beta, \gamma \neq 0$, $\alpha, \beta$ even and $\gamma$ odd. First, we observe that since $dx$ is \virgolette infinitesimal'' (as it is nilpotent) we can formally expand about it in the following fashion
\begin{align} \label{transdelta1}
 \delta (d\theta^\prime) & = \delta \left (  {\alpha}d\theta + {\beta} d\psi + \gamma dx\right ) = \delta \left ( {\alpha} d\theta + {\beta}d\psi  \right ) +  \gamma dx \delta^\prime (\left ( {\alpha} d\theta + {\beta}d\psi  \right ).
\end{align}
Now the problem is to make sense out of the expression $\delta \left ( {\alpha} d\theta + {\beta}d\psi  \right )$ and its derivative. Keep working formally, focusing on the first summand, one could get to the following expression 
\begin{align} \label{illdef}
\delta \left ( {\alpha} d\theta + {\beta}d\psi  \right ) &= \delta \left (\alpha \left ( d\theta + \frac{\beta}{\alpha} d \psi \right )\right ) = \frac{1}{\alpha} \left ( \delta (d\theta) + \frac{\beta}{\alpha} d\psi \delta^\prime (d\theta) + \ldots \right ) \nonumber \\
& =  \sum_{k= 0}^\infty \frac{\beta^k}{k! \, \alpha^{k+1}} (d\psi)^k \delta^{k} (d\theta),
\end{align}
which would suggest that, if $\delta (d\theta)$ was as a section, the corresponding sheaf would not be locally-free of \emph{finite} rank. But clearly, this is just the tip of the iceberg, as there are more inconsistencies: in the first place $d\theta $ and $d\psi$ are honest even variables on $\Pi {T} \mani,$ so that the expression $\alpha d\theta$ and $\beta d\psi$ are not at all nilpotent nor infinitesimal. Further, also forgetting about this, one might have chosen to expand about $d\psi$ instead of $d\theta.$ \\
On the other hand, by definition, all of the $q$ delta's (or their derivatives) are required to appear in sections of $\mathcal{O}^\delta_{\Pi {T} \mani}$. We shall see in the next theorem that this requirement leads to well-defined transformation properties.}
\begin{theorem}[Delta Forms are Isomorphic to Integral Forms] \label{isodeltaint} Let $\mani$ be a real supermanifold of dimension $p|q$. Then the sheaf $\mathcal{O}^{\delta (k)}_{\Pi {T}\mani}$ of delta forms of degree $k$ is isomorphic to the sheaf $\Sigma^k_\mani$ of integral forms of degree $k$ for any $k \leq p$.
\end{theorem}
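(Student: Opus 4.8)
The plan is to produce an explicit $\mathcal{O}_\mani$-linear morphism of sheaves $\Phi\colon \Sigma^k_\mani \longrightarrow \mathcal{O}^{\delta (k)}_{\tiny{\cat{T}}\mani}$ and to check it is an isomorphism. Recall that $\Sigma^k_\mani = \mathcal{B}er(\mani)\otimes_{\stsheaf}\cat{S}^{p-k}\Pi\mathcal{T}_\mani$, so both sheaves are supported in exactly the same range of degrees $-\infty < k \le p$ (compare Remark \ref{Zgrad}), and both are locally free of \emph{finite} rank over $\mathcal{O}_\mani$: for a degree-$k$ delta form the constraint $\sum_i \epsilon_i - \sum_j \ell_j = k$ together with $\sum_i \epsilon_i \le p$ forces $\sum_j \ell_j \le p-k$, so only finitely many monomials \eqref{deltaf} occur. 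The isomorphism will send the local generator $\mathcal{D}(x) = [dx_1 \cdots dx_p \otimes \partial_{\theta_1}\cdots\partial_{\theta_q}]$ of $\mathcal{B}er(\mani)$ to the top delta form $\omega_{(p)} = dx_1 \cdots dx_p\, \delta(d\theta_1)\cdots\delta(d\theta_q)$ and will intertwine the $\cat{S}^\bullet \Pi\mathcal{T}_\mani$-action on integral forms with the $\mathcal{D}_{\pi^\ast \Omega^1_\mani}$-action on delta forms.

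The first and decisive step is the case $k=p$, \emph{i.e.}\ showing that $\omega_{(p)}$ generates a copy of $\mathcal{B}er(\mani)$: one must verify that under a change of coordinates $\omega_{(p)}$ transforms by the Berezinian of the super-Jacobian precisely as in \eqref{transf}. Using the fiber transformations \eqref{transtot}, the odd fiber coordinates $dx_i$ and the even ones $d\theta_\alpha$ each pick up contributions from both the even and the odd blocks of the Jacobian; expanding the product $\prod_i dx_i$ and Taylor-expanding each $\delta(d\theta_\alpha)$ in its nilpotent (odd) part — as sketched around \eqref{transdelta1} — produces a leading term $\det(A)\det(D)^{-1}$ together with correction terms that, once reorganized by the integration-by-parts relations \eqref{eqD1}--\eqref{eqD4} and the anticommutativity of the $\delta$'s, collapse exactly into $\det(A-BD^{-1}C)\det(D)^{-1} = Ber(\mathcal{J}ac)$. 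This is the phenomenon the preceding remark anticipates: a single delta has no well-defined transformation law, but the \emph{full} product of all $q$ deltas does, and it is the Berezinian. I expect this sign- and combinatorics-heavy computation to be the main obstacle; it can alternatively be extracted from Lemma \ref{canvol} and the extension sequence \eqref{extomega} for $\Omega^1_{\tiny{\cat{T}}\mani}$, which already packages the relevant determinants.

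For the lower degrees I would first observe that the vertical derivations realize $\pi^\ast \Pi\mathcal{T}_\mani$ intrinsically: since $\partial_{dx_a}$ is dual to the fiber coordinate $X_a = dx_a$, and the $X_a$ transform as a basis of $\Omega^1_\mani = \Pi\mathcal{T}^\ast_\mani$ by \eqref{tfcot1}, the operators $\partial_{dx_a}$ transform as the dual basis, so that $\partial_{dx_i}\leftrightarrow \pi\partial_{x_i}$ and $\partial_{d\theta_\alpha}\leftrightarrow\pi\partial_{\theta_\alpha}$ coordinate-independently. I then extend $\Phi$ by
\[
\Phi\big(\mathcal{D}(x)\otimes \pi\partial_{x_{k_1}}\!\cdots \pi\partial_{x_{k_r}} (\pi\partial_{\theta_1})^{\ell_1}\!\cdots (\pi\partial_{\theta_q})^{\ell_q}\big) \defeq \partial_{dx_{k_1}}\!\cdots \partial_{dx_{k_r}}\,\partial_{d\theta_1}^{\ell_1}\!\cdots \partial_{d\theta_q}^{\ell_q}\cdot \omega_{(p)}.
\]
This is consistent with the supersymmetric algebra structure because the commutation relations match under the identification: the $dx_i$ are odd, so the $\partial_{dx_i}$ anticommute, matching the odd $\pi\partial_{x_i}$, while the $d\theta_\alpha$ are even, so the $\partial_{d\theta_\alpha}$ commute, matching the even $\pi\partial_{\theta_\alpha}$. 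Hence $\Phi$ descends to a well-defined, globally-defined $\mathcal{O}_\mani$-linear morphism intertwining the two actions.

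Finally, to see $\Phi$ is an isomorphism it suffices to exhibit a bijection on local generators. A degree-$k$ monomial in \eqref{deltaf} is determined by the subset $S\subseteq\{1,\dots,p\}$ of indices with $dx_i$ present and by the delta-orders $\ell_j$, subject to $|S| - \sum_j \ell_j = k$; its unique preimage under $\Phi$ is $\mathcal{D}(x)$ tensored with $\prod_{i\notin S}\pi\partial_{x_i}\cdot\prod_j(\pi\partial_{\theta_j})^{\ell_j}$, which lies in $\cat{S}^{p-k}\Pi\mathcal{T}_\mani$ since $(p-|S|)+\sum_j\ell_j = p-k$. By \eqref{eqD1}--\eqref{eqD4} the operator $\prod_{i\notin S}\partial_{dx_i}\prod_j \partial_{d\theta_j}^{\ell_j}$ applied to $\omega_{(p)}$ returns precisely that monomial (up to the standard sign), so $\Phi$ carries a free $\mathcal{O}_\mani$-basis bijectively onto a free $\mathcal{O}_\mani$-basis. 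The coordinate-independence established in the first two steps then upgrades this local isomorphism to the asserted global isomorphism $\mathcal{O}^{\delta (k)}_{\tiny{\cat{T}}\mani}\cong \Sigma^k_\mani$ for every $k\le p$.
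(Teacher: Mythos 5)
Your skeleton is the same as the paper's: send the generator $\mathcal{D}(x)$ of $\mathcal{B}er(\mani)$ to $\omega_{(p)} = dx_1\cdots dx_p\,\delta(d\theta_1)\cdots\delta(d\theta_q)$, identify the vertical derivations $\partial_{dx_i},\partial_{d\theta_\alpha}$ with the parity-shifted coordinate fields $\pi\partial_{x_i},\pi\partial_{\theta_\alpha}$ (equivalently, with local sections of $\pi^\ast\Pi\mathcal{T}_\mani$, as they act linearly on $\pi^\ast\Omega^1_\mani$), and match bases degree by degree through the relations \eqref{eqD1}--\eqref{eqD4}. The degree bookkeeping, the finite-rank observation, and the bijection of local bases are all correct. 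The problem is that the decisive step --- the case $k=p$ --- is asserted rather than proved. You claim that for a \emph{general} change of coordinates the Taylor expansion of $\prod_\alpha\delta(d\theta'_\alpha)$ against $\prod_i dx'_i$ ``collapses exactly into $\det(A-BD^{-1}C)\det(D)^{-1}$'', but this is precisely the hard content of the theorem: the correction terms $dx_j(\partial_{x_j}\theta'_\alpha)\,\delta'(\cdots)$ must be played off against those terms in the expansion of $dx'_1\cdots dx'_p$ in which some $dx$'s have been traded for $d\theta$'s through the $B$-block, and reorganizing this into the Schur-complement form of the Berezinian is the (nontrivial) Bernstein--Leites transformation law. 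Since this is the only point at which the two sheaves are actually compared, leaving it as an expectation leaves the theorem unproven.

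The paper closes exactly this gap with a reduction you are missing: since a real supermanifold is split (Batchelor's theorem), one may work in an atlas whose transition functions are of split type, $x'_i=f_i(x)$, $\theta'_\alpha=\sum_\beta g_{\alpha\beta}(x)\theta_\beta$. Then $B=\partial x'/\partial\theta=0$, so $dx'_1\cdots dx'_p=\det(A)\,dx_1\cdots dx_p$; the $dx$-proportional parts of the delta arguments are killed by saturation against $dx_1\cdots dx_p$, and the remaining deltas contribute $\det(D)^{-1}$, giving $\det(A)\det(D)^{-1}=Ber(\mathcal{J}ac(\varphi))$ in two lines. Importing this reduction (or genuinely carrying out the general computation) would complete your argument. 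Note also that your proposed fallback --- ``extracting'' the $k=p$ case from Lemma \ref{canvol} and the extension \eqref{extomega} --- does not work as stated: Lemma \ref{canvol} concerns $\mathcal{B}er(\cat{T}\mani)$, i.e.\ volume forms on the total space, and by itself says nothing about how the distributional section $\omega_{(p)}$ of $\mathcal{O}_{\tiny{\cat{T}}\mani}$ transforms. To convert it into an argument one would compose with the fiber integration $\pi_\ast$ (multiply by the canonical volume and integrate along the fibers) and check that this gives a coordinate-independent, locally bijective $\mathcal{O}_\mani$-linear map $\mathcal{O}^{\delta(p)}_{\tiny{\cat{T}}\mani}\to\mathcal{B}er(\mani)$; but the coordinate-independence of $\pi_\ast$ is exactly the kind of statement the paper only quotes, so this route is no shorter than the direct check.
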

\begin{proof} To prove the statement is enough to verify that the transformation properties of the generating sections do coincide. Let us start from degree $p$, corresponding to $\Sigma^p_\mani = \mathcal{B}er (\mani)$. Without loss of generality, we can restrict ourselves to consider coordinate transformations $\varphi $ of $\mani$ of the split type, \emph{i.e.}\ $x_i^\prime = f_i (x)$ and $\theta_\alpha^\prime = \sum_{\beta} g_{\alpha \beta}(x) \theta_\beta$. The only degree $p$ delta form in $\mathcal{O}^{\delta (p)}_{\Pi {T}\mani}$ is given in a certain trivialization by 
\bear
\omega_{(p)} = dx_1 \ldots  dx_p \delta(d\theta_1)  \ldots  \delta (d\theta_q).
\eear
The part $dx_1 \ldots  dx_p $ contributes with the determinant of the Jacobian of the change of coordinates $x_i^\prime = f_i (x)$, while the Dirac-delta part contributes in the following way
\bear
\delta(d\theta_1^\prime)  \ldots  \delta (d\theta_q^\prime) = \delta \left (\sum_{\beta=1}^q (\partial_{\theta_\beta}\theta^\prime_1) d\theta_\beta  + \sum_{i=1}^p (\partial_{x_i} \theta^\prime_1) dx_i \right )   \ldots  \delta \left (\sum_{\beta=1}^q (\partial_{\theta_\beta} \theta^\prime_q) d\theta_\beta  + \sum_{i=1}^p (\partial_{x_i} \theta^\prime_q) dx_i \right ). \nonumber
\eear
The part proportional to $dx$'s does not contribute by \eqref{transdelta1}, due to the presence of $dx_1  \ldots dx_p$, so that one is left with 
\begin{align}
\delta(d\theta_1^\prime)  \ldots  \delta (d\theta_q^\prime) & = \delta \left (\sum_{\beta=1}^q ( \partial_{\theta_\beta} \theta^\prime_1) d\theta_\beta  \right )   \ldots  \delta \left (\sum_{\beta=1}^q ( \partial_{\theta_\beta} \theta^\prime_q) d\theta_\beta \right )  \nonumber \\
& = \left (\det \left (\frac{\partial \theta^\prime_\alpha}{\partial \theta_\beta}\right) \right )^{-1} \delta(d\theta_1^\prime)  \ldots  \delta (d\theta_q^\prime)
\end{align}
upon using the properties of Dirac's delta distributions. Putting the pieces together, we find
\bear
\omega^{\prime}_{(p)} = {\det \left (\frac{\partial x^\prime_i}{\partial x_j}\right)} \det \left ( \frac{\partial \theta^\prime_\alpha}{\partial \theta_\beta} \right)^{-1} \omega_{(p)} = \mbox{B}er (\mathcal{J}ac (\varphi))\,  \omega_{(p)}.
\eear
This settles the degree $p$ case. 
For degree lower than $p$ it is enough to observe that, following remark \ref{Zgrad}, delta forms in $\mathcal{O}^{\delta (p-k)}_{\Pi {T}\mani}$ are obtained via the $\mathcal{D}_{\Omega^1_\mani}$-action of differential operators of order $k$ on the above section $\omega_{(p)} \in \Sigma^p_\mani \cong \mathcal{O}^{\delta (p)}_{\Pi {{T}}\mani}$. In particular for degree $p-1$, working locally, we have an action of $\partial_{dx}$'s and $\partial_{d\theta}$'s: these can be as linear maps acting on $\pi^\ast \Omega^1_\mani$, hence they belong to $(\pi^\ast \Omega^1_\mani)^\ast \cong \pi^\ast \Pi \mathcal{T}_\mani$. It follows that the transformation of a section $\omega_{(p-1)} \in \mathcal{O}^{\delta (p-1)}_{\Pi {{T}} \mani}$ will be given by
\bear
\omega_{(p-1)}^\prime =  \mathcal{J}ac (\varphi)^{\Pi} \otimes \mbox{B}er (\mathcal{J}ac (\varphi)) )\,  \omega_{(p)}, 
\eear
where $\mathcal{J}ac (\varphi)^{\Pi}$ is the parity-transpose of the Jacobian of the change of coordinates, which identifies the transition functions of the locally-free sheaf $\Pi \mathcal{T}_\mani$. This yields the isomorphism $\mathcal{O}^{\delta (p-1)}_{\Pi {{T}}\mani} \cong \mathcal{B}er(\mani) \otimes \Pi \mathcal{T}_\mani = \Sigma^{p-1}_\mani:$ explicitly $\partial_{dx_i} \mapsto \pi \partial_{x_i}$ and $\partial_{d\theta_\alpha} \mapsto \pi \partial_{\theta_\alpha}$, for any $i = 1, \ldots, p$ and $\alpha = 1, \ldots, q$. In the very same fashion, higher degree differential operators are identified with sections on $\cat{S}^k \Pi \mathcal{T}_\mani$, thus showing that $\mathcal{O}^{\delta (p-k)}_{\tiny{\cat{{T}}}\mani} \cong \mathcal{B}er(\mani) \otimes \cat{S}^k \Pi \mathcal{T}_\mani = \Sigma^{p-k}_\mani.$ 
\end{proof}

{\remark[Integration theory] It follows from the previous theorem that integration on supermanifolds via integral forms $\Sigma^k_\mani$ parallels integration theory via delta forms $\mathcal{O}^{\delta (k)}_{\Pi {{T}} \mani}$ on the supermanifold $\Pi T \mani$. While this is clear in the case of Berezinian, which accounts for integration on the full supermanifold, it might be helpful to consider a codimension $1|0$ example. To this end, let us consider the supermanifold $\mathbb{R}^{1|2}$, with a system of coordinates given by $ x |\theta_1, \theta_2$. 
According to Bernstein and Leites \cite{BL1}, the integral form $\sigma_0 = \mathcal{D} (x|\theta_1, \theta_2) \theta_1 \theta_2 \otimes \pi \partial_{x} \in H^{0}_{\mathpzc{Sp}} (\mani)$ can be integrated over a codimension $1|0$ sub-supermanifold of $\mani$. In particular, let us consider the sub-supermanifold $\mathpzc{N} \stackrel{\iota}{\longhookrightarrow} \mathbb{R}^{1|2} $ defined as follows 
\bear \label{subsup}
\mathpzc{N} \defeq \{ x|\theta_1, \theta_2 \in \mathbb{R}^{1|2}: x= 0 \}.
\eear
As explained by Deligne and Morgan in \cite{Deligne}, local (even) equations $u_1 = \ldots = u_i = 0$ for a sub-supermanifold $\mathpzc{N}$ define a section $\omega_{\mathpzc{N}} \in \Omega^i_{\mani, \mathpzc{c}}$ (independent of the particular choice of equations), which gives a cohomology class in $H^{i}_{\mathpzc{dR},\mathpzc{c}} (\mani)$. In the case of the sub-supermanifold $\mathpzc{N}$ given above in \eqref{subsup} one has $\omega_{\mathpzc{N}} \in H^1_{\mathpzc{dR}, \mathpzc{c}} (\mani)$ and usually a formal representative of the form of a generalized section $\omega_{\mathpzc{N}} = \delta (x) dx$ is taken, where the Dirac delta distribution $\delta (x)$ can be seen as a suitable limit of compactly supported functions - see \cite{Deligne} and \cite{Witten} on this regard.
The restriction $\iota^{\ast} \sigma_0 = \sigma_0 \lfloor_{\mathpzc{N}}$ of the integral form $\sigma_{0} \in H^0_{\mathpzc{Sp}}(\mani)$ to $\mathpzc{N}$ is implicitly defined by imposing the following equality
\bear
\label{codimsub}
\int_{\mathpzc{N}} f  \sigma_0 \lfloor_{\mathpzc{N}} \defeq \int_\mani f \sigma_0 \cdot \omega_\mathpzc{N},
\eear
for $f$ any compactly supported function on $\mani,$ see \cite{Deligne}, Chapter 3, Section 14. Based upon the previous \eqref{codimsub}, the section $\omega_{\mathpzc{N}}$ is given the interpretation of the \emph{Poincaré dual form} of the sub-supermanifold $\mathpzc{N}$.
It then follows from equation \eqref{codimsub} that in the above example one gets
\bear
\int_{ \mathpzc{N} \subset \mathbb{R}^{1|2}} \sigma_0 \lfloor_{\mathpzc{N}} = \int_{\mathbb{R}^{1|2}} \mathcal{D} (x|\theta_1 \theta_2) \theta_1 \theta_2 \otimes \pi \partial_{x} \cdot \delta (x) dx = \int_{\mathbb{R}^{1|2}} \mathcal{D}(x | \theta_1, \theta_2) \theta_1 \theta_2 \delta (x) = 1,
\eear
where we have used the duality pairing between $\pi \partial_x $ and $dx,$ given by $dx (\pi \partial_x) = 1$ and the properties of the Dirac delta distribution. \\
Similarly, via delta forms, one first observes that the integral form $\sigma_0$ corresponds to the delta form $\theta_1 \theta_2 \delta (d\theta_1) \delta (d\theta_2)$. Now, instead of duality, the integral uses the $\Omega^\bullet_\mani$-module structure (or $\mathcal{D}_{\Omega^1_\mani}$-structure), multiplying $\sigma_0 \in \mathcal{O}^{\delta (0)}_{\Pi T\mani}$ in the delta form representation by the form $\omega_\mathpzc{N}$ given as above: this yields a delta form in $\mathcal{O}^{\delta (1)}_{\Pi {T} \mani}$. More precisely, one has
\begin{align}
\int_{ \mathpzc{N} \subset \mathbb{R}^{1|2}} \sigma_0 \lfloor_\mathpzc{N} \defeq \int_{\Pi T \mathbb{R}^{1|2}} & \mathcal{D} (x, d\theta_1, d\theta_2 |\theta_1, \theta_2, dx)   \theta_1 \theta_2 \delta(d\theta_1) \delta (d\theta_2) \delta (x) dx \nonumber \\
& = \int_{\mathbb{R}^{1|2}} \mathcal{D}(x | \theta_1, \theta_2) \theta_1 \theta_2 \delta (x) = 1. 
\end{align}
Notice that the integral of $\sigma_0 \lfloor_{\mathpzc{N}}$ on $\mathpzc{N}$ is well defined as it only depends on the (co)homology classes. Let us indeed consider the sub supermanifold  
\bear
\widetilde{\mathpzc{N}} \defeq \{ x|\theta_1, \theta_2 \in \mathbb{R}^{1|2} : x + \theta_1 \theta_2 = 0\} \subset \mathbb{R}^{1|2}.
\eear
Notice that setting the odd variables to zero one has $\widetilde{\mathpzc{N}}_{\, \mathpzc{red}} = \mathpzc{N}_{\, \mathpzc{red}}$, and hence one expects the integrals over $\mathpzc{N}$ and $\widetilde{\mathpzc{N}}$ to be equal. We have $\omega_{\widetilde{\mathpzc{N}}} = \delta (x)  dx + d\theta_1 \theta_2 - \theta_1 d\theta_2$ and it is easy to see that 
\bear
\int_{\widetilde{\mathpzc{N}}} \sigma_0  = \int_{\mani} \sigma_0\cdot \omega_{\widetilde{\mathpzc{N}}} = \int_{\mani} \sigma_0 \cdot (\omega_{\mathpzc{N}} + d\eta ) = \int_{\mani} \sigma_0 \cdot \omega_{\mathpzc{N}} = \int_{\mathpzc{N}} \sigma_0,
\eear
where $d\eta = d (\theta_1 \theta_2) = d\theta_1 \theta_2 - \theta_1 d\theta_2$. Notice that this consideration is completely general - and not limited to the present easy example, as it relies on Stokes theorem \ref{stokesthm} - indeed any summand of the kind $\sigma_0 \cdot d\eta$ is exact and does not contribute to the Berezin integral - and Poincaré duality \ref{PD}.}

{\remark[Delta Forms and Pseudoforms] Delta forms have been defined by requiring that \emph{all} of the coordinate $d\theta$'s have distributional dependence of Dirac delta type. It was this very requirement that allowed us to prove theorem \ref{isodeltaint} above, thus showing that the formalism of delta forms is equivalent to that of integral forms that we have previously defined. \\
Nonetheless, the above requirement can be relaxed to a less stringent one, allowing, for example, a \virgolette mixed setting'', in which some of the $d\theta$'s have distributional dependence (hence of the kind of a delta-integral form) and the remaining have a polynomial dependence (hence of the kind of a differential form) \cite{Witten}. Even if there are important mathematical problems related to this framework - as we shall see -, this particular kind of (generally non-integrable) {pseudoforms} is the one which is considered in superstring perturbation theory \cite{Witten, WittenSuper}. In this context the number of localized variables $d\theta$'s is referred to as \emph{picture number} $\mathpzc{p}$ of the (pseudo)form. In particular, forms having picture number $\mathpzc{p}= 0$ are differential forms and forms having maximal picture number $\mathpzc{p} = q$ (which equal the odd dimension $q$ of the supermanifold) are integral forms. An example of pseudoform having middle dimensional - \emph{i.e.}\ non minimal and non maximal - picture number can be given considering again the easy case of $\mathbb{R}^{1|2}$. The most general pseudoform of picture $\mathpzc{p} = 1$ is given by
\begin{align}
\omega (x, d\theta | \theta, dx)& = \sum_{k_1, k_2 = 0}^{\infty}  f_{k_1, k_2}(x | \theta) (d\theta_1)^{k_1} \delta^{(k_2)} (d\theta_2) + \sum_{\ell_1, \ell_2 = 0}^\infty g_{\ell_1, \ell_2} (x | \theta) (d\theta_2)^{k_2} \delta^{(k_1)} (d\theta_1)  \nonumber  \\
& \quad + \sum_{i_1, i_2 = 0}^\infty h_{i_1, i_2} (x|\theta) dx (d\theta_1)^{i_1} \delta^{(i_2)} (d\theta_2) + \sum_{i_1, i_2 = 0}^\infty c_{i_1, i_2} (x|\theta) dx (d\theta_2)^{j_2} \delta^{(j_1)} (d\theta_1),
\end{align}
where $f, g, h, c \in \mathcal{O}_{\mathbb{R}^{1|2}}$ for any choice of indices. The degree of this kind of forms is defined to agree with the definition given for differential and integral forms: a $k$-derivative of any of the delta's counts $-k$. It is thus easy to see that there exists pseudoforms of middle picture $1 < \mathpzc{p} < q $ at \emph{any} degree, whereas differential forms have always non-negative degree and we have defined integral forms so that they have degree lower or equal than the even dimension of the supermanifold. Further, working in the same way as above, one sees that any module of pseudoforms of a certain middle dimensional picture $1 < \mathpzc{p} < q$ at a fixed degree $k \in (- \infty, + \infty)$ - we call it $\Omega^{k, \mathpzc{p}}_\mani$ - has an \emph{infinite} number of generators, hence cannot be described as a vector bundle or a locally-free sheaf of $\mathcal{O}_\mani$-modules of finite rank. For example, pseudoforms of picture $1$ and degree $k$ on $\mathbb{R}^{1|2}$ would be generated by  
\bear
\Omega^{k, 1}_{\mathbb{R}^{1|2}} = \mathcal{O}_{\mathbb{R}^{1|2}} \cdot \{ (d\theta_1)^{\ell_1 }\delta^{(\ell_2)} (d\theta_2), dx (d\theta_1)^{j_1} \delta^{j_2} (d\theta_2), \; 1 \leftrightarrow 2 \},
\eear
for any $\ell_1 - \ell_2 = k$ and $ j_1 - j_2 = k - 1$. On the other hand, the crucial problem which prevents us from having a well-given mathematical definition of these modules of pseudoforms with a middle-dimensional picture $0 <\mathpzc{p} < q$ is rooted in the ill-defined transformation of a single delta (and its derivative) $\delta^{(k_i)} (d\theta_i)$. Indeed, as explained above in the discussion around equation \eqref{illdef}, a single delta $\delta (d\theta)$ does not define a section of any vector bundle on $\mani$ and in particular it does not survive a change of coordinate. As it stands, a single $\delta (d\theta)$ - and more in general an expression consisting of a non-maximal number of delta's - is no more than a symbol: only the delta forms where all of the $q$ $d\theta$'s are present and have distributional dependence of the Dirac delta type do indeed yield well-defined sections of a vector bundle, as seen in theorem \ref{isodeltaint}. \\
It is to be noted, though, that when restricted to a specific immersed sub-supermanifold of the right codimension in $\mani$, pseudoforms of non-maximal picture are well-behaved, as they define the integral forms of the sub-supermanifold. As such, pseudoforms of non-maximal picture $0 < \mathpzc{p} < q$ are seen to be in relation to the integration over general sub-supermanifolds of codimension $k|q-\mathpzc{p}$, for any $k = 0, \ldots, p,$ depending on the degree of the pseudoform. }

\begin{remark}[Densities \& Super Grassmannians: a Teaser] As sketched above, pseudoforms should serve the purpose of having available objects that can be integrated over generic immersed sub-supermanifolds of a supermanifold, \emph{i.e.}\ $\iota: \mathpzc{N} \hookrightarrow \mani$, with $\mathpzc{N}$ of generic dimension $d \defeq d_1 | d_2 \leq p|q = \dim \mani.$ Geometrically, this leads to consider (relative) super Grassmannians over $\mani$ of the form $\cat{G}^d_\mani \defeq \cat{G} (d_1 |d_2 ; \mathcal{T}_\mani) \stackrel{\pi}{\longrightarrow} \mani $, see \cite{Manin, NojaG}. Notice that over a point $x \in \mani$, the fiber of the map $\pi$ is the ordinary super Grassmannian of $d_1 | d_2$-dimensional subspaces inside the vector superspace $\mathbb{R}^{p|q}$, \emph{i.e.}
\bear
\pi^{-1} (x) = \cat{G} (d_1 | d_2 ; \mathbb{R}^{p|q}).
\eear
Any relative super Grassmannian $\cat{G}^d_\mani$ comes endowed with its (relative) tautological sheaf $\mathcal{S}_{\cat{\tiny{G}}}$, see \cite{Manin}: this enters the definition of $d$-density.
\begin{definition}[$d$-Density] Let $\mani$ be a supermanifold of dimension $p|q$ and let $\cat{G}_\mani^d$ be a relative super Grassmanian over $\mani$ as defined above. Then a $d$-density on $\mani$ is a section of sheaf $\mathcal{B}er (\Pi \mathcal{S}_{\cat{\tiny{G}}}^\ast)$ on $\cat{G}_\mani^d$, for $d = d_1 | d_2 \leq p|q$ and $\mathcal{S}_{{\cat{\tiny{G}}}}$ the tautological sheaf on $\cat{G}_{\mani}^d$. We will denote this sheaf with $\mathfrak{D}^d$, to stress its dependence on the dimension $d = d_1 | d_2$.
\end{definition}
\noindent Now, upon restricting to sections of $\mathfrak{D}^d$ which are polynomial along the fibers in $\pi^{-1} (U)$, for $U\subset \mani$ (or working directly in the holomorphic or algebraic category), it turns out that in the extremal cases $d = k| 0$ and $d = k | q$ there are maps to differential forms and integral forms. 
\begin{theorem}[\cite{Deligne, Manin}] Let $\mani$ be a supermanifold of dimension $p|q$ and let $\mathfrak{D}^d$ be the sheaf of $d$-densities on it. Let $U$ be an open set in $\mani$, then the following are true.
\begin{enumerate}[leftmargin=*]
\item If $d = k | 0$, then there exists a map 
\bear
\varphi_{k|0} : \Gamma (U, \Omega^k_{\mani}) \stackrel{\sim}{\longrightarrow} \Gamma (\pi^{-1}(U), \mathfrak{D}^{k|0});
\eear 
\item if $d = k|q$, then there exists a map
\bear
\varphi_{k|0} : \Gamma (U, \Sigma^k_{\mani}) \stackrel{\sim}{\longrightarrow} \Gamma (\pi^{-1}(U), \mathfrak{D}^{k|n}),
\eear 
\end{enumerate}
In particular, a $p|q$-density on $\mani$ is a section of the Berezinian sheaf $\mathcal{B}er(\mani).$
\end{theorem}
\noindent In the remaining cases, for $0 < d_2 < q,$ there are no such sections and only $d$-densities are known. It would be then interesting to elucidate the relations between pseudoforms of non-maximal picture $\mathpzc{p}$ and the $d$-densities for intermediate dimension as defined above: this would be a crucial advance toward the mathematization of the formalism introduced by Witten and Belopolski for string theory purposes. Together with other topics, this problem is currently under investigation.

\end{remark}

\clearpage

\appendix

\section{Nilpotent Operators in Superalgebra} \label{app1}

\noindent In this appendix we report an easy yet very useful result that gives a criterion to establish the nilpotency of an operator. This appears as lemma 3 in \cite{Manin}, chapter 3, section 4 - the reader be advised of a little confusing misprint in the proof in the given reference. 
Let us consider the following generic setting: let ${S}$ and ${T}$ be $A$-modules, for $A $ a supercommutative ring and let $\sigma_a : S \rightarrow S$ and $\tau_a : T \rightarrow T$ two families of homogeneous homomorphisms for a finite set of indices $a= 1, \ldots, n.$ \\
We say that $\sigma_a $ and $\sigma_b$ \emph{commute} if
\bear
[\sigma_a, \sigma_b] \defeq \sigma_a \sigma_b - (-1)^{|\sigma_a| |\sigma_b|} \sigma_b \sigma_a =0
\eear  
We say that $\sigma_a$ and $\sigma_b$ \emph{anticommute} if 
\bear
\{\sigma_a, \sigma_b\} \defeq \sigma_a \sigma_b + (-1)^{|\sigma_a| |\sigma_b|} \sigma_b \sigma_a = 0
\eear  
Also we assume the following: 
\begin{enumerate}[leftmargin=*]
\item $|\sigma_a| + |\tau_a| $ does not depends on $a$;
\item the pairs $(\sigma_a, \sigma_b)$ and $(\tau_a, \tau_b)$ either commute or anticommute.   
\end{enumerate}
Then, it makes sense to define the following operator
\bear
\xymatrix@R=1.5pt{
d \defeq \sum_a \sigma_a \otimes \tau_a : S \otimes_A T \ar[r] & S \otimes_A T \nonumber \\ 
f = s \otimes t \ar@{|->}[r] & d (f)  \defeq \sum_a (-1)^{|\tau_a| |s|} \sigma_a (s) \otimes \tau_a (t).
}
\eear
We note that it has a well-defined parity (even or odd), since we have assumed that the parity of $|\sigma_a| + |\tau_a|$ does not depend on $a$. We have the following lemma. 
\begin{lemma} \label{lemmaA1} Let $d \defeq \sum_a \sigma_a \otimes \tau_a$ be as above, then if either one of the following is satisfied    
\begin{enumerate}[leftmargin=*]
\item $d$ is \emph{even} and the pair $(\sigma_a, \sigma_b)$ and $(\tau_a, \tau_b)$ have opposite commutation rules 
\item $d$ is \emph{odd} and the pair $(\sigma_a, \sigma_b)$ and $(\tau_a, \tau_b)$ have the same commutation rules 
\end{enumerate}
then $d \circ d = 0.$
\end{lemma}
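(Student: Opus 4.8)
The plan is to prove $d\circ d = 0$ by a direct computation: expand $d^2$ on a homogeneous elementary tensor $s\otimes t$ and show that the resulting double sum cancels termwise, once the index pairs $(a,b)$ and $(b,a)$ are grouped together. First I would iterate the defining formula, writing $s_a \defeq |\sigma_a|$ and $t_a \defeq |\tau_a|$ and recalling that $\sigma_a(s)$ has parity $s_a + |s|$. This gives
\[
d^2(s\otimes t) = \sum_{a,b}(-1)^{t_a|s| + t_b|s| + t_b s_a}\,\sigma_b\sigma_a(s)\otimes\tau_b\tau_a(t),
\]
and I would abbreviate the sign in front of the $(a,b)$-summand by $c_{ab}$, so that $c_{ab}c_{ba} = (-1)^{s_a t_b + s_b t_a}$.

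The heart of the argument is to compare the $(a,b)$ and $(b,a)$ terms. Using hypothesis (2) I would write $\sigma_a\sigma_b = \epsilon_\sigma(-1)^{s_a s_b}\sigma_b\sigma_a$ and $\tau_a\tau_b = \epsilon_\tau(-1)^{t_a t_b}\tau_b\tau_a$, where $\epsilon_\sigma,\epsilon_\tau\in\{\pm 1\}$ record the common commutation rule of each family. This rewrites the $(b,a)$-summand as $\epsilon_\sigma\epsilon_\tau(-1)^{s_a s_b + t_a t_b}$ times the $(a,b)$-summand, so the two cancel exactly when $c_{ab} + c_{ba}\,\epsilon_\sigma\epsilon_\tau(-1)^{s_a s_b + t_a t_b} = 0$, i.e.\ when $\epsilon_\sigma\epsilon_\tau = -(-1)^{E}$ with $E = s_a t_b + s_b t_a + s_a s_b + t_a t_b$. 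The single genuine computation is then to evaluate $E$ modulo $2$: substituting $t_a = |d| - s_a$ and $t_b = |d| - s_b$ — legitimate precisely because hypothesis (1) makes $s_a + t_a = |d|$ independent of $a$ — all mixed terms collapse and one is left with
\[
E \equiv |d|^2 \equiv |d| \pmod 2 .
\]
Hence the cancellation condition is $\epsilon_\sigma\epsilon_\tau = -(-1)^{|d|}$, which holds in exactly the two listed cases: $d$ even forces $\epsilon_\sigma\epsilon_\tau = -1$ (opposite rules) and $d$ odd forces $\epsilon_\sigma\epsilon_\tau = +1$ (same rules).

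Finally I would dispose of the unpaired diagonal terms $a=b$, where $P_{aa} = \sigma_a^2(s)\otimes\tau_a^2(t)$. Applying the self-commutation rules shows $\sigma_a^2 = 0$ unless $\epsilon_\sigma = (-1)^{s_a}$, and $\tau_a^2 = 0$ unless $\epsilon_\tau = (-1)^{t_a}$ (using that $2$ is invertible, as over $\mathbb{R}$ or $\mathbb{C}$). Were both factors to survive one would have $\epsilon_\sigma\epsilon_\tau = (-1)^{s_a + t_a} = (-1)^{|d|}$, contradicting $\epsilon_\sigma\epsilon_\tau = -(-1)^{|d|}$; so at least one factor is zero and every diagonal term vanishes. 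Summing the cancelling off-diagonal pairs together with the vanishing diagonal gives $d^2(s\otimes t)=0$, and since such tensors generate $S\otimes_A T$ we conclude $d\circ d = 0$. I expect the only delicate point to be the sign bookkeeping — keeping $c_{ab}$ consistent with the two commutation signs and verifying $E\equiv |d|$ — rather than anything conceptual, and it is exactly there that hypothesis (1) on the constancy of $s_a+t_a$ does the decisive work.
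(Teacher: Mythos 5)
Your proof is correct and follows essentially the same route as the paper's: both expand $d^2$, pair the $(a,b)$ term with the $(b,a)$ term, and use the commutation rules together with the constancy of $|\sigma_a|+|\tau_a|$ to make the two signs cancel, your identity $E=(s_a+t_a)(s_b+t_b)\equiv |d| \pmod 2$ being a unified version of the paper's two case-by-case sign computations. The only refinement is your explicit treatment of the diagonal terms $a=b$ (showing $\sigma_a^2=0$ or $\tau_a^2=0$ when $2$ is invertible), which the paper handles only implicitly through the same pairing argument and with the same tacit reliance on $2$ being invertible.
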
 
\begin{proof} Let us consider the expression for $d^2 \defeq d\circ d$. One finds the sum
\bear
d^2 \owns (\sigma_a \otimes \tau_a ) (\sigma_b \otimes \tau_b ) + (\sigma_b \otimes \tau_b )(\sigma_a \otimes \tau_a )
\eear
for some $a,b$. Commuting one has 
\begin{align} \label{sum}
& (\sigma_a \otimes \tau_a ) (\sigma_b \otimes \tau_b ) + (\sigma_b \otimes \tau_b )(\sigma_a \otimes \tau_a ) \nonumber \\
& \quad = ( (-1)^{|\sigma_b| |\tau_a|} + (-1)^{|\sigma_a| |\tau_b| + |\sigma_a| |\sigma_b| + |\tau_a| | \tau_b|} \epsilon_\sigma \epsilon_\tau) (\sigma_a \sigma_b \otimes \tau_a \tau_b) 
\end{align}
where $\epsilon_\sigma = \pm 1$ and $\epsilon_\tau = \pm 1$ depending on the commutation relations of $\sigma$ and $\tau$. \\
Say $d $ is even. Then this implies that $|\sigma_a| =|\tau_a|$ and $|\sigma_a| =|\tau_a|$ and $\epsilon_\sigma \epsilon_\tau = -1$ since one pair of morphisms commutes and the other anticommutes. Then one finds that
\bear
(-1)^{|\sigma_a| |\sigma_b|} - (-1)^{|\sigma_a| |\sigma_b| + |\sigma_a| |\sigma_b| + |\sigma_a| | \sigma_b|} = (-1)^{|\sigma_a| |\sigma_b|} - (-1)^{|\sigma_a| |\sigma_b|} = 0.
\eear
Say $d$ is odd. Then this implies that $|\sigma_a| =|\tau_a|+1$ and $|\sigma_b| =|\tau_b| +1$ and $\epsilon_\sigma \epsilon_\tau = +1$ since the two pairs of morphisms both commutes or anticommutes. Then one finds
\begin{align}
 & (-1)^{|\sigma_b| |\sigma_a| +|\sigma_b| } + (-1)^{|\sigma_a| |\sigma_b| + |\sigma_a| + |\sigma_a| |\sigma_b| + |\sigma_a| |\sigma_b| + |\sigma_a| + |\sigma_b| +1}  \nonumber \\
 & = {(-1)^{|\sigma_b| |\sigma_a| +|\sigma_b| }} + (-1)^{|\sigma_a| |\sigma_b| + |\sigma_b| + 1} \nonumber \\
& = {(-1)^{|\sigma_b| |\sigma_a| +|\sigma_b| }} - (-1)^{|\sigma_a| |\sigma_b| + |\sigma_b| } = 0
\end{align}
Therefore, in both cases, the sum in the parenthesis in \eqref{sum} vanishes.
\end{proof}
\noindent Notice that, even if it is given in the context of superalgebra, the above lemma \ref{lemmaA1} applies to a broad range of geometrical constructions - for example, the ordinary de Rham differential can be immediately proved to be nilpotent as a consequence of lemma \ref{lemmaA1} as $\sigma_a = dx_a$ and $\tau_a = \partial_a$.

\section{Right $\mathcal{D}$-modules and Canonical Sheaf} \label{app2}

\noindent In this appendix we prove lemma \eqref{DrightX} and we comment further on the relations between the Lie derivative and the $\mathcal{D}_M$-module structure of the canonical sheaf of an ordinary manifold $M$. The interested reader is invited to compare and appreciate the similarities of these constructions with those of section \ref{LieBerRight}, where the right $\mathcal{D}_\mani$-module structure on the Berezinian sheaf is discussed, for $\mathpzc{M}$ a supermanifold.

\begin{lemma}[$\omega_M$ is a Right $\mathcal{D}_M$-module] \label{DrightXApp} Let $M$ be a real or complex manifold and let $\Omega^{\dim M}_M$ be its canonical sheaf. Then $\Omega^{\dim M}_M$ is a sheaf of right $\mathcal{D}_M$-modules.
\end{lemma}
\begin{proof} By the previous theorem \ref{rightD}, it is enough to show that we can define a flat right connection on $\omega_X$. For sections $\omega^{top } \in \Omega^{\dim M}_M$ and $f \in \mathcal{O}_M$ and $X \in \mathcal{T}_M$ we give the following definition
\begin{align}
&\Delta_\mathpzc{R} (\omega^{top}\otimes f) \defeq \omega^{top} f \nonumber \\
&\Delta_\mathpzc{R} (\omega^{top} \otimes X) \defeq - \mathcal{L}_X (\omega^{top}).
\end{align}
First off, observe that 
\begin{align}
\Delta_{\mathpzc{R}} (\omega^{top} \otimes f \circ X) & = \mathcal{L}_{fX} (\omega^{top}) = d \circ \iota_{fX} \omega^{top} = d (f \iota_X \omega^{top}) \nonumber \\
& = df \wedge \iota_X (\omega^{top}) + f d (\iota_X (\omega^{top}))
\end{align}
On the other hand, one has
\begin{align}
\Delta_{\mathpzc{R}} (\omega^{top}f \otimes X) & = \mathcal{L}_{X} (\omega^{top} f)  = \iota_X \circ d ( \omega^{top} f) + d \circ \iota_X (\omega^{top} f) \nonumber \\
& = \iota_X (df \wedge \omega^{top} + f \wedge d\omega^{top}) + d \circ ( \iota_X (\omega^{top}) f) \nonumber \\
& = df \wedge \iota_X (\omega^{top}) + f d (\iota_X (\omega^{top}))
\end{align}
since $df \wedge \omega^{top}  = 0 = d\omega^{top}$, so that $\Delta_{\mathpzc{R}} (\omega^{top} \otimes f \circ X) = \Delta_{\mathpzc{R}} (\omega^{top}f \otimes X),$ which is the third defining property of a right connection. Further, we have that 
\bear
\Delta_{\mathpzc{R}} (\omega^{top} \otimes X  ) f = - \mathcal{L}_X (\omega^{top}) f,
\eear
but also 
\begin{align}
\Delta_\mathpzc{R} (\omega^{top} \otimes X \circ f) 
& = - \mathcal{L}_{fX} (\omega^{top}) + \omega^{top} X(f) = - \mathcal{L}_X ( \omega^{top} f) + \omega^{top} X(f) \nonumber \\
& = - \mathcal{L}_X (\omega^{top})f - \omega^{top}X(f) + \omega^{top} X(f)  = - \mathcal{L}_X (\omega^{top}) f,
\end{align}
so that we have indeed $\Delta_\mathpzc{R} (\omega^{top} \otimes X \circ f) = \Delta_{\mathpzc{R}} (\omega^{top} \otimes X  ) f, $ which proves the second defining property for a right connection. Finally, it is an obvious property of the Lie derivative that $\mathcal{L}_{[X, Y]} = [\mathcal{L}_X, \mathcal{L}_Y]$, which settles flatness.
\end{proof}
\remark Notice that it is crucial for the above to hold true that $\omega^{top}$ is really a section of the canonical sheaf. In other words, $\Omega^i_{M}$ is not a right $\mathcal{D}_M$-module unless $i = \dim M$. 
Also, notice that working locally in a chart $ U \subset M$ with local coordinates $x_1, \ldots, x_n$ such that a section of the canonical sheaf $\Omega^{\dim M}_M $ over $U$ reads $\omega^{top}_U = \omega (x) f$ for some functions $f \in \mathcal{O}_M(U)$ and $\omega(x) = dx_1 \wedge \ldots \wedge dx_{n} $ and considering a vector fields over $U$ such that $X_U = \sum_i X^i \partial_{x_i}$, then one easily has that 
\bear \label{LieComm}
\mathcal{L}_{X} (\omega^{top}) = \omega (x) \sum_{i=1}^n \partial_{x_i } ( X^i f ),
\eear 
indeed 
\begin{align}
\mathcal{L}_{X} (\omega^{top}) & = \mathcal{L}_X (\omega (x)) f + \omega(x) \mathcal{L}_X (f) =  d \iota_X (\omega (x)) f + \omega (x)  X (f) \nonumber \\
& = \omega (x) \partial_{x_i} (X^i) f + \omega (x) X^i \partial_{x_i} (f)  = \omega (x) \partial_{x_i} (X^i f).
\end{align}
Starting from the above \eqref{LieComm}, it can be seen that there exists a \emph{unique} right connection on $\Omega^{\dim M}_M$ satisfying the condition $\Delta_{\mathpzc{R}} (\omega(x) \otimes \partial_{x_i}) = 0 $ for all $i = 1, \ldots, n$ in any coordinate system. In particular, the following holds true.
\begin{lemma} \label{LieLemma} Let $M$ be a real or complex manifold and of dimension $n$ let $\Omega^{n}_X$ be its canonical sheaf. Then there exists a unique right connection on $\Omega^{n}_X$ such that 
\bear
\Delta_{\mathpzc{R}} (\omega(x) \otimes \partial_{x_i}) = 0
\eear
for any $i= 1, \ldots, n$ and for all system of local coordinates $(U, x_1, \ldots, x_n)$, with $\omega (x) = dx_1\wedge \ldots \wedge dx_n$ a generating section of $\Omega^{n}_M $ and $\partial_{x^i}$ is a coordinate vector field over $U$.
\end{lemma}   
\begin{proof} It is immediate using \eqref{LieComm} to see that indeed $\Delta_{\mathpzc{R}} (\omega (x) \otimes \partial_{x_i}) = - \mathcal{L}_{\partial_{x_i}} (\omega(x)) = 0$. Uniqueness follows from the fact that $\{ \partial_{x_i} \}_{i = 1, \ldots, n}$ is a system of generators for $\mathcal{T}_\mani$ and $\omega (x) = dx_1 \wedge \ldots \wedge dx_n$ is a generator for $\Omega^{n}_X$. It is an exercise to check that changing coordinates to $x^\prime_i = x^\prime_i (x) $ one still gets $\Delta_{\mathpzc{R}} (\omega (x^\prime) \otimes \partial_{x_i^\prime}) = 0$ for any $i$, thus concluding the proof. 
\end{proof}

\remark Notice that the above Lemma \ref{LieLemma} can be rephrased in terms of $\mathcal{D}_M$-module theory by saying that the right $\mathcal{D}_M$- module structure on $\Omega^n_M$ is uniquely characterized by the right action 
\bear
\omega (x) \cdot \partial_{x_i} = 0.
\eear 
for any $i = 1, \ldots, \dim M.$ This is to be related to Theorem \ref{BerRightTheo} and Corollary \ref{BerRightCor}


\begin{thebibliography}{99}

\bibitem{AC} D.\ Alekseevsky, V. Cortes, \emph{Classification of N-(super)-Extended Poincaré Algebras and Bilinear Invariants of the Spinor Representation of $Spin(p,q)$}, Comm.\ Math.\ Phys.\ \emph{183} (3) (1997) 477-510


\bibitem{BR} C.\ Bartocci, U.\ Bruzzo, D.\ Hern\`andez Ruiperez, \emph{The Geometry of Supermanifolds}, Reidel (1991)


\bibitem{Bat} M.\ Batchelor, \emph{The Structure of Supermanifolds}, Trans.\ Am.\ Math.\ Soc.\ {\bf 253} (1979) 329-338


\bibitem{Belo} A.\ Belopolsky, \emph{De Rham cohomology of the supermanifolds and BRST cohomology}, Phys. Lett. {\bf B} 403 (1997) 47-50

\bibitem{Belo1} A.\ Belopolsky, \emph{Picture Changing Operators in Supergeometry and Superstring Theory}, arXiv:9706033

\bibitem{Belo2}  A.\ Belopolsky, \emph{New Geometrical Approach to Superstrings},  hep-th/9703183 (1997)

\bibitem{Bettadapura1} K.\ Bettadapura, \emph{Higher Obstructions of Complex Supermanifolds}, SIGMA {\bf 14} (2018), 094

\bibitem{Bettadapura2} K.\ Bettadapura, \emph{Obstructed Thickenings of Supermanifolds}, J.\ Geom.\ Phys.\ {\bf 139} (2019) 25-49

\bibitem{Berezin} F.A.\ Berezin, \emph{Introduction to Superanalysis}, D. Reidel Publishing (1987) 

 \bibitem{BL1} J. Bernstein, D. Leites, \emph{Integral forms and Stokes formula on supermanifolds}, Funct. Anal. Appl.
11, 1, 55-56 (1977).

\bibitem{BL2}  J. Bernstein, D. Leites, \emph{How to integrate differential forms on supermanifolds}, Funct. Anal. Appl.
11, 3, 70-71 (1977).

\bibitem{BerLei} J.N.\ Bernstein, D.A.\ Leites, \emph{Invariant Differential Operators and Irreducible Representation of Lie Algebras of Vector Fields}, Serdica, Bulg. Math. Journ. {\bf 7} (1981) 320-334

\bibitem{BottTu} R.\ Bott, L.W.\ Tu, \emph{Differential Forms in Algebraic Topology}, Springer (1982)

\bibitem{P2} S.\ Cacciatori, S.\ Noja, R.\ Re, \emph{Non Projected Calabi-Yau Supermanifolds over $\mathbb{P}^2$}, Math.\ Res.\ Lett.\ {\bf 26} (4) (2019) 1027-1058

\bibitem{CNR} S.L.\ Cacciatori, S.\ Noja, R. Re, \emph{The Universal de Rham / Spencer Double Complex on a Supermanifold}, Doc.\ Math.\ {\bf 27} (2022) 487-518

\bibitem{CDF} L.\ Castellani, R. D'Auria, P. Fre, \emph{Supergravity and Superstring: a Geometric Perspective}, Vol 1, World Scientific (1991)

\bibitem{CCG} R. Catenacci, M.\ Debernardi, P.A. Grassi, D.\ Matessi, \emph{Cech and de Rham Cohomology of Integral Forms}, J. Geom.\ Phys.\ {\bf 62}, 4 (2012) 890-902

\bibitem{CCG1} L.\ Castellani, R. Catenacci, P.A. Grassi, \emph{The Integral Form of Supergravity}, JHEP 10 (2016) 049

\bibitem{SQM} L.\ Castellani, R. Catenacci, P.A. Grassi, \emph{Super Quantum Mechanics in the Integral Forms Formalism}, Ann. Henri Poincaré {\bf 19} (5), 1385-1417 (2018)

\bibitem{CGNinf} R. Catenacci, P.A. Grassi, S. Noja, \emph{$A_\infty$-Algebra from Supermanifolds}, Ann.\ Henri Poincar\'{e} {\bf 20} (12) 4163--4195 (2019)  

\bibitem{CGN} R.\ Catenacci, P.A.\ Grassi, S.\ Noja, \emph{Superstring Field Theory, Superforms and Supergeometry}, J. Geom. Phys., {\bf 148} 103559 (2020)

\bibitem{CCGN} R.\ Catenacci, C.\ Cremonini, P.A.\ Grassi, S.\ Noja, \emph{On Forms, Cohomology, and BV Laplacians in Odd Symplectic Geometry}, Lett.\ Math.\ Phys.\ {\bf 111} (2), (2021) 1-32

\bibitem{CCGN} R.\ Catenacci, C.\ Cremonini, P.A.\ Grassi, S.\ Noja, \emph{Cohomology of Lie Superalgebras: Forms, Integral Forms, and Coset Superspaces}, arXiv:2012.05246

\bibitem{CA1} C.A.\ Cremonini, P.A.\ Grassi, \emph{Pictures from Super Chern-Simons Theory}, JHEP 03 (2020) 043

\bibitem{CGP} C.A.\ Cremonini, P.A.\ Grassi, S.\ Penati, \emph{Supersymmetric Wilson Loops via Integral Forms}, JHEP (2020) 161 

\bibitem{CG2} C.A.\ Cremonini, P.A.\ Grassi, \emph{Cohomology of Lie Superalgebras: Forms, Pseudoforms, and Integral Forms}, arXiv:2106.11786 

\bibitem{Deligne} P. Deligne, J.\ Morgan, \emph{Notes on Supersymmetry (following Joseph Bernstein)}, in \emph{Quantum Field Theory and Strings: a Course for Mathematicians}, Vol 1, AMS (1999) 

\bibitem{DeligneFreed} P. Deligne, D.\ S.\ Freed, \emph{Supersolutions}, in \emph{Quantum Field Theory and Strings: a Course for Mathematicians}, Vol 1, AMS (1999) 

\bibitem{DonWit} R.\ Donagi, E.\ Witten, \emph{Supermoduli Space is Not Projected}, Symp.\ Pure Math.\ {\bf 90} (2015) 19-72 

\bibitem{Eisenbud} D.\ Eisenbud, \emph{Commutative Algerbra - with a view toward Algebraic Geometry}, Springer GTM (1995)

\bibitem{Freed} D.\ S.\ Freed, \emph{Five Lectures on Supersymmetry}, AMS (1999)

\bibitem{FMS1} D.\ Friedan, E.\ Martinec, S.\ Shenker, \emph{Covariant Quantization of Superstrings}, Phys.\ Lett.\ {\bf B} 60 (1985) 55 

\bibitem{FMS2} D.\ Fridan, E.\ Martinec, S.\ Shenker, \emph{Conformal Invariance, Supersymmetry and String Theory}, Nucl. Phys. {\bf B} (1986) 93

\bibitem{Gaiduk} A.V.\ Gaiduk, H.M.\ Khudaverdian, A.S.\ Schwarz, \emph{Integration over Surfaces in Superspace}, Theor.\ Math.\ Phys.\ {\bf 52} (1982)

\bibitem{Gaw} K.\ Gaw\c{e}dzki, \emph{Supersymmetry - Mathematics of Supergeometry}, Ann.\ Inst.\ H.\ Poincaré Sect.\ A (N.S.) {\bf 27} (1977) 335-366

\bibitem{Green} P.\ Green, \emph{On Holomorphic Graded Manifolds}, Proc.\ Am.\ Math. Soc. {\bf 85}, 4 (1982)

\bibitem{GH} P.\ Griffiths, J.\ Harris, \emph{Principles of Algebraic Geometry}, Wiley (1978)


\bibitem{Kapranov} M.\ Kapranov, \emph{Supergeometry in Mathematics and Physics}, in {New Spaces in Physics}, M.\ Anel, G.\ Catren eds, CUP (2021)

229-231 (1989)

\bibitem{Khudaverdian1}  H.M. Khudaverdian, \emph{Geometry of superspace with Even and Odd Brackets}, J. Math. Phys 32,
1938-1941 (1991)

\bibitem{KN1}  H.M. Khudaverdian, A.P. Nersessian, \emph{On Geometry of Batalin-Vilkovisky Formalism}, Mod. Phys.
Lett. A 8 (25), 2377-2385 (1993)

\bibitem{KN2}  H.M. Khudaverdian, A.P. Nersessian, \emph{Batalin-Vilkovisky Formalism and Integration Theory on
Manifolds}, J. Math.Phys 37, 3713-3724 (1996)

\bibitem{Khudaverdian2}  H.M. Khudaverdian, \emph{Odd Invariant Semidenstiy and Divergence-like Operators on Odd Symplectic
Superspace}, Comm.\ Math.\ Phys.\ {\bf 198}, 591-606 (1998)

\bibitem{Khuda2} H. M.\ Khudaverdian, \emph{Laplacians in Odd Symplectic Geometry}, Contemp. Math. {\bf 315}, 199-212 (2002)

\bibitem{Khuda1} H. M.\ Khudaverdian, \emph{Semidensities on Odd Symplectic Supermanifolds}, Commun. Math. Phys., {\bf 247}, 353-390 (2004)


\bibitem{Konstant} B.\ Konstant, \emph{Graded Manifolds, Graded Lie Theory, and Prequantization}, in Differential geometrical methods in mathematical physics, K.\ Bleuler and A.\ Reetz (eds.) 177-306, Lecture Notes in Mathematics {\bf 570}, Springer-Verlag (1977)

\bibitem{Leites} D.\ A.\ Leites, \emph{Introduction to the Theory of Supermanifolds} (Russian), Uspekhi Mat. Nauk. {\bf 35} (1980) 3-57, English translation in Russian Math. Surveys {\bf 35} (1980) 1-64

\bibitem{Sabbah} P. Maisonobe, C. Sabbah, \emph{Aspect of the Theory of $\mathscr{D}$-modules}, Lecture Notes - Kaiserslautern 2002, available at \url{http://www.math.polytechnique.fr/cmat/sabbah/livres/kaiserslautern.pdf}

\bibitem{Manin} Yu.\ I.\ Manin, \emph{Gauge Fields and Complex Geometry}, {Springer-Verlag}, (1988)

\bibitem{ManinSuper} Yu.\ I. Manin, I.\ B.\ Penkov, A.\ A.\ Voronov, \emph{Elements of Supergeometry}, J. Soviet Math. {\bf 51} (1990) 2069-2083

\bibitem{Mnev} 
  P.\ Mnev,
  \emph{Quantum Field Theory: Batalin-Vilkovisky Formalism and its Applications}, AMS (2019)

\bibitem{NojaG} S.\ Noja, \emph{Non-Projected Supermanifolds and Embedding in Super Grassmannians}, Universe {\bf 4} 11 (2018)

\bibitem{NojaRe} S.\ Noja, R.\ Re, \emph{A Note on Super Koszul Complex and the Berezinian}, Ann.\ Mat.\ Pura Appl. {\bf 201} (1) 403-421 (2022) 

\bibitem{Noja} S.\ Noja, \emph{On BV Supermanifolds and the Super Atiyah Class}, Eur.\ J.\ Math.\ {\bf 9} (1) (2023)

\bibitem{OP} O.V.\ Ogievetskii, I.B.\ Penkov, \emph{Serre Duality for Projective Supermanifolds}, Funct. Anal. its Appl. {\bf 18} 68-70 (1984)

\bibitem{Ogiev} O.V.\ Ogievetskii, \emph{private communication} 

\bibitem{Penkov} I.\ B.\ Penkov, \emph{$\mathscr{D}$-Modules on Supermanifolds}, Invent.\ Math.\ {\bf 71}, 501-512, (1983)

\bibitem{Pestun} V.\ Pestun, M.\ Zabzine, eds., \emph{Localization techniques in quantum field theories}, J. Phys. A: Math. Theor., {\bf 50} (44) 440301 (2016)

\bibitem{Ruiperez} D.\ Hern\`andez Ruiperez, J. Mu\~{n}oz Masque, \emph{Construction Intrinsique du faisceau de Berezin d'une variet\`e gradu\`ee}, C. R. Acad. Sc. Paris {\bf 301} 915-918 (1985)

\bibitem{Severa} P.\ \v{S}evera, \emph{On the Origin of the BV Operator on Odd Symplectic Supermanifolds}, Lett.\ Math.\ Phys.\ {\bf 78} 55-59 (2006)

  \bibitem{Schwarz} 
A.\ S.\ Schwarz, \emph{Geometry of Batalin-Vilkovisky Quantization}, Comm.\ Math.\ Phys.\ {\bf 155} (1993)
249-260; 

\bibitem{BerezinLife} M.\ Shifman (edited by), \emph{Felix Berezin: Life and Death of the Mastermind of Supermathematics}, World Scientific (2007)

\bibitem{SuZhang} Y.\ Su, R.\ Zhang, \emph{Mixed Cohomology of Lie Superalgebras}, J.\ Alg.\ {\bf 549} (2020) 1-29

\bibitem{Varadarajan} V.S.\ Varadarajan, \emph{Supersymmetry for Mathematician: an Introduction}, Courant Lecture Notes, AMS (2004)

 \bibitem{VZ1}  T. Voronov, A. Zorich, \emph{Complexes of forms on a supermanifold}, Funct. Anal. Appl. {\bf 20} (2), 58-59
(1986)

\bibitem{VZ2}  T. Voronov, A. Zorich, \emph{Integral Transformations of Pseudodifferential Forms}, Russian Math. Surv.
{\bf 41} (6), 221-222 (1986)

\bibitem{VZ3}  T. Voronov, A. Zorich, \emph{Integration on vector bundles}, Funct. Anal. Appl. {\bf 22} (2), 94 103 (1988)


\bibitem{V3} T. Voronov, \emph{Quantization of Forms on the Cotangent Bundle}. Comm.\ Math.\ Phys.\ {\bf 205}, 315-336 (1999)

\bibitem{V4}  T. Voronov, \emph{Dual forms on supermanifolds and Cartan calculus}, Comm.\ Math.\ Phys.\ {\bf 228}, 1-16
(2002)

\bibitem{Voronov} Th.\ Th.\ Voronov, \emph{Geometric Integration Theory on Supermanifolds}, Cambridge Scientific Publisher (2014)


\bibitem{Witten} E.\ Witten, \emph{Notes on Supermanifolds and Integration}, Pure\ Appl.\ Math.\ Q., {\bf 15} (1) 3--56 (2019)

\bibitem{WittenRiem} E.\ Witten, \emph{Notes on Super Riemann Surfaces and Their Moduli}, Pure\ Appl.\ Math.\ Q., {\bf 15} (1) 57--211 (2019)

\bibitem{WittenSuper} E.\ Witten, \emph{Superstring Perturbation Theory via Super Riemann Surfaces: an Overview}, Pure\ Appl.\ Math.\ Q., {\bf 15} (1) 517--607 (2019)

\end{thebibliography}
\end{document}